\newtheorem{definition}{Definition}
\newcommand{\lyxmathsym}[1]{\ifmmode\begingroup\def\b@ld{bold}
	\text{\ifx\math@version\b@ld\bfseries\fi#1}\endgroup\else#1\fi}
\numberwithin{equation}{section}
\numberwithin{figure}{section}
\theoremstyle{plain}
\newtheorem{thm}{\protect\theoremname}[section]
\theoremstyle{plain}
\newtheorem{cor}[thm]{\protect\corollaryname}
\theoremstyle{plain}
\newtheorem{rem}[thm]{\protect\remarkname}
\theoremstyle{plain}
\newtheorem{lem}[thm]{\protect\lemmaname}
\theoremstyle{plain}
\newtheorem{prop}[thm]{\protect\propositionname}
\def\wtF{\widetilde{{\mathcal{F}}}}
\def\whF{\widehat{{\mathcal{F}}}}
\def\jx{\langle x \rangle}
\def\jy{\langle y \rangle}
\def\jk{\langle k \rangle}
\def\wt{\widetilde}
\def\#{\sharp}
\def\R{\mathbb{R}}
\def\C{\mathbb{C}}
\def\K{\mathcal{K}}
\def\e{\epsilon}
\def\jt{\langle t \rangle}
\def\js{\langle s \rangle}
\def\pv{\mathrm{p.v.}}
\def\sign{\mathrm{sgn}}
\def\sgn{\mathrm{sgn}}
\def\what{\widehat}
\providecommand{\corollaryname}{Corollary}
\providecommand{\lemmaname}{Lemma}
\providecommand{\propositionname}{Proposition}
\providecommand{\remarkname}{Remark}
\providecommand{\theoremname}{Theorem}
\begin{document}

\title[Cubic NLS with a non-generic potential]{On the $1$d cubic NLS with a non-generic potential}

\author{Gong Chen}

\address{University of Kentucky, Department of Mathematics,
719 Patterson Office Tower,
Lexington, KY, 40506-0027, USA.}

\email{g.chen@uky.edu} 

\author{Fabio Pusateri}
\address{University of Toronto, Department of Mathematics, 40 St George Street,
Toronto, ON, M5S 2E4, Canada.}

\email{fabiop@math.toronto.edu}

\begin{abstract}

\small

We consider the $1d$ cubic nonlinear Schr\"odinger equation with an external potential $V$ 
that is non-generic.
Without making any parity assumption on the data,
but assuming that the zero energy resonance of the associated Schr\"odinger operator is either odd or even,
we prove global-in-time quantitative bounds and asymptotics for small solutions. 

First, we use a simple modification of the basis for the distorted Fourier transform (dFT)
to resolve the (possible) discontinuity at zero energy due to the presence of a resonance and the absence of symmetry
of the solution. 
We then use a refined analysis of the low frequency structure
of the (modified) 
nonlinear spectral distribution,
and employ smoothing estimates in the setting of non-generic potentials.

\end{abstract}

\maketitle

\setcounter{tocdepth}{1}

\begin{quote}
\tableofcontents 
\end{quote}

\date{\today}

\section{Introduction}
We consider 
the one dimensional cubic nonlinear Schr\"odinger equation (NLS)
\begin{align}\label{NLSV} 
i\partial_tu - \partial_{xx}u + V(x)u \pm \left|u\right|^{2}u = 0, \qquad  u(0,x)=u_{0}(x),
	\end{align}
for an unknown $u:(t,x) \in \R \times \R \mapsto \C$, with small initial data $u_0 \in L^2(\jx^2 dx) \cap H^1$, 
where $V=V(x)$ is a large external potential.
We assume that the potential is {\it non-generic}, i.e., there is bounded solution $\varphi=\varphi(x)$ 
of $H \varphi = 0$, where $H:=-\partial_x^2 + V(x)$ is the  Schr\"odinger operator associated to $V$.
We do not make any parity assumptions on the initial data,
but assume that the zero energy resonance $\varphi$  is either even or odd
(which is essentially equivalent to assuming that the potential $V$ is even). 
Our main result is the proof of global bounds and asymptotics for solution of \eqref{NLSV}.

It is well known that non-generic potentials pose many challenges in the study 
of nonlinear equations such as \eqref{NLSV}.
These difficulties include the lack of improved local decay 
and related weaker smoothing effects for the linear flow,
as well as the (possible) discontinuity at zero frequency of the distorted Fourier transform associated to 
$H$, which is an important tool in the analysis of \eqref{NLSV} and similar problems.


At the same time, contrary to what the denomination `non-generic' would seem to suggest,
these potentials appear naturally in many physical models. Focusing on one dimensional models, examples include the linearizations around
soliton solutions to the $1$d cubic NLS or the quadratic and cubic Klein-Gordon (KG) equation, see \cite{CGNT}. 
Other examples in the relativistic setting are the linearization around kink solutions to the $\phi^4$ 
model and the sine-Gordon equation.
In all these cases the zero energy resonance has a parity since the potential is even.
Also note that in $1$d the trivial potential $V=0$ is non-generic (with resonance $\varphi(x) \equiv 1$).

In Subsection \ref{ssecideas} we discuss in more details 
some of the difficulties in the treatment of non-generic potentials,
and how we address them in the context of \eqref{NLSV}.
Results similar to the ones in this paper can also be obtained for the Klein-Gordon equations
(see Remarks \ref{remKG3} and \ref{remKG2}),
and other nonlinear evolution equations sharing some basic features with \eqref{NLSV}.
It is natural to regard \eqref{NLSV} as a basic, yet important model to understand nonlinear equations
when the linear operator has a zero energy resonance, 
and outside of symmetry classes. 
Moreover, studying \eqref{NLSV} 
allows us to explore the applicability of methods based on the distorted Fourier transform (dFT)
when this has a discontinuity at zero frequency, i.e., in the case of an odd resonance.
In particular, our results show that the discontinuity of the dFT 
is not an obstruction to approaching questions about the asymptotic stability 
of solitons and kinks under non-symmetric perturbations
when the linearized problem has a non-generic even potential.



\subsection{Some background and literature}
Recall that sufficiently regular solutions of \eqref{NLSV} conserve the mass 
\[M(u) := \int\left|u\right|^{2}\,dx \]
and the total energy (Hamiltonian)
\begin{align}\label{Ham}
H\left(u\right) := \int\frac{1}{2}\left|\partial_xu\right|^{2}
  +\frac{1}{2}V\left|u\right|^{2}\pm\frac{1}{4}\left|u\right|^{4} \, dx.
\end{align}
The Cauchy problem for \eqref{NLSV} with $V=0$ - we will refer to this as the ``free'' or ``flat'' case -
is globally well-posed in $L^2$ and $H^1$, see for example Cazenave-Weissler \cite{CW}.

In what follows we only focus on the question of global-in-time bounds and asymptotics
for \eqref{NLSV} as $|t|\rightarrow \infty$.
The main feature of the cubic nonlinearity is its criticality with respect to scattering:
linear solutions of the Schr\"odinger equation decay at best like $|t|^{-1/2}$ in $L^\infty_x$, 
so that, when evaluating the nonlinearity on linear solutions, one see that $|u|^{2}u \sim |t|^{-1}u$;
the non-integrability of $|t|^{-1}$ 
results in a ``Coulomb''-type contribution of the nonlinear terms which produces 
modified scattering with an additional nonlinear phase correction compared to the linear behavior. 

\smallskip
\noindent
{\it The free case.}
%
%
In the case $V=0$ this problem is well understood. 
Using complete integrability, modified scattering was proven in the seminal work of Deift-Zhou \cite{DZ} 
without size restriction on the solutions;
see also \cite{DZ2}. 
Without making use of complete integrability, 
proofs of modified scattering for small solutions were given by Ozawa \cite{O},
Hayashi-Naumkin \cite{HN}, Lindblad-Soffer \cite{LS}, Kato-Pusateri \cite{KP} and Ifrim-Tataru \cite{IT}.

\smallskip
\noindent
{\it Generic potentials or symmetric solutions.}
Recently, the above results for small solutions have been extended to
\eqref{NLSV} with a generic potential (see Definition \ref{def:generic}) of sufficient regularity and decay 
in the works of Naumkin \cite{N}, Delort \cite{Del} and Germain-Pusateri-Rousset \cite{GPR};
The work \cite{Del} also considered the non-generic cases
of a so-called ``very exceptional'' potential
but under symmetry assumptions (even potential and odd data/solution).
The work of Masaki-Murphy-Segata \cite{MMS} treated the case of a delta potential. 
In our previous work \cite{NLSV} 
we treated general classes of potentials in a weighted $L^1$ space. 
We also analyzed the non-generic problem but with extra symmetry 
assumptions on the data/solution to ensure improved behavior at zero energy.
We also mention the work by Mart\'inez \cite{MM}
who used virial type arguments to prove decay estimates on compact regions 
for some classes of small energy solutions of $L^2$ subcritical NLS equations with potentials.


\smallskip
\noindent
{\it Non-generic potentials: difficulties and related works.}
In all the results mentioned above
one has to appeal to either genericity of the potential or parity of the solutions.
This is done to ensure a better behavior 
of the solution 
at zero frequency, i.e., the vanishing of its dFT at zero, $\wt{u}(t,0)=0$,
or, equivalently, improved local decay and smoothing properties.  
%
%
One crucial issue in the non-generic setting is exactly the non-vanishing of the transform at zero frequency.
An additional difficulty is that the dFT
may 
have a jump discontinuity at zero.
See Subsection \ref{sec:JostSpec} for more information.

Because of these issues, nonlinear problems 
with non-generic potentials in low dimensions and/or with low power nonlinearities
are not very well understood at the present moment.
Nevertheless, important 
recent works have been done in the context of $1d$ Klein-Gordon/relativistic theories.
We refer in particular to the works of Lindblad-L\"uhrmann-Schlag-Soffer \cite{LLSS} 
on KG with non-generic potentials and localized quadratic nonlinearities,
\cite{LLS,LLS1} on the case $V=0$, 
and the work of L\"uhrmann-Schlag \cite{LSch} on the asymptotic 
stability under odd perturbations of the Kink solution for the Sine-Gordon equation.

Loosely speaking, 
so far two successful ways 
have been proposed to handle zero energy resonances.
One approach 
consists in explicitly isolating the influence of the resonance and then 
carefully analyzing dispersive properties for the remaining contributions,
such as in \cite{LLSS}; see also  Krieger-Schlag \cite{KriSch} for wave equations in $3d$ and reference therein.
Another approach,
which goes under the name of ``super-symmetric'' factorization,
is to use factorization properties of the perturbed operator $-\partial_x^2 + V(x) + m^2$ ($m\geq 0$)
to conjugate it to the flat one.
L\"uhrmann-Schlag \cite{LSch} in particular were able to use this to prove a  asymptotic stability
result over the full line for 
the kink of the sine-Gordon equation. 
See also \cite{CGNT,KNS,KMMV}.

%


In this paper we adopt a different approach which does not rely on explicit formulas
and the super-symmetric factorization.
In the most interesting case of an odd resonance, we first use an elementary observation 
to handle the jump discontinuity of the dFT at zero frequency,
and obtain a modified dFT with no discontinuity. 
We then analyze the structure of the nonlinear spectral measure given by the spatial integral of 
the product of four (modified) generalized eigenfunctions, paying particular attention to its low frequency structure.
We then combine this with general versions of smoothing estimates, 
as well as low frequency improved local decay,
in the context of Schr\"odinger flows (and pseudo-differential variants of it) with non-generic potentials.
See Subsection \ref{ssecideas} for more details on the main ideas of the proof.



\subsection{Main result}\label{secmainrest}
In this section, we present our main results. We start with our assumptions on potentials. 
	
\smallskip
\noindent
{\it Assumptions on the potential $V$.}

\setlength{\leftmargini}{1.5em}
\begin{itemize}
\item We assume that $V$ is non-generic (see Definition \ref{def:generic})
and that the zero energy resonance $\varphi$ (see Definition \ref{def:zeroresonance}) is either even or odd;
		
\item We assume that $V$ has no discrete spectrum;
\item We assume that for $\gamma>\frac{5}{2}$
\begin{align}\label{VassumeWei}
\jx^\gamma V \in L^1(\R).
\end{align}
Note that, in particular, we do not assume any regularity on $V$.

\end{itemize}


\begin{thm}\label{mainthm}
Consider the nonlinear Schr\"odinger equation  \eqref{NLSV}  with a potential satisfying the assumptions above.
Then 
there exists $0<\epsilon_{0}\ll 1$ 
such that for all $\varepsilon\leq\epsilon_{0}$ and $u_0$ with
\begin{align}\label{datasmall}
\left\Vert u_{0}\right\Vert _{H^{1}}+\left\Vert xu_{0}\right\Vert _{L^{2}} \leq \varepsilon
\end{align}
the equation \eqref{NLSV} has a unique
global solution $u\in C(\R,H^1(\R))$, with $u(0,x)=u_{0}(x)$;
this solution satisfies the sharp decay rate
\begin{align}\label{main1fdecay}
\left\Vert u(t)\right\Vert_{L^\infty_x} \lesssim \varepsilon \left(1+\left|t\right|\right)^{-\frac{1}{2}}
\end{align}
and has a modified scattering behavior (see Remark \ref{remmodscatt} for more details).

Moreover, if we define the profile of the solution $u$ as
\begin{align}\label{main1prof}
f(t,x) := e^{-itH}u(t,x), \quad H := -\partial_{xx}+V,
\end{align}
then, 
for $\alpha \in (0,1/4)$ 
we have the global bounds 
\begin{align}\label{main1fbounds}
{\big\| (\mathcal{F} f) (t) \big\|}_{L_k^\infty} + (1+|t|)^{-\alpha} 
  {\big\| \partial_{k} (\mathcal{F} f) (t) \big\|}_{L_k^2} \lesssim \varepsilon,
\end{align}
where, in the case of an even, respectively odd, zero energy resonance,
$\mathcal{F}$ denotes the distorted Fourier transform $\wtF$ associated to $H$,
(see the definition \eqref{tildeF} with \eqref{matK} and \eqref{psipm}-\eqref{psipmlim})
respectively 
the `modified' distorted Fourier transform $\mathcal{F}^\sharp$
(see the definition \eqref{sharpF} with \eqref{eq:Ksharp}).
\end{thm}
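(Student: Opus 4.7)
\emph{Bootstrap setup.} The plan is to run a bootstrap argument in the norm
\[
\|f\|_{X_T} := \sup_{0\le t\le T}\Big( \|\mathcal{F}f(t)\|_{L^\infty_k} + (1+t)^{-\alpha}\|\partial_k\mathcal{F}f(t)\|_{L^2_k} \Big),
\]
for some $\alpha\in(0,1/4)$, where $\mathcal{F}$ denotes $\wtF$ in the even-resonance case and $\mathcal{F}^\sharp$ in the odd-resonance case. Local well-posedness in $H^1\cap L^2(\jx^2\,dx)$ follows from standard Strichartz and weighted commutator estimates. I would then improve the a priori bound $\|f\|_{X_T}\le C\varepsilon$ to $\|f\|_{X_T}\le (C/2)\varepsilon$; the sharp pointwise decay \eqref{main1fdecay} follows from a linear dispersive bound of the form
\[
\|e^{-itH}g\|_{L^\infty_x} \lesssim t^{-1/2}\Big(\|\mathcal{F}g\|_{L^\infty_k}+t^{-1/4}\|\partial_k \mathcal{F}g\|_{L^2_k}\Big),
\]
valid for both $\wtF$ and $\mathcal{F}^\sharp$ under the assumptions of Subsection~\ref{secmainrest}, combined with the choice $\alpha<1/4$.

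\emph{Profile equation and the $L^\infty_k$ bound.} Writing $i\partial_t f = \pm e^{-itH}(|u|^2u)$ and transforming to the distorted Fourier side yields
\[
i\partial_t\, \mathcal{F}f(t,k) = \pm \iiint e^{it\Phi(k,\ell,m,n)}\, \mu(k,\ell,m,n)\, \mathcal{F}f(\ell)\,\overline{\mathcal{F}f(m)}\,\mathcal{F}f(n)\, d\ell\,dm\,dn,
\]
with $\Phi=k^2-\ell^2+m^2-n^2$ and a nonlinear spectral distribution (NSD) $\mu$ obtained as the spatial integral of four (modified) generalized eigenfunctions. I would decompose $\mu=\mu_S+\mu_R$, where $\mu_S$ carries the singular delta distributions of the form $\delta(\pm k\pm\ell\pm m\pm n)$ (reproducing the flat resonant geometry $\ell=k,\ m=n$), while $\mu_R$ is a continuous remainder encoding the scattering data of $V$. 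On $\mu_S$, stationary phase in $(\ell,m,n)$ gives
\[
i\partial_t\, \mathcal{F}f(t,k) = \frac{1}{t}\, c(k)\, |\mathcal{F}f(t,k)|^2\,\mathcal{F}f(t,k) + \mathcal{R}(t,k),
\]
with $c(k)$ determined by the values of $\mu$ on the resonant set; the gauge transformation $\mathcal{F}f\mapsto e^{i\Psi(t,k)}\mathcal{F}f$ removes the leading phase, and integrating the resulting ODE yields $\|\mathcal{F}f(t)\|_{L^\infty_k}\lesssim\varepsilon$ together with the logarithmic correction of Remark~\ref{remmodscatt}. The remainder $\mathcal{R}$ (non-resonant terms and $\mu_R$-contributions) is controlled by one or two integrations by parts in $\ell$ or $n$, trading powers of $t^{-1}$ for derivatives that fall on the profile or on $\mu_R$, combined with multilinear Coifman--Meyer-type estimates on the continuous part; the gap $\alpha<1/4$ provides enough room to absorb the resulting endpoint powers.

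\emph{Weighted $L^2_k$ bound and the main obstacle.} Differentiating the Duhamel integral in $k$ produces three types of contributions: (i) a $2tk$ factor from $\partial_k$ hitting the phase, which I would absorb via the normal form $\partial_\ell e^{it\Phi}=-2it\ell\, e^{it\Phi}$ that converts $t$ into a weight $k/\ell$; (ii) a $\partial_k\mu$ term, bounded in $L^2_k$ using the structure of $\mu$ together with the $L^\infty_x$ decay of $u$; and (iii) singular contributions from the $k/\ell$ weight and from the (possible) jump of $\mu$ at $\ell=0$ in the odd-resonance case. Step (iii) is the main obstacle and is where the novel input of the paper enters. Without symmetry one has $\mathcal{F}f(t,0)\neq 0$ in general, so the favourable cancellations available in \cite{Del,NLSV} are lost; moreover, in the non-generic regime the standard improved local decay $\jx^{-\sigma}e^{-itH}\sim t^{-3/2}$ is absent, and the unmodified NSD is discontinuous at zero in the odd-resonance case. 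The remedy is twofold: first, the modification $\mathcal{F}^\sharp$ defined via \eqref{sharpF}--\eqref{eq:Ksharp} is designed precisely so that $\mu_R$ becomes continuous across $k=0$ in each entry, turning the $k/\ell$ weight into a Hilbert-transform-type operator bounded on $L^2_k$; second, one uses the weaker smoothing and low-frequency local decay estimates for $e^{-itH}$ available in the non-generic setting — where the hypothesis $\jx^\gamma V\in L^1$ with $\gamma>5/2$ plays a decisive role — to absorb the residual low-frequency singularities. Putting these together yields a differential bound of the form $\|\partial_t\partial_k\mathcal{F}f(t)\|_{L^2_k}\lesssim \varepsilon^3(1+t)^{\alpha-1}$, which integrates to $\|\partial_k\mathcal{F}f(t)\|_{L^2_k}\lesssim \varepsilon+\varepsilon^3(1+t)^\alpha$ and closes the bootstrap.
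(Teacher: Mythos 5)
Your overall architecture (bootstrap on $\|\mathcal{F}f\|_{L^\infty_k}$ and $\langle t\rangle^{-\alpha}\|\partial_k\mathcal{F}f\|_{L^2_k}$, the sharp transform to remove the jump at $k=0$, and reliance on smoothing/local decay in the non-generic regime) matches the paper, but your description of the nonlinear spectral distribution contains a genuine gap that propagates through both main estimates. The NSD is not ``delta distributions plus a continuous remainder'': because the generalized eigenfunctions are combinations of exponentials only after inserting the cutoffs $\chi_\pm$, the spatial integral produces, besides $\delta_0(k-\ell+m-n)$ and a regular part, a family of principal-value distributions $\mathrm{p.v.}\,\phi(\boldsymbol{\eps}\cdot\mathbf{k})/(\boldsymbol{\eps}\cdot\mathbf{k})$ (Theorem \ref{theomu}). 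The paper's central structural point is that, after passing to $\mathcal{K}^\sharp$, every such p.v.\ term carries a Lipschitz coefficient in $\mathcal{C}_L$ vanishing when one frequency is zero (a consequence of $T(0)=-1$, $R_\pm(0)=0$). This vanishing is what makes the weighted estimate closable: the $\partial_k$-commutator produces a term growing like $s\,a(x)|u|^2u$, and since local decay in the non-generic case is only $t^{-1/2}$, direct time integration fails; the paper compensates either with the dual Kato-type smoothing estimate \eqref{eq:generalimhomsmoothing} (whose $L^2_s$ norm turns $s\langle s\rangle^{-3/2}$ into $\sqrt{\log t}$) when the outer coefficient vanishes, or with the low-frequency improved local decay of Lemma \ref{lem:lowlocaldecay} when an inner coefficient vanishes. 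Your proposed mechanism --- integrating by parts in $\ell$ to trade $t$ for a weight $k/\ell$ and invoking Hilbert-transform boundedness --- does not engage this structure: it requires differentiating against the $\delta$ and p.v.\ measures and creates a $1/\ell$ singularity precisely where $\mathcal{F}f(t,0)\neq 0$, which is the very obstruction of the non-generic, non-symmetric setting; the role of $\mathcal{F}^\sharp$ is to make the transform and the coefficients Lipschitz at zero, not to render a $k/\ell$ weight bounded.

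The same omission affects your $L^\infty_k$ step. In the absence of symmetry the p.v.\ contributions are \emph{not} remainders: they enter at the critical order $1/t$ with a $\mathrm{sgn}(2tk)$ factor (Lemmas \ref{AsLem1} and \ref{lemStat}, which themselves require either the coefficient or the profile to vanish at zero), and the clean coefficient $\tfrac{1}{2t}$ in the asymptotic ODE \eqref{fODE} only emerges after the algebraic cancellations between the $\delta$- and p.v.-contributions in \eqref{Npvasy5}--\eqref{Nasyconc}, which use $T(0)=-1$, $R_\pm(0)=0$ and the unitarity relations \eqref{TRid}. Stationary phase on the delta part alone, with the p.v.\ and regular parts dismissed via integration by parts and Coifman--Meyer bounds, would either miss leading-order terms or produce an incorrect (possibly $k$-dependent and sign-ambiguous) ODE coefficient. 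To repair the proposal you would need to (i) state the full three-part decomposition of $\mu^\sharp$ with the vanishing coefficients of the p.v.\ part, (ii) replace the frequency integration by parts with the commutation identity of Lemma \ref{lem:algetri} combined with the smoothing and low-frequency local decay estimates, and (iii) carry the p.v.\ terms through the stationary-phase analysis to exhibit the cancellations yielding \eqref{fODE}.
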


\begin{rem}
Note that in \eqref{main1fbounds} we are measuring the $L^2_k$ norm of $\partial_k \wtF f$
in the case of an even zero energy resonance,
and of $\partial_k \mathcal{F}^\sharp f$ in the case of an odd resonance.
In both cases, the respective transforms are continuous (Lipschitz) everywhere including zero.
\end{rem}

\begin{rem}[Adding a coefficient]
Our analysis can be directly applied to the equation
\begin{align}
i\partial_tu - \partial_{xx}u + V(x)u \pm b_1(x) \left|u\right|^{2}u +b_2(x) u^3+b_3(x) |u|u^2+b_4(x) |u|^3  = 0
\end{align}
with $b_1(x)-1$, $b_2(x)$, $b_3(x)$ and $b_4(x)$ are
$C^1$ coefficients that converge to $0$ sufficiently fast 
(together with their derivative) as $|x|\rightarrow \infty.$
In fact, all the additional localized cubic terms can be estimated like 
the `regular' nonlinear terms treated in \S\ref{ssecNR2} and \S\ref{ssecRasy}.
\end{rem}

The following remark describes more precisely the asymptotics of the solutions 
constructed in Theorem \ref{mainthm}.
As in the statement above we let $\mathcal{F}$ denote the distorted Fourier transform $\wtF$
in the case of an even resonance, and the sharp transform $\mathcal{F}^\sharp$ in the case of an odd resonance.

\begin{rem}[About modified scattering]\label{remmodscatt}
As a consequence of Proposition \ref{propinfty} and, in particular of the asymptotic ODE \eqref{fODE},
we prove that the solutions in Theorem \ref{mainthm} have the following asymptotics:
there exists $W_{+\infty}\in L^{\infty}_k$ such that, for $t \geq 1$
\begin{align}\label{mainasy}
\left| (\mathcal{F}f)\left(t,k\right)\exp\left(\frac{i}{2}\int_{0}^{t} \left| (\mathcal{F}f) \left(s,k\right)\right|^{2}
	\frac{ds}{s+1}\right) - W_{+\infty}(k)\right| \lesssim \varepsilon \, t^{-\rho} 
\end{align}
for some $\rho\in(0,\alpha)$. 

Combining \eqref{mainasy} and the linear asymptotic formula
\begin{align}\label{linearasy}
u\left(t,x\right)=\frac{e^{i\frac{x^{2}}{4t}}}{\sqrt{-2it}}
  (\mathcal{F}f) \left(t,-\frac{x}{2t}\right) 
  + \mathcal{O} \big( {\big\| \partial_{k} (\mathcal{F} f) (t) \big\|}_{L_k^2}
  \, t^{-\frac{3}{4}} \big), \quad t\geq 1,
\end{align}
one obtains the following asymptotic formula for solutions of \eqref{NLSV} in physical space:
for some $\delta >0$,
\begin{align}\label{nonlinearasy}
u\left(t,x\right)=\frac{e^{i\frac{x^{2}}{4t}}}{\sqrt{-2it}}
  \exp\left(-\frac{i}{2}\left|W_{+\infty}\left(-\frac{x}{2t}\right)\right|^{2}\log t\right)
  W_{+\infty} \left(-\frac{x}{2t}\right)
  + \mathcal{O} \left( \varepsilon t^{-\frac{1}{2}-\delta} \right), \quad t\geq1.
\end{align}
For $t \rightarrow -\infty$ one can use time-reversal symmetry to obtain asymptotics.
The scattering matrix associated to $V$ will then enter 
formulas analogous to \eqref{mainasy} and \eqref{linearasy}; see Remark 1.2 in \cite{NLSV}. 
Note that these formulas are the same as the ones in the generic case.
\end{rem}

Let us now give a couple of remarks with examples of classes of potentials that we can treat.

\begin{rem}[Even potentials]\label{remeven}
Note that a Schr\"odinger operator with a non-generic potential that is even, $V(x)=V(-x)$,
has a zero energy resonance that is either even or odd.
In fact, suppose $\varphi$ is a zero resonance, i.e., a globally bounded solution to $(-\partial_{xx}+V)\varphi=0.$
Assume that $\varphi$ is normalized so that $\lim_{x\rightarrow\infty}\varphi(x)=1$ and let $\lim_{x\rightarrow-\infty}\varphi(x)=:a$. 
Then $\tau(x) := \varphi(-x)$ also solves $H\tau = 0$
with $\lim_{y\rightarrow\infty}\tau(y)=a$ and $\lim_{y\rightarrow-\infty}\tau(y)=1$. 
By 
uniqueness 
we have that $\tau=a\varphi$ and therefore
\[ \lim_{y\rightarrow-\infty} \tau(y)=1=a\lim_{y\rightarrow\-\infty}\varphi(y)=a^2. \]
So $a=\pm1$. When $a=1$, one has an even resonance and when $a=-1$, the resonance is odd.

Conversely, if $V$ is a non-generic potential and the zero energy resonance $\varphi$ 
is either even or odd,
then one can deduce that $V$ is even almost everywhere. 
Indeed, when $\varphi(x) \neq 0$ then $V(x) = \partial_{xx}\varphi(x) / \varphi(x) = V(-x)$.
It then suffices to observe that $\varphi$ cannot vanish on a set of positive measure, 
for otherwise the set of zeros of $\varphi$, which is compact, would contain a sequence $x_n \rightarrow x_0$,
and one sees that $\varphi(x_0)=0=\varphi'(x_0)$ ($\varphi \in C^1$); 
by uniqueness for the Cauchy problem for ODEs one gets $\varphi \equiv 0$, which is a contradiction.
\end{rem}

\begin{rem}[Some examples]
Even potentials with zero energy resonances
appear in the linearization around special solutions for many models.
For example, the hierarchy of Schr\"odinger operators with P\"oschl-Teller potentials
\begin{equation}\label{PTn}
H_n := -\partial_{xx}-n(n+1)\text{sech}^2(x),\qquad n\in\mathbb{N}_0,
\end{equation}
comes up in several classical asymptotic stability problems,
such as the linearization around kink solutions of the sine-Gordon equation ($H_1$) 
and the $\phi^4$ model ($H_2)$, and solitons of quadratic ($H_3$) and cubic ($H_2$)  Klein-Gordon equations. 
In all these cases the Schr\"odinger operator also has additional discrete spectrum in the negative half-line.
See for example the discussions in 
\cite{KS,KMM,LSch,KGV} and references therein. 


\end{rem}

\medskip
We conclude this subsection by explaining how our proofs 
can be naturally extended to the Klein-Gordon (KG) setting, and mention some implications.
First, an analogue of Theorem \ref{mainthm} can be proven for the cubic KG equation:

\begin{rem}[The KG equation: cubic case]\label{remKG3}
Consider, for a given mass $m>0$, the equation
\begin{align}\label{KGV}
\partial_t^2 \phi - \partial_x^2\phi + m^2 \phi + V(x)\phi = \phi^3, 
  \qquad (\phi,\phi_t)(0)=(\phi_0,\phi_1),
\end{align}
for an unknown $\phi: \R \times \R \mapsto \R$, a potential $V=V(x)$ 
that is non-generic, and satisfies the assumptions given at the beginning of
Subsection \ref{secmainrest}.
Then, if $V$ and the initial data are sufficiently regular\footnote{We
do not specify the exact regularity of $V$, and whether 
one may consider $V$ in a weighted $L^1$ space also in the KG case, as we do here in the NLS case.} 
and localized\footnote{For $V$ this means \eqref{VassumeWei}, for a possibly larger $\gamma$.},
Theorem \ref{mainthm} can be extended to \eqref{KGV} as follows:\footnote{Here 
we decided to use a topology for the data as in \cite{KGV,KGVSim},
but other (similar) choices are possible.
}

\smallskip
There exists $0 < \epsilon_0 \ll 1$ such that for all $\eta \leq \epsilon_0$ 
and initial data satisfying 
\begin{align}\label{datasmallKG}
{\big\| (\sqrt{H+m^2} \phi_0, \phi_1) \big\|}_{H^4} 
  + {\big\| \jx (\sqrt{H+m^2}\phi_0, \phi_1) \big\|}_{L^2} \leq \eta,
\end{align}
the equation \eqref{KGV} has a unique global solution $\phi\in C(\R,H^5(\R))$. 
This solution satisfies the sharp decay rate
\begin{align}\label{main1fdecaykg}
\left\Vert \phi(t)\right\Vert_{L^\infty_x} \lesssim \eta \left(1+\left|t\right|\right)^{-\frac{1}{2}}
\end{align}
and has a modified scattering behavior by a logarithmic correction.\footnote{See \cite{KGV} 
for details on the analogues of \eqref{mainasy}-\eqref{nonlinearasy} for the KG case.
Note that, although \cite{KGV} does not treat in full the non-generic case as we do here,
the asymptotic formulas are the same.}

Moreover, if we define the profile of the solution by
$f = e^{it\sqrt{H+m^2}}(\partial_t \phi - i \sqrt{H+m^2} \phi)$,
then, for some $0 < p_0 < \alpha < 1/4$, we have the global bounds
\begin{align}\label{main1fboundsKG}
{\big\| \jk^{3/2} (\mathcal{F} f) (t) \big\|}_{L_k^\infty} 
  + \jt^{-\alpha} {\big\| \jk \partial_{k} (\mathcal{F} f) (t) \big\|}_{L_k^2} 
  + \jt^{-p_0} {\big\| \jk^4 (\mathcal{F} f) (t) \big\|}_{L_k^2} \lesssim \eta.
\end{align}

\smallskip
About the proof: The above extension of Theorem \ref{mainthm} to \eqref{KGV} can be achieved by
noticing that all the ingredients that we use in our proof can either be directly
used in the KG case or adapted with minor modifications.
More precisely:

\setlength{\leftmargini}{1.5em}
\begin{itemize}

\item The structure of the nonlinear spectral distribution only depends on the Schr\"odinger operator
and not the evolution, so Theorem \ref{theomu} can be used as a black-box for any cubic nonlinearity
(with natural extensions in the case of nonlinearities of other homogeneity).

\item The commutation property for $\partial_k$ used in Lemma \ref{lem:algetri}
has a direct analogue in the context of the KG equation using $\jk\partial_k$;
see for example Germain-Pusateri \cite{KGV}. 

\item Analogues of all the necessary linear estimates from Section \ref{seclinest} 
are available in the case of the KG flow:

\noindent
(1) The smoothing estimates in Lemma \ref{lem:smoothingsim} also hold for the linear Klein-Gordon 
flow since these estimates are tied to the small-frequency behavior,
which is the same for KG and 
Schr\"odinger flows.  

\noindent
(2) The local decay estimates in Subsection \ref{sseclocdec} are also available for KG, for the same reason above.

\noindent
(3) Linear dispersive estimates, like those in Subsection \ref{ssecdisp}, are also available (and standard)
provided one adapts the norms on the right-hand sides of the inequalities in Corollaries \ref{cor:pointwisesingular}
and \ref{cor:regularLinfty} and Lemma \ref{lem:pointwiseH}
to take into account the different behavior of the KG flow for large frequencies.
This is done using the norms in \eqref{main1fboundsKG}.

\end{itemize}

\end{rem}

\begin{rem}[The KG equation: quadratic case]\label{remKG2}
Consider the quadratic model
\begin{align}\label{KGVquad}
\partial_t^2 \phi - \partial_x^2\phi + \phi + V(x)\phi = a(x)\phi^2 + b(x) \phi^3, 
  \qquad (\phi,\phi_t)(0)=(\phi_0,\phi_1),
\end{align}
for $\phi$, $V$ and $(\phi_0,\phi_1)$ as above, and a localized coefficient\footnote{Extensions
to non-localized coefficients are also possible, but require more algebraic work 
in carrying out normal form transformations, along the lines of \cite{KGV}.} $a=a(x)$.
 
One of the results in the work by Lindblad-L\"uhrmann-Soffer \cite{LLS1}
gives global bounds (including decay at the linear rate)  
and asymptotics for solutions of \eqref{KGVquad} with $V=0$ and under the 
`non-resonance' assumption $\widehat{a\varphi^2}(\pm \sqrt{3}) = 0$.

For more strongly localized data 
one of the results in Lindblad-L\"uhrmann-Schlag-Soffer \cite{LLSS} 
gives global bounds (for localized $L^2$-norms) 
and sharp linear decay for solutions of \eqref{KGVquad} with $b=0$ and under the 
`non-resonance' assumption $\big(\wtF (a\varphi^2)\big)(\pm\sqrt{3}) = 0$.

With the approach of our paper, 
one can extend the above results to include a non-trivial potential in the cited result from \cite{LLS1} 
or, equivalently, cubic terms in the cited results from \cite{LLSS},
provided the zero energy resonance is either even or odd (and the same non-resonance assumption 
$\big(\wtF (a\varphi^2)\big)(\pm\sqrt{3}) = 0$ holds).
\end{rem}

\begin{rem}[About the sine-Gordon equation]\label{remSG}
In important recent work, L\"uhrmann-Schlag \cite{LSch} consider the sine-Gordon equation
\begin{align}\label{SG} 
\partial_t^2 \phi - \partial_x^2\phi + \sin \phi = 0,
\end{align}
and proved asymptotic stability for small and localized odd perturbation of the kink $K(x) = 4\arctan(e^x)$.
The problem is essentially equivalent to proving global bounds and asymptotics for odd solutions
of the linearized equation
\begin{align}\label{SGlin} 
\partial_t^2 u + H_1 u + u = a(x) u^2 + u^3, 
\end{align}
where $H_1$ is defined in \eqref{PTn}.
Since the resonance of $H_1$ is odd, the assumption that $u$ is also odd 
does not eliminate the issues associated to it, but 
only avoids modulating by the symmetries of the equation.
The key `non-resonance' condition $\big(\wtF (a\varphi^2)\big)(\pm\sqrt{3}) = 0$ discussed in the previous remark,
holds for this model.

In \cite{LSch} the authors make use of a factorization property,
called the `super-symmetric' factorization,
of the linear operator of \eqref{SGlin}
that conjugates it to the flat one, while retaining the structure of the equation and the 
`non-resonance' condition.
A natural question then arises about the intrinsic necessity of this factorization,
and related implications, such as, how can one treat problems with resonances 
(and a discontinuous distorted Fourier transform) without a `super-symmetric' structure.

Our results show that it is actually possible 
to approach this type of problems 
without resorting to factorization properties that conjugate the Schr\"odinger operator to the free one.
\end{rem}


\smallskip
\subsection{The bootstrap argument and structure of the paper}
The proof of Theorem \ref{mainthm} is based on a bootstrap argument which involves the norms in \eqref{main1fbounds}.
This functional setting and the overall set-up for the proof is fairly standard, and common to most papers on this topics, such as
\cite{GPR,NLSV}, that use norms based on the distorted Fourier transform.
In the flat case analogous norms are used with the standard Fourier transform instead, 	see \cite{KP,HN}. 
In the context of the Klein-Gordon equation similar norms are employed as well, with small modifications
(e.g., using the norms in \eqref{main1fboundsKG})
see for example \cite{KGV,KGVSim}.
Essentially equivalent norms expressed in the physical space are used in other works on the subject, 
see for example \cite{Del,LS,LLS,LSch}.

There are two main steps in the proof of Theorem \ref{mainthm}:

\setlength{\leftmargini}{1.5em}
\begin{itemize}

\item[(i)] Prove a bound on the weighted-type norm:
\begin{align}\label{aprioriwei}
\| \partial_k \mathcal{F}f(t) \|_{L^2} \leq C_0\varepsilon \jt^\alpha,
\end{align}
for some $C_0>1$;

\smallskip
\item[(ii)] Prove a uniform bound on the Fourier-$L^\infty$ norm of the profile
\begin{align}\label{aprioriinfty}
\|\mathcal{F}f(t) \|_{L^\infty} \leq C_0\varepsilon.
\end{align}

\end{itemize}

Note that the $H^1$ norm can be bounded by $C\varepsilon$ 
using the conservation of the mass and the Hamiltonian and Sobolev's embedding.
Also, using the equivalence of norms in \eqref{kusharp} (and (ii) of Proposition \ref{proptildeF}) we have
\begin{align}\label{Sob}
{\| u(t) \|}_{H^1} \approx {\| f(t) \|}_{H^1} \approx {\| \jk \mathcal{F} f (t)\|}_{L^2} \lesssim \varepsilon.
\end{align}
These two facts then imply global existence in $H^1$ by standard arguments.
Through the linear estimate \eqref{eq:linearpoinwiseH}
one also obtains the pointwise decay \eqref{main1fdecay} directly from \eqref{aprioriwei} and \eqref{aprioriinfty}
for $|t| \geq 1$, and from \eqref{Sob} for $|t|\leq 1$.

The asymptotics described in Remark \ref{remmodscatt} are obtained 
in the course of the proof of the Fourier-$L^\infty$ bound \eqref{aprioriinfty},
that is based on the derivation of an asymptotic ODE for $(\mathcal{F}f)(\cdot,k)$.



\medskip
The first main task we need to accomplish is to show the 
following bootstrap proposition for the weighted norm:

\begin{prop}[A priori weighted estimates]\label{propaprwei}
Assume that $u\in C(\R,H^1(\R))$ is a solution of \eqref{NLSV} with initial data as in \eqref{datasmall},
satisfying, for some $T>0$,
\begin{align}\label{aprweias}
\sup_{t\in[0,T]} \big( {\|\mathcal{F}f(t) \|}_{L^\infty_k} 
  + \jt^{-\alpha} {\| \partial_k \mathcal{F}f(t) \|}_{L^2_k} \big) \leq 2C_0\varepsilon,
\end{align}
for some $C_0 \geq 1$.
Then, for all $t\in[0,T]$,
\begin{align}\label{aprweiconc}
{\| \partial_k \mathcal{F}f(t) \|}_{L^2_k} \leq C' \varepsilon + C\varepsilon^3 \jt^{\alpha},
\end{align}
for some\footnote{Here $C'$ is the implicit constant in the inequality ${\|\partial_k \mathcal{F}f(0)\|}_{L^2}
\lesssim {\| \jx f(0) \|}_{L^2}$; see \eqref{eq:weiF}.} $C,C'>0$.
In particular, for $\varepsilon\leq\epsilon_0$ small enough and $C_0$ large enough, we have 
\begin{align}\label{aprweifin}
{\| \partial_k \mathcal{F}f(t) \|}_{L^2_k} \leq C_0\varepsilon  \jt^{\alpha}
\end{align}
for all $t\in \R$.
\end{prop}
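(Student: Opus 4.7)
The plan is to start from Duhamel's formula for the profile $f = e^{-itH}u$, which on the distorted Fourier side (with $g := \mathcal{F} f$) reads
\begin{align*}
g(t,k) = g(0,k) \pm i \int_0^t \iiint e^{is\Phi(k,k_1,k_2,k_3)} \mu(k,k_1,k_2,k_3) g(s,k_1) \overline{g(s,k_2)} g(s,k_3) \, dk_1 dk_2 dk_3 \, ds,
\end{align*}
where $\Phi = -k^2 + k_1^2 - k_2^2 + k_3^2$ and $\mu(k,k_1,k_2,k_3) = \int \overline{\psi(x,k)} \psi(x,k_1)\overline{\psi(x,k_2)}\psi(x,k_3)\, dx$ is the nonlinear spectral distribution (for the chosen version of the dFT). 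Apply $\partial_k$: it produces two main types of terms, one where $\partial_k$ hits the phase yielding a factor $-2isk$, and one where it hits $\mu$. For the latter, I would use the commutation-type identity from Lemma~\ref{lem:algetri} so that $\partial_k \mu$ gets related to the weighted norm $\jx f$, keeping the $L^2_k$ structure intact.

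Next I would invoke Theorem~\ref{theomu} to decompose $\mu = \mu_R + \mu_S$ into a \emph{regular} part (essentially a Coifman--Meyer-type kernel, well-behaved at zero) and a \emph{singular} piece supported near $k=k_1=k_2=k_3=0$, which captures the influence of the zero-energy resonance and, in the odd-resonance case, is the reason for introducing $\mathcal{F}^\sharp$. For the regular contribution the strategy is by now standard: on the non-resonant region I would integrate by parts in $s$ using $\partial_s(e^{is\Phi}) = i\Phi e^{is\Phi}$ to trade the $sk$ growth for time decay, and on (and near) the space--time resonant set $\{k_1=k_2=k_3=k\}$ I would extract the leading ``diagonal'' contribution $\tfrac{-i s k}{2\pi} |g|^2 g$, which, when combined with the pointwise bound $\|g\|_{L^\infty} \leq 2C_0\varepsilon$, contributes the logarithmic phase responsible for modified scattering and, after multiplication by $1/\jt$ (gained from stationary phase in $k_1,k_2,k_3$), leads to an $L^2_k$ bound growing at most like $\varepsilon^3 \jt^\alpha$ (actually like $\varepsilon^3 \log\jt$). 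The remaining regular terms are estimated by H\"older in $k$ together with the a priori assumption \eqref{aprweias} and the pointwise decay $\|u(s)\|_{L^\infty_x} \lesssim \varepsilon \js^{-1/2}$ (available from \eqref{aprweias} via the linear dispersive inequality cited after \eqref{Sob}); the small polynomial loss is exactly what produces the $\jt^\alpha$ growth with $\alpha<1/4$.

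The main obstacle is the singular piece $\mu_S$: this is where the non-genericity really shows up, since at zero frequency $\psi(x,0) = c\varphi(x)$ is only bounded (and, in the odd case, contributes the discontinuity that $\mathcal{F}^\sharp$ is designed to repair). For the terms arising from $\mu_S$, phase-integration in $s$ no longer wins time decay on its own because the frequencies are forced to be small and $\Phi$ can be arbitrarily small too. The plan is to reduce these contributions to bilinear or trilinear expressions in $u$ weighted by $V$-dependent localized factors, and then to combine (i) the smoothing estimates of Lemma~\ref{lem:smoothingsim} to absorb a derivative/frequency power against a spatially localized weight, with (ii) the low-frequency local decay estimates from Subsection~\ref{sseclocdec}, which provide the missing $\js^{-1/2-\delta}$ gain compensating the $sk$ coming from $\partial_k e^{is\Phi}$. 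Tracking the powers carefully, these terms are controlled by $\varepsilon^3 \jt^{\alpha}$ for any $\alpha \in (0,1/4)$, and adding this to the contribution from the initial data $\|\partial_k g(0)\|_{L^2} \lesssim \|xu_0\|_{L^2} \leq \varepsilon$ yields \eqref{aprweiconc}. Choosing $C_0$ large compared to $C'$ and $\varepsilon$ small enough so that $C\varepsilon^2 \leq C_0/2$ closes the bootstrap and gives \eqref{aprweifin}.
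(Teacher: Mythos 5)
Your overall skeleton (Duhamel on the distorted Fourier side, decomposition of the nonlinear spectral distribution via Theorem \ref{theomu}, the commutation identity of Lemma \ref{lem:algetri}, and the combination of the smoothing estimates of Lemma \ref{lem:smoothingsim} with the low-frequency local decay of Subsection \ref{sseclocdec}, closed by a bootstrap) does match the paper's strategy for the singular/low-frequency part $\mathcal{N}_L$, and your treatment of that piece is essentially the one in Section \ref{sec:Cubic} (case (1) uses the dual smoothing estimate with $\phi=a_{\e_1}$, case (2) uses Lemma \ref{lem:lowlocaldecay}, which in fact gives a full $\jt^{-1}$ gain, not just $\js^{-1/2-\delta}$). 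Two clarifications on the structure: the singular part of $\mu^\#$ is not ``supported near $k=k_1=k_2=k_3=0$''; it consists of $\delta$ and $\mathrm{p.v.}$ distributions on the hyperplanes ${\bf \eps}\cdot{\bf k}=0$ whose \emph{coefficients} vanish at zero frequency. Moreover, for the weighted bound no extraction of the resonant phase is needed at all: for the pure $\delta_0$ piece the commutator term in Lemma \ref{lem:algetri} vanishes identically because $y\,\delta_0(y)=0$, so $\partial_k\mathcal{N}_0$ is bounded directly by $L^2\times L^\infty\times L^\infty$ with the $\js^{-1/2}$ decay; the logarithmic phase and the ODE $\frac{1}{2t}|\mathcal{F}f|^2\mathcal{F}f$ only enter the Fourier-$L^\infty$ estimate (Proposition \ref{propinfty}), not \eqref{aprweiconc}.

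The genuine gap is in your treatment of the regular part. After $\partial_k$ hits the phase, the terms built on $\mu^\#_{R,1}$ and $\mu^\#_{R,2}$ are of the schematic form $\int_0^t sk\,e^{-isk^2}\int \overline{\mathcal{K}^\#_R(x,k)}\,(u\bar u u)(s,x)\,dx\,ds$, i.e.\ cubic terms with a kernel localized in $x$ but with \emph{no} momentum constraint linking $k$ to $\ell,m,n$. Your proposed scheme (integration by parts in $s$ off the resonant set, extraction of a diagonal contribution near $k_1=k_2=k_3=k$) is tailored to convolution-type nonlinearities and does not apply here: there is no $\delta({\bf \eps}\cdot{\bf k})$ forcing a diagonal, the phase $-k^2+\ell^2-m^2+n^2$ vanishes on a large set so dividing by it is not controlled, and integrating by parts in $s$ produces boundary terms carrying the full weight $tk$ together with quintic terms, none of which can be closed with only the $\jt^{-1/2}$ local decay available in the non-generic setting (this is exactly the obstruction emphasized in Subsection \ref{ssecideas}). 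The paper instead splits into $|k|\lesssim 1$ and $|k|\gtrsim 1$: at low frequencies the factor $k$ activates the smoothing estimate \eqref{eq:smoothingQim}, whose $L^1_xL^2_s$ right-hand side converts $s\cdot\js^{-3/2}$ into an $L^2_s$-integrable quantity; at high frequencies one integrates by parts in $x$ (writing $ik\mathcal{K}^\#_R=\partial_x\mathcal{K}'_{\#,R}-\mathcal{K}''_{\#,R}$, and similarly for $\mathcal{K}^\#_S$), and then combines the high-frequency smoothing estimate \eqref{eq:smoothingQimh} with the $L^2$ local decay for the differentiated flow, Lemma \ref{lem:localderivative}. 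Without this (or an equivalent substitute), your argument does not close for $\mathcal{N}_{R,1}$ and $\mathcal{N}_{R,2}$, so as written the proof of \eqref{aprweiconc} is incomplete.
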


\eqref{aprweifin} follows from \eqref{aprweiconc}, the continuity of 
$t \mapsto \|\partial_k \mathcal{F}f(t) \|_{L^2}$, 
and a standard bootstrap argument. Proposition \ref{propaprwei} then follows from 
the proof of \eqref{aprweiconc} under the assumption \eqref{aprweias}.

\medskip
The other main task we need to accomplish is the bound \eqref{aprioriinfty}. 
This is the content of the following proposition.

\begin{prop}[A priori $L^\infty$ estimates]\label{propinfty}
Assume that $u\in C(\R,H^1(\R))$ is a solution of \eqref{NLSV} with initial data as in \eqref{datasmall}, 
satisfying, for some $T>0$ the a priori bounds \eqref{aprweias}.
Then, there exists $\rho>0$ such that
\begin{align}\label{fODE}
i\frac{d}{dt} \mathcal{F}f(t,k) = \frac{1}{2t}\big|\mathcal{F}f(t,k)\big|^2 \mathcal{F}f(t,k)
   + \mathcal{O}(\varepsilon^3 \jt^{-1-\rho})
\end{align}
for all $t\in[0,T]$.
In particular, 
\begin{align}\label{aprinftyfin}
{\|\mathcal{F}f(t) \|}_{L^\infty_k} \leq C_0\varepsilon 
\end{align}
for all $t\in \R$.
\end{prop}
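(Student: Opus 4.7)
The plan is to derive the asymptotic ODE \eqref{fODE} via a stationary-phase analysis of the trilinear integral coming from the profile equation, and then to integrate this ODE in time to close the Fourier-$L^\infty$ bootstrap bound \eqref{aprinftyfin}. Starting from $i\partial_t f = \mp e^{-itH}(|u|^2 u)$ and applying $\mathcal{F}$, I would expand using the nonlinear spectral distribution $\mu$ associated to $\mathcal{F}$ and substitute $\mathcal{F}u(t,k_j) = e^{itk_j^2} \mathcal{F}f(t,k_j)$ to arrive at
\begin{align*}
i\partial_t \mathcal{F}f(t,k) = \mp \iiint e^{it\Phi(k,k_1,k_2,k_3)} \mu(k,k_1,k_2,k_3) \, \overline{\mathcal{F}f(t,k_1)} \, \mathcal{F}f(t,k_2) \, \mathcal{F}f(t,k_3) \, dk_1 dk_2 dk_3,
\end{align*}
with phase $\Phi = -k^2 - k_1^2 + k_2^2 + k_3^2$.

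Next, I would invoke the structure theorem for $\mu$ (Theorem \ref{theomu}) to split $\mu = \mu_S + \mu_R$, where $\mu_S$ is the singular piece supported on the union of momentum-conservation hypersurfaces (the flat delta $\delta(k+k_1-k_2-k_3)$ together with its parity-reflected companions arising from the reflection/transmission structure of the distorted basis), and $\mu_R$ is a regular, better-localized remainder. Restricted to each singular surface, $\Phi$ becomes a nondegenerate quadratic form in two variables whose unique stationary point lies at the resonant frequency $(k_1, k_2, k_3) = (k, k, k)$ or a reflected analogue. A standard two-dimensional stationary-phase expansion then produces the leading contribution $\frac{1}{2t}|\mathcal{F}f(t,k)|^2 \mathcal{F}f(t,k)$ of \eqref{fODE}, where the coefficient $1/2$ emerges after combining the various reflected surface contributions; this combination is enabled, in the odd-resonance case, precisely by the continuity at $k=0$ of the modified transform $\mathcal{F}^\sharp$.

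For the errors I would distinguish three sources: the stationary-phase remainder on $\mu_S$, controlled by $\|\partial_k \mathcal{F}f\|_{L^2}\|\mathcal{F}f\|_{L^\infty}^2 \lesssim \varepsilon^3 \jt^\alpha$ via the bootstrap hypothesis \eqref{aprweias}, which gives a contribution of size $\varepsilon^3 \jt^{-1-\rho}$ for any $\rho < 1/2 - \alpha$ (positive since $\alpha < 1/4$); the non-resonant part of the $\mu_S$ integral, handled by integration by parts in time (a normal-form transformation using the non-vanishing of $\Phi$ off the stationary set) which trades a factor $1/t$ for a derivative of $\mathcal{F}f$ controlled by the weighted norm; and the $\mu_R$ integral, estimated using the additional regularity and localization of the regular part of $\mu$ together with the smoothing estimates of Lemma \ref{lem:smoothingsim} and the low-frequency local decay of Subsection \ref{sseclocdec}. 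The main obstacle is the low-frequency analysis, where $\Phi$ degenerates and the NSD has non-trivial (and, in the odd case, discontinuous) boundary behavior in the non-generic setting; the refined low-frequency structure of $\mu$ near the origin, together with the continuity of $\mathcal{F}^\sharp$, must be used to ensure the stationary-phase expansion is clean and the low-frequency errors remain integrable in time.

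Finally, from \eqref{fODE} I would multiply by $\overline{\mathcal{F}f(t,k)}$ and take real parts; since the leading term $-\frac{i}{2t}|\mathcal{F}f|^2 \mathcal{F}f$ is imaginary after multiplication by $\overline{\mathcal{F}f}$, it drops out, leaving
\begin{align*}
\partial_t |\mathcal{F}f(t,k)|^2 = O(\varepsilon^4 \jt^{-1-\rho}),
\end{align*}
uniformly in $k$, which is integrable in time. Combined with the initial bound $\|\mathcal{F}f(0,\cdot)\|_{L^\infty_k} \lesssim \varepsilon$ (from \eqref{datasmall} and the mapping properties of $\mathcal{F}$), integration yields $\|\mathcal{F}f(t,\cdot)\|_{L^\infty_k} \leq C_0 \varepsilon$ for $C_0$ sufficiently large and $\varepsilon$ small, closing the bootstrap.
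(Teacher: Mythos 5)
Your overall skeleton (decompose $\mu$ via Theorem \ref{theomu}, treat the regular part as a remainder, extract the resonant contribution from the singular part by stationary phase, then integrate the ODE — your closing argument by taking $\mathrm{Re}\,(\overline{\mathcal{F}f}\,\partial_t\mathcal{F}f)$ is a fine substitute for the paper's integrating-factor argument) matches the paper, but there is a genuine gap exactly at the point you defer. The singular part of the NSD is \emph{not} supported on momentum-conservation hyperplanes: since $\mu_S$ comes from $\int \chi_\pm^4(x)\,e^{ix(\cdots)}dx$, it contains, besides $\delta_0(\boldsymbol{\eps}\cdot\mathbf{k})$, genuine principal-value pieces $a(\mathbf{k})\,\pv\,\widehat{\zeta}(\boldsymbol{\eps}\cdot\mathbf{k})/(i\,\boldsymbol{\eps}\cdot\mathbf{k})$ dressed with transmission/reflection coefficients (see \eqref{chi+-}, \eqref{muSpmasy}). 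These p.v.\ terms are \emph{not} lower order: after the stationary-phase reduction (Lemma \ref{AsLem1}) they contribute at the same $O(1/t)$ rate, with $\mathrm{sgn}(2tk)$ factors and coefficients $T$, $R_\pm$ (Lemma \ref{lemStat}, \eqref{Npvasy5}), and the clean coefficient $\tfrac{1}{2t}$ in \eqref{fODE} only appears after an explicit algebraic cancellation between the $\delta$-contributions \eqref{Nd+} and the p.v.-contributions \eqref{Npvasy5}. Moreover, evaluating those p.v.\ integrals asymptotically requires a vanishing condition at zero frequency (either the outer coefficient or the integrand must vanish at $k=0$, as in \eqref{lemStatas0}); in the generic or symmetric settings this is supplied by $\wt{f}(t,0)=0$, which is precisely unavailable here. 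The paper substitutes for it the vanishing at $k=0$ of specific coefficient combinations — e.g.\ $b_+^+(k)+b_-^+(k)$ and $|b_+^+|^2b_+^+\,\sgn+|b_-^+|^2b_-^+\,\sgn$, which vanish because $T(0)=-1$ for an odd resonance (see \eqref{N2'}--\eqref{N22}) — and this is the new content of the section. Your proposal names the low-frequency degeneracy as ``the main obstacle'' and asserts the expansion will be ``clean,'' but supplies no mechanism, so the crucial step is missing.

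Two smaller points. First, the paper handles the non-stationary and p.v.\ errors with stationary-phase lemmas under the bootstrap norms (Lemmas \ref{AsLem1} and \ref{lemStat}), not by integration by parts in time; a normal-form route would have to confront the vanishing of $\Phi$ along the resonant set inside the p.v.\ kernel and is not obviously closable with only \eqref{aprweias}. Second, for $\mathcal{N}_R$ the smoothing estimates of Lemma \ref{lem:smoothingsim} are $L^2$-in-time bounds and are the tool for the weighted estimates, not for the pointwise-in-time remainder in \eqref{fODE}; the paper simply uses the weighted $L^\infty_x$ decay \eqref{eq:decaysingtilde}--\eqref{eq:decayregtilde} and the localization of $\mathcal{K}_R$ to get the $\varepsilon^3\jt^{-3/2}$ bound of Proposition \ref{prop:asyregular}.
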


We now show how the proof of the above proposition is reduced to showing the 
validity of the asymptotic ODE in \eqref{fODE}. 

\begin{proof}[Proof of \eqref{aprinftyfin} using \eqref{fODE}]
Define the modified profile 
\begin{align}\label{aprinftyw}
w(t,k) := \mathcal{F}f(t,k) \exp\Big(i \int_0^t \big|\mathcal{F}f(s,k)\big|^2 \, \frac{ds}{2(s+1)} \Big),
\qquad |\mathcal{F}f(t,k)| = |w(t,k)|.
\end{align}
Using \eqref{fODE} we see that $\partial_t w (t) = \mathcal{O}(\varepsilon^3 \jt^{-1-\rho})$
so that, for all $t_1 < t_2 \in [0,T]$, 
\begin{align}\label{aprinftyconc}
\big| w(t_1,k) - w(t_2,k) \big| \lesssim \varepsilon^3 \langle t_1 \rangle^{-\rho}.
\end{align}
In particular, with $t_1=0$ we see that, for all $t \in [0,T]$,
\begin{align}\label{aprinfty1}
\big| w(t,k) \big| \leq |\mathcal{F}f(0,k)| + C\varepsilon^3,
\end{align}
for some $C > 0$.
Using (i) in Proposition \ref{proptildeF} and the assumptions
on the data \eqref{datasmall}, we have, for $C_0$ large enough, 
$|\mathcal{F}f(0,k)| \leq (C_0/2) \|f(0)\|_{L^1_x} \leq (C_0/2)\varepsilon$.
Therefore, for $\varepsilon \leq \epsilon_0$ small enough, we see that, for all $t \in [0,T]$, 
\begin{align}\label{aprinfty2}
{\| \mathcal{F}f(t,\cdot) \|}_{L^\infty} = {\| w(t,\cdot) \|}_{L^\infty}
  \leq (C_0/2) \varepsilon + C\varepsilon^3 \leq C_0\varepsilon.
\end{align}
\eqref{aprinftyfin} then follows from this, the continuity of 
$t \mapsto \|\mathcal{F}f(t) \|_{L^\infty}$ and a bootstrap argument.
\end{proof}

We conclude this subsection by showing how the asymptotics \eqref{mainasy} 
(and, therefore, \eqref{nonlinearasy}) follow from the above propositions. 

\begin{proof}[Proof of \eqref{mainasy}]
From \eqref{aprinftyfin} and \eqref{aprweifin} we have that \eqref{fODE} holds for all $t\in\R$.
Then \eqref{aprinftyconc} also holds for all $0\leq t_1<t_2<\infty$.
In particular, we see that $w(t)$ is a Cauchy sequence in time with values in $L^\infty_k$.
We then let $W_{+\infty}(k) := \lim_{t\rightarrow \infty} w(t,k)$.
Since $\partial_t w (t) = \mathcal{O}(\varepsilon^3 \jt^{-1-\rho})$ we have
\begin{align*}
{\|  w(t,k) - W_{+\infty}(k) \|}_{L^\infty_k} \lesssim \varepsilon^3 \jt^{-\rho},
\end{align*}
and using \eqref{aprinftyw} we arrive at \eqref{mainasy}.
\end{proof}

\smallskip
\subsubsection*{Organization of the paper}
Section \ref{seclinscatt} presents basic results on linear scattering theory and introduces the 
distorted Fourier transform, the modified/sharp transform needed in the case of an odd resonance,
and our decomposition of the (modified) generalized eigenfunctions.

Section \ref{seclinest} contains all the necessary linear estimates, 
including smoothing estimates and local improved decay.

Section \ref{secmu} gives the main result on the structure of the nonlinear spectral measure.
In all the main Sections \ref{seclinest}-\ref{sec:estimateregular} we will work 
under the assumption the the zero energy resonance is odd, and therefore use the appositely defined `sharp' transform.
The case of an even resonance can be treated identically by replacing the sharp transform $\mathcal{F}^\sharp$ with the 
standard distorted transform $\wtF$. 

The nonlinear estimates needed to show \eqref{aprweiconc} 
are carried out in Sections \ref{sec:Cubic} and \ref{sec:estimateregular}
using the results from Sections \ref{seclinscatt}, \ref{seclinest} and \ref{secmu}.

Finally, the proof of \eqref{fODE}, hence of Proposition \ref{propinfty}, is presented in Section \ref{secinfty}.
Although the arguments there are fairly standard by now,
we give some details of the proof and in particular show how to modify (and simplify in some instances) 
the arguments in \cite{NLSV,GPR} to deal with the case of non-generic potentials.



\medskip
\subsection{Ideas of the proof}\label{ssecideas}
To deal with a non-generic potential in the setting of this paper
we use three main ingredients: 

\smallskip
\noindent
(1) a simple modification of the basis for the distorted Fourier transform  
to resolve the (possible) discontinuity at zero energy due to the presence of a resonance,

\smallskip
\noindent
(2) a detailed analysis of the nonlinear spectral measure
and, in particular, of its low-frequency structure, and 

\smallskip
\noindent
(3) general ``smoothing estimates'' for Schr\"odinger operators with non-generic potentials.

\smallskip
Our first step  is to use the Fourier transform adapted to $H=-\partial_{xx}+V$,
the so-called ``distorted Fourier Transform'' (dFT), to write Duhamel's formula in (distorted) frequency space.
For the sake of this brief introduction, it suffices
to admit for the moment the existence of ``generalized plane waves'' $\mathcal{K}(x,k)$ such that
one can define an $L^2$-unitary transformation $\wtF$ by
\begin{align}\label{tildeFint}
\wtF[f](k) := \widetilde{f}(k) := \int_\R \overline{\mathcal{K}(x,k)}f(x)\,dx,
\quad \mbox{with} \quad \wtF^{-1}\left[\phi\right](x) = \int_\R \mathcal{K}(x,k)\phi(k)\,dk.
\end{align}
See \eqref{matK}, \eqref{psipm} and \eqref{defT}
for the precise definition of $\mathcal{K}(x,k)$ and its relation
with the generalized eigenfunctions of $H$.
The distorted transform $\wtF$ diagonalizes the Schr\"odinger operator,
$-\partial_{xx}+V=\wtF^{-1}k^{2}\wtF$, and standard functional calculus follows.

It is important to remark that, in contrast with the generic case, in the non-generic setting
the distorted Fourier transform does not vanish at $k=0$ (zero energy);
this fact is essentially equivalent to the lack of local decay for the flow $e^{itH}$.
Moreover (unless the zero energy resonance is even) 
the distorted Fourier transform has a jump discontinuity at zero. 

As already explained, the proof of our main theorem is based on a bootstrap argument.
Given a solution $u$ of \eqref{NLSV} we let $f = e^{-itH} u$ be its linear profile, so that 
$\widetilde{f}(t,k) = e^{itk^2}\wt{u}(t,k)$.
A natural strategy, employed in several previous works, e.g. \cite{GPR,NLSV},
is to prove bounds on $\wt{f}$ to infer bounds on $u$,
using, for example, the basic linear estimate 
\begin{align}\label{intlinest}
{\| u(t,\cdot) \|}_{L^\infty_x} \lesssim \frac{1}{|t|^{1/2}} {\| \wt{f}(t) \|}_{L^\infty_k}
	+ \frac{1}{|t|^{3/4}} {\| \partial_k \wt{f}(t) \|}_{L^2_k},
\end{align}
which is the analogue of the linear estimate for the case $V=0$
(where one can replace ${\| \partial_k \wt{f}(t) \|}_{L^2}$ by a standard weighted norm 
${\| xf(t) \|}_{L^2}$).
However, the possible discontinuity of the distorted Fourier transform
brings in the first 
obstacle since one cannot compute $\partial_k \wt{f}$ at $k=0$ when the zero energy resonance is odd.
When the resonance is even, instead, $\wt{f}$ is Lipschitz at zero and this issue is absent.

In order to resolve the above issue in the case of an odd zero energy resonance, 
we introduce a \emph{modified}  version of the distorted Fourier base, 
$\mathcal{K}^{\#}(x,k) = \mathrm{sign}(k) \mathcal{K}(x,k)$ and its associated  transform $\mathcal{F}^{\#}$. 
We then observe that this modified Fourier basis is Lipschitz at $k=0$, 
and the same linear estimate \eqref{intlinest} holds when we replace all $\wt{f}$ by $f^{\#}:=\mathcal{F}^{\#}[f]$.  
This gives us the basic setting to carry out our analysis. 
To obtain the sharp pointwise decay of $|t|^{-1/2}$ it then suffices to control 
$f^{\#}(t,k)$ uniformly in $k$ and $t$ and the $L^2_k$-norm of $\partial_k {f}^{\#}(t)$ with a small growth in $t$. 
Both of these bounds can be achieved studying the equation 
in the modified distorted Fourier space. 
A bound on the weighted-type norm ${\| \partial_k f^\sharp\|}_{L^2}$ is the main estimate we need to prove
while, as mentioned before, the (modified distorted) Fourier-$L^\infty$ bound 
can be proved with relatively small modifications of the arguments from \cite{GPR,NLSV},
and some additional care in the handling of small frequencies.

In the modified distorted Fourier space, Duhamel's formula associated to \eqref{NLSV} becomes
\begin{align}\label{introD}
\begin{split}
f^{\#}(t,k) & = f^{\#}(0,k) \pm i \int_{0}^{t} \mathcal{N}_{\mu^\sharp}[f,f,f](s,k) \, ds,
\\ 
\mathcal{N}_{\mu^\sharp}[f,f,f](s,k) & := \!\! \iiint e^{is(-k^2+\ell^2-m^2+n^2)}
f^{\#}(s,\ell)\overline{f^{\#}(s,m)}f^{\#}(s,n) \, \mu^\sharp(k,\ell,m,n) \,dndmd\ell,
\end{split}
\end{align}
where we have defined
the {\it modified nonlinear spectral distribution}
\begin{align}\label{intromu}
\mu^{\#}(k,\ell,m,n):=\int\overline{\mathcal{K}^{\#}(x,k)}\mathcal{K}^{\#}(x,\ell)
  \overline{\mathcal{K}^{\#}(x,m)}\mathcal{K}(x,n)\,dx.
\end{align}

To understand the structure of $\mu^\sharp$ we first use a decomposition of $\K^\sharp$
into two parts, $\K^{\#}_S$ and $\K^{\#}_R$, with the following properties:
$\K_S^\sharp$ consists of two pieces, one carries the main contribution from the zero energy resonance
and does not vanish at $k=0$,
while the other piece is a linear combination of exponentials $e^{\pm i xk}$ whose coefficients 
vanish at $k=0$; 
$\K_R^\sharp$ is the part that arise from the interaction with the potential,
it has strong localization in $x$ and is regular in $k$.
See \eqref{Ksharpdecomp} and \eqref{KsharpS}-\eqref{KR} for the precise formulas.  

According to this decomposition 
we split of $\mu$ into three pieces: 
\begin{align}\label{intromudec}
\mu(k,\ell,m,n) = \sqrt{2\pi}\delta_0(k-\ell+m-n) + \mu^{\#}_L(k,\ell,m,n) + \mu^{\#}_R(k,\ell,m,n).
\end{align}
We call $\mu^{\#}_L$ the ``low-frequency improved'' part of $\mu^{\#}$,
since it vanishes if one of $k,\ell,m,n$ is $0$,
and we call $\mu^{\#}_R$ the``regular'' part of $\mu^{\#}$ since it is a 
function rather than a distribution.   
We remark that this decomposition is finer than the one we proposed for the generic case in \cite{NLSV}, 
and it is needed since we can not use local improved decay here. 

According to \eqref{intromudec}
we split the nonlinear term $\mathcal{N}_{\mu^\sharp}$ in \eqref{introD} into three corresponding pieces: 
$\mathcal{N}_0 := \mathcal{N}_{\sqrt{2\pi}\delta_0}$, 
$\mathcal{N}_L := \mathcal{N}_{\mu^{\#}_L}$ and  $\mathcal{N}_R := \mathcal{N}_{\mu^{\#}_R}$.
Clearly $\mathcal{N}_{0}$ can be handled as in the free case $V=0$,
while new ingredients are needed to estimate the other two nonlinear terms.

To deal with $\mathcal{N}_L$ we first apply a ``commutation identity''
with $\partial_k$ discovered in \cite{NLSV}. 
An explicit calculation shows that the (flat) transform of the multilinear commutator
between $\partial_k$ and $\mathcal{N}_L$ 
is a localized term which, a priori, may be of the form $a(x) s |u|^2 u$, for some localized function $a$. 
We then face the following obstacle: 
in the non-generic setting, the best 
decay rate one can obtain for local-in-space norms of $u(t)$ is $t^{-1/2}$.
In contrast, in the generic case (or in the presence of symmetries), local decay 
has an improved rate of $t^{-1}$, under the assumption that $xf$ is bounded in $L^2$, for example;
see the estimates \eqref{improvedlocaldecay}.
This weak local decay in the non-generic setting 
is far from sufficient to close the estimate on $\partial_k \mathcal{N}_L$ 
by a direct time-integration.

We then rely on some vanishing properties of $\mu^\sharp_L$ 
which corresponds to low frequency improvements for $\mathcal{N}_L$ and its multilinear commutator with $\partial_k$. 
%
%
We leverage this vanishing using two distinct types of estimates:
smoothing estimates obtained by exploiting the time oscillation of the Schr\"odinger flow (see Lemma \ref{lem:smoothingsim})
and enhanced local decay for low frequency improved Schrödinger flows (see Lemma \ref{lem:lowlocaldecay}).
Our main smoothing estimate can be stated as follows:
assuming $\mathcal{Q}(x,k)$ is a bounded function, and letting $\phi=\phi(k)$ be a  
function such that $|\phi(k)|\lesssim \sqrt{|k|}$, we have
\begin{align}\label{eq:smoothingintro}
\left\Vert \int\left[\int e^{-ik^{2}s} \phi(k) \overline{\mathcal{Q}}(y,k)F(s,y)\,dy\right]
	  \,ds\right\Vert _{L_{k}^{2}}\lesssim & \left\Vert F\right\Vert _{L_{y}^{1}L_{s}^{2}}.
\end{align}
This estimate can be proven using the Fourier transform in time,
and may be regarded as the dual version of the classical Kato-$\frac{1}{2}$ smoothing estimate, 
see Remark \ref{rem:classicalsmoothing}. 
The key point 
is the $L^2_t$ norm on the right-hand side of \eqref{eq:smoothingintro}, 
which is sufficient to make up for the lack of local decay.

Finally, the regular part $\mathcal{N}_R$ is essentially
a nonlinear term of the form  $\whF\big( e^{isH} a(x)|u(s)|^2u(s)\big)$, for some localized coefficient $a$.
It is not hard to see that applying $\partial_k$ to $\mathcal{N}_R$ essentially amounts to multiplying it by 
a factor of $sk$. For $|k|$ small, this factor directly allows us to employ the 
smoothing estimate \eqref{eq:smoothingintro} to obtain the desired bound. 
For $|k|\gtrsim 1$, \eqref{eq:smoothingintro} cannot be applied directly
but we can use an integration by parts argument to absorb the growing factor of $k$ 
followed by an application of \eqref{eq:smoothingintro} (in the form given by Corollary \ref{corsmoothing}),
and  improved local decay for the differentiated Schr\"odinger flow (see Lemma \ref{lem:localderivative}).

\smallskip
\subsection*{Notation}
We use the notation $\langle x\rangle := (1+|x|^{2})^{\frac{1}{2}}$.

\noindent
As usual \textquotedblleft $A:=B\lyxmathsym{\textquotedblright}$
	or $\lyxmathsym{\textquotedblleft}B=:A\lyxmathsym{\textquotedblright}$
	is the definition of $A$ by means of the expression $B$. 

\noindent
For positive quantities $a$ and $b$, we write $a\lesssim b$ for
	$a\leq Cb$ where $C$ is a universal constant,
	and $a\simeq b$ when $a\lesssim b$ and $b\lesssim a$. 
	
\noindent
We let $\mathbf{1}_A$ denote the characteristic function of the set $A$,
	and let $\mathbf{1}_+ := \mathbf{1}_{[0,\infty)}$, $\mathbf{1}_- := \mathbf{1}_{(-\infty,0)}$.
	
\noindent
We denote $u_{t}:=\frac{\partial}{\partial_{t}}u$, $u_{xx}:=\frac{\partial^{2}}{\partial x^{2}}u$.

\noindent
The (regular) Fourier transform is defined as
	\begin{equation}\label{eq:FT}
	\hat{h}\left(\xi\right) = \whF \left[h\right]\left(\xi\right)
	=\frac{1}{\sqrt{2\pi}} \int e^{-ix\xi}h(x)\,dx,
	\qquad \whF^{-1} [h](x)
	=\frac{1}{\sqrt{2\pi}} \int e^{ix\xi}h(\xi)\,d\xi. 
	\end{equation}

\noindent
We use the standard notation $L^p$ for Lebesgue spaces, and $H^s$ for Sobolev spaces of order $s$.
We will sometimes specify the domain and the variable, e.g. $L^p_x(\R)$ or $L^2_s([0,t])$,
but often omit these when there is no risk of confusion.

Given $p,q \in[1,\infty]$, the mixed norms for a space-time function $F(t,x)$ are given by
$$	\big\| F \big\|_{L_{x}^{p}L_t^q}=	\big\| \big\| F\big\|_{L^q_t} \big\|_{L_{x}^{p}}\quad\text{and}\quad\big\| F \big\|_{L_{t}^{q}L_x^p}=	\big\| \big\| F\big\|_{L^p_x} \big\|_{L_{t}^{q}}.$$


	\medskip
{\bf Acknowledgments.}
	F.P. was supported in part by a start-up grant from the University of Toronto 
	and the NSERC grant RGPIN-2018-06487. We would like to thank Jonas L\"uhrmann for useful comments and suggestions.

	\medskip
	\section{Linear theory, distorted and `sharp' Fourier transforms}\label{seclinscatt}

	In this section, we first collect some facts about Jost functions in \S\ref{sec:JostSpec},
	present some elements of linear scattering theory, 
	and introduce the (standard) distorted Fourier transform (dFT) in \S\ref{ssecDFT0}.
	In \S\ref{subsec:GeneEigenZero} and \S\ref{subsec:DecK} we analyze the generalized eigenfunctions
	of the Schr\"odinger operator and introduce a modification of the distorted Fourier transform
	that resolves the singularity at the zero frequency in the case of non-generic potentials;
	we call this the `sharp' transform and denote it by $\mathcal{F}^\#$). 
	The introduction of this variant of the dFT is only needed in the case of odd zero energy resonances.
	For even resonances the standard dFT is continuous, so in the rest of our paper we 
	will mostly focus on the harder case of an odd resonance.

	\medskip
	\subsection{Jost functions and spectral theory}\label{sec:JostSpec}
	In this subsection we recall some basic properties of Jost functions.
	The facts provided here hold for all potentials regardless of genericity;
	see Deift-Trubowitz \cite{DT}, Delort \cite{Del}, 
	Germain-Pusateri-Rousset \cite{GPR}, Lindblad-L\"uhrmann-Schlag-Soffer \cite{LLSS} and references therein.
	
	The Jost functions $\psi_+(x,k)$ and $\psi_-(x,k)$
	are defined as solutions to 
	\begin{align}\label{psipm}
	H\psi_{\pm}(x,k)=\left(-\partial_{xx}+V\right)\psi_{\pm}(x,k)=k^{2}\psi_{\pm}(x,k)
	\end{align}
	such that
	\begin{align}\label{psipmlim}
	\lim_{x\rightarrow\infty}\left|e^{-ikx}\psi_+(x,k)-1\right|=0,
	\qquad  \lim_{x\rightarrow-\infty}\left|e^{ikx}\psi_-(x,k)-1\right|=0. 
	\end{align}
	We let 
	\begin{align}\label{mpm}
	m_{\pm}(x,k)=e^{\mp ikx}\psi_{\pm}(x,k).
	\end{align}
	Then for fixed $x$, $m_{\pm}$ is analytic in $k$ for $\Im k>0$
	and continuous up to $\Im k\geq0$.
	
	
	We define
	\begin{align}\label{defW_+-}
	\mathcal{W}_{+}^{s}(x)=\int_x^\infty \jy^{s}\left|V(y)\right|\,dy,
	\qquad \mathcal{W}_{-}^{s}(x)=\int_{-\infty}^{x}\jy^{s}\left|V(y)\right|\,dy.
	\end{align}
	Note that if $V$ decays fast enough, then $\mathcal{W}_{\pm}^{s}(x)$
	also decays as $x\rightarrow\pm\infty$ respectively.

	%
	%
	
	\begin{lem}\label{lem:Mestimates}
		For every $s\ge0$, we have the estimates: 
		\begin{align}\label{Mestimates1}
		\begin{split}
		& \left|\partial_{k}^{s}\left(m_{\pm}(x,k)-1\right)\right|\lesssim\frac{1}{\jk}\mathcal{W}_{\pm}^{s+1}(x),\qquad \pm x\geq-1,
		\\
		& \left|\partial_{k}^{s}\left(m_{\pm}(x,k)-1\right)\right|\lesssim\frac{1}{\jk}\jx^{s+1}, \qquad \pm x\leq1.
		\end{split}
		\end{align}
		Also, 
		\begin{align}\label{Mestimates1'}
		\begin{split}
		& \left|\partial_{k} \left(m_{\pm}(x,k)-1\right)\right|
		\lesssim\frac{1}{|k|}\mathcal{W}_{\pm}^{1}(x),\qquad \pm x\geq-1.
		\end{split}
		\end{align}
		Moreover
		\begin{align}\label{Mestimates2}
		\begin{split}
		& \left|\partial_{k}^{s} \partial_x m_{\pm}(x,k) \right| \lesssim \mathcal{W}_{\pm}^{s}(x), \qquad \pm x\geq-1,
		\\
		& \left|\partial_{k}^{s} \partial_x m_{\pm}(x,k) 
		\right|\lesssim\jx^{s}, \qquad \pm x\leq1.
		\end{split}
		\end{align}
	\end{lem}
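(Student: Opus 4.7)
The plan is to derive the estimates from the standard Volterra integral equation for $m_\pm$. Substituting $\psi_+(x,k)=e^{ikx}m_+(x,k)$ into \eqref{psipm} and using the ODE $\partial_{xx} m_+ + 2ik\partial_x m_+ = V m_+$ together with the boundary conditions $m_+(\infty,k)=1$ and $\partial_x m_+(\infty,k)=0$, I would integrate twice in $x$ to obtain
\begin{align*}
m_+(x,k)-1 = \int_x^\infty K(x,y,k)\, V(y)\, m_+(y,k)\, dy,\qquad K(x,y,k):=\frac{e^{2ik(y-x)}-1}{2ik},
\end{align*}
with the analogous equation for $m_-$ on $(-\infty,x]$. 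The entire lemma will follow from kernel estimates on $K$ combined with a Gronwall/iteration argument.

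Next I would establish the pointwise kernel bounds. Writing $K$ as the power series $K(x,y,k)=(y-x)\sum_{n\ge 0}(2ik(y-x))^n/(n+1)!$ and using the two elementary bounds $|K|\le y-x$ and $|K|\le 1/|k|$, one gets $|K(x,y,k)|\lesssim \langle y-x\rangle/\langle k\rangle$, and differentiating in $k$ term-by-term gives $|\partial_k^s K(x,y,k)|\lesssim \langle y-x\rangle^{s+1}/\langle k\rangle$. In the regime $\pm x\ge -1$ (say for $m_+$, so $x\ge -1$ and $y\ge x$), one has $\langle y-x\rangle \lesssim \langle y\rangle$, whereas in the regime $\pm x\le 1$ one uses $\langle y-x\rangle \lesssim \langle x\rangle\langle y\rangle$. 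In either case the weight is controllable in $V(y)\,dy$ by the assumption $\langle y\rangle^\gamma V\in L^1$.

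With these kernel bounds in hand, the bound \eqref{Mestimates1} for $s=0$ follows by iterating the Volterra equation: the operator $T_kh(x)=\int_x^\infty K(x,y,k)V(y)h(y)\,dy$ has $\|T_k\|_{L^\infty_x\to L^\infty_x} \lesssim \mathcal{W}_+^1(x)/\langle k\rangle$ going to zero as $x\to\infty$, so $(I-T_k)^{-1}1$ exists and yields $|m_+-1|\lesssim \mathcal{W}_+^1(x)/\langle k\rangle$. For higher $k$-derivatives I would differentiate the integral equation, obtaining an inhomogeneous Volterra equation for $\partial_k^s m_+$ whose inhomogeneity involves $\partial_k^j K$ (for $j\le s$) acting on lower-order derivatives of $m_+$, and iterate to get \eqref{Mestimates1}. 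The refined estimate \eqref{Mestimates1'} with a bare $1/|k|$ rather than $1/\langle k\rangle$ comes from noting $|\partial_k K|\le \min((y-x)^2,1/k^2)\le (y-x)/|k|$ and combining this with the already-proven zeroth order bound. For \eqref{Mestimates2}, use the intermediate formula $\partial_x m_+(x,k)=-\int_x^\infty e^{2ik(y-x)}V(y)m_+(y,k)\,dy$ derived during the derivation of the Volterra equation; here the kernel is simply a bounded exponential and its $k$-derivatives only produce powers of $y-x$, so the $\langle k\rangle$ in the denominator disappears but the spatial weights are as claimed. The bounds for $\pm x\le 1$ follow by the same arguments using the coarser kernel estimate with the $\langle x\rangle^{s+1}$ factor.

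The routine part is the iteration once the kernel estimates are in place; the main bookkeeping obstacle will be tracking the derivatives of $K$ at $k=0$ (where $K$ is only $C^\infty$ because of the cancellation in $(e^{2ik(y-x)}-1)/(2ik)$) and ensuring the $\langle k\rangle^{-1}$ gain is preserved after each differentiation, which is handled cleanly via the power series representation of $K$ above. All of this is standard and appears in Deift-Trubowitz \cite{DT} and in the appendices of \cite{GPR,LLSS}; the only novelty here is the precise form of the weights $\mathcal{W}_\pm^{s+1}$, which matches directly the way $K$ picks up the factor $\langle y\rangle^{s+1}$ under $s$ derivatives in $k$.
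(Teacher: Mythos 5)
Your proposal is correct and takes essentially the same route as the paper, whose proof consists precisely of analyzing the Volterra equation \eqref{minteq} with the kernel $D_k$ (citing Deift--Trubowitz, Weder and the appendix of \cite{GPR}), exactly as you do via kernel bounds plus iteration. One small slip worth fixing: the intermediate claim $|\partial_k K|\le \min\big((y-x)^2,1/k^2\big)$ is false for large $y-x$ (differentiating $D_k$ explicitly gives $|\partial_k K|\lesssim \min\big((y-x)^2,(y-x)/|k|\big)$), but since the bound you actually use, $|\partial_k K|\lesssim (y-x)/|k|$, is correct, the argument is unaffected.
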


	\begin{proof}
		The proofs of these estimates follow from analyzing the Volterra equation satisfied by $m_\pm$,
		that is,
		\begin{align}\label{minteq}
		m_\pm(x,k) = 1\pm\int_x^{\pm\infty} D_k(\pm(y-x))V(y)m_\pm(y,k)\,dy,
		\qquad D_k(x) = \frac{e^{2ik x}-1}{2ik};
		\end{align} 
		as in Deift-Trubowitz \cite{DT}, Weder \cite{We1} or \cite[Appendix A]{GPR}. 
		The estimates \eqref{Mestimates1'} follow from Lemma 2.1 in \cite{We2}.
	\end{proof}

	Denote $T(k)$ and $R_{\pm}(k)$ the transmission
	and reflection coefficients associated to the potential $V$ respectively.
	With these coefficients, one can write
	\begin{align}\label{eq:f_+-1}
	T(k)\psi_{+}(x,k) & = R_{-}(k) \psi_{-}(x,k) + \psi_{-}\left(x,-k\right),
	\\
	\label{eq:f_--1}
	T(k)\psi_{-}(x,k) & = R_{+}(k) \psi_{+}(x,k) + \psi_{+}\left(x,-k\right).
	\end{align}
	Using these formulae, one has
	\begin{align*}
	T(k)\psi_{+}(x,k) & \sim R_{-}(k) e^{-ikx} + e^{ikx}\ \ \ x\rightarrow-\infty,
	\\
	T(k)\psi_{-}(x,k)& \sim R_{+}(k) e^{ikx} + e^{-ikx}\ \ \ x\rightarrow\infty.
	\end{align*}
	Moreover, these coefficients are given explicitly by
	\begin{align}\label{defT}
	\frac{1}{T(k)}=1-\frac{1}{2ik}\int V(x)m_{\pm}(x,k)\,dx, 
	\end{align}
	\begin{align}\label{defR}
	\frac{R_{\pm}(k)}{T(k)}=\frac{1}{2ik}\int e^{\mp2ikx}V(x)m_{\mp}(x,k)\,dx, 
	\end{align}
	and satisfy
	\begin{align}\label{TRid}
	\begin{split}
	& T\left(-k\right)=T(k),\ \ \ R_{\pm}\left(-k\right)=\overline{R_{\pm}(k),}
	\\
	& \left|R_{\pm}(k)\right|^{2}+\left|T(k)\right|^{2}=1,\ \ T(k)\overline{R_{-}(k)}+\overline{T(k)}R_{+}(k)=0.
	\end{split}
	\end{align}

	\begin{definition}[Generic and non-generic potentials]\label{def:generic}
		$V$ is said to be a ``generic'' potential if
		\begin{equation}\label{eq:genericcond}
		\int V(x)m_{\pm}\left(x,0\right)\,dx \neq 0,
	\end{equation}
		and it is ``non-generic'' or ``exceptional'' otherwise.

	\end{definition}

Given \eqref{eq:f_+-1} and \eqref{eq:f_--1}, one can compute the  Wronskian as
\begin{equation}
    T(k)W(\psi_+(\cdot,k),\psi_-(\cdot,k))=-2ik.
\end{equation}
In the non-generic setting, from \eqref{defT} and \eqref{eq:genericcond},
$T(0)\neq0$, whence, $W(\psi_+(\cdot,0),\psi_-(\cdot,0))=0$. 
So from basic ODE theory we have that $\psi_+(x,0) = \varphi(x)$ 
is a globally bounded solution 
to the equation $Hf=0$ which is unique up to a multiplicative constant.

\begin{definition}[Zero-energy resonance]\label{def:zeroresonance}
Assume $V$ is a non-generic potential in the sense of Definition \ref{def:generic}.  
Then there exists a nonzero solution of $H\varphi=0$ with  
$\varphi\in L^{\infty}(\mathbb{R}),\,\,\varphi\neq0$ 
normalized so that $\varphi(x)=\psi_+(x,0)$ approaches $1$ as $x\rightarrow \infty$ 
and a nonzero constant as $x\rightarrow-\infty$. 
Such $\varphi(x)$ is called a zero-energy resonance.
\end{definition}

	Given $T(k)$ and $R_{\pm}(k)$ as above, we
	can define the scattering matrix associated to the potential by
	\begin{equation}
	S(k):=\left(\begin{array}{cc}
	T(k) & R_{+}(k)
	\\
	R_{-}(k) & T(k)
	\end{array}\right),\ \ S^{-1}(k):=\left(\begin{array}{cc}
	\overline{T(k)} & \overline{R_{-}(k)}
	\\
	\overline{R_{+}(k)} & \overline{T(k)}
	\end{array}\right).\label{eq:scatMatrix}
	\end{equation}
	
	We have the following lemma on the coefficients.

	\begin{lem}\label{estiTR}
		Assuming that $\jx^{2}V\in L^1$, we have the uniform estimates for $k\in\mathbb{R}$:
		\[ \jk \big( \left|\partial_{k}T(k)\right| + \left|\partial_{k}R_{\pm}(k)\right| \big) \lesssim 1. \]
	\end{lem}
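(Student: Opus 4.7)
The plan is to differentiate the explicit formulas \eqref{defT} and \eqref{defR} directly in $k$, and control the resulting expressions using the estimates on the Faddeev functions $m_\pm$ from Lemma \ref{lem:Mestimates}. Setting
$$A_\pm(k) := \int V(x) m_\pm(x,k)\,dx, \qquad B_\pm(k) := \int e^{\mp 2ikx} V(x) m_\mp(x,k)\,dx,$$
we have $1/T(k) = 1 - A_\pm(k)/(2ik)$ and $R_\pm(k)/T(k) = B_\pm(k)/(2ik)$, so the estimates on $\partial_k T$ and $\partial_k R_\pm$ follow once $\partial_k(1/T)$ and $\partial_k(R_\pm/T)$ are bounded, together with suitable bounds on $T$ and $R_\pm$ themselves (the latter coming from \eqref{TRid}).

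For the regime $\jk \gtrsim 1$ the argument is straightforward. One computes $\partial_k(1/T) = A_\pm(k)/(2ik^2) - A_\pm'(k)/(2ik)$ where $A_\pm'(k) = \int V \partial_k m_\pm\,dx$, and similarly for $\partial_k(R_\pm/T)$, with the additional differentiation of $e^{\mp 2ikx}$ producing a harmless factor $\mp 2ix$ against the $\jx^2$ weight on $V$. Using $|m_\pm| \lesssim 1$ to control $A_\pm$ and $B_\pm$, Lemma \ref{lem:Mestimates} with $s=1$ to obtain $|\partial_k m_\pm(x,k)| \lesssim \jk^{-1}\mathcal{W}_\pm^2(x)$, and Fubini to get $\int |V|\mathcal{W}_\pm^2\,dx \lesssim \|V\|_{L^1}\|\jx^2 V\|_{L^1} < \infty$, one sees that both contributions decay like $\jk^{-2}$ in this regime, which is more than needed.

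The main obstacle is the regime $|k| \lesssim 1$, where the explicit $1/k$ in the formulas for $1/T$ and $R_\pm/T$ threatens a singularity. The key is to exploit the vanishing at zero energy: in the non-generic case one has $A_\pm(0) = 0$ by \eqref{eq:genericcond}, and the identities \eqref{eq:f_+-1}--\eqref{eq:f_--1} together with $\psi_\pm(\cdot,0)$ being proportional to $\varphi$ give $B_\pm(0) = 0$ as well. Writing $A_\pm(k) = \int_0^k A_\pm'(\ell)\,d\ell$ then expresses $A_\pm(k)/(2ik) = (2i)^{-1} k^{-1}\int_0^k A_\pm'(\ell)\,d\ell$, which is bounded as $k\to 0$, giving the uniform boundedness of $1/T$ and $R_\pm/T$ near zero. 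For the derivative, the identity
$$\partial_k \left(\frac{1}{k}\int_0^k A_\pm'(\ell)\,d\ell\right) = \frac{1}{k^2}\int_0^k \bigl[A_\pm'(k) - A_\pm'(\ell)\bigr]\,d\ell$$
reduces the task to a Lipschitz-type estimate $|A_\pm'(k) - A_\pm'(\ell)| \lesssim |k - \ell|$ for $|k|,|\ell| \lesssim 1$. This is obtained by bounding $A_\pm''(\ell) = \int V \partial_\ell^2 m_\pm\,dx$ via \eqref{Mestimates1} with $s=2$ (splitting into the regions $\pm x \geq -1$ and $\pm x \leq 1$) and the weighted integrability of $V$. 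The same mechanism handles $R_\pm/T$, the extra factor $e^{\mp 2ikx}$ contributing only harmless polynomial weights in $x$. In the generic case, where $A_\pm(0)\neq 0$, the function $1/T$ has a simple pole at $0$ so $T(k) = 2ik/(2ik - A_\pm(k))$ vanishes linearly, and the quotient rule applied to this representation gives the claimed bound directly with no singularity issue.
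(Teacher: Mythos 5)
The paper does not actually prove Lemma \ref{estiTR}: it is quoted as a standard scattering-theory fact (of the type proved in Deift--Trubowitz, Weder, and in \cite{NLSV}), so there is no in-paper argument to compare with and your proposal must stand on its own. Your high-frequency argument is fine, and your identification of the key low-frequency mechanism is the right one: in the non-generic case $A_\pm(0)=\int V m_\pm(\cdot,0)\,dx=0$ and likewise $B_\pm(0)=0$, so $A_\pm(k)/(2ik)$ and $B_\pm(k)/(2ik)$ are regular at $k=0$; the reduction of the generic case to the representation $T=2ik/(2ik-A_\pm)$ is also correct. (A small inaccuracy: for $|k|\gtrsim 1$ the $R_\pm$ terms only give $\jk^{-1}$, not $\jk^{-2}$, since differentiating $e^{\mp 2ikx}$ produces a factor $x$ with no gain in $k$; this is still exactly what the lemma requires.)

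The genuine gap is in the low-frequency step of the non-generic case. The identity $\partial_k\big(k^{-1}\int_0^k A'(\ell)\,d\ell\big)=k^{-2}\int_0^k\big[A'(k)-A'(\ell)\big]\,d\ell$ is correct, but the Lipschitz bound $|A'(k)-A'(\ell)|\lesssim|k-\ell|$ you invoke cannot be obtained from Lemma \ref{lem:Mestimates} with $s=2$ under the stated hypothesis: that estimate carries the weight $\mathcal{W}^{3}_\pm(x)$ (respectively $\jx^{3}$ on the complementary half-line), so bounding $A''(\ell)=\int V\,\partial_\ell^2 m_\pm\,dx$ this way requires $\jx^{3}V\in L^1$, strictly stronger than the assumed $\jx^{2}V\in L^1$ and not even implied by the paper's standing assumption $\gamma>5/2$ in \eqref{VassumeWei}. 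Moreover, the Lipschitz regularity is not something you can relax: since $\partial_k(1/T)=\big(A(k)-kA'(k)\big)/(2ik^2)$ and $1/T$ is bounded near zero in the non-generic case, boundedness of $\partial_k T$ near $k=0$ is equivalent to $|A(k)-kA'(k)|\lesssim k^2$, and a H\"older-$\theta$ modulus with $\theta<1$ (which is all the available moments of $V$ give by this crude route, e.g. roughly $|A''(k)|\lesssim |k|^{-1/2}$ when $\gamma=5/2$) only yields $|\partial_k(1/T)|\lesssim |k|^{\theta-1}$. So, as written, your argument proves the lemma only under an extra decay assumption on $V$; to close it at the stated level of decay one must exploit the finer structure of the Volterra kernel $D_k(z)=(e^{2ikz}-1)/(2ik)$, whose $k$-derivatives gain smallness through $|k||z|$, together with the cancellations coming from the resonance condition, as in the classical treatments of the exceptional case, rather than the blunt $s=2$ bound of Lemma \ref{lem:Mestimates}.
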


	\subsection{Some bounds on Pseudo-differential operators}

	We state a few bounds on Pseudo-Differential Operators (PDOs) 
	that are specifically tailored to the Jost functions $m_\pm(x,k)$.
	The proofs of these results can be found in Lemmas 2.4-2.9 in \cite{NLSV}.

	\begin{lem}\label{lem:m-1}
		Suppose $\jx^{\gamma}V\in L^1$.
		Then, for $\gamma>3/2$, we have
		\begin{align}\label{eq:pseeasy}
		\left\Vert \mathbf{1}_{\{\pm x\geq-1\}}\int e^{ik x}m_\pm(x,k)g(k)\,dk
		\right\Vert_{L^2_x}\lesssim{\| g \|}_{L^2},
		\\
		\label{eq:pseeasy-1}
		\left\Vert \int e^{ik x} m_\pm(x,k)
		\mathbf{1}_{\{\pm x\geq-1\}}h(x)\,dx\right\Vert_{L_{k}^{2}}
		\lesssim\left\Vert h\right\Vert_{L^2},
		\end{align}
		and, for $\gamma>\beta+3/2$,
		\begin{align}\label{eq:mPDO+}
		\left\Vert \mathbf{1}_{\{\pm x\geq-1\}}\jx^{\beta}
		\int e^{ik x}\left(m_\pm(x,k)-1\right)g(k)\,dk\right\Vert_{L^2_x}
		\lesssim \left\Vert g\right\Vert_{L^{2}},
		\\
		\label{eq:mPDO+-1}
		\left\Vert \int e^{ik x}\left(m_\pm(x,k)-1\right)
		\mathbf{1}_{\{\pm x\geq-1\}}h(x)\,dx\right\Vert_{L_{k}^{2}}
		\lesssim\left\Vert \jx^{-\beta}h\right\Vert_{L^2_x}.
		\end{align}
		%
		Moreover, with $\gamma>\max(\beta/2+3/4,\beta)$, we have
		\begin{align}\label{eq:psehard}
		& \left\Vert \jx^{\beta} \mathbf{1}_{\{\pm x\geq-1\}}
		\int e^{ik x}\partial_xm_\pm(x,k)g(k)\,dk\right\Vert_{L^2_x}\lesssim{\| g \|}_{L^2},
		\\
		\label{eq:mPDOhard+2}
		& \left\Vert \int e^{ik x}\partial_xm_\pm (x,k)
		\mathbf{1}_{\{\pm x\geq-1\}}h(x)\,dx\right\Vert_{L_{k}^{2}}
		\lesssim\left\Vert \jx^{-\beta}h\right\Vert_{L^2}.
		\end{align}
		%
		Furthermore, if $\gamma > \beta + 5/2$, then
		\begin{align}
		\label{eq:pdomk}
		& \left\Vert \mathbf{1}_{\{\pm x\geq-1\}} \jx^\beta 
		\int e^{ik x}\partial_{k}m_\pm(x,k)g(k)\,dk\right\Vert_{L^2_x}\lesssim{\| g \|}_{L^2},
		\\
		\label{eq:pdomkd}
		& \left\Vert \int e^{ik x}\partial_{k}m_\pm(x,k)
		\mathbf{1}_{\{\pm x\geq-1\}}h(x)\,dx\right\Vert_{L^2_k}
		\lesssim\left\Vert \jx^{-\beta}h\right\Vert_{L^2_x}.
		\end{align}
		Finally if $\gamma > \max(\beta/2 + 3/4, \beta) + 1 $, then
		\begin{align}
		\label{eq:pdomxk}
		& \left\Vert \mathbf{1}_{\{\pm x\geq-1\}} \jx^\beta 
		\int e^{ik x} \partial_x\partial_k m_\pm(x,k)g(k)\,dk\right\Vert_{L^2_x}\lesssim{\| g \|}_{L^2},
		\\
		\label{eq:pdomxkd}
		& \left\Vert \int e^{ik x} \partial_x\partial_k m_\pm(x,k)
		\mathbf{1}_{\{\pm x\geq-1\}}h(x)\,dx\right\Vert_{L^2_k}
		\lesssim\left\Vert \jx^{-\beta}h\right\Vert_{L^2_x}.
		\end{align}

	\end{lem}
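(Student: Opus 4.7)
The strategy is to prove all the bounds by a common template: decompose $m_\pm = 1 + (m_\pm - 1)$ (or handle $\partial_x m_\pm$, $\partial_k m_\pm$, $\partial_x \partial_k m_\pm$ directly) and reduce each inequality to an $L^2 \to L^2$ bound for a pseudo-differential operator with a well-decaying symbol. The quantitative input is Lemma \ref{lem:Mestimates}, which converts the assumption $\jx^\gamma V \in L^1$ into decay of the symbol in $x$ (through the tails $\mathcal{W}_\pm^s$) and, where present, in $k$ (through a $\langle k\rangle^{-1}$ prefactor).

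First, I would treat \eqref{eq:pseeasy}--\eqref{eq:pseeasy-1}. The contribution of the constant ``$1$'' part of $m_\pm$ is the inverse Fourier transform, hence an isometry by Plancherel. For the remainder $m_\pm - 1$, I would apply the $TT^*$ method: the kernel of the composition is
\begin{equation*}
K(x,y) = \int e^{ik(x-y)} (m_\pm(x,k)-1) \overline{(m_\pm(y,k)-1)} \, dk.
\end{equation*}
Integrating by parts in $k$ (using \eqref{Mestimates1}--\eqref{Mestimates1'} to control the boundary and the derivatives) converts factors of $e^{ik(x-y)}$ into powers of $|x-y|^{-1}$ at the cost of $k$-derivatives on $m_\pm - 1$; combined with the decay furnished by $\mathcal{W}_\pm^{s+1}$ on $\{\pm x \ge -1\}$, one obtains a kernel satisfying Schur's test, hence the $L^2$ bound. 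For the weighted version \eqref{eq:mPDO+}--\eqref{eq:mPDO+-1}, the same argument carries through after noting that on $\{\pm x \ge -1\}$ one has $\jx^\beta \mathcal{W}_\pm^s(x) \leq \mathcal{W}_\pm^{s+\beta}(x)$, which is uniformly bounded precisely when $\gamma > \beta + s$; the threshold $\gamma > \beta + 3/2$ in the statement corresponds to needing this with $s$ slightly larger than $3/2$. The dual estimates follow by transposition.

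The estimates involving $\partial_x m_\pm$, $\partial_k m_\pm$, and $\partial_x \partial_k m_\pm$ follow the same template, with the symbol replaced accordingly and the pointwise bounds \eqref{Mestimates1}--\eqref{Mestimates2} taking the place of the bounds for $m_\pm - 1$. The main subtlety, and the principal technical obstacle, is the bookkeeping of decay in $x$ versus $k$: $\partial_x m_\pm$ has no $\langle k\rangle^{-1}$ prefactor but still decays in $x$, so one must integrate by parts in $x$ rather than in $k$ to generate the missing $k$-decay; this is cheaper in weight and explains the milder threshold $\gamma > \beta/2 + 3/4$ in \eqref{eq:psehard}--\eqref{eq:mPDOhard+2}. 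Conversely, $\partial_k m_\pm$ is controlled by $\mathcal{W}_\pm^{2}$ rather than $\mathcal{W}_\pm^{1}$, which raises the threshold to $\gamma > \beta + 5/2$ in \eqref{eq:pdomk}--\eqref{eq:pdomkd}. The mixed derivative $\partial_x \partial_k m_\pm$ in \eqref{eq:pdomxk}--\eqref{eq:pdomxkd} requires balancing both effects, and the resulting condition $\gamma > \max(\beta/2 + 3/4, \beta) + 1$ reflects whichever of the two integration-by-parts directions turns out to be dominant. Beyond this bookkeeping, the argument is a routine application of the $TT^*$/Schur framework to the explicit Volterra representation \eqref{minteq}.
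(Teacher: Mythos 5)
Your template does not match how this lemma is actually established: the paper gives no proof here, quoting the result from Lemmas 2.4--2.9 of \cite{NLSV}, where the bounds are obtained by working directly with the Volterra equation \eqref{minteq}, i.e.\ by representing $m_\pm-1$, $\partial_x m_\pm$, $\partial_k m_\pm$ as superpositions of modulations $e^{2ik(y-x)}$ with coefficients built from $V$, and then applying Plancherel and Cauchy--Schwarz. Measured against that, your $TT^*$/Schur plan based only on the pointwise bounds of Lemma \ref{lem:Mestimates} has genuine gaps. Already for \eqref{eq:pseeasy} and \eqref{eq:mPDO+}: without integration by parts the $TT^*$ kernel is only bounded by $\mathcal{W}^1_\pm(x)\mathcal{W}^1_\pm(y)\lesssim \jx^{1-\gamma}\jy^{1-\gamma}$ on the relevant half-lines, whose row integrals diverge for $3/2<\gamma\le 2$, so the plain Schur test fails at the stated threshold; and your integration by parts in $k$ cannot repair this, since each $\partial_k$ on $m_\pm-1$ costs either $\mathcal{W}^{s+1}_\pm(x)\lesssim\jx^{s+1-\gamma}$ (see \eqref{Mestimates1}) or a non-integrable factor $|k|^{-1}$ (see \eqref{Mestimates1'}); the gain of $|x-y|^{-1}$ is exactly offset by a power of $\jx$ or $\jy$, and the worst term $|x-y|^{-N}\jx^{1-\gamma}\jy^{N+1-\gamma}$ still has divergent row integrals. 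What actually produces $\gamma>\beta+3/2$ is a different, simpler mechanism: by Cauchy--Schwarz in $k$, $\big|\int e^{ikx}(m_\pm(x,k)-1)g(k)\,dk\big|\lesssim \mathcal{W}^1_\pm(x)\|g\|_{L^2}$, and $\jx^{\beta}\mathcal{W}^1_\pm\,\mathbf{1}_{\{\pm x\ge-1\}}\in L^2_x$ when $\gamma>\beta+3/2$ (a Hilbert--Schmidt, not a Schur, argument), with \eqref{eq:pseeasy-1}, \eqref{eq:mPDO+-1} by duality.

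More seriously, the template collapses for \eqref{eq:psehard}--\eqref{eq:mPDOhard+2} and \eqref{eq:pdomxk}--\eqref{eq:pdomxkd}: by \eqref{Mestimates2} the symbols $\partial_x m_\pm$ and $\partial_x\partial_k m_\pm$ have no decay in $k$ at all, so the $TT^*$ kernel $\int e^{ik(x-y)}\partial_xm_\pm(x,k)\overline{\partial_ym_\pm(y,k)}\,dk$ is not even absolutely convergent, and no amount of integration by parts in $k$ based on Lemma \ref{lem:Mestimates} generates convergence. Your proposed substitute, ``integrate by parts in $x$'', is not an available operation: $x$ is the output variable, not an integration variable, in either the operator or the kernel. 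The missing $k$-oscillation can only be exposed through the identity $\partial_xm_\pm(x,k)=\mp\int_x^{\pm\infty}e^{\pm2ik(y-x)}V(y)m_\pm(y,k)\,dy$ obtained from \eqref{minteq}; note moreover that integrating by parts in that representation would place derivatives on $V$, which the paper explicitly does not assume ($V$ is merely in a weighted $L^1$), so the structure must be exploited via Plancherel/Cauchy--Schwarz (e.g.\ splitting $|V|=|V|^{1/2}|V|^{1/2}$), which is precisely what yields the half-power thresholds $\gamma>\beta/2+3/4$ and $\gamma>\max(\beta/2+3/4,\beta)+1$. Your sketch asserts these thresholds as the outcome of unspecified bookkeeping but never derives them; as written, the argument would at best give the zeroth-order bounds with a loss in $\gamma$, and it does not prove the derivative bounds at all.
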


	\medskip
	\subsection{Distorted Fourier Transform (dFT)}\label{ssecDFT0}
	We recall some properties of the distorted Fourier transform
	with respect to the perturbed Schr\"odinger operator.
	First, recall that the standard Fourier transform is defined, for $\phi\in L^2$, as in \eqref{eq:FT}.
	Given the Jost functions $\psi_{\pm}$ from \eqref{psipm}, we set
	\begin{align}\label{matK}
	\mathcal{K}(x,k):=\frac{1}{\sqrt{2\pi}}
	\begin{cases}
	T(k) \psi_+(x,k) & k \geq 0
	\\
	T(-k) \psi_-(x,-k) & k < 0,
	\end{cases}
	\end{align}
	and define the ``distorted Fourier Transform'' (dFT) for $f\in \mathcal{S}$ by
	\begin{align}\label{tildeF}
	\wtF\left[\phi\right](k) = \widetilde{f}(k) := \int_\R \overline{\mathcal{K}(x,k)} f(x)\,dx.
	\end{align}
	
	The following proposition summarizes some basic properties of the dFT.
	
\begin{prop}\label{proptildeF}
In our setting, one has
\begin{align}\label{tildeFL2}
{\big\| \wtF\left[f\right] \big\|}_{L^{2}} = \left\Vert f\right\Vert _{L^{2}},\,\,\forall f\in L^{2}
\end{align}
and
\begin{align}\label{tildeF-1}
\wtF^{-1}\left[\phi\right](x) = \int_\R \mathcal{K}(x,k)\phi(k)\,dk.
\end{align}
Also, if $D:=\sqrt{-\partial_{xx}+V}$, 
$m(D)=\wtF^{-1}m(k)\wtF$,
so that, in particular,
$\left(-\partial_{xx}+V\right)=\wtF^{-1}k^{2}\wtF$. We also have the following properties:

\setlength{\leftmargini}{2em}
\begin{itemize}

\item[(i)] If $\phi\in L^1$, then $\widetilde{\phi}$ is 
a bounded function, ${\| \wt{\phi} \|}_{L^\infty} \lesssim {\| \phi \|}_{L^1}$,
which is continuous everywhere except possibly at $k=0$.

\smallskip
\item[(ii)] One has
$\left\Vert \jk \widetilde{u}\right\Vert _{L^{2}} \approx  \| u \|_{H^{1}}$,
where the implicit constants depend on  ${\| V  \|}_{L^1}$.
\end{itemize}

\end{prop}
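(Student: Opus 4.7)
The plan is to establish the five assertions in sequence, using the spectral theory of $H = -\partial_{xx} + V$ together with the Jost-function bounds from Lemma \ref{lem:Mestimates}.

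For the unitarity \eqref{tildeFL2} and inversion formula \eqref{tildeF-1}, I would invoke the spectral resolution of $H$. Under the assumption $\jx^\gamma V \in L^1$ with $\gamma > 5/2$ and no discrete spectrum, $H$ is self-adjoint with purely absolutely continuous spectrum $[0,\infty)$. Using the jump of the resolvent $(H-(k^2\pm i0))^{-1}$ across the continuous spectrum, expressed through the Jost solutions $\psi_\pm$ and the scattering coefficients via \eqref{eq:f_+-1}--\eqref{defR}, one obtains the spectral density in the form $\mathcal{K}(x,k)\overline{\mathcal{K}(y,k)}$. Completeness, i.e.\ $\int |\widetilde f(k)|^2 \, dk = \|f\|_{L^2}^2$ for Schwartz $f$, then follows from the absence of eigenvalues; this is the standard one-dimensional argument going back to Deift--Trubowitz \cite{DT} (see also \cite{We1,GPR}). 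The inversion formula \eqref{tildeF-1} then follows from unitarity by duality, and the intertwining $m(D) = \wtF^{-1} m(k) \wtF$ is immediate from $H\mathcal{K}(\cdot,k) = k^2 \mathcal{K}(\cdot,k)$.

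For part (i), the key observation is that $|\mathcal{K}(x,k)| \lesssim 1$ uniformly in $(x,k)$: by \eqref{mpm} one has $|\psi_\pm(x,k)| = |m_\pm(x,k)|$, and \eqref{Mestimates1} in Lemma \ref{lem:Mestimates} gives $|m_\pm(x,k)-1| \lesssim 1$; together with $|T(k)| \le 1$ from \eqref{TRid} this yields the bound. Then for $\phi \in L^1$,
\[
|\widetilde{\phi}(k)| \leq \|\mathcal{K}\|_{L^\infty_{x,k}} \, \|\phi\|_{L^1} \lesssim \|\phi\|_{L^1}.
\]
Continuity of $\wt\phi$ at any $k_0 \neq 0$ follows by dominated convergence, since $k \mapsto \mathcal{K}(x,k)$ is continuous away from $k=0$ (as $m_\pm(x,\cdot)$ is continuous up to $\Im k \ge 0$). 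The potential jump at $k=0$ arises because the one-sided limits are $T(0)\psi_+(x,0)/\sqrt{2\pi}$ and $T(0)\psi_-(x,0)/\sqrt{2\pi}$, which differ in the non-generic odd-resonance case (cf.\ Remark \ref{remeven}).

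For part (ii), I use the spectral theorem: with $D = \sqrt{H}$,
\[
\|\jk \widetilde u\|_{L^2}^2 = \|u\|_{L^2}^2 + \|Du\|_{L^2}^2 = \|u\|_{L^2}^2 + (Hu,u) = \|u\|_{L^2}^2 + \|\partial_x u\|_{L^2}^2 + \int V|u|^2 \, dx.
\]
The potential term is controlled by $\|V\|_{L^1}\|u\|_{L^\infty}^2$, and the 1d Gagliardo--Nirenberg inequality $\|u\|_{L^\infty}^2 \lesssim \|u\|_{L^2}\|\partial_x u\|_{L^2}$ followed by Young's inequality gives
\[
\Big|\int V|u|^2 \,dx \Big| \leq \tfrac{1}{2}\|\partial_x u\|_{L^2}^2 + C\|V\|_{L^1}^2\|u\|_{L^2}^2,
\]
which yields the two-sided comparison with $\|u\|_{H^1}^2$ after absorbing $\|\partial_x u\|_{L^2}^2$ from both sides. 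The main (and only real) obstacle is the first step—rigorously establishing unitarity and the inversion formula under the present hypotheses—which is standard but requires care in the non-generic regime where $T(0) \neq 0$; once unitarity is in hand, the remaining claims are straightforward.
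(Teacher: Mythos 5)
Your overall route is the standard one and is essentially what the references the paper points to carry out: the paper gives no proof of Proposition \ref{proptildeF} itself, deferring to \cite{Yaf}, \cite{Agmon}, \cite{KGV}, \cite{LLSS}, so invoking the resolvent-jump/spectral-resolution construction for \eqref{tildeFL2}, \eqref{tildeF-1} and the intertwining identity is consistent with what the paper intends (your sketch of that part is at the same level of detail as the paper's citation). Part (ii) is also fine in spirit; note that the paper proves the analogous statement \eqref{kusharp} for $\mathcal{F}^\#$ by bounding ${\| u \|}_{L^\infty}\lesssim {\| \jk u^\sharp \|}_{L^2}$ via the inversion formula instead of Gagliardo--Nirenberg, and in your absorption step you also need Plancherel, ${\| u \|}_{L^2}={\| \widetilde u \|}_{L^2}\leq {\| \jk \widetilde u \|}_{L^2}$, to control the lower-order term after choosing the small parameter; with that, both versions close.

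There is, however, one step in your part (i) that is wrong as written. You claim that Lemma \ref{lem:Mestimates} gives $|m_\pm(x,k)-1|\lesssim 1$, hence $|\mathcal{K}(x,k)|\lesssim |T(k)|\,|m_\pm(x,k)|\lesssim 1$ uniformly. The estimates \eqref{Mestimates1} are one-sided: they give a uniform bound only on the half-lines $\pm x\geq -1$, while on the opposite half-line the stated bound is $\jx^{s+1}/\jk$, i.e. $|m_+(x,k)-1|$ may grow like $\jx$ as $x\rightarrow-\infty$ (and Jost solutions genuinely can grow there), so the inequality $|m_\pm-1|\lesssim 1$ is false globally and the one-line deduction of $\sup_{x,k}|\mathcal{K}(x,k)|<\infty$ does not go through from the cited lemma. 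The missing ingredient is the scattering relations \eqref{eq:f_+-1}--\eqref{eq:f_--1}: for $k\geq 0$ and $x\leq -1$ write $T(k)\psi_+(x,k)=R_-(k)\psi_-(x,k)+\psi_-(x,-k)$ and use the bound on $m_-$ valid for $x\leq 1$ together with $|T|,|R_\pm|\leq 1$ (and symmetrically for $k<0$); this is precisely what the paper's representation \eqref{matKk>0}--\eqref{matKk<0} with the cutoffs $\chi_\pm$ encodes. Once this repair is made, your $L^1\rightarrow L^\infty$ bound, the continuity away from $k=0$ by dominated convergence, and the identification of the one-sided limits at $k=0$ (this is Remark \ref{remdisc} rather than Remark \ref{remeven}) all go through as you describe.
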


For a proof see, for example, 
Chapter 5 in \cite{Yaf}, Section 6 in \cite{Agmon}, 
Section 3 in \cite{KGV} or Section 2 in \cite{LLSS} and references therein.
	
	\medskip
	Here are some facts about non-generic potentials (with some comparison to the case of generic potentials),
	and some consequences concerning the discontinuity of the distorted Fourier transform at zero frequency.
	
	\begin{lem}[Low energy scattering]\label{lemngTR}
		If $V$ is a non-generic potential, let
		\begin{align}\label{a}
		a := \psi_+(-\infty,0) \in \R \setminus \{ 0 \}. 
		\end{align}
		Then, 
		\begin{align}\label{LESTR0}
		T(0) = \frac{2a}{1+a^2}, \qquad R_+(0) = \frac{1 - a^2}{1+ a^2}, \qquad \mbox{and} \qquad R_-(0) = \frac{a^2-1}{1+ a^2}.
		\end{align}
	\end{lem}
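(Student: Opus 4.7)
The plan is to exploit the scattering identities \eqref{eq:f_+-1} and \eqref{eq:f_--1} evaluated at $k=0$, combined with the identification of the two zero-energy Jost solutions as constant multiples of one another. All the ingredients needed have already been recorded in the excerpt, so the argument will be essentially algebraic.

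First I would note that in the non-generic setting $T(0) \neq 0$: the formula \eqref{defT} says $1/T(k) = 1 - (2ik)^{-1}\int V(x) m_\pm(x,k)\,dx$, and the apparent singularity at $k=0$ is cancelled precisely by the non-generic condition $\int V(x) m_\pm(x,0)\,dx = 0$ from \eqref{eq:genericcond}, yielding a finite nonzero value for $T(0)$. Consequently $\psi_\pm(\cdot,0)$ are both bounded solutions of $H\varphi = 0$, and since the space of such solutions is one-dimensional (cf.\ Definition~\ref{def:zeroresonance}), $\psi_+(\cdot,0)$ and $\psi_-(\cdot,0)$ are proportional. Matching the limits at $x\to -\infty$, where $\psi_+(x,0)\to a$ and $\psi_-(x,0)\to 1$, gives
\begin{equation*}
\psi_+(x,0) = a\,\psi_-(x,0),
\end{equation*}
and matching at $x\to +\infty$ then forces $\psi_-(x,0)\to 1/a$.

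Next I would set $k=0$ in \eqref{eq:f_+-1}-\eqref{eq:f_--1}. Since $\psi_\pm(x,-k)\big|_{k=0}=\psi_\pm(x,0)$, these become
\begin{equation*}
T(0)\,\psi_+(x,0) = \bigl(1+R_-(0)\bigr)\psi_-(x,0), \qquad T(0)\,\psi_-(x,0) = \bigl(1+R_+(0)\bigr)\psi_+(x,0).
\end{equation*}
Substituting $\psi_+(x,0)=a\,\psi_-(x,0)$ and dividing out the (nonzero) common factor yields the two scalar relations
\begin{equation*}
T(0)\,a = 1 + R_-(0), \qquad T(0) = a\bigl(1+R_+(0)\bigr).
\end{equation*}

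Finally, the symmetries $T(-k)=T(k)$ and $R_\pm(-k)=\overline{R_\pm(k)}$ from \eqref{TRid} imply that $T(0)$ and $R_\pm(0)$ are real. Combining $R_+(0) = T(0)/a - 1$ with the unitarity relation $|T(0)|^2+|R_+(0)|^2=1$ from \eqref{TRid}, a direct expansion produces
\begin{equation*}
T(0)^2\Bigl(1+\tfrac{1}{a^2}\Bigr) = \tfrac{2\,T(0)}{a},
\end{equation*}
and dividing by $T(0)\neq 0$ gives $T(0)=2a/(1+a^2)$. Back-substituting into the two scalar relations produces $R_+(0) = (1-a^2)/(1+a^2)$ and $R_-(0) = (a^2-1)/(1+a^2)$, as claimed. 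There is no genuine obstacle here: the only two nontrivial inputs are (i) $T(0)\neq 0$ together with the one-dimensionality of the bounded zero-energy eigenspace, both standard consequences of the non-generic assumption, and (ii) the observation that evaluating \eqref{eq:f_+-1}-\eqref{eq:f_--1} at $k=0$ reduces the scattering identities to a simple linear system in $T(0)$ and $R_\pm(0)$.
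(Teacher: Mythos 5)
Your argument is correct, but there is no in-paper proof to compare it with: for this lemma the paper only cites Proposition 3.4 of \cite{KGV}. Your derivation is the standard self-contained one, and all its ingredients are indeed already recorded in the paper: non-genericity makes $1/T(0)$ finite and nonzero via \eqref{defT} and \eqref{eq:genericcond}; the proportionality $\psi_+(x,0)=a\,\psi_-(x,0)$ follows (as the paper itself notes just before Definition \ref{def:zeroresonance}) from the Wronskian relation $T(k)W(\psi_+(\cdot,k),\psi_-(\cdot,k))=-2ik$ together with $T(0)\neq0$, which is a slightly more direct route than your appeal to one-dimensionality of the bounded zero-energy kernel, though both are fine; and evaluating \eqref{eq:f_+-1}--\eqref{eq:f_--1} at $k=0$ then gives your two scalar relations $aT(0)=1+R_-(0)$ and $T(0)=a(1+R_+(0))$ after dividing by $\psi_-(x,0)$ (legitimate since $\psi_-(x,0)\to1$ as $x\to-\infty$). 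One small caveat: the symmetry for $T$ as printed in \eqref{TRid} reads $T(-k)=T(k)$, which at $k=0$ gives no reality information; the identity you actually want is $T(-k)=\overline{T(k)}$ (the printed version is presumably a typo). This does not damage your proof, because $R_\pm(-k)=\overline{R_\pm(k)}$ already gives $R_+(0)\in\R$, and then $T(0)=a\bigl(1+R_+(0)\bigr)$ with $a\in\R$ forces $T(0)\in\R$, which is all the unitarity computation $T(0)^2+R_+(0)^2=1$ requires; the final algebra and back-substitution yielding \eqref{LESTR0} are correct.
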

	
	For a proof see for example Proposition 3.4 in \cite{KGV} and references therein.
	
	\smallskip
	\begin{rem}[Transmission and reflection for symmetric zero energy resonance]\label{remTRnongen}
		In the setting of this paper the Schr\"odinger operator $H$ has a resonance, $\psi_+(x,0) = m_+(x,0)$, with 
		odd or even parity.
		For an odd, respectively even, resonance, we have that $a$ in \eqref{a} is $-1$, respectively $1$,
		and therefore $T(0)=-1$, respectively $T(0)=1$. In both cases we have $R_+(0) = R_-(0) = 0$.
	\end{rem}

	\smallskip
	\begin{rem}[Transmission and reflection for generic potentials]\label{remTRgen}
		For generic potentials the associated transmission and
		reflection coefficients have the following Taylor expansions near $0$:
		Assuming that $\jx^2 V\in L^1$, then
		\begin{align}\label{LEST}
		\begin{split}
		& T(k) = \alpha k+\mathcal{O}(k^2), \quad \alpha\neq0, \quad \text{as}\ k\rightarrow0,
		\\
		& 1+R_{\pm}(k) = \alpha_{\pm}k+\mathcal{O}(k^2), \quad \text{as} \quad k\rightarrow0.
		\end{split}
		\end{align}
		For a detailed proof, see page 144 in Deift-Trubowitz \cite{DT}.
		
		In particular, in the generic case, the distorted Fourier transform vanishes at frequency zero (and is continuous).
		This gives improved local decay estimates, such as, see for example \cite{KS,Sch,GSch,NLSV},
		\begin{align}\label{improvedlocaldecay}
		{\| \jx^{-1} e^{itH} f \|}_{L^\infty} \lesssim \jt^{-3/2} {\| \jx f \|}_{L^1},
		\qquad {\| \jx^{-1} e^{itH} f \|}_{L^\infty} \lesssim \jt^{-1} {\| \jx f \|}_{L^2},
		\end{align}
		which simplify the study of nonlinear equations with generic potentials.
		However these estimates fail in the non-generic case and 
		one cannot have more than $\jt^{-1/2}$ decay even locally in space.
	\end{rem}

	\smallskip
	\begin{rem}[Discontinuity of the distorted Fourier transform]\label{remdisc}
		As mentioned before, the Fourier transform is continuous for a generic potential
		(and vanishes at zero) but a non-generic potential may give rise to a discontinuous Fourier transform
		at zero.
		
		\setlength{\leftmargini}{1.5em}
		\begin{itemize}
			
			\item If $V$ is generic, from \eqref{matK} and Remark \ref{remTRgen},
			we see that $\mathcal{K}(x,0) = 0$ and thus $\widetilde{f}(0) = 0$ if $f \in L^1$. 
			Furthermore, assuming better integrability properties at $\infty$ for $f$, one has
			\begin{align}\label{wtf0gen}
			\begin{split}
			& \mbox{if $k > 0$}, \qquad \widetilde{f}(k) = - 
			\frac{\alpha k }{\sqrt{2\pi}} \int f(x) \psi_+(x,0)\,dx + O(k^2)
			\\
			& \mbox{if $k < 0$}, \qquad \widetilde{f}(k) =
			\frac{ \alpha k }{\sqrt{2\pi}} \int f(x) \psi_-(x,0)\,dx + O(k^2),
			\end{split}
			\end{align}
			where $\alpha$ was defined in \eqref{LEST}.
			Thus, $\widetilde{f}$ is typically continuous, but not continuously differentiable at zero.
			
			\smallskip
			\item If $V$ is exceptional, then
			\begin{align*}
			\sqrt{2\pi} \, \K(x,0+) = \frac{2a}{1+a^2} \psi_+(x,0), \qquad \mbox{and} \qquad \K(x,0-) = \frac{1}{a} \K(x,0+), 
			\end{align*}
			where $a$ is defined in \eqref{a}. Therefore, if $f \in L^1$,
			\begin{align}\label{wtf0exc}
			\begin{split}
			\widetilde{f}(0+) = \frac{2a}{1+a^2} \frac{1}{\sqrt{2\pi}} \int f(x) \psi_+(x,0) \,dx 
			\qquad \mbox{and} \qquad \widetilde{f}(0-) = \frac{1}{a} \widetilde{f}(0+).
			\end{split}
			\end{align}
			As a consequence, $\widetilde{f}$ is continuous if $a=1$, but might not be otherwise.
		\end{itemize}
		
	\end{rem}

	

	\medskip
	
	
	
	\subsection{Generalized eigenfunctions and the zero frequency}\label{subsec:GeneEigenZero}
	We need a suitable decomposition of the generalized eigenfunctions
	in order to have a good understanding of the nonlinear spectral distribution
	(see \eqref{intromu} and \eqref{eq:nonmea}).

	First, from the definition of $\mathcal{K}$ in \eqref{matK} and $m_\pm$ in \eqref{mpm} we have 	
	\begin{align}
	\begin{split}\label{matKk>00}
	\mbox{for $k\geq 0$,} \qquad \sqrt{2\pi}\mathcal{K}(x,k) & = T(k)m_+(x,k)e^{ixk},
	\end{split}
	\\
	\begin{split}\label{matKk<00}
	\mbox{for $k<0$,} \qquad \sqrt{2\pi}\mathcal{K}(x,k) & = T(-k)m_-(x,-k)e^{ixk}.
	\end{split}
	\end{align}
	
	Note that the expression \eqref{matKk>00} is bounded for $x>0$ but unbounded when $x\rightarrow -\infty$
	(and viceversa for  \eqref{matKk<00}).
	We resolve this in the standard way by letting $\Phi$ be a smooth, 
	non-negative function, which is one in a neighborhood of $0$, vanishes outside
	of $[-2,2]$, and such that $\int\Phi\,dx=1$. 
	We then define $\chi_+$ and $\chi_-$ by
	\begin{align}\label{chipm}
	\chi_+(x) = \int_{-\infty}^{x}\Phi(y)\,dy,
	\qquad \text{and} \qquad  \chi_+(x)+\chi_-(x)=1.
	\end{align}
	
	Using $\chi_{\pm}(x)$, the definition of $\mathcal{K}(x,k)$ in \eqref{matK},
	and the identities \eqref{eq:f_+-1}-\eqref{eq:f_--1} we can write
	\begin{align*}
	\begin{split}
	\mbox{for $k\geq 0$,} \qquad \sqrt{2\pi}\mathcal{K}(x,k) & = \chi_+(x)T(k)\psi_+(x,k)
	+\chi_-(x)\left[\psi_-(x,-k) + R_-(k) \psi_-(x,k) \right],
	\end{split}
	\\
	\begin{split}
	\mbox{for $k<0$,} \qquad \sqrt{2\pi}\mathcal{K}(x,k) & =\chi_-(x)T(-k) \psi_-(x,-k)
	+\chi_+(x)\left[ \psi_+(x,k) + R_+(-k)\psi_+(x,-k) \right].
	\end{split}
	\end{align*}
	Then, 
	with the definition of $m_{\pm}(x,k)$ in \eqref{mpm}, we can write:
	\begin{align}
	\begin{split}\label{matKk>0}
	\mbox{for $k\geq 0$,} \qquad \sqrt{2\pi}\mathcal{K}(x,k) & =\chi_+(x)T(k)m_+(x,k)e^{ixk}
	\\
	& +\chi_-(x)\left[m_-(x,-k)e^{ikx}+R_-(k)m_-(x,k)e^{-ikx}\right],
	\end{split}
	\\
	\begin{split}\label{matKk<0}
	\mbox{for $k<0$,} \qquad \sqrt{2\pi}\mathcal{K}(x,k) & =\chi_-(x)T(-k)m_-(x,-k)e^{ixk}
	\\
	& +\chi_+(x)\left[m_+(x,k)e^{ikx}+R_+(-k)m_+(x,-k)e^{-ikx}\right].
	\end{split}
	\end{align}

	\smallskip
	\begin{rem}[Zero frequency behavior for an odd/even resonance]\label{remK0}
		Note that in our setting of a non-generic potential, $\mathcal{K}(x,\cdot)$ may be discontinuous
		at $k=0$. More precisely, we have
		\begin{align}
		\begin{split}\label{lim0+}
		\lim_{k \rightarrow 0^+} \sqrt{2\pi}\mathcal{K}(x,k) & =\chi_+(x)T(0)m_+(x,0)
		+ \chi_-(x) [m_-(x,0) + R_-(0)m_-(x,0)],
		\end{split}
		\end{align}
		and
		\begin{align}
		\begin{split}\label{lim0-}
		\lim_{k \rightarrow 0^-}  \sqrt{2\pi}\mathcal{K}(x,k) & =\chi_-(x)T(0)m_-(x,0)
		+ \chi_+(x) [m_+(x,0) + R_+(0)m_+(x,0) ].
		\end{split}
		\end{align}
		
		We see that if $T(0) = 1$, and therefore $R_\pm(0) = 0$, we have that $\mathcal{K}$ is continuous at $k=0$,
		but if $T(0) \neq 1$, this may not be the case.
		
		When the zero energy resonance is odd we have from Lemma \ref{lemngTR} that $T(0)=-1$ (and $R_{\pm}(0)=0$)
		and therefore
		\begin{align}\label{limodd}
		\begin{split}
		\lim_{k \rightarrow 0^+} \mathcal{K}(x,k) = - \lim_{k \rightarrow 0^-} \mathcal{K}(x,k).
		\end{split}
		\end{align}
		In particular, $\text{sgn}(k)\mathcal{K}(x,k)$ is continuous in the case of an odd zero energy resonance.

		Continuity of $\text{sgn}(k)\mathcal{K}(x,k)$ at $k=0$ 
		can also be seen directly from \eqref{matKk>00}-\eqref{matKk<00} and the fact that
		\begin{align}\label{mpmrel}
		m_+(x,0) = -m_-(x,0). 
		\end{align}
		This latter holds true because oddness implies $m_+(x,0) = -m_+(-x,0)$,
		and therefore $\psi(x) = m_+(-x,0)$ satisfies $H\psi=0$ with $\psi(-\infty) = 1$
		so that, by uniqueness, $\psi(x) = m_-(x,0)$. 
	\end{rem}

	\smallskip
	\subsection{Decomposition of generalized eigenfunctions with an odd resonance}\label{subsec:DecK}
	We concentrate first on the main case of an odd resonance.
	Motivated by \eqref{limodd} we define
	\begin{equation}\label{eq:Ksharp}
	\mathcal{K}^{\#}(x,k) = \text{sgn}(k)\mathcal{K}(x,k).
	\end{equation}
	
From \eqref{matKk>0}-\eqref{matKk<0} we can write
	\begin{align}\label{Ksharpk>0}
	\begin{split}
	\mbox{for \ensuremath{k\geq 0},}\qquad\sqrt{2\pi}\mathcal{K}^{\#}(x,k) & =\chi_{+}(x)T(k)m_{+}(x,k)e^{ixk}
	\\
	& +\chi_{-}(x)\left[m_{-}(x,-k)e^{ikx}+R_{-}(k)m_{-}(x,k)e^{-ikx}\right],
	\end{split}
	\\
	\label{Ksharpk<0}
	\begin{split}\mbox{for \ensuremath{k<0},}\qquad\sqrt{2\pi}\mathcal{K}^{\#}(x,k) & =-\chi_{-}(x)T(-k)m_{-}(x,-k)e^{ixk}
	\\
	& -\chi_{+}(x)\left[m_{+}(x,k)e^{ikx}+R_{+}(-k)m_{+}(x,-k)e^{-ikx}\right].
	\end{split}
	\end{align}

	Let us then analyze the structure of $\mathcal{K}^\#$ more closely. 
	For notational convenience we let $\chi_0(x) \equiv 1$.
	We then use the formulas above to write
	$\mathcal{K}^\#$ as the sum of a ``singular'' and ``regular'' part
	\begin{align}\label{Ksharpdecomp}
	\begin{split}
	& \sqrt{2\pi} \mathcal{K}^{\#}(x,k) =  \mathcal{K}^{\#}_S(x,k) + \mathcal{K}^{\#}_R(x,k),
	\end{split}
	\end{align}
	with the following definitions:
	
	\setlength{\leftmargini}{1.5em}
	\begin{itemize}
		
		\item The singular part is given by
		\begin{align}\label{KsharpS}
		\begin{split}
		& \mathcal{K}^{\#}_S(x,k) := \chi_0(x)\mathcal{K}^{\#}_0(x,k) + \chi_+(x) \mathcal{K}^{\#}_+(x,k) 
		+ \chi_-(x) \mathcal{K}^{\#}_-(x,k)
		\end{split}
		\end{align}
		where 
		\begin{align}\label{K0}
		\mathcal{K}_{0}^{\#}(x,k) & = T(0) m_{+}(x,0)e^{ikx},
		\end{align}
		and
		\begin{align}\label{K+}
		\begin{split}
		\mathcal{K}_+^{\#}(x,k) := (T(k)-T(0)) \mathbf{1}_+(k) e^{ikx} - R_{+}(-k) \mathbf{1}_-(k) e^{-ikx}
		\\
		=: a_+^+(k) e^{ikx} + a_+^-(k) e^{-ikx} ,
		\end{split}
		\\
		\label{K-}
		\begin{split}
		\mathcal{K}_-^{\#}(x,k) := R_{-}(k)  \mathbf{1}_+(k) e^{-ikx} - (T(-k)-T(0) ) \mathbf{1}_-(k) e^{ikx}
		\\
		=: a_-^+(k) e^{ikx} + a_-^-(k) e^{-ikx}.
		\end{split}
		\end{align}
		%
		
		\noindent
		Note that even though we know that $T(0) = -1$ (see \eqref{LESTR0}), 
		we still leave it as $T(0)$ for convenience.
		
		\smallskip
		\item the regular part is given as
		\begin{align}\label{KR}
		\mathcal{K}_{R}^{\#}(x,k):=\begin{cases}
		\chi_{+}(x) \left[(T(k)-T(0))(m_{+}(x,0)-1) + T(k) \left(m_{+}(x,k)-m_{+}(x,0) \right) \right] e^{ikx}
		\\
		\qquad +\chi_{-}(x) \big[\left(m_{-}(x,-k)-m_{-}(x,0)\right)e^{ixk}
		\\
		\qquad +R_{-}(k)\left(m_{-}(x,k)-1\right)e^{-ixk} \big], \qquad \qquad  k\geq0,
		\\
		\\
		-\chi_{-}(x)\left[(T(-k)-T(0))(m_{-}(x,0)-1)e^{ikx} + T(-k)\left(m_{-}(x,-k)-m_{-}(x,0) \right) \right]e^{ikx}
		\\
		\qquad -\chi_{+}(x) \big[ \left(m_{+}(x,k)-m_{+}(x,0)\right)e^{ikx}
		\\
		\qquad +R_{+}(-k)\left(m_{+}(x,-k)-1\right)e^{-ixk} \big], \qquad \qquad k<0.
		\end{cases}
		\end{align}
		
	\end{itemize}


	Let us 
	record here the fact that $m_+(x,0)$ converges rapidly to $\pm 1$ as $x \rightarrow \pm\infty$,
	as one can see from \eqref{Mestimates1} and \eqref{Mestimates2} (with $s=0$) and using the oddness of $m_+$.
	
	\begin{lem}
		For $\epsilon \in \{-1,1\}$ we have, for all $r=1,2,\dots$
		\begin{align}\label{m+0}
		& \chi_{\epsilon}(x) \big| \partial_x^\alpha \big( m_+(x,0)^r - \epsilon^r \big) \big| 
		\lesssim \jx^{-s} \mathcal{W}_{\pm}^{s+1}(x), \qquad \alpha = 0,1,
		\end{align}
		for all $s \geq 0$.
		In particular, under our assumption \eqref{VassumeWei}
		we have
		\begin{align}\label{m+0'}
		\chi_{\epsilon}(x) \big| \partial_x^\alpha \big( m_+(x,0)^r - \epsilon^r \big) \big| 
		\lesssim \jx^{-\gamma+1}, \qquad r=1,2,3,4, \quad \alpha = 0,1.
		\end{align}
	\end{lem}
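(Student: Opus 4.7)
The plan is to reduce the general statement to the basic Jost function bounds of Lemma \ref{lem:Mestimates} at $k=0$ with $s=0$, using two simple algebraic identities and the oddness of the resonance $\varphi(x) = m_+(x,0)$. First I would factor
$m_+(x,0)^r - \epsilon^r = (m_+(x,0) - \epsilon) \sum_{j=0}^{r-1} m_+(x,0)^j \epsilon^{r-1-j}$;
since $m_+(\cdot,0)$ is globally bounded (it tends to $1$ at $+\infty$ by the normalization \eqref{psipmlim}, and to $-1$ at $-\infty$ by oddness), the sum is uniformly bounded and the case $\alpha = 0$ reduces to $r=1$. For $\alpha = 1$, since $\epsilon^r$ is constant we have
$\partial_x(m_+(x,0)^r - \epsilon^r) = r\, m_+(x,0)^{r-1} \partial_x m_+(x,0)$,
so by the same boundedness it suffices to estimate $\partial_x m_+(x, 0)$.

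On the support of $\chi_+$ (contained in $\{x \geq -2\}$), Lemma \ref{lem:Mestimates} with $s=0$ gives $|m_+(x, 0) - 1| \lesssim \mathcal{W}_+^1(x)$ and $|\partial_x m_+(x, 0)| \lesssim \mathcal{W}_+^0(x)$. For $y \geq x \geq 1$ one has $\jy^{s+1} \geq \jx^s \, \jy$ and $\jy^{s+1} \geq \jx^{s+1}$, so integrating against $|V(y)|$ yields $\mathcal{W}_+^1(x) \leq \jx^{-s}\mathcal{W}_+^{s+1}(x)$ and $\mathcal{W}_+^0(x) \leq \jx^{-(s+1)}\mathcal{W}_+^{s+1}(x) \leq \jx^{-s}\mathcal{W}_+^{s+1}(x)$. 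On the bounded transition region $x \in [-2, 1]$, the left-hand side is uniformly bounded while $\mathcal{W}_+^{s+1}(x)$ is bounded below by a positive constant (here one uses $V \not\equiv 0$, which must hold since an odd resonance cannot exist when $V \equiv 0$), so the inequality holds trivially with a possibly larger implicit constant.

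For $\epsilon = -1$ on the support of $\chi_-$ (contained in $\{x \leq 2\}$), I would use oddness of $\varphi$, namely $m_+(-x, 0) = -m_+(x, 0)$, to write $m_+(x, 0)^r - (-1)^r = (-1)^r(m_+(-x, 0)^r - 1)$ and $\partial_x m_+(x, 0) = (\partial_x m_+)(-x, 0)$, which reduces the estimate to the previous step applied at $-x \geq -2$. By Remark \ref{remeven} the oddness of $\varphi$ forces $V$ to be even almost everywhere, so the substitution $y \mapsto -y$ combined with $\langle -y \rangle = \jy$ gives $\mathcal{W}_+^{s+1}(-x) = \mathcal{W}_-^{s+1}(x)$, while $\langle -x \rangle = \jx$ matches the weight. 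The specific consequence \eqref{m+0'} then follows by choosing $s = \gamma - 1$: under \eqref{VassumeWei} we have $\mathcal{W}_\pm^\gamma(x) \leq \|\jy^\gamma V\|_{L^1_y} \lesssim 1$, turning $\jx^{-s}\mathcal{W}_\pm^{s+1}(x)$ into $\jx^{-(\gamma - 1)}$. The main (mild) difficulty is bookkeeping across the three regions (right tail, left tail, bounded middle) and the evenness reflection; the analytic content is entirely contained in Lemma \ref{lem:Mestimates}.
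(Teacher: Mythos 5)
Your route is the same as the paper's: the lemma is justified there in a single sentence, by evaluating the Jost bounds \eqref{Mestimates1}--\eqref{Mestimates2} at $k=0$ and using the oddness of the resonance (the paper's intended left-tail reduction is via $m_+(x,0)=-m_-(x,0)$, i.e.\ \eqref{mpmrel}, and the $m_-$ estimates, whereas you reflect through the evenness of $V$ from Remark \ref{remeven}; the two are equivalent). Your reduction of general $r$ to $r=1$ by factoring the difference of powers, the treatment of $\alpha=1$ via $\partial_x(m_+^r)=r\,m_+^{r-1}\partial_x m_+$, the reflection identities $\mathcal{W}_+^{s+1}(-x)=\mathcal{W}_-^{s+1}(x)$, and the choice $s=\gamma-1$ together with $\mathcal{W}_\pm^{\gamma}(x)\le \|\jy^{\gamma}V\|_{L^1}$ for \eqref{m+0'} are all correct.

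The one step that does not hold as written is your treatment of the transition region: you claim $\mathcal{W}_+^{s+1}(x)$ is bounded below by a positive constant on $[-2,1]$ because $V\not\equiv0$. This inference is false: if $V$ (even, nontrivial) vanishes a.e.\ on $[1,\infty)$ --- e.g.\ any compactly supported even potential, which is admissible under \eqref{VassumeWei} --- then $\mathcal{W}_+^{s+1}(1)=0$ and $\inf_{[-2,1]}\mathcal{W}_+^{s+1}=0$, so the ``trivial'' bound on that region has no justification. The conclusion is not endangered: by the Volterra equation \eqref{minteq}, $m_+(x,0)\equiv1$ and $\partial_xm_+(x,0)\equiv0$ wherever $V$ vanishes on $[x,\infty)$, so both sides degenerate together; but your argument on that subregion collapses and should be replaced. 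The clean fix is to drop the lower-bound argument altogether: the comparisons $\mathcal{W}_+^{1}(x)\le C_s\,\jx^{-s}\mathcal{W}_+^{s+1}(x)$ and $\mathcal{W}_+^{0}(x)\le C_s\,\jx^{-s}\mathcal{W}_+^{s+1}(x)$ hold pointwise for \emph{every} $x$ in the support of $\chi_+$, not only for $x\ge1$ (for $x\in[-2,1]$ use $\jy\ge1$ and $\jx\le\sqrt{5}$ there, for $x\ge1$ your monotonicity argument), so combining them with $|m_+(x,0)-1|\lesssim\mathcal{W}_+^{1}(x)$ and $|\partial_xm_+(x,0)|\lesssim\mathcal{W}_+^{0}(x)$ gives \eqref{m+0} with no case distinction on $\{x\ge-1\}$. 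For the remaining sliver $[-2,-1]$, either observe that the first lines of Lemma \ref{lem:Mestimates} extend there with uniform constants (the Volterra iteration only requires $\mathcal{W}_+^{1}(x)\le\|\jy V\|_{L^1}$), or use your positivity argument, which \emph{is} valid on $[-2,-1]$ since $\mathcal{W}_+^{s+1}(x)\ge\int_{-1}^{\infty}\jy^{s+1}|V(y)|\,dy>0$ for a nonzero even $V$. The same correction applies symmetrically on the left tail. With this replacement your proof is complete and agrees in substance with the paper's.
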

	


	

	\medskip
	We now gather some properties of the $\#$-generalized eigenfunctions
	that can be seen from the explicit formulas and using 
	the estimates for the Jost functions in Lemma \ref{lem:Mestimates}.
	
	\begin{lem}\label{lem:Kregular}
		With the definitions \eqref{Ksharpdecomp}-\eqref{KR} we have
		
		\setlength{\leftmargini}{1.5em}
		\begin{itemize}
			
			\item $k \mapsto \mathcal{K}_{\pm}^{\#}(x,k)$ is Lipschitz continuous 
			with $\mathcal{K}_{\pm}^{\#}(x,0) = 0$.
			More precisely, $\mathcal{K}_{\pm}^{\#}(x,k) = a_\pm^+(k) e^{ikx} + a_\pm^-(k) e^{-ikx}$
			with Lipschitz coefficients $a_{\epsilon_1}^{\epsilon_2}$, $\epsilon_1,\epsilon_2 \in \{+,-\}$,
			with $a_{\epsilon_1}^{\epsilon_2}(0)=0$. 
			
			\smallskip
			\item $k \mapsto \mathcal{K}_R^{\#}(x,k)$ is Lipschitz continuous with $\mathcal{K}_{R}^\#(x,0)=0$. 
			

\end{itemize}

\end{lem}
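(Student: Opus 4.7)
The plan is to establish the two claims by directly inspecting the explicit formulas \eqref{K+}--\eqref{K-} and \eqref{KR}, treating vanishing at $k=0$ and Lipschitz continuity separately, and relying on Lemma \ref{estiTR} and Lemma \ref{lem:Mestimates} for the quantitative bounds.

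For the first bullet, I would simply read off the coefficients from \eqref{K+}--\eqref{K-}:
\begin{align*}
a_+^+(k) &= (T(k)-T(0))\mathbf{1}_+(k), & a_+^-(k) &= -R_+(-k)\mathbf{1}_-(k),\\
a_-^+(k) &= -(T(-k)-T(0))\mathbf{1}_-(k), & a_-^-(k) &= R_-(k)\mathbf{1}_+(k).
\end{align*}
Vanishing at $k=0$ is immediate from $T(k)-T(0)\to 0$ as $k\to 0$ together with $R_\pm(0)=0$ (which holds in our setting by Remark \ref{remTRnongen}, since the resonance is odd). Continuity across $k=0$ is then guaranteed because both one-sided limits are $0$, so the indicator functions cause no jump. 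Lipschitz continuity on each half-line follows from Lemma \ref{estiTR}, which yields $|\partial_k T(k)|+|\partial_k R_\pm(k)|\lesssim \jk^{-1}\lesssim 1$; combining this with the vanishing at $k=0$ and the trivial Lipschitz estimate across the origin produces the claim.

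For the second bullet, I would first check that $\mathcal{K}_R^\#(x,0^\pm)=0$ by plugging $k=0$ into the $k\geq 0$ and $k<0$ branches of \eqref{KR}: every summand contains a factor that vanishes at $k=0$, namely one of $T(k)-T(0)$, $m_\pm(x,k)-m_\pm(x,0)$, or $R_\pm(k)$. Since both one-sided limits equal $0$, $\mathcal{K}_R^\#(x,\cdot)$ is continuous at $k=0$. For Lipschitz continuity I would differentiate each summand in $k$ and bound the result uniformly for $x$ in the appropriate half-line. A representative term is
\[
T(k)\bigl(m_+(x,k)-m_+(x,0)\bigr)e^{ikx}\chi_+(x),
\]
whose $k$-derivative is the sum of $T'(k)(m_+(x,k)-m_+(x,0))e^{ikx}\chi_+(x)$, $T(k)\partial_k m_+(x,k)e^{ikx}\chi_+(x)$, and $T(k)(m_+(x,k)-m_+(x,0))ix e^{ikx}\chi_+(x)$. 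By Lemma \ref{lem:Mestimates} (specifically \eqref{Mestimates1} with $s=0,1$ and \eqref{Mestimates1'}) and Lemma \ref{estiTR}, each factor is controlled on $\{x\geq -1\}$ by $\mathcal{W}_+^{s}(x)$ for suitable $s$, giving a bound that is integrable in $x$ and uniform in $k$. The second derivative in $k$ of the product $(T(k)-T(0))(m_+(x,0)-1)$ is trivially bounded by Lemma \ref{estiTR} and \eqref{m+0}. The remaining summands involving $m_-$ or $R_\pm$ are handled identically, using the symmetric estimates from Lemma \ref{lem:Mestimates} on $\{x\leq 1\}$ via the cutoff $\chi_-$.

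The main (minor) obstacle is merely bookkeeping: keeping track of the piecewise definition of $\mathcal{K}_R^\#$ across $k=0$ and confirming that the cutoffs $\chi_\pm$ restrict each Jost function factor to the half-line where the estimates of Lemma \ref{lem:Mestimates} apply. Once that matching is done, everything reduces to standard Lipschitz estimates, and no cancellation or subtle low-frequency analysis is required beyond the vanishing already encoded by construction in \eqref{KR}.
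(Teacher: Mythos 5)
Your proposal is correct and follows essentially the same route as the paper, whose proof is exactly this ``direct verification'' using the smoothness (Lipschitz bounds from Lemma \ref{estiTR}) of $T$ and $R_\pm$, the vanishing $R_\pm(0)=0$, and the Jost-function estimates \eqref{Mestimates1}; your write-up just makes the term-by-term bookkeeping explicit. The only quibble is your reference to a ``second derivative'' of $(T(k)-T(0))(m_+(x,0)-1)$: Lipschitz continuity only requires first $k$-derivatives (and second derivatives of $T$ are not controlled by Lemma \ref{estiTR}), but since the first-derivative bound is what you actually need and it holds, this is a harmless slip.
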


\begin{proof}
These statements are a direct verification, using that $T$ and $R_\pm$ are smooth
and $R_\pm(0)=0$ and the estimates \eqref{Mestimates1}.
\end{proof}

	\medskip
	\subsection{The `sharp' transform}
	In analogy with \eqref{tildeF}, for $f \in \mathcal{S}$, we define the following `sharp' transform:
	\begin{align}\label{sharpF}
	\mathcal{F}^\# [f](k) = f^\#(k) = \int \overline{\mathcal{K}^\#(x,k)} f(x)\,dx.
	\end{align}
	Here is the analogue of Proposition \ref{proptildeF}:
	
	\begin{prop}\label{propsharpF}
		We have 
		\begin{align}\label{FsharpL2}
		{\big\| f^\# \big\|}_{L^2} = {\| f \|}_{L^2}, \quad \forall f\in L^{2} 
		\end{align}
		and
		\begin{align}\label{Fsharp-1}
		\big(\mathcal{F}^\#\big)^{-1}[\phi](x) = \int \mathcal{K}^\#(x,k) \phi(k)\,dk.
		\end{align}
		Also, if $D:=\sqrt{-\partial_{xx}+V}$, for any symbol $m$ we have 
		\begin{align}\label{Fsharpm}
		m(D) = \big(\mathcal{F}^\#\big)^{-1} m(k) \mathcal{F}^\#,
		\end{align}
		so that in particular 
		$\left(-\partial_{xx}+V\right) 
		= \big(\mathcal{F}^\#\big)^{-1} k^{2} \mathcal{F}^\#$.
		Moreover,
		
		\setlength{\leftmargini}{1.5em}
		\begin{itemize}
			
			\item If $\phi\in L^1$, then $\phi^\#$ is a Lipschitz continuous bounded function
			that decays to zero at infinity.
			
			\smallskip
			\item There exists $C>0$, depending on ${\| V \|}_{L^1}$,
			such that one has
			\begin{equation}\label{kusharp}
			\frac{1}{C} {\| u \|}_{H^1} \leq {\big\| \jk u^\# \big\|}_{L^2} \leq C 
			{\| u \|}_{H^1}.
			\end{equation}
			
			\smallskip
			\item We have
			\begin{align}\label{eq:weiF}
			{\| \partial_{k} f^\# \big\|}_{L^2} \lesssim {\| \jx f \|}_{L^2}.
			\end{align}
			Moreover, for any $\beta \in [0, \gamma - 5/2]$,
			\begin{align}\label{eq:weiFR}
			{\Big\| \partial_{k} \int \mathcal{K}^\#_R(x,k) f(x) \, dx \Big\|}_{L^2}
			\lesssim {\big\| \jx^{-\beta} f \big\|}_{L^2}.
			\end{align}
			
		\end{itemize}
		
	\end{prop}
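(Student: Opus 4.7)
The plan rests on the simple algebraic identity $\mathcal{F}^\#[f](k) = \mathrm{sgn}(k)\wtF[f](k)$, which follows from \eqref{eq:Ksharp} and the reality of $\mathrm{sgn}$. Since multiplication by $\mathrm{sgn}(k)$ is a unitary involution on $L^2$, the $L^2$-isometry \eqref{FsharpL2}, the inversion formula \eqref{Fsharp-1}, and the functional calculus \eqref{Fsharpm} all transfer directly from the corresponding statements about $\wtF$ in Proposition \ref{proptildeF}: one simply slides $\mathrm{sgn}(k)$ past the multiplier $m(k)$ and absorbs it back into the kernel. The Sobolev equivalence \eqref{kusharp} then follows by applying \eqref{Fsharpm} with $m(k) = \jk$, together with the standard form equivalence ${\|\sqrt{H+1}u\|}_{L^2}^2 = {\|\partial_x u\|}_{L^2}^2 + \int V|u|^2\,dx + {\|u\|}_{L^2}^2 \approx {\|u\|}_{H^1}^2$, valid under $V\in L^1$ after a Sobolev/Gagliardo--Nirenberg interpolation to absorb the potential term with constants depending only on ${\|V\|}_{L^1}$.

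For the Lipschitz continuity, boundedness, and decay at infinity of $\phi^\#$ when $\phi \in L^1$, I would use the decomposition \eqref{Ksharpdecomp}--\eqref{KR} of $\mathcal{K}^\#$ into its singular and regular parts. Each term in $\mathcal{K}_S^\#$ has the form (Lipschitz-in-$k$ coefficient) $\cdot e^{\pm ikx}$ times a bounded-in-$x$ factor (either $T(0)\, m_+(x,0)$ in the $\chi_0 \mathcal{K}_0^\#$ piece, or a smooth localization in the $\chi_\pm \mathcal{K}_\pm^\#$ pieces, whose coefficients moreover vanish at $k=0$). Lemma \ref{lem:Kregular} likewise gives Lipschitz continuity of $\mathcal{K}_R^\#$ in $k$ with $\mathcal{K}_R^\#(x,0)=0$. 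Pairing with $\phi \in L^1$ then yields the uniform bound ${\|\phi^\#\|}_{L^\infty} \lesssim {\|\phi\|}_{L^1}$ and joint Lipschitz continuity in $k$, while the Riemann--Lebesgue lemma applied to each exponential-times-$L^1$ contribution gives the decay at infinity.

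The main technical task is the weighted bound \eqref{eq:weiF}, which I would reduce to \eqref{eq:weiFR} plus direct estimates on the singular part. Differentiating $\mathcal{K}_S^\#$ in $k$ via \eqref{KsharpS}--\eqref{K-} produces two types of contributions: (a) derivatives of the Lipschitz $k$-coefficients times $e^{\pm ikx}$, and (b) the original coefficients times $\pm ix\, e^{\pm ikx}$. Integrating each against $f$ and using Plancherel in the Fourier pair $x \leftrightarrow k$, type (a) terms are controlled by ${\|f\|}_{L^2}$ and type (b) terms by ${\|xf\|}_{L^2}$; the multiplicative factors $T(0)\, m_+(x,0)$ and the cutoffs $\chi_{0,\pm}$ are harmless since $m_+(\cdot,0) \in L^\infty$ by \eqref{m+0}. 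For \eqref{eq:weiFR}, each term arising from $\partial_k$ applied to \eqref{KR} carries an $x$-localization — either through $\chi_\pm(x)(m_\pm(x,0)-1)$ in the difference-at-zero pieces, or through $\chi_\pm(x)\partial_k m_\pm(x,\pm k)$ in the Jost-derivative pieces — which permits an application of the PDO bounds in Lemma \ref{lem:m-1}, in particular \eqref{eq:mPDO+-1}, \eqref{eq:pdomkd}, and \eqref{eq:pdomxkd}, to bound the corresponding operator by ${\|\jx^{-\beta} f\|}_{L^2}$ provided $\beta \leq \gamma - 5/2$. The main obstacle is the bookkeeping: one must enumerate the many terms produced by $\partial_k$ on \eqref{KR} and match each to the applicable PDO estimate from Lemma \ref{lem:m-1}, after which the bound follows by linearity.
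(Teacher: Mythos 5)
Your proposal follows essentially the same route as the paper: \eqref{FsharpL2}--\eqref{Fsharpm} are transferred from $\wtF$ through the $\sgn(k)$ factor, \eqref{kusharp} comes from the quadratic form of $H$ with the $\int V|u|^2$ term absorbed using only ${\| V \|}_{L^1}$ (the paper bounds ${\|u\|}_{L^\infty}\lesssim {\|\jk u^\sharp\|}_{L^2}$ rather than invoking Gagliardo--Nirenberg, an inessential difference), and \eqref{eq:weiF}--\eqref{eq:weiFR} are proved exactly as in the paper by splitting $\mathcal{K}^\#$ into its singular and regular parts, using flat Plancherel for the exponential pieces (derivative hitting the Lipschitz coefficients vs.\ the factor $ix$) and the PDO bounds of Lemma \ref{lem:m-1} together with \eqref{m+0'} for the localized ones. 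The only overstatement is your claim that pairing with $\phi\in L^1$ yields Lipschitz continuity of $\phi^\#$: the $k$-Lipschitz constant of $\mathcal{K}^\#(x,\cdot)$ grows like $|x|$, so this does not follow from $L^1$ data alone; the paper's own proof of this bullet only establishes boundedness and decay via Riemann--Lebesgue and continuity via dominated convergence.
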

	
	\begin{proof}
		\eqref{FsharpL2} follows directly from \eqref{tildeFL2} and the definition \eqref{sharpF}
		that is, $(\mathcal{F}^\# f)(k) = \sgn(k) (\wtF f)(k)$.
		To verify \eqref{Fsharp-1} we write
		\begin{align*}
		\big( (\mathcal{F}^\#)^{-1} \circ \mathcal{F}^\# g \big) (x) & =
		\int_{\R_k} \mathcal{K}^\#(x,k) \Big( \int_{\R_y} \overline{\mathcal{K}^\#(y,k)} g(y)\,dy \Big) dk
		\\
		& = \int_{\R_k} \mathcal{K}(x,k) \Big( \int_{\R_y} \overline{\mathcal{K}(y,k)} g(y)\,dy \Big) dk = g(x)
		\end{align*}
		having used \eqref{tildeF-1}. Similarly 
		\begin{align*}
		\big(\mathcal{F}^\# \circ (\mathcal{F}^\#)^{-1} g \big) (k) & =
		\int_{\R_y} \overline{\mathcal{K}^\#(y,k)} 
		\Big( \int_{\R_\ell} \mathcal{K}^\#(y,\ell) g(\ell)\,d\ell \Big) dy
		\\
		& = \int_{\R_y} \sgn(k) \overline{\mathcal{K}(y,k)} 
		\Big( \int_{\R_\ell} \mathcal{K}(y,\ell) \sgn(\ell) g(\ell)\,d\ell \Big) dy
		\\
		& =  \sgn(k) \wtF \big( \wtF^{-1}(\sgn(\cdot) g)  \big) = g(k).
		\end{align*}
		
		The intertwining identity \eqref{Fsharpm} follows along the same lines of the first calculation above using the analogue for $\wtF$.
		
		The fact that $\mathcal{F}^\#$ maps $L^1$ to bounded functions that decay at infinity
		follows from the Riemann-Lebesgue lemma.
		Continuity follows from dominated convergence, since $k \mapsto \K^\#(x,k)$ is continuous.

For \eqref{kusharp} one can first use \eqref{FsharpL2} and \eqref{Fsharpm} to see that
\begin{align*}
{\| u \|}_{H^1}^2 & = - \int u u_{xx} \, dx + \int u^2 = 
  \int u Hu \, dx - \int u Vu \, dx + \int u^2 \, dx
\\
& = 
{\big\| |k| u^\sharp \big\|}_{L^2}^2 - \int u Vu \, dx 
+ {\| u^\sharp \|}_{L^2}^2.
\end{align*}
Then, using \eqref{Fsharp-1} and $|\mathcal{K}^\sharp| \lesssim 1$
one can estimate ${\| u \|}_{L^\infty} \lesssim {\| u^\sharp \|}_{L^1} \lesssim {\| \jk u^\sharp \|}_{L^2}$
and
\begin{align*}
\Big| \int u Vu \, dx \Big| \lesssim {\| V \|}_{L^1}  {\| \jk u^\sharp \|}_{L^2}^2
\end{align*}
to obtain the first inequality in \eqref{kusharp};
the second inequality follows similarly using ${\| u \|}_{L^\infty} \lesssim {\| u \|}_{H^1}$.

		For \eqref{eq:weiF} we first use \eqref{Ksharpdecomp}-\eqref{KsharpS} to write 
		\begin{align}\label{dkfsharp}
		\begin{split}
		\partial_k f^\# (k) & = \int_\R \partial_k \overline{\mathcal{K}^\#_0(x,k)} f(x) \, dx
		+ \int_\R \chi_+(x) \partial_k \overline{\mathcal{K}^\#_+(x,k)} f(x) \, dx
		\\
		& + \int_\R \chi_+(x) \partial_k \overline{\mathcal{K}^\#_-(x,k)} f(x) \, dx
		+ \int_\R \partial_k \overline{\mathcal{K}^\#_R(x,k)} f(x) \, dx.
		\end{split}
		\end{align}
		For the first term on the right-hand side above we see from \eqref{K0} that
		\begin{align*}
		\int_\R \partial_k \overline{\mathcal{K}^\#_0(x,k)}  f(x) \, dx
		= T(0) \int_\R m_+(x,0) (-ix) e^{ixk} f(x) \, dx
		\end{align*}
		so that taking $L^2$-norms and using (flat) Plancharel we get a bound by ${\| x f \|}_{L^2}$.
		For the second and third term on the right-hand side of \eqref{dkfsharp} 
		we recall the definitions \eqref{K+}-\eqref{K-}
		and use again Plancharel together with the fact that that $|\partial_k a_{\epsilon_1}^{\epsilon_2}(k)|\lesssim 1$,
		$\epsilon_1,\epsilon_2 \in \{+,-\}$.
		
		We are left with estimating the last term  on the right-hand side of \eqref{dkfsharp},
		for which we prove directly the stronger estimate \eqref{eq:weiFR}.
		We start from the formula \eqref{KR} to write
		\begin{align}\label{dkKR}
		\begin{split}
		& \int_\R \partial_k \overline{\mathcal{K}^\#_R(x,k)} f(x) \, dx
		\\
		& = \int_\R \chi_+(x) \, \partial_k
		\Big\{ \big[ \mathbf{1}_+(k) (T(k)-T(0))(m_{+}(x,0)-1) + T(k) \mathbf{1}_+(k) (m_{+}(x,k)-m_{+}(x,0)) 
		\\
		& - \mathbf{1}_-(k) (m_{+}(x,k)-m_{+}(x,0)) \big] e^{ikx} 
		- \mathbf{1}_- (k) R_{+}(-k) (m_{+}(x,-k)-1) e^{-ixk} \Big\} f(x) \, dx
		\\
		& + \int_\R \chi_-(x) \, \partial_k \Big\{ \big[ \mathbf{1}_+(k)(m_{-}(x,-k)-m_{-}(x,0))e^{ixk}
		+ \mathbf{1}_+(k)  R_{-}(k)(m_{-}(x,k)-1 )e^{-ixk}
		\\
		& - \mathbf{1}_-(k) (T(-k)-T(0)) (m_{-}(x,0)-1) - \mathbf{1}_-(k) T(-k)(m_{-}(x,-k)-m_{-}(x,0)) \big] e^{ixk}
		\Big\} f(x) \, dx.
		\end{split}
		\end{align}
		Note that the symbol in curly brackets is Lipschitz in $k$.
		Since the contributions from all four lines on the right-hand side of \eqref{dkKR} can be treated in the same way, 
		we only look at the first one.
		We can write it as
		\begin{align}
		\nonumber
		& \int_\R \chi_+(x) \, 
		\big\{ \partial_k a_+^+(k) (m_{+}(x,0)-1) 
		\\ 
		\label{dkKR1}
		& \qquad + \partial_k T(k) \mathbf{1}_+(k) 
		(m_{+}(x,k)-m_{+}(x,0)) \big\} e^{ixk} f(x) \, dx
		\\
		\label{dkKR2}
		+ & \int_\R \chi_+(x) \, T(k) \partial_k \big[\mathbf{1}_+(k) (m_{+}(x,k)-m_{+}(x,0)) \big] \, e^{ixk} 
		f(x) \, dx
		\\
		\nonumber
		+ & \int_\R \chi_+(x) \,\big\{ a_+^+(k) (m_{+}(x,0)-1) 
		\\
		\label{dkKR3}
		& \qquad + T(k) \mathbf{1}_+(k) (m_{+}(x,k)-m_{+}(x,0)) \big\}
		(ix) e^{ixk} f(x) \, dx.
		\end{align}
		For \eqref{dkKR1}, using $|\partial_k a_+^+(k)|, |\partial_kT(k)| \leq 1$, we have
		\begin{align*}
		\big| \eqref{dkKR1} \big| & \lesssim \Big| \int_\R \chi_+(x) \,(m_{+}(x,0)-1) e^{ixk} f(x) \, dx \Big|
		\\
		& + \Big| \int_\R \chi_+(x) (m_{+}(x,k)- 1 
		) e^{ixk} f(x) \, dx \Big|.
		\end{align*}
		We can then use Plancharel with \eqref{m+0'} for the first line and \eqref{eq:mPDO+-1} ($\gamma > 3/2 + \beta$)
		for the second line, to estimate the $L^2$ norm of \eqref{dkKR1} by ${\| \jx^{-\beta}f \|}_{L^2}$ as desired.
		
		For \eqref{dkKR2} 
			we first notice that
			$\partial_k [ \mathbf{1}_+(k) (m_{+}(x,k)-m_{+}(x,0)) ] = \mathbf{1}_+(k) \partial_k m_{+}(x,k)$,
			and then use  $|T(k)| \lesssim 1$ and \eqref{eq:pdomkd} ($\gamma > 5/2 + \beta$)
			to estimate
			\begin{align*}
			{\big\| \eqref{dkKR2} \big\|}_{L^2} & \lesssim 
			{\Big\| \int_\R \chi_+(x)  \partial_k m_+(x,k) e^{ixk} f(x) \, dx \Big\|}_{L^2}
			\lesssim {\| \jx^{-\beta} f \|}_{L^2}.
			\end{align*}
		
		
		Finally for \eqref{dkKR3} we can estimate
		\begin{align*}
		\big| \eqref{dkKR3} \big| & \lesssim \Big| \int_\R \chi_+(x) \, (m_{+}(x,0)-1) e^{ixk} (ix) f(x) \, dx \Big|
		\\
		& + \Big| \int_\R \chi_+(x) (m_{+}(x,k)-m_{+}(x,0)) e^{ixk} (ix) f(x) \, dx \Big|;
		\end{align*}
		using again Plancharel with \eqref{m+0'} for the first line,
		and the $L^2$-bound for PDOs \eqref{eq:mPDO+-1}, we obtain a bound by ${\| \jx^{-\beta} f\|}_{L^2}$ as desired.
		This concludes the proof of \eqref{eq:weiFR} and of the proposition.
	\end{proof}

	\medskip
	\begin{definition}[Singular and Regular `Projections']
		According to the decomposition \eqref{Ksharpdecomp}-\eqref{KsharpS}, 
		given $\phi\in \mathcal{S}$ we can write
		\begin{align}\label{decompphi}
		\begin{split}
		& \phi = \phi_S + \phi_R, \qquad \phi_S = \phi_0 + \phi_+ + \phi_-,
		\\
		& \phi_\ast(x) :=\frac{1}{\sqrt{2\pi}} \chi_\ast (x)  \int \mathcal{K}_\ast^\#(x,k) \, 
		(\mathcal{F}^\# \phi) (k) \, dk, \qquad \ast \in \{0,+,-\}
		\\
		& \phi_R(x) :=\frac{1}{\sqrt{2\pi}}  \int \mathcal{K}_R^\#(x,k) \, 
		(\mathcal{F}^\# \phi) (k) \, dk.
		\end{split}
		\end{align}
		We can then extend these definitions to $L^2$ analogously to Lemma \ref{proptildeF}.
		Note that the identities \eqref{decompphi} are the same with $\wtF$ instead of $\mathcal{F}^\#$.
	\end{definition}
	
		
	\medskip
	\subsection{The case of an even resonance}\label{sseceven}
	To conclude this section, let us comment on the case of an even resonance.
	If the zero energy resonance is even the distorted Fourier transform is already continuous,
	see Remark \ref{remdisc},
	so we do not need to use the `sharp transform'. 
	In what follows we will only work in the case of an odd resonance, with the understanding
	that the case of an even resonance follows with minor modifications,
	and we will only briefly comments about this in a few instances.


	\bigskip
	\section{Linear estimates: decay and smoothing}\label{seclinest}
	In this section we provide all the main linear estimate that we will need in the rest of the paper.
	We start by collecting the standard linear decay estimates, 
	which we phrase in terms of the sharp transform, Lemma \ref{lem:pointwiseH}, 
	Corollaries \ref{cor:pointwisesingular} and \ref{cor:regularLinfty}.  
	Then we establish smoothing estimates for non-generic potentials, see Lemma \ref{lem:smoothingsim}.  
	Finally, we show an estimate of ``improved local decay" type 
	specific to the low frequency-part of (pseudo) linear solutions with a coefficient vanishing at $0$, Lemma \ref{lem:lowlocaldecay},
	and a version of $L^2$-local decay for differentiated flows, Lemma \ref{lem:localderivative}.
	
	Together with the structure of the nonlinear spectral distribution (NSD), 
	these linear estimates will be the main ingredients that allow us to to close our nonlinear bounds 
	in Sections \ref{sec:Cubic} and \ref{sec:estimateregular}.

	\medskip
	\subsection{Dispersive decay}\label{ssecdisp}
	We start with a basic stationary phase-type lemma (see, for example, \cite{GPR}): 
	
	\begin{lem}
		\label{lemstat}
		Consider a function $a(x,k)$ defined on $B \times \mathbb{R}_{+}$, for some $B \subset \R$, and such that
		\begin{equation}
		\label{hypest}
		| a(x, k)| +|k| |\partial_{k}a(x,k)| \lesssim 1, \quad \forall x \in B, \, \forall k \in \mathbb{R}_{+}
		\end{equation}
		and for every  $X \in \mathbb{R}$, consider the oscillatory  integral
		$$
		I(t,X,x) = \int_{0}^{\infty} e^{it(k-X)^2} a(x,k)  h(k) \, dk, \quad t>0, \, x \in B.
		$$
		Then, we have the  estimate
		\begin{equation}
		\label{decaylem} | I(t,X,x)|  \lesssim \frac{1}{\sqrt t} \| h  \|_{L^\infty} + \frac{1}{t^{\frac{3}{4}}} \| \partial_k h \|_{L^2}
		\end{equation}
		which is uniform for $X\in \mathbb{R}$, $t >0$ and $x\in B$.
	\end{lem}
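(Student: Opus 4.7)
The estimate is a classical stationary-phase bound for the quadratic phase $\phi(k) = (k-X)^2$ on the half-line, whose critical point is at $k = X$. The plan is to set $\delta := t^{-1/2}$ and split $I(t,X,x) = I_{\mathrm{near}} + I_{\mathrm{far}}$ according to whether $|k-X| \leq \delta$ or not (both restricted to $k \geq 0$). Since $|a| \lesssim 1$ and the domain of $I_{\mathrm{near}}$ has length at most $2\delta$, the near piece is controlled immediately by $|I_{\mathrm{near}}| \lesssim \delta\|h\|_{L^\infty} = t^{-1/2}\|h\|_{L^\infty}$, matching the first term of \eqref{decaylem}.

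To handle $I_{\mathrm{far}}$, I would use the non-stationary identity $e^{it(k-X)^2} = \bigl(2it(k-X)\bigr)^{-1}\partial_k e^{it(k-X)^2}$ and integrate by parts once. Because the hypothesis $|k||\partial_k a|\lesssim 1$ only bounds $|\partial_k a|$ by $1/|k|$, which is not integrable near $k=0$, I would first peel off a boundary strip: write $I_{\mathrm{far}} = I_{\mathrm{far},0} + I_{\mathrm{far},1}$ with $I_{\mathrm{far},0}$ the integral over $\{0\leq k \leq \delta\}\cap\{|k-X|>\delta\}$ (bounded trivially by $\delta\|h\|_{L^\infty} \lesssim t^{-1/2}\|h\|_{L^\infty}$) and $I_{\mathrm{far},1}$ the remainder, on which the IBP is carried out. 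The resulting boundary terms (at $k=X\pm\delta$, at $k=\delta$, and, when applicable, at $k=0$, which is now excluded) are each bounded by $(t\delta)^{-1}\|h\|_{L^\infty} \lesssim t^{-1/2}\|h\|_{L^\infty}$.

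The bulk of the IBP produces three pieces, according to which factor the inner $\partial_k$ hits. The $(k-X)^{-1}$ piece is controlled pointwise by $|ah|/(k-X)^2$ and integrates to $\lesssim (t\delta)^{-1}\|h\|_{L^\infty}$. The $\partial_k h$ piece is the source of the $t^{-3/4}$ decay: by Cauchy--Schwarz,
\[
\frac{1}{2t}\int_{|k-X|>\delta} \frac{|a|\,|\partial_k h|}{|k-X|}\,dk \leq \frac{\|\partial_k h\|_{L^2}}{2t}\Bigl(\int_{|k-X|>\delta}\frac{dk}{(k-X)^2}\Bigr)^{\!1/2} \lesssim t^{-3/4}\|\partial_k h\|_{L^2},
\]
which is exactly the second term in \eqref{decaylem}. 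Finally the $\partial_k a$ piece, bounded by $\|h\|_{L^\infty}/(tk|k-X|)$, is integrated over $\{k > \delta,\, |k-X|>\delta\}$ where $1/k$ is no longer singular; a direct computation (via partial fractions when $X \gg \delta$) gives at worst $\|h\|_{L^\infty}\log(X/\delta)/(tX)$, which is absorbed into $t^{-1/2}\|h\|_{L^\infty}$ using the elementary inequality $\log y \leq y$ with $y = X\sqrt{t}$. The main obstacle is precisely this last term: handling the near-boundary behavior of $\partial_k a$ is what forces the extra splitting described above, while all other pieces are routine stationary phase.
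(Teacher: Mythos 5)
Your proof is correct: the split at scale $\delta=t^{-1/2}$ around the stationary point $k=X$, the trivial bound on the near region, integration by parts off it with Cauchy--Schwarz for the $\partial_k h$ term, and the extra strip $\{0\le k\le\delta\}$ plus the partial-fraction/logarithm estimate to handle $|\partial_k a|\lesssim 1/k$ all check out. The paper itself does not prove Lemma \ref{lemstat} but defers to \cite{GPR}, and your argument is essentially the standard stationary-phase proof given there, so there is nothing further to flag.
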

	
	Given the decomposition \eqref{Ksharpdecomp}, 
	we can directly obtain  pointwise decay estimates associated to each piece of the decomposition. 
	We start with the singular part.
	
	\begin{cor}\label{cor:pointwisesingular}
		Using the notation in \eqref{KsharpS}, we have
		\begin{align}\label{eq:singulardecaysep}
		\left\Vert\chi_\ast(x)\int\mathcal{K}_{\ast}^{\#}(x,k)e^{ik^{2}t}h(k)\,dk\right\Vert_{L^{\infty}_x}
		\lesssim\frac{1}{\sqrt{t}}{\big\| h \big\|}_{L^\infty_k}
		+\frac{1}{t^{\frac{3}{4}}}{\big\| \partial_k h \big\|} _{L^2_k}
		\end{align}
		for $\ast\in\{0,+,-\}$. 
		In particular, we get
		\begin{align}\label{eq:singulardecay}
		\left\Vert  \int\mathcal{K}^{\#}_S(x,k)e^{ik^{2}t}h(k)\,dk\right\Vert_{L^{\infty}_x}
		\lesssim\frac{1}{\sqrt{t}}{\big\| h \big\|}_{L^\infty_k}
		+\frac{1}{t^{\frac{3}{4}}}{\big\| \partial_k h \big\|} _{L^2_k}.
		\end{align}
	\end{cor}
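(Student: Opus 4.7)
The plan is to reduce the estimate to a direct application of the stationary phase Lemma \ref{lemstat}. First, I unfold the definitions \eqref{K0}--\eqref{K-} and observe that each $\chi_\ast(x) \mathcal{K}_\ast^\#(x,k)$ is a finite sum of terms of the form $b_\ast(x)\, a(k)\, e^{\pm i k x}\, \mathbf{1}_\pm(k)$, where $b_\ast(x)$ is uniformly bounded in $x$ (for $\ast = 0$ this is $\chi_0(x) T(0) m_+(x,0)$, which is bounded by \eqref{Mestimates1} with $s=0$; for $\ast = \pm$ it is just $\chi_\pm(x)$), and $a(k)$ is either the constant $1$, one of $T(\pm k) - T(0)$, or one of $R_\pm(\mp k)$.

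Next, for each such term I complete the square,
$$k^2 t \pm k x = t \Big( k \pm \frac{x}{2t} \Big)^2 - \frac{x^2}{4t},$$
and pull out the unit-modulus factor $e^{-i x^2/(4t)}$. For the pieces carrying $\mathbf{1}_+(k)$ the $k$-integral is already over $[0, \infty)$; for those carrying $\mathbf{1}_-(k)$ I change variables $k \mapsto -k$ (which preserves $e^{ik^2 t}$), after which the integral is again over $[0, \infty)$, the symbol becomes $a(-k)$, and $h(k)$ becomes $h(-k)$. Each contribution is thus in the form
$$b_\ast(x)\, e^{-i x^2/(4t)} \int_0^{\infty} e^{i t (k - X)^2}\, a(\pm k)\, h(\pm k) \, dk, \qquad X = \mp \frac{x}{2t},$$
which is exactly the shape handled by Lemma \ref{lemstat}.

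To apply the lemma I need to verify \eqref{hypest} for the symbols $a(\pm k)$. By \eqref{TRid} the coefficients $T, R_\pm$ are bounded, so $|a(\pm k)| \lesssim 1$. By Lemma \ref{estiTR}, $\jk\, |\partial_k T(k)|, \jk\, |\partial_k R_\pm(k)| \lesssim 1$, hence $|k|\, |\partial_k a(\pm k)| \lesssim 1$. Lemma \ref{lemstat} then gives a bound of the integral by
$$t^{-1/2}\, {\| h(\pm \cdot)\, \mathbf{1}_{[0,\infty)} \|}_{L^\infty} + t^{-3/4}\, {\| \partial_k[h(\pm \cdot)\, \mathbf{1}_{[0,\infty)}] \|}_{L^2},$$
which is in turn dominated by $t^{-1/2}\|h\|_{L^\infty_k} + t^{-3/4}\|\partial_k h\|_{L^2_k}$. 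Summing over the finitely many pieces and taking $L^\infty_x$ yields \eqref{eq:singulardecaysep}, and the combined estimate \eqref{eq:singulardecay} then follows from \eqref{KsharpS} by the triangle inequality.

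There is no real obstacle here; the only points requiring a bit of care are the change of variable at $k = 0$ for the pieces restricted to $k < 0$ (which produces no boundary contributions since $T(\pm k) - T(0)$ and $R_\pm(\mp k)$ are continuous at $0$), and the uniform boundedness of $m_+(\cdot, 0)$ needed for the $\ast = 0$ piece, where $\chi_0 \equiv 1$ offers no spatial localization.
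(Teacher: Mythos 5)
Your argument is correct and is essentially the paper's proof: complete the square, pull out the unimodular factor $e^{-ix^2/(4t)}$, and apply the stationary-phase Lemma \ref{lemstat}, with the symbol bounds supplied by Lemma \ref{estiTR} (the paper only writes out $k\geq 0$ and keeps $\mathcal{K}^{\#}_\ast(x,k)e^{-ikx}$ as the $x$-dependent symbol rather than splitting off the coefficients, but this is the same computation). One small precision: for the $\ast=0$ piece the uniform bound on $m_+(\cdot,0)$ over all of $\R$ should be drawn from the global boundedness of the zero-energy resonance (e.g.\ \eqref{m+0'}, or the relation $m_+(x,0)=-m_-(x,0)$ combined with \eqref{Mestimates1} applied to $m_-$), since \eqref{Mestimates1} by itself only gives a bound $\lesssim \jx$ in the region $x\leq 1$.
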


	\begin{proof}
		We only consider the pieces with $k\geq0$ since $k<0$ is similar.  
		We define
		\begin{equation}
		J_{\ast}:=\chi_{\ast}(x)\int_{k\geq0} \mathcal{K}_{\ast}^{\#}(x,k)e^{ik^{2}t}h(k)\,dk,
		\end{equation}
		and consider the case $\ast=0,+$. The analysis of $J_-$ is the same. 
		We write $
		J_{\ast}= e^{ -i {\frac{x^2}{ 4t}}} I_{\ast} (t,X,x)$, for $X=  -x/(2 t)$
		with
		$$I_{\ast}(t,X,x)= \int_{0}^{+\infty} e^{it (k-X)^2}  \mathcal{K}_{\ast}^{\#}(x,k)e^{-ikx} h(k)\, dk.$$ 
		Thanks to Lemma \ref{estiTR},  
		given the explicit formulae from \eqref{K0} and \eqref{K+},
		we can use Lemma \ref{lemstat} with 
		$a(x,k) =T(0)m_+(x,0)$ and $a(x,k) =\chi_+(x)(T(k)-T(0))$ for $\ast=0$ and $\ast=+$ respectively. 
		These give
		$$ |J_{\ast}| \lesssim   \frac{1}{\sqrt t} \| h \|_{L^\infty} 
		+ \frac{1}{t^{\frac{3}{4}}} \| \partial_k h \|_{L^2}$$
		as desired.
	\end{proof}

	Recall the definition of the regular part from \eqref{KR}.
	We have the following $L^\infty$ decay for the regular part of  the Schr\"odinger flow:
	
	\begin{cor}\label{cor:regularLinfty}
		If $\jx^\gamma V(x)\in L^1$ with $\gamma\geq\beta+1$, then
		\begin{align}\label{regularinfty}
		\left\Vert \jx^\beta \int\mathcal{K}^{\#}_R(x,k)e^{ik^{2}t}h(k)\,dk\right\Vert_{L^{\infty}_x}
		\lesssim\frac{1}{\sqrt{t}}{\big\| h \big\|}_{L^\infty_k}
		+\frac{1}{t^{\frac{3}{4}}}{\big\| \partial_k h \big\|} _{L^2_k}.
		\end{align}
	\end{cor}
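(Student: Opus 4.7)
The plan is to reduce the estimate to Lemma \ref{lemstat} exactly as in the proof of Corollary \ref{cor:pointwisesingular}, but with a symbol $a(x,k)$ carrying an extra $\jx^{-\beta}$ weight that compensates the prefactor $\jx^\beta$ on the left-hand side. More precisely, for each piece of $\mathcal{K}^\#_R(x,k)$ in the decomposition \eqref{KR}, I will isolate a plane-wave factor $e^{\pm ikx}$, complete the square to write
\[
e^{itk^2 \pm ikx} = e^{\mp ix^2/(4t)} \, e^{it(k \mp X)^2}, \qquad X = -x/(2t),
\]
and be left with an integral of the form $\int_0^\infty e^{it(k-X)^2} a(x,k) h(k)\, dk$ (plus the analogous integral over $k<0$, which is handled symmetrically). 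Lemma \ref{lemstat} then yields the claimed decay provided we show the uniform bound
\begin{equation}\label{plangoal}
|a(x,k)| + |k|\, |\partial_k a(x,k)| \lesssim \jx^{-\beta}, \qquad x\in\R, \; k\geq 0.
\end{equation}

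Because $\mathcal{K}^\#_R(x,k)$ is a finite sum of terms of the form $\chi_\pm(x) b(x,k) e^{\pm ikx}$, it suffices to check \eqref{plangoal} term by term. All the factors $T(k), T(k)-T(0), R_\pm(k), m_\pm(x,\cdot)-1$ etc.\ that appear are bounded by Lemma \ref{estiTR} and Lemma \ref{lem:Mestimates}, so the pointwise bound $|a(x,k)|\lesssim \jx^{-\beta}$ reduces to showing $\jx^\beta$ decay of the localized factors. On the support of $\chi_+$, the factor $m_+(x,0)-1$ gives decay $\jx^{-(\gamma-1)}\lesssim \jx^{-\beta}$ by \eqref{m+0'}, while on the support of $\chi_\pm$ the differences $m_\pm(x,\pm k)-m_\pm(x,0)$ are controlled by $\mathcal W^{1}_\pm(x)\lesssim \jx^{-(\gamma-1)}\lesssim \jx^{-\beta}$ using \eqref{Mestimates1} with $s=0$; the term containing $R_-(k)(m_-(x,k)-1)$ is handled similarly using that $|R_-(k)|\lesssim 1$ together with \eqref{Mestimates1}.

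The main subtlety is the bound on $|k||\partial_k a(x,k)|$, since a naive application of \eqref{Mestimates1} with $s=1$ gives only $|\partial_k m_\pm(x,k)|\lesssim \jk^{-1}\mathcal W^{2}_\pm(x)$, which forces $\gamma\geq \beta+2$ rather than $\gamma\geq \beta+1$. The key point is to instead invoke the refined estimate \eqref{Mestimates1'}, which trades one factor of $|k|^{-1}$ (exactly what is needed to absorb the prefactor $|k|$) for the better weight $\mathcal W^{1}_\pm(x)\lesssim \jx^{-(\gamma-1)}$. Combined with $|T'(k)|+|R'_\pm(k)|\lesssim \jk^{-1}$ from Lemma \ref{estiTR} for the remaining term $T'(k)(m_\pm(x,k)-m_\pm(x,0))$ (which contributes $|k|/\jk \cdot \jx^{-(\gamma-1)}\lesssim \jx^{-\beta}$), this gives \eqref{plangoal} under the sharp assumption $\gamma\geq \beta+1$.

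With \eqref{plangoal} in hand, Lemma \ref{lemstat} delivers, for each of the finitely many pieces, the pointwise bound
\[
\Big|\int_0^\infty e^{it(k-X)^2} a(x,k) h(k)\,dk\Big|
  \lesssim \jx^{-\beta}\Big(\tfrac{1}{\sqrt t}\|h\|_{L^\infty_k} + \tfrac{1}{t^{3/4}}\|\partial_k h\|_{L^2_k}\Big),
\]
uniformly in $x$ and $X$, and summing over all pieces (and over the symmetric contribution from $k<0$) yields \eqref{regularinfty}. The hardest step is verifying \eqref{plangoal} in the regime $|k|\gtrsim 1$, precisely because it is there that the alternative estimate \eqref{Mestimates1'} is essential.
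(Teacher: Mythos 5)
Your proposal is correct and follows essentially the same route as the paper: after completing the square, both arguments reduce each piece of $\mathcal{K}^\#_R$ to the stationary-phase Lemma \ref{lemstat} with the weight $\jx^\beta$ absorbed into the symbol, and verify the hypothesis $|a|+|k||\partial_k a|\lesssim 1$ via Lemmas \ref{lem:Mestimates} and \ref{estiTR}, with the refined bound \eqref{Mestimates1'} (i.e., $|\partial_k m_\pm|\lesssim |k|^{-1}\mathcal{W}^1_\pm$) supplying exactly the $\mathcal{W}^1_\pm$ weight that makes $\gamma\geq\beta+1$ sufficient. The only (immaterial) difference is your remark that \eqref{Mestimates1'} is essential mainly for $|k|\gtrsim 1$; in fact it is what saves a power of $\jx$ in every frequency regime.
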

	
	\begin{proof}
		We only  analyze the case $k\geq0$ since the case $k<0$ is similar.
		From the definition of the regular part \eqref{KR}
		\begin{align}\label{locdecpr12ng0}
		\begin{split}
		& \int_{k\geq0} \mathcal{K}^{\#}_R(x,k)e^{ik^2 t}h(k)\,dk = J_+(x) + J_-(x),
		\\
		& J_+(x) := 
		\chi_+(x)\int_{k\geq0}e^{ik^2t}  \big[ (T(k)-T(0))(m_{+}(x,0)-1) 
		\\ & \qquad \qquad \qquad + T(k) \left(m_{+}(x,k)-m_{+}(x,0) \right)  \big] e^{ikx} h(k)\,dk,
		\\
		& J_-(x) := \chi_-(x)\int_{k\geq0} e^{ik^2t} 
		\big[\left(m_{-}(x,-k)-m_{-}(x,0)\right)e^{ixk}
		\\  & \qquad \qquad \qquad+ R_{-}(k)\left(m_{-}(x,k)-1\right)e^{-ixk} \big] h(k)\,dk.
		\end{split}
		\end{align}
		From Lemma \ref{lem:Mestimates} and Lemma \ref{estiTR} we know that 
		\begin{align}
		& \label{m+}
		\jx^\beta|m_\pm(x,k) - 1 | \lesssim \jx^\beta\mathcal{W}^1_\pm, \quad x \geq \mp 1,
		\\
		& \label{m+2}
		\jx^\beta| \partial_{k}m_\pm(x,k)| 
		\lesssim \frac{1}{|k|}\jx^\beta\mathcal{W}^{1}_\pm, \quad x \geq  \mp 1,
		\\
		& \label{Tk2}  | \partial_{k} T(k) | + | \partial_{k}R_{+} (k)|  \lesssim \frac{1}{\jk}.
		\end{align}
		We then write $\jx^\beta J_{+}= e^{ -i {\frac{x^2}{ 4t}}} I_{+} (t,X,x)$ with $X= - x/(2 t)$
		and
		\begin{align*}
		I_{+}(t,X,x) = \int_{0}^{+\infty} e^{it (k-X)^2}  \jx^\beta \big[(T(k)-T(0))(m_{+}(x,0)-1)
		\\ + T(k) \left(m_{+}(x,k)-m_{+}(x,0) \right) \big] h(k)\, dk.
		\end{align*}
		Thanks to \eqref{m+}-\eqref{Tk2}, we can use Lemma \ref{lemstat} with 
		$$a(x,k) = \jx^\beta\left[(T(k)-T(0))(m_{+}(x,0)-1)+ T(k) \left(m_{+}(x,k)-m_{+}(x,0) \right)  \right].$$
		This yields
		$$ |J_{+}| \lesssim   \frac{1}{\sqrt t} \| h \|_{L^\infty} 
		+ \frac{1}{t^{\frac{3}{4}}} \| \partial_k h \|_{L^2}$$
		as desired. The term $J_-$ can be estimated in the same way.
	\end{proof}

	\smallskip
	Putting together the results from Corollary \ref{cor:pointwisesingular}
	and Corollary \ref{cor:regularLinfty} we can obtain the following pointwise decay estimate:
	
	\begin{lem}\label{lem:pointwiseH}
		Suppose $\jx^\gamma V(x)\in L^1$ with $\gamma\geq1$ and assume the Schr\"odinger operator $H=-\partial_{xx}+V$ 
		has an odd resonance. 
		Then the perturbed Schr\"odinger flow has the following dispersive estimate:
		for $t\geq0$
		\begin{equation}\label{eq:linearpoinwiseH}
		{\big\| e^{itH}h \big\|}_{L^\infty_x} = \left\Vert\int \mathcal{K}^{\#}(x,k) 
		e^{ik^2t} h^{\#}(k)\,dk \right\Vert_{L^\infty_x} \lesssim \frac{1}{\sqrt{t}} {\big\| h^\# \big\|}_{L^\infty_k}
		+\frac{1}{t^{\frac{3}{4}}}{\big\| \partial_k h^\# \big\|} _{L^2_k}.
		\end{equation}
	\end{lem}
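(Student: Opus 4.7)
The plan is to combine the spectral representation of $e^{itH}$ in terms of the sharp distorted Fourier transform with the decomposition $\sqrt{2\pi}\,\mathcal{K}^{\#} = \mathcal{K}^{\#}_S + \mathcal{K}^{\#}_R$ from \eqref{Ksharpdecomp}, and then apply the two pointwise decay estimates already established in Corollaries \ref{cor:pointwisesingular} and \ref{cor:regularLinfty}. Since these corollaries carry exactly the same right-hand side as \eqref{eq:linearpoinwiseH}, the proof will essentially be an assembly of already-available pieces.

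First I would establish the identity
\[
e^{itH}h(x) = \int_{\R} \mathcal{K}^{\#}(x,k)\, e^{ik^2 t}\, h^{\#}(k)\,dk,
\]
which is simply the functional calculus formula \eqref{Fsharpm} applied to the symbol $m(k)=e^{itk^2}$ combined with the inversion formula \eqref{Fsharp-1}. This proves the equality in \eqref{eq:linearpoinwiseH}.

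Next I would split the integrand using \eqref{Ksharpdecomp} into its singular and regular parts. The singular contribution is bounded in $L^\infty_x$ directly by \eqref{eq:singulardecay} from Corollary \ref{cor:pointwisesingular} (or equivalently by summing \eqref{eq:singulardecaysep} over $\ast \in \{0,+,-\}$), which yields the desired rate $t^{-1/2}\|h^{\#}\|_{L^\infty_k} + t^{-3/4}\|\partial_k h^{\#}\|_{L^2_k}$. For the regular contribution I would apply Corollary \ref{cor:regularLinfty} with $\beta = 0$; this is permitted since the hypothesis $\jx^\gamma V \in L^1$ with $\gamma \geq 1 = \beta+1$ matches exactly the assumption in that corollary, and the estimate \eqref{regularinfty} (with $\jx^0 = 1$) produces the same right-hand side. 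Summing the two contributions gives \eqref{eq:linearpoinwiseH}.

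There is no real obstacle here: the heavy work, namely the stationary phase analysis via Lemma \ref{lemstat} and the careful handling of the Jost functions near zero frequency, has already been performed in the proofs of the two corollaries. The only point to note is that both corollaries yield the identical rate $t^{-1/2}\|h^{\#}\|_{L^\infty} + t^{-3/4}\|\partial_k h^{\#}\|_{L^2}$, so the contributions add cleanly and no trade-off between norms is needed. Of course the same strategy works in the case of an even resonance, by replacing $\mathcal{K}^{\#}$ and $\mathcal{F}^{\#}$ with $\mathcal{K}$ and $\wtF$ throughout, as noted in \S\ref{sseceven}.
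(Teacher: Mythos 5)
Your proposal is correct and follows essentially the same route as the paper: the paper's proof also writes $e^{itH}h$ via Proposition \ref{propsharpF} as $\int \mathcal{K}^{\#}(x,k)e^{itk^2}h^{\#}(k)\,dk$, splits $\mathcal{K}^{\#}$ by \eqref{Ksharpdecomp}, and invokes Corollary \ref{cor:pointwisesingular} together with Corollary \ref{cor:regularLinfty} with $\beta=0$. Nothing further is needed.
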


	\begin{proof}
		From Proposition \ref{propsharpF} we have
		\begin{align}
		e^{itH}h 
		= \int \mathcal{K}^{\#}(x,k) \mathcal{F}^{\#}(e^{itH} h) \,dk
		& = \int \mathcal{K}^{\#}(x,k) e^{itk^2} \wt{h}^{\#}(k)\,dk.
		\end{align}
		By the decomposition \eqref{Ksharpdecomp}, 
		the desired decay estimate is a direct consequence of 
		Corollary \ref{cor:pointwisesingular} and Corollary \ref{cor:regularLinfty} with $\beta=0$.
	\end{proof}

	\subsection{Smoothing estimates}\label{ssecsmooth}
	In this subsection we prove some smoothing-type estimates for Schr\"odinger flows with non-generic potentials
	which will be an important tool for the nonlinear estimates of Sections \ref{sec:Cubic} and \ref{sec:estimateregular}.
	These estimates resemble the Kato-$\frac{1}{2}$ smoothing estimates; see also Mizumachi \cite{Miz} for 
	smoothing estimate for the Schr\"odinger problem and Krieger-Nakanishi-Schlag for 
	smoothing estimates in the Klein-Gordon setting \cite{KNS}.
	Here we provide self-contained proofs 
	using the Fourier transform in time, and allowing the additional presence of fairly general classes of symbols. 

	\begin{lem}\label{lem:smoothingsim}
		Given a function $\mathcal{Q}: \R_x\times \R_k \mapsto \C$ such that, for some $\beta \in \R$,
		\begin{equation}\label{eq:Qbound}
		\sup_{x\in\mathbb{R},\,k\in\mathbb{R}} \big| \left\langle x\right\rangle^{\beta}\mathcal{Q}(x,k) \big| < \infty,
		\end{equation}
		and $\phi: \R \mapsto \C$ such that $|\phi(k)|\lesssim \sqrt{|k|}$,
		then, for all $t\geq0$,
		\begin{equation}\label{eq:moregeneralsmoothing}
		\left\Vert \left \langle x\right\rangle^{\beta} \int_\R \mathcal{Q}(x,k) \phi(k) e^{ik^{2}s}h(k)\,dk\right\Vert _{L_{x}^{\infty}L_{s}^{2}([0,t])}
		\lesssim \left\Vert h\right\Vert _{L^{2}}.
		\end{equation}
		
		Under the same assumptions, the following inhomogeneous estimate holds:
		\begin{align}\label{eq:generalimhomsmoothing}
		\left\Vert \int_0^t \left[ \int_\R e^{-ik^{2}s} \overline{\phi}(k) \overline{\mathcal{Q}}(y,k)F(s,y)\,dy \right] \,ds\right\Vert_{L_{k}^{2}}
		\lesssim & \left\Vert \left\langle x\right\rangle^{-\beta}F\right\Vert_{L_{x}^{1}L_{s}^{2}([0,t])}.
		\end{align}
		
	\end{lem}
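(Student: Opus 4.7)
The plan is to prove the homogeneous bound \eqref{eq:moregeneralsmoothing} using Plancherel in the time variable, and then deduce the inhomogeneous bound \eqref{eq:generalimhomsmoothing} by duality. Since restricting $s$ to $[0,t]$ only decreases the norm, I first freeze $x$, extend the time integration to all of $\mathbb{R}$, and set
$$G_x(s) := \int_\R \mathcal{Q}(x,k)\phi(k) e^{ik^2 s} h(k)\,dk.$$
Splitting the $k$-integral over $k\geq 0$ and $k<0$ and changing variables $\tau=k^2$ (with Jacobian $dk = d\tau/(2\sqrt{\tau})$ on each half), I recognize $G_x$ as the inverse Fourier transform in $s$ of a function supported in $\{\tau>0\}$. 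By Plancherel in $s$ and undoing the change of variables,
$$\|G_x\|_{L^2_s(\R)}^2 \;\lesssim\; \int_\R \frac{|\mathcal{Q}(x,k)|^2\,|\phi(k)|^2}{|k|}\,|h(k)|^2\,dk.$$
The hypothesis $|\phi(k)|\lesssim \sqrt{|k|}$ cancels the singular factor $1/|k|$, and the pointwise bound $|\langle x\rangle^\beta \mathcal{Q}(x,k)|\lesssim 1$ yields $\|\langle x\rangle^\beta G_x\|_{L^2_s}\lesssim \|h\|_{L^2}$ uniformly in $x$, proving \eqref{eq:moregeneralsmoothing}.

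For the inhomogeneous estimate I introduce the operator $T:L^2_k \to L^\infty_x L^2_s([0,t])$ defined by
$$(Th)(s,x) := \langle x\rangle^\beta \int_\R \mathcal{Q}(x,k)\phi(k) e^{ik^2 s} h(k)\,dk,$$
which is bounded by Step~1. A direct computation of the adjoint with respect to the natural pairing against $L^1_x L^2_s$ gives
$$(T^* F)(k) = \overline{\phi(k)} \int_0^t \!\!\int_\R \langle y\rangle^\beta \overline{\mathcal{Q}(y,k)}\,e^{-ik^2 s}\, F(s,y)\,dy\,ds,$$
and boundedness of $T$ dualizes to $\|T^* F\|_{L^2_k}\lesssim \|F\|_{L^1_x L^2_s([0,t])}$. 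Substituting $F = \langle x\rangle^\beta G$ (so that the weight absorbs into the right-hand side as $\langle y\rangle^{-\beta}$ when read back on $G$) then yields exactly \eqref{eq:generalimhomsmoothing}.

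The only delicate point is the singularity $1/|k|$ at $k=0$ produced by the change of variables $\tau=k^2$; it is cancelled exactly by the hypothesis $|\phi(k)|\lesssim\sqrt{|k|}$. This assumption is essential and reflects the lack of local decay at zero energy in the non-generic case: whereas in the generic setting one can exploit the vanishing of the distorted Fourier transform at the origin to gain this factor, here the vanishing must instead be built into the multiplier $\phi$, which in the applications later in the paper will come from the low-frequency structure of the nonlinear spectral distribution.
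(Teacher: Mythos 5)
Your proof is correct and follows essentially the same route as the paper: for the homogeneous bound, Plancherel in the time variable after the change of variables $\tau=k^2$, with the hypothesis $|\phi(k)|\lesssim\sqrt{|k|}$ cancelling the Jacobian singularity $1/\sqrt{\tau}$, and then duality against $L^1_xL^2_s([0,t])$ (equivalently, testing against an arbitrary $h\in L^2$ and using H\"older plus the homogeneous estimate) for the inhomogeneous bound. The only blemish is a trivial bookkeeping slip in the last step: to recover \eqref{eq:generalimhomsmoothing} from the adjoint bound you should substitute $F=\langle x\rangle^{-\beta}$ times the given source (so the weights cancel inside the integral and reappear as $\langle x\rangle^{-\beta}F$ on the right-hand side), not $\langle x\rangle^{+\beta}$ times it.
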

	
	\begin{proof}
		Without loss of generality we may assume $\beta=0$ and restrict the integral in \eqref{eq:moregeneralsmoothing} to $k\in[0,\infty)$.
		Making a change of variable $k^{2}=\lambda$, $2kdk=d\lambda$, one has
		\begin{align*}
		I(s,x) := \int_0^\infty \mathcal{Q}(x,k)\phi(k)e^{ik^{2}s}h(k)\,dk
		& = \int_0^{\infty}\mathcal{Q}(x,\sqrt{\lambda})\phi(\sqrt{\lambda})
		e^{is\lambda}h(\sqrt{\lambda})\frac{1}{2\sqrt{\lambda}}\,d\lambda.
		\end{align*}
		Taking the $L^2_s$ norm, and then applying Plancherel's theorem in $s$ we obtain
		\begin{align*}
		{\| I(\cdot,x) \|}_{L^2_s([0,t])}^2 
		& = \left\| \int_\R \mathcal{Q}(x,\sqrt{\lambda}) \, \mathbf{1}_{[0,\infty)}(\lambda)
		\phi(\sqrt{\lambda})e^{is\lambda} h(\sqrt{\lambda})\frac{1}{2\sqrt{\lambda}}
		\,d\lambda \right\|_{L^2_s(\R)}^2
		\\
		& \lesssim \int_0^{\infty} \left|  \mathcal{Q}(x,\sqrt{\lambda}) \frac{\phi(\sqrt{\lambda})}{\sqrt{\lambda}} 
		h(\sqrt{\lambda}) \right|^{2}\,d\lambda 
		\\
		& \lesssim \int_0^{\infty} \big| h(\sqrt{\lambda}) \big|^2 \frac{1}{\sqrt{\lambda}} \,d\lambda 
		\lesssim \int_{0}^{\infty} | h (k) |^{2} \,dk
		\lesssim \left\Vert h\right\Vert _{L^{2}}^2 .
		\end{align*}

		To obtain the inhomogeneous estimate \eqref{eq:generalimhomsmoothing}
		we test the expression on the left-hand side against an arbitrary function $h \in L^2$:
		\begin{align}
		\left \langle  h,\int_0^t \left[ \int_\R e^{-ik^{2}s} \overline{\phi}(k) 
		\overline{\mathcal{Q}}(y,k)F(s,y)\,dy\right] ds \right \rangle 
		= & \int_0^t \int_\R \Big( \int_\R e^{ik^{2}s}\phi(k)\mathcal{Q}(y,k) h(k)\,dk \Big)  \overline{F}(s,y)\,dyds
		\nonumber. 
		\end{align}
		Applying H\"older's inequality followed by the homogeneous estimate \eqref{eq:moregeneralsmoothing} we get
		\begin{align}
		& \left| \int_0^t \int_\R \Big( \int e^{ik^{2}s}\phi(k)\mathcal{Q}(y,k) h(k)\,dk \Big) \overline{F}(s,y) \,dyds\right|
		\nonumber
		\\
		& \lesssim \left\Vert \left\langle x\right\rangle ^{\beta}\int\mathcal{Q}(x,k)\phi(k)e^{ik^{2}s}h(k)\,dk\right\Vert _{L_{x}^{\infty} L_s^2([0,t])} 
		\big\| \jx^{-\beta}F \big\|_{L_{x}^{1}L_s^2([0,t])}
		\nonumber
		\\
		& \lesssim \left\Vert h\right\Vert _{L^{2}} \big\| \jx^{-\beta}F \big\|_{L_{x}^{1}L_s^2([0,t])}
		\nonumber
		\end{align}
		which implies \eqref{eq:generalimhomsmoothing} by duality.
	\end{proof}

	\smallskip	
	\begin{rem}\label{rem:classicalsmoothing}
		Assuming that the potential $V$ is generic
		and taking $\beta=-1$, $\mathcal{Q}(x,k)=\mathcal{K}(x,k)/\sqrt{|k|}$, $\phi(k)=\sqrt{|k|}$, and $h(k)=\widetilde{f}(k)$ in \eqref{eq:moregeneralsmoothing},
		gives the smoothing estimates for the perturbed Schr\"odinger flow
		$$\left\Vert \left\langle x\right\rangle ^{-1}e^{itH}f\right\Vert _{L_{x}^{\infty}L_{s}^{2}}\lesssim\left\Vert f\right\Vert _{L^{2}};$$
		see Mizumachi \cite{Miz} where the smoothing estimate is proved with a slightly stronger weight.
		
		More classically, 
		with $\beta=0$,  $\mathcal{Q}(x,k)=e^{ikx}$,  $\phi(k)=\sqrt{k}$, and $h(k)=\widehat{f}(k)$
		in 
		\eqref{eq:moregeneralsmoothing} 
		gives the classical Kato-$\frac{1}{2}$ smoothing estimate
		$$\big\| |\partial_x|^{1/2} e^{it\partial_{xx}}f \big\|_{L_{x}^{\infty}L_{s}^{2}}\lesssim\left\Vert f\right\Vert _{L^{2}}.$$
	\end{rem}

	For our later applications, we record the following special cases of the smoothing estimates of Lemma \ref{lem:smoothingsim}
	where we restrict the frequency integral to small or large frequencies.
	
	\begin{cor}\label{corsmoothing}
		Given $\mathcal{Q}:\R_x\times \R_k \mapsto \C$ such that for some $\beta\in\mathbb{R}$
		\begin{equation}\label{eq:Qbound'}
		\sup_{x\in\mathbb{R},\,k\in\mathbb{R}}\left|\left\langle x\right\rangle ^{\beta}\mathcal{Q}(x,k)\right|
		<\infty,
		\end{equation}
		then, for all $t>0$,
		\begin{equation}
		\left\Vert \left\langle x\right\rangle ^{\beta}\int\mathbf{1}_{\{|k|\lesssim1\}}\,k\,\mathcal{Q}(x,k) e^{ik^{2}s}h(k)\,dk
		\right\Vert_{L_{x}^{\infty}L_{s}^{2}([0,t])}
		\lesssim\left\Vert h\right\Vert _{L^{2}}.\label{eq:smoothingQ}
		\end{equation}
		By duality, the following inhomogeneous estimate holds:
		\begin{align}\label{eq:smoothingQim}
		\left\Vert \int_0^t \left[\mathbf{1}_{\{|k|\lesssim1\}} \int e^{-ik^{2}s} \,k\, \overline{\mathcal{Q}}(y,k) F(s,y)\,dy\right]\,ds\right\Vert _{L_{k}^{2}}
		& \lesssim {\big\| \left\langle x\right\rangle^{-\beta}F \big\|}_{L_{x}^{1}L_{s}^{2}([0,t])}.
		\end{align}
		
		Under the same assumptions above
		\begin{equation}
		\left\Vert \left\langle x\right\rangle ^{\beta}\int\mathbf{1}_{\{ |k|\gtrsim1 \}}
		\mathcal{Q}(x,k)e^{ik^{2}s}h(k)\,dk\right\Vert _{L_{x}^{\infty}L_{s}^{2}([0,t])}
		\lesssim\left\Vert h\right\Vert _{L^{2}}.\label{eq:smoothingQh}
		\end{equation}
		By duality, the following inhomogeneous estimate holds:
		\begin{align}\label{eq:smoothingQimh}
		\left\Vert \int_0^t \left[\mathbf{1}_{\{ |k|\gtrsim1 \}} \int e^{-ik^{2}s}\overline{\mathcal{Q}}(y,k)F(s,y)\,dy\right]\,ds\right\Vert _{L_{k}^{2}}
		\lesssim & {\big\| \left\langle x\right\rangle^{-\beta}F \big\|}_{L_{x}^{1}L_{s}^{2}([0,t])}.
		\end{align}
	\end{cor}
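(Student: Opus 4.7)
The plan is to derive both parts of the corollary as immediate specializations of Lemma \ref{lem:smoothingsim} by choosing the symbol $\phi(k)$ to absorb the frequency localization. The only thing to check in each case is the hypothesis $|\phi(k)| \lesssim \sqrt{|k|}$ required by the lemma.

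For the low-frequency estimate \eqref{eq:smoothingQ}, I would set
\[
\phi(k) := k \, \mathbf{1}_{\{|k|\lesssim 1\}},
\]
and apply \eqref{eq:moregeneralsmoothing} with this choice and the given $\mathcal{Q}(x,k)$. The hypothesis $|\phi(k)|\lesssim\sqrt{|k|}$ holds because on the support of $\mathbf{1}_{\{|k|\lesssim 1\}}$ we have $|k|\leq C$, so $|k|\lesssim |k|^{1/2}\cdot C^{1/2}\lesssim \sqrt{|k|}$. Thus \eqref{eq:smoothingQ} follows directly.

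For the high-frequency estimate \eqref{eq:smoothingQh}, I would set
\[
\phi(k) := \mathbf{1}_{\{|k|\gtrsim 1\}},
\]
and again apply \eqref{eq:moregeneralsmoothing}. Here $|\phi(k)|\leq 1 \lesssim \sqrt{|k|}$ on the support of $\mathbf{1}_{\{|k|\gtrsim 1\}}$ since $|k|$ is bounded below by a positive constant there. So \eqref{eq:smoothingQh} follows at once.

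For the two inhomogeneous estimates \eqref{eq:smoothingQim} and \eqref{eq:smoothingQimh}, I would run exactly the duality argument from the proof of Lemma \ref{lem:smoothingsim}: pair the expression against an arbitrary $h\in L^2_k$, swap the order of integration, and apply H\"older's inequality in $(s,x)$ together with the corresponding homogeneous estimate just established. There is no genuine obstacle here; the only minor points to be careful about are the placement of complex conjugates and the fact that the frequency cutoff $\mathbf{1}_{\{|k|\lesssim 1\}}$ or $\mathbf{1}_{\{|k|\gtrsim 1\}}$ is real-valued, so it passes through conjugation unchanged and can be absorbed into $\phi(k)$ on both sides of the pairing.
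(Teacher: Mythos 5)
Your proposal is correct and is essentially the paper's own proof: the corollary is obtained by applying Lemma \ref{lem:smoothingsim} with $\phi(k)=k\,\mathbf{1}_{\{|k|\lesssim1\}}$ and $\phi(k)=\mathbf{1}_{\{|k|\gtrsim1\}}$, both of which satisfy $|\phi(k)|\lesssim\sqrt{|k|}$ as you verify. The only cosmetic difference is that the inhomogeneous bounds can be quoted directly from \eqref{eq:generalimhomsmoothing} rather than re-running the duality argument.
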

	
	\begin{proof}
		These results follow directly from Lemma \ref{lem:smoothingsim} by choosing $\phi(k)=\mathbf{1}_{\{|k|\lesssim 1\}} k$  
		and $\phi(k)=\mathbf{1}_{\{|k|\gtrsim 1\}}$.
	\end{proof}


	\medskip
	\subsection{Local decay}\label{sseclocdec}
	We have the following improved local decay estimate for flows with improved low-frequency behavior:
	
	\begin{lem}\label{lem:lowlocaldecay}
		Given a function $\mathcal{Q}(x,k)$ such that for some $\beta\in\mathbb{R}$
		\begin{equation}\label{eq:QboundP}
		\sup_{x\in\mathbb{R},\,k\in\mathbb{R}}\left|\left\langle x\right\rangle^{\beta}
		\partial_{k}^{j}\mathcal{Q}(x,k)\right|<\infty, \qquad j=0,1,
		\end{equation}
		and a smooth function $\phi$ such that $|\phi(k)|\lesssim |k|$ for $|k|\leq1$
			and $|\phi(k)|$ is bounded for $|k|\geq1$,
		then one has
		\[
		\left\Vert \left\langle x\right\rangle^{\beta}\int\mathbf{1}_{\{\pm k\geq0\}}\phi(k)
		\mathcal{Q}(x,k)e^{ik^{2}t}h(k)\,dk\right\Vert_{L_{x}^{\infty}}
		\lesssim \frac{1}{\left\langle t\right\rangle }\left\Vert h\right\Vert _{H^{1}}.
		\]
	\end{lem}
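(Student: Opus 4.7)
The plan is to split into two regimes. For $|t|\leq 1$, the desired bound $\jx^\beta|I(t,x)|\lesssim \|h\|_{H^1}$, where $I(t,x)$ denotes the left-hand side integral (restricted, say, to $k\geq 0$), is a direct consequence of Cauchy--Schwarz: using the assumption on $\mathcal{Q}$ and that $|\phi(k)|\lesssim \min(|k|,1)$, one has
\[
\jx^\beta |I(t,x)| \lesssim \Big(\int |\phi(k)|^{2}\jk^{-2}\,dk\Big)^{1/2}\|\jk h\|_{L^{2}} \lesssim \|h\|_{H^1},
\]
since the first factor is finite ($|\phi|^2\jk^{-2}\lesssim k^2$ near $0$ and $\lesssim \jk^{-2}$ at infinity). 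So the substance of the lemma is to show decay $\lesssim 1/t$ for $t\geq 1$.

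For $t\geq 1$ the plan is a single integration by parts exploiting the vanishing of $\phi$ at $0$. After the substitution $\lambda=k^2$ the integral becomes
\[
I(t,x) = \int_{0}^{\infty} G(x,\lambda)\, e^{i\lambda t}\,d\lambda, \qquad G(x,\lambda):=\frac{\phi(\sqrt{\lambda})\,\mathcal{Q}(x,\sqrt{\lambda})\,h(\sqrt{\lambda})}{2\sqrt{\lambda}}.
\]
The ratio $\phi(\sqrt\lambda)/\sqrt\lambda$ extends to a bounded function at $\lambda=0$ with value $\phi'(0)$, so $G(x,\cdot)$ is continuous at $0$ with $|G(x,0^+)|\lesssim \jx^{-\beta}|h(0)|\lesssim \jx^{-\beta}\|h\|_{H^1}$ by the 1D Sobolev embedding $H^1\hookrightarrow L^\infty$. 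Integration by parts in $\lambda$ then produces a boundary contribution at $\lambda=0$ of size $\jx^{-\beta}\|h\|_{H^1}/t$ and a bulk term $-\frac{1}{it}\int_0^\infty \partial_\lambda G(x,\lambda)\,e^{i\lambda t}\,d\lambda$, which it suffices to bound in $L^1_\lambda$ by $\jx^{-\beta}\|h\|_{H^1}$.

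Changing back to the variable $k$ (with $d\lambda=2k\,dk$) the $L^1_\lambda$-norm of $\partial_\lambda G$ equals, up to a constant, the $L^1_k$-norm of
\[
\frac{\phi'(k)k-\phi(k)}{k^{2}}\,\mathcal{Q}(x,k)\,h(k) \;+\; \frac{\phi(k)}{k}\,\partial_k\mathcal{Q}(x,k)\,h(k) \;+\; \frac{\phi(k)}{k}\,\mathcal{Q}(x,k)\,\partial_k h(k).
\]
The factor $\phi(k)/k$ is uniformly bounded, and $(\phi'(k)k-\phi(k))/k^{2}$ is bounded near $0$ (by smoothness and $\phi(0)=0$) and $O(1/k)$ at infinity. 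Using the assumptions \eqref{eq:QboundP} on $\mathcal{Q}$, we split each integral at $|k|=1$: on $|k|\leq 1$ we bound each $h$-factor in $L^\infty$ (respectively $L^2$ for $\partial_k h$) via $H^1\hookrightarrow L^\infty$, and on $|k|\geq 1$ we apply Cauchy--Schwarz with the weights $1/k\in L^2(|k|\geq 1)$. This yields each of the three pieces $\lesssim \jx^{-\beta}\|h\|_{H^1}$, completing the proof.

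The main conceptual point---and the one subtlety---is that the $1/k$ singularities produced by IBP (in the guise of $\phi/k$ and $\partial_k(\phi/k)$) are exactly neutralized by the hypothesis $|\phi(k)|\lesssim|k|$; without this vanishing only the $1/\sqrt t$-decay of the full flow would be available, in agreement with the absence of unweighted improved local decay for $e^{-itH}$ in the non-generic setting.
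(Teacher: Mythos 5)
Your proposal is correct and follows essentially the same route as the paper: the substitution $\lambda=k^2$ followed by integration by parts in $\lambda$ is the same computation as the paper's direct integration by parts in $k$ via $e^{ik^2t}=(2itk)^{-1}\partial_k e^{ik^2t}$, with the identical boundary term at zero controlled by Sobolev embedding and the same three bulk terms controlled by Cauchy--Schwarz using the vanishing $|\phi(k)|\lesssim|k|$. Your explicit splitting at $|k|=1$ just spells out the Cauchy--Schwarz step the paper states tersely.
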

	
	\begin{proof}
		This follows from a standard integration by parts argument as in \cite[Lemma 3.6]{NLSV}.
		For completeness we give some details.
		Let us consider the case of $k\geq0$
		and integrate by parts in $k$ 
		to obtain the following:
		\begin{align*}
		2it \int\mathbf{1}_{\{k\geq0\}}\phi(k)\mathcal{Q}(x,k)e^{ik^{2}t}h(k)\,dk 
		=\int\mathbf{1}_{\{k\geq0\}}\phi(k) k^{-1} \mathcal{Q}(x,k)h(k)\,\partial_k e^{ik^{2}t} \, dk
		\\
		= \phi'(0) h(0) \mathcal{Q}(x,0)
		-\int\mathbf{1}_{\{k\geq0\}}e^{ik^{2}t}\partial_{k}\big( \phi(k) k^{-1} \mathcal{Q}(x,k)h(k) \big)\,dk.
		\end{align*}
		By Sobolev's embedding and \eqref{eq:QboundP}, 
		$|\phi'(0)\langle x \rangle^{\beta}h(0)\mathcal{Q}(x,0)|\lesssim\left\Vert h\right\Vert _{H^{1}}$.
		For the second  term on the right-hand side above, one has
		\begin{align*}
		& \left\langle x\right\rangle^{\beta}\left|\int \mathbf{1}_{\{k\geq0\}} e^{ik^{2}t}
		\partial_{k}\left(\phi(k)/k\mathcal{Q}(x,k)h(k)\right)\,dk\right|
		\\
		& \lesssim\left\langle x\right\rangle^{\beta}
		\int \mathbf{1}_{\{k\geq0\}} \left|
		\partial_{k} \big(\phi(k)/k \mathcal{Q}(x,k) h(k) \big)\right|\,dk
		\lesssim \left\Vert h\right\Vert _{H^{1}},
		\end{align*}
		where in the last line above we applied the Cauchy-Schwarz inequality
		in $k$ under the assumption \eqref{eq:QboundP}. 
	\end{proof}

Finally, we give a local decay estimate for the derivative of the Schr\"odinger flow, once again allowing the presence 
of some general symbols.

	\begin{lem}\label{lem:localderivative}
		Consider $\mathfrak{Q}(x,k)$ such that for some $\beta\in\mathbb{R}$
		\begin{equation}\label{eq:QboundP2}
	\sup_{x\in\mathbb{R},\,k\in\mathbb{R}}\left| \jx^\beta \partial_x^j \partial_k^\rho \mathfrak{Q}(x,k)\right|\lesssim 1,
	\qquad j,\rho=0,1,
		\end{equation}
		and assume that $\jx^\beta \partial_x^j \partial_k^\rho \mathfrak{Q}(x,k)$ 
		are symbols of $L^2$-bounded PDOs.
		Then one has
		\begin{equation}\label{eq:QdecayL2}
		\left\Vert \left\langle x\right\rangle ^{\beta-1}\partial_{x}
		\int\mathbf{1}_{\{\pm k\geq0\}}e^{ikx}\mathfrak{Q}(x,k)e^{ik^{2}t}h(k)\,dk\right\Vert_{L_{x}^{2}}
		\lesssim \frac{1}{\jt^{1/2}} \left\Vert h\right\Vert _{H^1}.
		\end{equation}
	\end{lem}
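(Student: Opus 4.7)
The plan is to distribute the derivative via the Leibniz rule, writing
\begin{align*}
J(t,x) := \partial_x \int_{\pm k \geq 0} e^{ikx}\mathfrak{Q}(x,k) e^{ik^2 t} h(k)\,dk = J_1(t,x) + J_2(t,x),
\end{align*}
with $J_1 := \int_{\pm k \geq 0} (ik) e^{ikx}\mathfrak{Q}(x,k) e^{ik^2 t} h(k)\,dk$ and $J_2 := \int_{\pm k \geq 0} e^{ikx}\partial_x\mathfrak{Q}(x,k) e^{ik^2 t} h(k)\,dk$. For $t\in[0,1]$, where $\jt^{-1/2}\simeq 1$, the bound is immediate from the assumed $L^2$-boundedness of the PDOs with symbols $\jx^\beta\mathfrak{Q}$ and $\jx^\beta\partial_x\mathfrak{Q}$, combined with $\|kh\|_{L^2}\lesssim \|h\|_{H^1}$, so the real work is in the regime $t\geq 1$.

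To handle $t\geq 1$, I would split each of $J_1, J_2$ using a smooth cutoff $\chi$ with $\chi\equiv 1$ on $|k|\leq 1$ and $\chi\equiv 0$ on $|k|\geq 2$. For the low-frequency pieces $J_j^{\mathrm{low}}$ I would apply the stationary-phase Lemma \ref{lemstat} after factoring $e^{ikx+ik^2t}=e^{it(k+x/(2t))^2}e^{-ix^2/(4t)}$, with symbol $a(x,k)=\chi(k)\jx^\beta\partial_x\mathfrak{Q}(x,k)$ for $J_2^{\mathrm{low}}$ and $a(x,k)=ik\chi(k)\jx^\beta\mathfrak{Q}(x,k)$ for $J_1^{\mathrm{low}}$. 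Both satisfy $|a|+|k||\partial_k a|\lesssim 1$ by \eqref{eq:QboundP2}, the extra $k$ in $J_1^{\mathrm{low}}$ being harmless since it is confined to $|k|\lesssim 1$. Lemma \ref{lemstat} then yields
\begin{align*}
\|\jx^\beta J_j^{\mathrm{low}}\|_{L^\infty_x}\lesssim t^{-1/2}\|h\|_{L^\infty_k}+t^{-3/4}\|\partial_k h\|_{L^2_k}\lesssim t^{-1/2}\|h\|_{H^1},
\end{align*}
and I would drop one power of $\jx$ to convert this into the required $L^2_x$ bound via $\|\jx^{-1}\|_{L^2_x}\lesssim 1$.

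For the high-frequency pieces I would integrate by parts in $k$, using the identity $e^{ik^2 t}=(2ikt)^{-1}\partial_k e^{ik^2 t}$, legitimate on the support of $1-\chi(k)$. In $J_1^{\mathrm{high}}$ the $ik$ in the integrand cancels the $1/k$ exactly, and distributing $\partial_k$ produces a $1/(2t)$ prefactor times four terms whose symbols are of the form $\jx^\beta \cdot (\text{smooth bounded function of }k)\cdot\partial_x^j\partial_k^\rho\mathfrak{Q}$ with $j,\rho\in\{0,1\}$; the term in which $\partial_k$ lands on $e^{ikx}$ generates an $ix$ factor which I would pair with the outer weight $\jx^{\beta-1}$ to recover $\jx^\beta$. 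By hypothesis each of these symbols is that of an $L^2$-bounded PDO, so testing against $h$ or $\partial_k h$ and summing the four contributions yields
\begin{align*}
\|\jx^{\beta-1}J_1^{\mathrm{high}}\|_{L^2_x}\lesssim t^{-1}\|h\|_{H^1}.
\end{align*}
The analysis of $J_2^{\mathrm{high}}$ is identical, except that an extra $1/k$ survives in every symbol (still smooth and bounded on $|k|\gtrsim 1$), again giving a $t^{-1}\|h\|_{H^1}$ bound.

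The main obstacle is not any single estimate but the bookkeeping for the high-frequency integration by parts: after distributing $\partial_k$ one must verify that multiplying $\partial_x^j\partial_k^\rho\mathfrak{Q}$ by the smooth bounded $k$-factors $(1-\chi(k))/k$, $(1-\chi(k))/k^2$, etc.\ preserves $L^2$-boundedness of the associated PDO, and that the $ix$ generated when $\partial_k$ meets $e^{ikx}$ pairs cleanly with the outer $\jx^{\beta-1}$ to stay within the hypotheses of \eqref{eq:QboundP2}. Once this is carried out, combining the low- and high-frequency bounds produces $\|\jx^{\beta-1}J\|_{L^2_x}\lesssim t^{-1/2}\|h\|_{H^1}$ for $t\geq 1$, which together with the trivial $t\in[0,1]$ bound gives the claimed $\jt^{-1/2}$ estimate.
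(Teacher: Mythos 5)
Your large-time argument is correct but takes a genuinely different route from the paper for the $\partial_x\mathfrak{Q}$-piece. The paper treats the $ik$-piece globally: one integration by parts over the whole half-line (the $ik$ cancels the $1/k$ exactly), producing a boundary term $\frac{1}{2t}\mathfrak{Q}(x,0)h(0)$ handled by Sobolev, and a $t^{-1}$ bound; it then splits the $\partial_x\mathfrak{Q}$-piece at the \emph{time-dependent} scale $|k|\sim t^{-1/2}$, bounding the low piece crudely by the measure of the interval (only pointwise bounds on $\partial_x\mathfrak{Q}$ are needed, no $k$-derivatives) and the high piece by integration by parts with $1/k\lesssim\sqrt{t}$, giving $t^{-1}\cdot\sqrt{t}$. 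You instead cut both pieces at the fixed scale $|k|\sim 1$, apply the stationary-phase Lemma \ref{lemstat} to the low-frequency parts (this uses the $\partial_k$- and $\partial_k\partial_x$-bounds in \eqref{eq:QboundP2}, which are assumed) and trade one power of $\jx$ to pass from the weighted $L^\infty_x$ bound to $L^2_x$, while your high-frequency integration by parts yields the better rate $t^{-1}$ since $1/k\lesssim 1$ there. The bookkeeping you worried about is indeed routine: the $k$-only factors $(1-\chi)/k$, $\chi'(k)$, etc.\ can be absorbed into $h$ (multiplication by a bounded function of $k$ before applying the PDO), and the factor $ix$ pairs with $\jx^{\beta-1}$ as $\jx^{\beta}\cdot(ix/\jx)$, a bounded $x$-multiplier after the PDO. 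So for $t\gtrsim 1$ both proofs give $\jt^{-1/2}\|h\|_{H^1}$; yours buys a cleaner high-frequency bound at the cost of invoking Lemma \ref{lemstat} and the extra symbol regularity, whereas the paper's choice of cutoff at $t^{-1/2}$ avoids stationary phase altogether.

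One step is wrong as written: for $t\in[0,1]$ you invoke $\|kh\|_{L^2}\lesssim\|h\|_{H^1}$, which is false ($H^1$ in $k$ controls $\partial_k h$, not the moment $kh$; a smooth truncation of $k^{-3/5}$ on $[1,N]$ has uniformly bounded $H^1$ norm while $\|kh\|_{L^2}\rightarrow\infty$), so your disposal of the $ik$-piece for small times does not follow from the stated hypotheses; indeed at $t=0$ the estimate \eqref{eq:QdecayL2} is not a consequence of $h\in H^1$ alone. To be fair, the paper's own proof has the same limitation: its bounds degenerate like $t^{-1}$ and $t^{-1/2}$ as $t\rightarrow 0$, so it effectively establishes the lemma only for $t\gtrsim 1$, which is the regime in which it is applied (with $h=f^\#(t)$, for which $\jk f^\#\in L^2$ is available anyway). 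You should either restrict the claim to $t\gtrsim 1$ or justify small times using that extra information, rather than the false inequality.
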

	
	\begin{proof}
		It suffices to look at the case $k>0$. We calculate:
		\begin{align}
		\nonumber
		& 
		\partial_{x}\int\mathbf{1}_{\{k\geq0\}}e^{ikx}\mathfrak{Q}(x,k)e^{ik^{2}t}h(k)\,dk
		\\
		\label{locald1}
		& = 
		\int\mathbf{1}_{\{k\geq0\}}ike^{ikx}\mathfrak{Q}(x,k)e^{ik^{2}t}h(k)\,dk
		+ 
		\int\mathbf{1}_{\{k\geq0\}}e^{ikx}\partial_{x}\mathfrak{Q}(x,k)e^{ik^{2}t}h(k)\,dk.
		\end{align}
		For the first term on the right-hand side above, 
		we integrate by parts in $k$ using $e^{ik^2t} = (2itk)^{-1}\partial_k e^{ik^2t}$:
		\begin{align*}
		\int\mathbf{1}_{\{k\geq0\}}ike^{ikx}\mathfrak{Q}(x,k)e^{ik^{2}t}h(k)\,dk
		& =
		\frac{1}{2t} \mathfrak{Q}(x,0)h(0) 
		-\frac{1}{2t}\int\mathbf{1}_{\{k\geq0\}}e^{ikx}\mathfrak{Q}(x,k)e^{ik^{2}t}\partial_{k}h(k)\,dk
		\\
		& -\frac{1}{2t}\int\mathbf{1}_{\{k\geq0\}}e^{ikx}\partial_{k}\mathfrak{Q}(x,k)e^{ik^{2}t}h(k)\,dk
		\\
		& -\frac{1}{2t}\int\mathbf{1}_{\{k\geq0\}}e^{ikx}ix\mathfrak{Q}(x,k)e^{ik^{2}t}h(k)\,dk.
		\end{align*}
		Using $\jx^\beta\mathfrak{Q}(x,k)$ and $\jx^\beta\partial_k\mathfrak{Q}(x,k)$ as symbols,
		by our assumptions, it follows that the three integrated terms above satisfy
		\begin{align}
		\label{eq:psebound1}
		& \left\Vert \left\langle x\right\rangle ^{\beta}\int\mathbf{1}_{\{k\geq0\}}e^{ikx}
		\mathfrak{Q}(x,k)e^{ik^{2}t}\partial_{k}h(k)\,dk\right\Vert _{L_{x}^{2}}
		\lesssim\left\Vert \partial_{k}h\right\Vert _{L^{2}},
		\\
		\label{eq:psebound2}
		& \left\Vert \left\langle x\right\rangle^{\beta-1}
		\int\mathbf{1}_{\{k\geq0\}}e^{ikx}ix\mathfrak{Q}(x,k)e^{ik^{2}t}h(k)\,dk\right\Vert_{L_{x}^{2}}
		\lesssim\left\Vert h\right\Vert_{L^{2}},
		\\
		\label{eq:psebound3}
		& \left\Vert \left\langle x\right\rangle ^{\beta} \int \mathbf{1}_{\{k\geq0\}}e^{ikx}
		\partial_{k}\mathfrak{Q}(x,k)e^{ik^{2}t}h(k)\,dk\right\Vert _{L_{x}^{2}}\lesssim\left\Vert h\right\Vert_{L^{2}}.
		\end{align}
		For the boundary term, by Sobolev's embedding we have
		\begin{equation}\label{eq:psebound4}
		\left\Vert \left\langle x\right\rangle ^{\beta-1}\mathfrak{Q}(x,0)h(0)\right\Vert_{L^{2}}
		\lesssim\left\Vert h\right\Vert _{H^{1}}.    
		\end{equation}
		
		Finally, it remains to analyze the last integral in \eqref{locald1}, which we split as
		\begin{align}
		& \int\mathbf{1}_{\{k\geq0\}}e^{ikx}\partial_{x}\mathfrak{Q}(x,k)e^{ik^{2}t}h(k)\,dk = I_{1}(t,x)+I_{2}(t,x),
		\\
		& I_1(t,x) := \int\mathbf{1}_{\{0\leq k\leq \frac{1}{ \sqrt{t}}\}}e^{ikx}\partial_{x}\mathfrak{Q}(x,k)e^{ik^{2}t}h(k)\,dk,
		\\
		& I_2(t,x) := \int\mathbf{1}_{\{ k\geq \frac{1}{\sqrt{t}}\}}e^{ikx}\partial_{x}\mathfrak{Q}(x,k)e^{ik^{2}t}h(k)\,dk.
		\end{align}
		For $I_1$, by the pointwise boundedness of $\jx^\beta \partial_x \mathfrak{Q}(x,k)$ 
		and Sobolev's embedding one has
		\begin{equation}\label{eq:psebound5-1}
		\left\Vert \left\langle x\right\rangle ^{\beta-1} I_1(t,\cdot)\right\Vert _{L_{x}^{2}}
		\lesssim \left\Vert \mathbf{1}_{\{0\leq k\leq \frac{1}{ \sqrt{t}}\}} h(k)\right\Vert_{L^1_k}
		\lesssim \frac{1}{\sqrt{t}}\left\Vert h \right\Vert_{H^1}.
		\end{equation}
		Next, for $I_2$, we perform integration by parts in $k$ as done before, and obtain
		\begin{align*}
		I_2(t,x) & = \frac{1}{2it} \sqrt{t} e^{i\frac{1}{\sqrt{t}}x}e^i \mathfrak{Q}\big(x,\frac{1}{\sqrt{t}}\big)
		h\big(\frac{1}{\sqrt{t}}\big)
		\\
		& - \frac{1}{2it} \int_{k\geq \frac{1}{\sqrt{t}}}\frac{1}{k}e^{ikx}
		\partial_x\mathfrak{Q}(x,k)e^{ik^{2}t}\partial_{k}h(k)\,dk
		\\
		& - \frac{1}{2it} \int_{k\geq \frac{1}{\sqrt{t}}}\frac{1}{k}e^{ikx}\partial_{k}\partial_x\mathfrak{Q}(x,k)e^{ik^{2}t}h(k)\,dk
		\\
		& - \frac{1}{2it} \int_{k\geq\frac{1}{\sqrt{t}}}\frac{1}{k}e^{ikx}ix\partial_x\mathfrak{Q}(x,k)e^{ik^{2}t}h(k)\,dk. 
		\end{align*}
		Then we apply similar arguments to \eqref{eq:psebound1}-
		\eqref{eq:psebound4}. 
		Using $\jx^\beta \partial_x\mathfrak{Q}(x,k)$ and $\jx^{\beta-1} \partial_x\partial_k\mathfrak{Q}(x,k)$ 
		as symbols, one has
		\begin{align}
		\label{eq:psebound5-2}
		& \left\Vert \left\langle x\right\rangle ^{\beta}\int_{k\geq \frac{1}{\sqrt{t}}}\frac{1}{k}
		e^{ikx}\partial_x\mathfrak{Q}(x,k)e^{ik^{2}t}\partial_{k}h(k)\,dk\right\Vert _{L_{x}^{2}}
		\lesssim \sqrt{t}\left\Vert \partial_{k}h\right\Vert _{L^{2}},
		\\
		\label{eq:psebound5-3} 
		& \left\Vert \left\langle x\right\rangle ^{\beta-1}\int_{k\geq\frac{1}{\sqrt{t}}}\frac{1}{k}
		e^{ikx} \partial_k 
		\partial_x\mathfrak{Q}(x,k)e^{ik^{2}t}h(k)\,dk\right\Vert _{L_{x}^{2}}
		\lesssim \sqrt{t}\left\Vert h\right\Vert_{L^{2}},
		\\
		\label{eq:psebound5-4}
		& \left\Vert \jx^{\beta-1}
		\int_{k\geq\frac{1}{\sqrt{t}}}\frac{1}{k}
		e^{ikx}ix\partial_x\mathfrak{Q}(x,k)e^{ik^{2}t}h(k)\,dk\right\Vert_{L_{x}^{2}}
		\lesssim \sqrt{t}\left\Vert h \right\Vert _{L^{2}}.
		\end{align}
		By Sobolev's embedding, we also estimate the boundary term
		\begin{equation}\label{eq:psebound5-5}
		\left\Vert \left\langle x\right\rangle ^{\beta-1} \sqrt{t} e^{i\frac{1}{\sqrt{t}}x}e^i 
		\mathfrak{Q}(x,\frac{1}{\sqrt{t}})h(\frac{1}{\sqrt{t}})\right\Vert_{L^{2}}
		\lesssim \sqrt{t} \left\Vert h \right\Vert_{H^{1}}.    
		\end{equation}
		Putting together \eqref{eq:psebound5-2}
		-\eqref{eq:psebound5-5}, it follows that
		\begin{equation}\label{eq:psebound5-6}
		\left\Vert \left\langle x\right\rangle ^{\beta-1} I_2(t,\cdot)\right\Vert_{L_{x}^{2}}
		\lesssim \frac{1}{\sqrt{t}}\left\Vert h\right\Vert_{H^1}.
		\end{equation}
		The desired result follows from \eqref{eq:psebound5-1}, \eqref{eq:psebound5-6},
		\eqref{eq:psebound1}-
		\eqref{eq:psebound4}.
	\end{proof}

	\smallskip
As an immediate corollary of Lemma \ref{lem:localderivative}, together
		with the pointwise bounds on $m_\pm$ in Lemma \ref{lem:Mestimates} and the 
		PDO bounds in Lemma \ref{lem:m-1}, we obtain the following estimates:
		using the notation \eqref{decompphi}, with $u=e^{-itH} f$ ($u^\# = e^{itk^2} f^\#$), we have  
		\begin{align}\label{dxuM}
		{\big\| \jx^{-1}\partial_x u_{M}(t) \big\|}_{L^2_x} \lesssim \jt^{-1/2} {\| f^\#(t) \|}_{H^1} 
		, \qquad M \in \{S,R\}.
		\end{align}


	\def\eps{\epsilon}
	\def\epss{\eps_1,\eps_2,\eps_3,\eps_4}
	\def\pv{\mathrm{p.v.}}
	\def\kk{k_1,k_2,k_3,k_4}
	
	\bigskip
	\section{The nonlinear spectral distribution for non-generic potentials}\label{secmu}
	In this section we give our main result on the decomposition of the 
	nonlinear spectral distribution
	\begin{equation}\label{musharpdef}
	\mu^{\#}(k,\ell,m,n):=\int\overline{\mathcal{K}^{\#}(x,k)}\mathcal{K}^{\#}(x,\ell)
	\overline{\mathcal{K}^{\#}(x,m)}\mathcal{K}^{\#}(x,n)\,dx.
	\end{equation}

	\medskip
	\subsection{Definitions and notation conventions}
	We introduce some definitions and convenient notation.
	First, we define the set of `good' linear coefficients, which is given by six scalar functions as follows:
	\begin{align}\label{defS}
	\begin{split}
	A := \big\{ [T(\cdot)-T(0)]\mathbf{1}_+(\cdot), \,\, -R_+(-\cdot)\mathbf{1}_-(\cdot), \,\, R_-(\cdot)\mathbf{1}_+(\cdot),
-[T(-\cdot)-T(0)]\mathbf{1}_-(\cdot), \,\, 1, \,\, -1 \big\}.
	\end{split}
	\end{align}
Notice that the set \eqref{defS} contains the coefficients $a^\epsilon_\iota$,
	$\epsilon,\iota \in \{+,-\}$ defined in \eqref{K+}-\eqref{K-},
	and that these are all Lipschitz continuous (hence the denomination `good').
	
	In order to have more compact notation we use the following convention:
	we denote signs $\{+,-\}$ by $\eps_j$ and let
	\begin{align}\label{convsum}
	\sum_\ast = \sum_{\epss \in \{+,-\}} \, .
	\end{align}
	
	Given four coefficients in $A$ we denote a fourfold tensor product with alternate conjugation as follows:
	\begin{align}\label{convprod}
	\prod_\ast 
	a_j(k_j) := \overline{a_1(k_1)} a_2(k_2) \overline{a_3(k_3)} a_4(k_4).
	\end{align}
	We will also use a similar notation for the product of four functions that also depend on the (same) variable $x$:
	\begin{align}\label{convprod'}
	\prod_\ast 
	b_j(x,k_j) := \overline{b_1(x,k_1)} b_2(x,k_2) \overline{b_3(x,k_3)} b_4(x,k_4).
	\end{align}
	See for example \eqref{muR1}.
	
	For ${\bf \eps} = (\epss)$ and ${\bf k} = (\kk)$ we let
	\begin{align}\label{dot}
	{\bf \eps} \cdot {\bf k} = \eps_1k_1 + \eps_2k_2 + \eps_3k_3 + \eps_4k_4
	\end{align}
	be the standard dot product.
	
	We define $\mathcal{C}$ to be the set of functions on $\R^4$ that are a tensor product of $4$ elements in $A$:
	\begin{align}\label{defSet}
	\mathcal{C} := \big\{ f: {\bf k}:=(k_1,k_2,k_3,k_4) \in \R^4 \rightarrow \C, \, \, 
	\mbox{s.t.} \,  f({\bf k}) = \prod_\ast a_j(k_j), \,\, a_j \in A \big\},
	\end{align}
	$\mathcal{C} =  \bar{A}\otimes  A \otimes  \bar{A}\otimes A.$ 
	We will also consider the above set minus the constant functions $1$ and $-1$:
	\begin{align}\label{defSet'}
	\mathcal{C}_L = \mathcal{C} \smallsetminus \{1,-1\}.
	\end{align}
	In particular, any function $f: {\bf k} \in \R^4 \rightarrow \C$ that belongs to $\mathcal{C}_L$
	is such that $f({\bf k}) = 0$ if $k_1\cdot k_2\cdot k_3\cdot k_4 = 0$.

	Given $\chi_+$ and $\chi_-$ as before, by standard computations we have, for $r=1,2,3,4$, 
	\begin{align}\label{chi+-}
	\begin{split}
	\int e^{-i\xi x}\chi_{+}^{r}(x)\,dx = 
	\sqrt{\frac{\pi}{2}}\delta_{0}(\xi)+\pv\frac{\what{\zeta}_{r}(\xi)}{i\xi}+\what{\varpi}_{r}(\xi),
	\\
	\int e^{-i\xi x}\chi_{-}^{r}(x)\,dx =
	\sqrt{\frac{\pi}{2}}\delta_{0}(\xi)-\pv\frac{\what{\zeta}_{r}(\xi)}{i\xi}+\what{\varpi}_{r}(\xi),
	\end{split}
	\end{align}
	where $\zeta_{r}$ is an even $C_{c}^{\infty}$ function with integral 
	$1$ and $\varpi_r$ denotes a (generic) $C_{c}^{\infty}$ function.
	
	\begin{thm}\label{theomu}
		Let $\mu^\#$ be defined as in \eqref{musharpdef}. Then, we can decompose into the sum of a delta function,
		a singular part with improved low frequency behavior (subscript $L$)
		and a regular part (subscript $R$) as follows:
		let ${\bf k}=(k_1,k_2,k_3,k_4) \in \R^4$, we have
		\begin{align}\label{mudecomp}
		(2\pi)^2 \, \mu^\#({\bf k}) = \sqrt{2\pi}\, \delta_0(k_1-k_2+k_3-k_4) + \mu^\#_L({\bf k}) + \mu^\#_R({\bf k}),
		\end{align}
		where
		
		\setlength{\leftmargini}{1.5em}
		\begin{itemize}
			
			\medskip
			\item Denoting ${\bf \eps} = (\epss)$ we have that  $\mu^\#_L({\bf k})$ is a finite linear combination (over $\R$) of terms of the form
			\begin{align}\label{muL}
			\begin{split}
			a_{\epss}({\bf k}) \Big[ \sqrt{\frac{\pi}{2}} \delta({\bf \eps} \cdot {\bf k} ) 
			\pm \pv \frac{\phi_{\epss}({\bf \eps} \cdot {\bf k} )}{i{\bf \eps} \cdot {\bf k} } \Big] &,
			\quad \, \mbox{with} \quad a_{\epss} \in \mathcal{C}_L,
			\end{split}
			\end{align}
			and (even) $\phi_{\epss} \in \mathcal{S}$ with integral $1$ 
			(where $\mathcal{S}$ is the Schwartz class).
			
			\medskip
			\item We have
			\begin{align}\label{muR0}
			\begin{split}
			\mu^\#_R({\bf k}) =  \mu^\#_{R,1}({\bf k}) + \mu^\#_{R,2}({\bf k}),  
			\end{split}
			\end{align}
			where
			\begin{align}\label{muR1}
			\begin{split}
			\mu^\#_{R,1}({\bf k}) = \sum_{(A_1,A_2,A_3,A_4) \in \mathcal{X}_R} \int_{\R} \prod_\ast \mathcal{K}^\#_{A_j}(k_j,x) \, dx,
			\,\quad \mathcal{X}_R := \{ S,R \}^4 \smallsetminus \{(S,S,S,S)\},
			\end{split}
			\end{align}
			and $\mu^\#_{R,2}$ is a linear combination of terms of the form
			\begin{align}\label{muR2}
			\begin{split}
			& a_{\epss}({\bf k}) \, \varphi_{\epss}({\bf \eps} \cdot {\bf k}),
			\\
			& \mbox{with} \quad a_{\epss} \in \mathcal{C},
			\quad \mbox{and} \quad \big|\partial_x^\alpha \whF^{-1}(\varphi_{\epss}) \big| \lesssim \jx^{-\gamma+1},\quad \alpha=0,1.
			\end{split}
			\end{align}
			
		\end{itemize}
		
	\end{thm}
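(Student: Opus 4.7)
The plan is to substitute the decomposition $\sqrt{2\pi}\mathcal{K}^\# = \mathcal{K}^\#_S + \mathcal{K}^\#_R$ from \eqref{Ksharpdecomp} into the definition \eqref{musharpdef} and expand the fourfold product, yielding $(2\pi)^2 \mu^\#({\bf k}) = \int_\R \prod_\ast (\mathcal{K}^\#_S + \mathcal{K}^\#_R)(x,k_j)\,dx$, a sum of $16$ integrals. By construction the fifteen terms carrying at least one $\mathcal{K}^\#_R$ factor are exactly $\mu^\#_{R,1}({\bf k})$ as in \eqref{muR1}. Everything then reduces to analyzing the pure $(S,S,S,S)$ integral.

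To unpack that integral I would substitute \eqref{KsharpS}--\eqref{K-} into each of the four slots and apply two algebraic reductions. First, because $\chi_0 \equiv 1 = \chi_+ + \chi_-$, the factor $\chi_0 \mathcal{K}^\#_0 = T(0)\, m_+(x,0)\, e^{ikx}$ can be split across the supports of $\chi_\pm$. Second, on the support of $\chi_+$ I write $m_+(x,0) = 1 + (m_+(x,0)-1)$, and on the support of $\chi_-$ I write $m_+(x,0) = -1 + (m_+(x,0)+1)$; here the bracketed remainders decay like $\jx^{-\gamma+1}$ by \eqref{m+0'}, together with the odd-parity identity $m_+(-x,0) = -m_+(x,0)$ that forces the limits $m_+(\pm\infty,0) = \pm 1$. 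After collecting the resulting sum of terms, and after bundling any composite coefficient such as $T(k)\mathbf{1}_+(k) + T(0)\mathbf{1}_-(k)$ as the sum of two elements of $A$ (and similarly for the $\chi_-$ side), every term in the expansion takes the shape of either a principal piece $\chi_+^{r_+}(x)\chi_-^{r_-}(x)\, \prod_\ast a_j(k_j)\, e^{i(\eps\cdot k)x}$, with $a_j \in A$, signs $\eps_j \in \{+,-\}$, and $r_+ + r_- \leq 4$, or a residual piece obtained from the same template by replacing at least one $\chi_{\iota_j}$ factor by $\chi_{\iota_j}(m_+(\cdot,0) \mp 1)$.

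For the principal pieces the $x$-integral is evaluated through \eqref{chi+-} and splits into three cases. If $r_+ \geq 1$ and $r_- \geq 1$ simultaneously, $\chi_+^{r_+}\chi_-^{r_-}$ is compactly supported so the Fourier integral is Schwartz in $\eps\cdot k$, fitting the $\mu^\#_{R,2}$ template \eqref{muR2}. If only $\chi_+^{r_+}$ (or only $\chi_-^{r_-}$) appears, \eqref{chi+-} yields a delta plus a principal value plus a Schwartz part; paired with the coefficient $\prod_\ast a_j$, the first two pieces match the $\mu^\#_L$ structure \eqref{muL} exactly when $\prod_\ast a_j \in \mathcal{C}_L$, while the Schwartz part again feeds \eqref{muR2}. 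The only way that $\prod_\ast a_j \notin \mathcal{C}_L$ is that each $a_j \in \{+1,-1\}$; in that degenerate case, a direct enumeration relying on $T(0)^2 = 1$ and the telescoping $\chi_+ + \chi_- = 1$ collapses all surviving constant-coefficient contributions into a single main delta with coefficient $\sqrt{2\pi}$, yielding the leading term $\sqrt{2\pi}\,\delta_0(k_1-k_2+k_3-k_4)$ of \eqref{mudecomp}. Finally, the residual pieces have the form $g(x)e^{i(\eps\cdot k)x}$ with $g \in L^\infty$ decaying like $\jx^{-\gamma+1}$, so their $x$-integrals produce $\sqrt{2\pi}\,\widehat{g}(\eps\cdot k)$ and contribute to \eqref{muR2} with the required control $|\partial_x^\alpha \whF^{-1}(\varphi_\star)| \lesssim \jx^{-\gamma+1}$ for $\alpha=0,1$.

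The main obstacle is the combinatorial bookkeeping: the step-two expansion generates a large but finite number of terms and one must verify that (i) no constant-coefficient delta survives beyond $\sqrt{2\pi}\,\delta_0(k_1-k_2+k_3-k_4)$ after the cancellations coming from $T(0) = -1$, $R_\pm(0) = 0$, and the limits $m_+(\pm\infty,0) = \pm 1$, and (ii) whenever a $\delta$ or $\pv$ structure arises paired with a non-constant coefficient, that coefficient truly belongs to $\mathcal{C}_L$ so as to fit the form \eqref{muL}. Both checks are mechanical but require a disciplined enumeration of the $(S,S,S,S)$-expansion; this is precisely where the careful definition of the set $A$ of "good" linear coefficients in \eqref{defS}, and the grouping $\mathcal{C}_L = \mathcal{C} \smallsetminus \{1,-1\}$ in \eqref{defSet'}, are designed to make the accounting close cleanly.
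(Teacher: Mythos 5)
Your proposal is correct and follows essentially the same route as the paper: peel off the fifteen terms containing a $\mathcal{K}^\#_R$ factor as $\mu^\#_{R,1}$, expand the remaining $(S,S,S,S)$ integral via \eqref{KsharpS}--\eqref{K-}, extract the delta/p.v.\ structure from \eqref{chi+-} with coefficients in $\mathcal{C}_L$, and send the $m_+(\cdot,0)\mp 1$ remainders and the $\chi_+\chi_-$-supported pieces into $\mu^\#_{R,2}$ using \eqref{m+0'}. The only (harmless) deviation is that you split $\chi_0=\chi_++\chi_-$ in the pure $(0,0,0,0)$ term, which obliges you to verify that the constant-coefficient p.v.\ contributions from $\chi_+^4$ and $\chi_-^4$ cancel --- true, since \eqref{chi+-} carries the same $\zeta_4$ with opposite signs (equivalently $\chi_+^4+\chi_-^4-1$ is compactly supported) --- whereas the paper keeps $\chi_0\equiv 1$ and reads the delta directly off $\int m_+^4(x,0)\,e^{ix(-k_1+k_2-k_3+k_4)}\,dx$.
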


	\medskip
	Here are a few words to explain the statement and the logic behind Theorem \ref{theomu}.
	
	\setlength{\leftmargini}{2em}
	\begin{itemize}
		
		\smallskip
		\item The singular part of the distribution is made of $\delta$ contributions which will be easy to handle,
		as these correspond to the flat $V=0$ case.
		
		\smallskip
		\item The $\pv$ parts in \eqref{muL} are potentially dangerous, but they come with at least one vanishing coefficient,
		since $a_{\epss} \in \mathcal{C}_L$; see \eqref{defSet'}. 
		For this reason, we refer to $\mu^\#_L$ as an ``improved low frequency'' singular distribution.
		
		\smallskip
		\item The regular part $\mu^\#_R$ is split into a local part $\mu^\#_{R,2}$ which is just a linear combination
		of Schwartz functions of ${\bf \eps} \cdot {\bf k}$, and a pseudodifferential part $\mu^\#_{R,1}$
		which involves at least one localized $\mathcal{K}^\#_R(x,k_j)$ function.
		
		
	\end{itemize}
	
	
	%
	%
	%
	%
	%
	%

	\medskip
	\subsection{Proof of Theorem \ref{theomu}} 
	\label{subsec:DecM}
	
	We proceed in a few steps.
	Recall the definitions \eqref{Ksharpdecomp} and \eqref{K0}-\eqref{KR} and Lemma \ref{lem:Kregular}.
	Note that $\mathcal{K}_{\pm}^{\#}(x,k)$ vanish at $k=0$, and are a linear combination of regular exponentials with 
	coefficients in the set $A \smallsetminus \{1,-1\}$.
	
	\medskip
	\noindent
	{\it Step 1: Removing $\mu^\#_{R,1}$ and the delta function}.
	Starting from the definition of $\mu^\#$ we first remove $\mu^\#_{R,1}$ 
	as defined in \eqref{muR1}, and are left with
	\begin{align}\label{prmuS1}
	\begin{split}
	(2\pi)^2 \mu^\#({\bf k}) - \mu^\#_{R,1}({\bf k}) & = \int_{\R} \prod_\ast \mathcal{K}^\#_S(k_j,x) \, dx
	\\
	& = \sum_{(\iota_1,\iota_2,\iota_3,\iota_4) \in \{0,+,-\}^4} \int_{\R} \prod_\ast \chi_{\iota_j}(x)\mathcal{K}^\#_{\iota_j}(k_j,x) \, dx.
	\end{split}
	\end{align}
	To prove the proposition it will suffice to show that the terms in \eqref{prmuS1} are 
	either of the form \eqref{muL} or of the form \eqref{muR2}.
	
	We first look at the leading order term in the last sum in \eqref{prmuS1}, 
	that is the one with $(\iota_1,\iota_2,\iota_3,\iota_4)=(0,0,0,0)$;
	recalling the convention \eqref{convprod} and the convention $\chi_0(x) \equiv 1$, this leading order term
	is given by (recall $T(0)=-1$)
	\begin{align}\label{prmuS0}
	\begin{split}
	\int_{\R} \prod_\ast (-m_+(x,0)) e^{ixk_j}\, dx 
	& = \int_{\R} m_+^4(x,0) e^{ix(-k_1+k_2-k_3+k_4)} \, dx 
	\\
	& = \sqrt{2\pi} \, \delta(k_1-k_2+k_3-k_4) 
	\\
	& + \int_{\R} \big( m_+^4(x,0) -  1\big) e^{ix(-k_1+k_2-k_3+k_4)} \, dx .
	\end{split}
	\end{align}
	The second to last line gives us the delta contribution in \eqref{mudecomp}.
The last line in \eqref{prmuS0} is $\sqrt{2\pi} \whF(g)(k_1-k_2+k_3-k_4)$
		where $g(x) = m_+^4(x,0) - 1$; 
		this latter is regular and decaying according to \eqref{m+0'},
		and therefore this term is accounted for in the sum on the right-hand side of \eqref{muR2}.
	
	Next, we look at the contributions from \eqref{prmuS1} that give rise 
	to the improved low frequency singular distribution $\mu^\#_L$.
	
	\medskip
	\noindent
	{\it Step 2: The singular part $\mu^\#_L$}.
	This part of the distribution arises from the contributions to \eqref{prmuS1} 
	that have no terms with both $+$ and $-$ in the sum,
	that is, the term
	\begin{align}
	\label{prmu+a}
	& \sum_{(\iota_1,\iota_2,\iota_3,\iota_4) \in \{0,+\}^4 \smallsetminus \{0\}^4} 
	\int_{\R} \prod_\ast \chi_{\iota_j}(x)\mathcal{K}^\#_{\iota_j}(k_j,x) \, dx
	\\
	\label{prmu+b}
	& + \sum_{(\iota_1,\iota_2,\iota_3,\iota_4) \in \{0,-\}^4 \smallsetminus \{0\}^4} 
	\int_{\R} \prod_\ast \chi_{\iota_j}(x)\mathcal{K}^\#_{\iota_j}(k_j,x) \, dx.
	\end{align}
	
	We look at the first sum above \eqref{prmu+a}. 
	By symmetry in the $k_j$ variables (up to irrelevant conjugations),
	it suffices to analyze four types of terms in the sum in \eqref{prmu+a}, 
	depending on how many indexes $\iota_j = 0$ appear; these terms are (recall the convention $\chi_0\equiv1$)
	\begin{subequations}\label{prmu+1}
		\begin{align}
		\label{prmu+11}
		& \int_{\R} \chi_+(x)  \overline{\mathcal{K}^\#_0(k_1,x)} \mathcal{K}^\#_0(k_2,x) 
		\overline{\mathcal{K}^\#_0(k_3,x)} \mathcal{K}^\#_+(k_4,x) \, dx,
		\\
		\label{prmu+12}
		& \int_{\R} \chi_+^2(x)  \overline{\mathcal{K}^\#_0(k_1,x)} \mathcal{K}^\#_0(k_2,x) 
		\overline{\mathcal{K}^\#_+(k_3,x)} \mathcal{K}^\#_+(k_4,x) \, dx,
		\\
		\label{prmu+13}
		& \int_{\R} \chi_+^3(x) \overline{\mathcal{K}^\#_0(k_1,x)} \mathcal{K}^\#_+(k_2,x) 
		\overline{\mathcal{K}^\#_+(k_3,x)} \mathcal{K}^\#_+(k_4,x) \, dx,
		\\
		\label{prmu+14}
		& \int_{\R} \chi_+^4(x) \prod_\ast \mathcal{K}^\#_{+}(k_j,x) \, dx.
		\end{align}
	\end{subequations}
	
	Recalling the definitions \eqref{K0} and \eqref{K+} we have 
	\begin{align}
	\nonumber
	\eqref{prmu+11} & = \sum_{\eps_4 \in\{+,-\}}\int_{\R}\big( -m_+^3(x,0) \big) e^{ix(-k_1+k_2-k_3)} 
	\chi_+(x)  a_+^{\eps_4}(k_4) e^{\eps_4 i xk_4} \, dx
	\\
	\label{prmu+11a}
	& = -\sum_{\eps_4\in\{+,-\}}  a_+^{\eps_4}(k_4) \int_{\R} \chi_+(x) e^{ix(-k_1+k_2-k_3 + \eps_4 k_4)} \, dx
	\\
	\label{prmu+11b}
	& - \sum_{\eps_4\in\{+,-\}} a_+^{\eps_4}(k) \int_{\R} \chi_+(x)  
	\big( m_+^3(x,0) - 1\big) e^{ix(-k_1+k_2-k_3 + \eps_4 k_4)} \, dx.
	\end{align}
	Using the first formula in \eqref{chi+-} we see that \eqref{prmu+11a} is of the form \eqref{muL},
	since the coefficient $a^\epsilon_\iota(k_4)$ belongs to $\mathcal{C}_L$
	(recall the definitions \eqref{defSet}-\eqref{defSet'} and \eqref{defS}).
	For the terms in \eqref{prmu+11b}, denoting 
	$$\rho(K) := \int_{\R} \chi_+(x) \big( m_+^3(x,0) - 1\big) e^{-ixK} \, dx, $$
	and using \eqref{m+0'}, we have that
	\begin{align}\label{accmuR2}
	\big|\partial_x^\alpha  \whF^{-1}_{K\mapsto x} \rho \big| 
	= \sqrt{2\pi} \big| \partial_x^\alpha \big[ \chi_+(x) \big( m_+^3(x,0) - 1\big) \big] \big|
	\lesssim \jx^{-\gamma+1},
	\end{align}
	so that \eqref{prmu+11b} are acceptable regular remainders that we can include in \eqref{muR2}.
	
	For the term in \eqref{prmu+12} we can write similarly
	\begin{align}
	\nonumber
	\eqref{prmu+12} 
	& = \sum_{ \eps_3,\eps_4 \in\{+,-\} } \int_{\R} m_+^2(x,0) e^{ix(-k_1+k_2)} 
	\, \chi_+(x)  \overline{a_+^{\eps_3}(k_3) e^{\eps_3 i k_3x}} \, \chi_+(x)  a_+^{\eps_4}(k_4) e^{\eps_4 i k_4x}\, dx
	\\
	\label{prmu+12a}
	& = \sum_{ \eps_3,\eps_4 \in\{+,-\} } \overline{a_+^{\eps_3}(k_3)}  a_+^{\eps_4}(k_4) \int_{\R} \chi_+^2(x) 
	e^{ix(-k_1+k_2-\eps_3k_3 + \eps_4 k_4)} \, dx
	\\
	\label{prmu+12b}
	& = \sum_{ \eps_3,\eps_4 \in\{+,-\} }  \overline{a_+^{\eps_3}(k_3)}  a_+^{\eps_4}(k_4) \int_{\R} \chi_+^2(x) \big(m_+^2(x,0) - 1\big) 
	e^{ix(-k_1+k_2-\eps_3k_3 + \eps_4 k_4)} \, dx .
	\end{align}
	Using \eqref{chi+-} we see that \eqref{prmu+12a} is of the form \eqref{muL}.
	Using \eqref{m+0'} we see, similarly to \eqref{accmuR2}, that the term \eqref{prmu+12b} is of the form \eqref{muR2}. 
	The term \eqref{prmu+13} can be analyzed in the same exact way,
	using \eqref{m+0}, and contributes to \eqref{muL} and the linear combination of terms as in \eqref{muR2}.
	Using again \eqref{K+} and \eqref{chi+-} we see that the term \eqref{prmu+14} 
	also contributes to \eqref{muL} and \eqref{muR2}.
	This completes the analysis of \eqref{prmu+a}
	
	
	The sum \eqref{prmu+b} can be analyzed in a similar way, 
	using \eqref{K0}, \eqref{K-}, the second formula in \eqref{chi+-},
	and \eqref{m+0'} with $\epsilon = -$.
	

	\medskip
	\noindent
	{\it Step 3: The regular part}.
	We are left with the terms in \eqref{prmuS1} where the sum is taken over quadruples 
	$(\iota_1,\iota_2,\iota_3,\iota_4)$
	that contain at least one $+$ and one $-$ sign, which we denote by 
	\begin{align}
	\mathcal{I} := \big\{ (\iota_1,\iota_2,\iota_3,\iota_4) \in \{+,-,0\}^4 \, \mbox{s.t.} \, \exists \, a,b \in \{1,2,3,4\} \,\, \mbox{with} 
	\,\, \iota_a = +, \iota_b = - \big\}.
	\end{align}
	Up to permuting variables (and conjugating), we can reduce matters to the terms where the $+$ and $-$ indexes correspond to the first two 
	generalized eigenfunctions, 
	that is, the sum
	\begin{align}\label{prmuR0}
	\begin{split}
	& \sum_{(\iota_3,\iota_4) \in \{+,-,0\} } \int_{\R} \overline{\chi_{+}(x)\mathcal{K}^\#_{+}(k_1,x)}  
	\, \chi_{-}(x)\mathcal{K}^\#_{-}(k_2,x) 
	\, \overline{\chi_{\iota_3}(x)\mathcal{K}^\#_{\iota_3}(k_3,x)}  \, \chi_{\iota_4}(x)\mathcal{K}^\#_{\iota_4}(k_4,x) \, dx.
	\end{split}
	\end{align}
	Using \eqref{K+} and \eqref{K-}  we can write
	\begin{align}\label{prmuR1}
	\begin{split}
	\eqref{prmuR0} & = \sum_{ \substack{\iota_3,\iota_4 \in \{+,-,0\} \\ \eps_1,\eps_2 \in \{+,-\} } }   \overline{a_+^{\eps_1}(k_1)}  a_-^{\eps_2}(k_2)
	\\ 
	& \times \int_{\R} (\chi_+\chi_-)(x) \, 
	e^{ix(-\eps_1k_1+\eps_2k_2)} 
	\overline{\chi_{\iota_3}(x)\mathcal{K}^\#_{\iota_3}(k_3,x)}  
	\, \chi_{\iota_4}(x)\mathcal{K}^\#_{\iota_4}(k_4,x) \, dx
	\end{split}
	\end{align}
	and see that since $\chi_+\chi_-$ is compactly supported,
	the expression above is of the form \eqref{muR2} in view of the definitions 
	\eqref{K0}, \eqref{K+} and \eqref{K-}, and the definition \eqref{defSet}. 
	
	This concludes the proof of the theorem. $\hfill \Box$



\bigskip
\section{Nonlinear estimates I: set-up and the singular part}\label{sec:Cubic}

In this and the next section we perform the main weighted estimates on solutions of \eqref{NLSV},
and show the a priori bound \eqref{aprweiconc}, which then gives Proposition \ref{propaprwei}.
As already pointed out, it suffices to concentrate on the case of an odd resonance,
in which case we denote $\mathcal{F} = \mathcal{F}^\sharp$.
Taking the modified/sharp distorted Fourier transform and using Duhamel's formula, 
in terms of the profile we can write
\begin{align}
\begin{split}
& f^{\#}(t,k) = f^{\#}(0,k)
\\ 
& \pm i\int_{0}^{t}
\iiint e^{is(-k^2+\ell^2-m^2+n^2)}f^{\#}(s,\ell)\overline{f^{\#}(s,m)}f^{\#}(s,n)\mu^{\#}(k,\ell,m,n)\,dndmd\ell ds
\label{eq:profile}
\end{split}
\end{align}
where, we recall,
\begin{equation}\label{eq:nonmea}
\mu^{\#}(k,\ell,m,n) := \int\overline{\mathcal{K}^{\#}(x,k)}\mathcal{K}^{\#}(x,\ell)\overline{\mathcal{K}^{\#}(x,m)}\mathcal{K}^{\#}(x,n)\,dx
\end{equation}
is the the modified nonlinear spectral distribution defined in \eqref{musharpdef} 
and analyzed in Theorem \ref{theomu}.
	
\medskip
Our main purpose in this and the next section is to 
show the following:

\begin{prop}\label{pro:weightmain}
For $0\leq t\leq T$, one has that, for some $C>0$,
\begin{align}\label{weightmainconc}
{\big\| \partial_{k}f^{\#}(t) \big\|}_{L_{k}^{2}} \leq {\big\| \partial_{k}f^{\#}(0) \big\|}_{L_k^2} 
  + C \jt^{\alpha} \varepsilon^3 
\end{align}
\end{prop}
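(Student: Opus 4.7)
The plan is to start from Duhamel's formula \eqref{eq:profile}, apply $\partial_k$ to both sides, and split the time integral according to the decomposition \eqref{mudecomp} of the nonlinear spectral distribution from Theorem \ref{theomu}. This gives
\begin{align*}
\partial_k f^\#(t,k) = \partial_k f^\#(0,k) \pm i \int_0^t \partial_k \big[ \mathcal{N}_0 + \mathcal{N}_L + \mathcal{N}_R \big](s,k) \, ds,
\end{align*}
where $\mathcal{N}_0$ corresponds to the delta contribution $\sqrt{2\pi}\,\delta_0(k-\ell+m-n)$, $\mathcal{N}_L$ to the improved low-frequency singular part $\mu^\#_L$, and $\mathcal{N}_R$ to the regular part $\mu^\#_R = \mu^\#_{R,1}+\mu^\#_{R,2}$. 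Under the bootstrap assumption \eqref{aprweias} I need to bound the $L^2_k$-norm of each contribution by $C\varepsilon^3 \jt^\alpha$.

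For the delta piece $\mathcal{N}_0$ the argument is essentially identical to the flat case $V=0$: the convolution structure enforced by the delta reduces the problem to standard bilinear/trilinear stationary-phase estimates that consume the weighted norm and the $L^\infty$-bound on $f^\#$, exactly as in \cite{KP,HN,GPR,NLSV}. The decay bound \eqref{eq:linearpoinwiseH} from Lemma \ref{lem:pointwiseH}, applied via the sharp transform, provides $|t|^{-1/2}$ in $L^\infty_x$, and then one performs the normal-form/integration-by-parts in time of \cite{KP,GPR} to obtain the $\jt^\alpha$ growth.

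The core difficulty lies in $\mathcal{N}_L$. A direct application of $\partial_k$ to the singular distribution in \eqref{muL} would produce boundary terms and a factor of $s$ from differentiating the oscillating phase; the standard remedy, as developed in \cite{NLSV}, is a commutation identity for $\partial_k$ which transfers the derivative through the multiplier and yields, up to good error terms, a localized cubic expression of the form $\mathcal{F}^\#\big(s\,a(x)|u|^2 u\big)$ with $a$ compactly supported and inherited vanishing at $k=0$ (because all symbols $a_{\epss}$ lie in $\mathcal{C}_L$). In the generic case one would now use improved $\jt^{-1}$ local decay to close, but this is unavailable here. Instead, the vanishing of the coefficients forces a factor of the form $\phi(k)\sim\sqrt{|k|}$ (or $k$) in front of the oscillating kernel, and this is exactly what powers the smoothing estimate \eqref{eq:generalimhomsmoothing} of Lemma \ref{lem:smoothingsim}. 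I would bound the resulting bilinear-in-time expression in $L^2_k$ by placing the cubic localized input in $L^1_x L^2_s$, then interpolate using the basic $L^\infty$ decay of $u$ together with Lemma \ref{lem:lowlocaldecay} on the low-frequency-improved portion of $u$, so that the product $|u|^2 u$ on a compact spatial set is in $L^2_s$ with the admissible growth. This is the main obstacle and where Lemmas \ref{lem:smoothingsim} and \ref{lem:lowlocaldecay} are indispensable.

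For $\mathcal{N}_R$ I would use that, modulo algebraically benign pieces of $\mu^\#_{R,2}$ (which are Schwartz in ${\bf \eps}\cdot{\bf k}$) and the pseudodifferential piece $\mu^\#_{R,1}$, the contribution can be written as $\mathcal{F}^\#(e^{isH} a(x)|u|^2 u)$ with $a$ strongly localized. Applying $\partial_k$ produces a factor essentially like $sk$. On low frequencies $|k|\lesssim 1$, the extra $k$ already matches the hypothesis of Corollary \ref{corsmoothing}, so \eqref{eq:smoothingQim} closes the estimate directly using localization of $a$ in $L^1_x$. On high frequencies $|k|\gtrsim 1$ the factor $sk$ is dangerous; here I would integrate by parts in $s$ (using $e^{-ik^2 s} = (2ik)^{-1}\partial_x e^{ikx}\cdots$ after unwinding via $\mathcal{K}^\#_R$), exchanging the $k$ for an $x$-derivative falling on either the profile or the localized coefficient. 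The resulting term is then controlled by combining the high-frequency smoothing estimate \eqref{eq:smoothingQimh} with the local $L^2$ decay for differentiated Schrödinger flows from Lemma \ref{lem:localderivative} and its corollary \eqref{dxuM}, both producing a $\jt^{-1/2}$ gain that closes the estimate against the a priori $\jt^\alpha$ growth. Summing the contributions yields \eqref{weightmainconc}.
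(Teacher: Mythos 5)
Your overall architecture coincides with the paper's proof: the same decomposition into $\mathcal{N}_0+\mathcal{N}_L+\mathcal{N}_{R,1}+\mathcal{N}_{R,2}$ from Theorem \ref{theomu}, the commutation identity of Lemma \ref{lem:algetri}, the smoothing estimates of Lemma \ref{lem:smoothingsim} and Corollary \ref{corsmoothing}, the low-frequency improved local decay of Lemma \ref{lem:lowlocaldecay}, and, for the regular part, the low/high frequency split where the factor of $k$ is absorbed through the identity \eqref{KRid}, $ik\mathcal{K}^{\#}_{R}=\partial_x\mathcal{K}_{\#,R}'-\mathcal{K}_{\#,R}''$, combined with Lemma \ref{lem:localderivative} and \eqref{dxuM} (so the integration by parts is in $x$, not in $s$ as you wrote). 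Note also that for $\mathcal{N}_0$ no normal form in time is needed: in the commutation identity the potentially growing term carries $y\,\mathrm{b}(y)$ with $\mathrm{b}=\delta_0$, hence vanishes identically, and the remaining terms are $\mathcal{O}(\varepsilon^3\js^{-1+\alpha})$, which integrates directly to $\jt^{\alpha}$.

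The one point where your plan, as written, would break is the estimate of $\partial_k\mathcal{N}_L$ after the commutation. Membership of $a_{\epss}$ in $\mathcal{C}_L$ only guarantees that at least one of the four coefficients $a_{\e_j}$ vanishes at $0$; it does not force a factor $\phi(k)\lesssim\sqrt{|k|}$ in the output variable $k$. When the vanishing coefficient sits on an input frequency ($\ell$, $m$ or $n$), the hypothesis of the dual smoothing estimate \eqref{eq:generalimhomsmoothing} fails and it cannot be invoked at all; in that case the paper forgoes smoothing entirely and applies Lemma \ref{lem:lowlocaldecay} to the corresponding input, obtaining $\|\jx^{-1}u_1(s)\|_{L^\infty}\lesssim\js^{-1}\|f^\#\|_{H^1}$, so the localized term $s\,a(x)|u|^2u$ contributes $\mathcal{O}(\varepsilon^3\js^{-1+\alpha})$ in $L^2_k$ and is directly time-integrable. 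Conversely, when the vanishing is in the output variable, the smoothing estimate with $\phi=a_{\e_1}$ together with the plain $\js^{-3/2}$ pointwise decay of the cubic product already closes the bound (giving $\varepsilon^3\sqrt{\log\jt}$), and no local decay is needed. Your description applies both mechanisms simultaneously, but exactly one of them is available in each situation; the argument must be split according to which coefficient vanishes, as in the paper's Case (1)/(2) dichotomy.
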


\medskip
\subsection{Decomposition of the nonlinearity}\label{subsec:DecMpointwise}
	According to the decomposition  \eqref{mudecomp} of $\mu^\#$ from Theorem \ref{theomu}, and 
	Duhamel's formula \eqref{eq:profile}, we can write
	\begin{align}\label{eq:expandf}
	\begin{split}
	f^{\#}(t,k) & = f^{\#}(0,k) \, \mp \, 
	i\int_0^{t} \big(\mathcal{N}_{0} + \mathcal{N}_{L}
	+ \mathcal{N}_{R,1} + \mathcal{N}_{R,2} \big)\,ds,
	\end{split}
	\end{align}
	where
	\begin{align}\label{Nast}
	\begin{split}
	\mathcal{N}_{\ast}(s,k)=\frac{1}{(2\pi)^{2}} \iiint e^{is(-k^2+\ell^2-m^2+n^2)}
	f^{\#}(s,\ell)\overline{f^{\#}(s,m)}
	f^{\#}(s,n) &
	\\ \times \mu^{\#}_{\ast}(k,\ell,m,n)\,dndmd\ell, & \quad \ast\in\{0; L; R,\!1; R,\!2\}
	\end{split}
	\end{align}
	according to the definitions in Theorem \ref{theomu} and the notation
	\begin{align}	
	\mu^{\#}_{0}(k,\ell,m,n) = \sqrt{2\pi}\, \delta_0(k-\ell+m-n).
	\end{align}	
	To prove the main Proposition \ref{pro:weightmain} it then suffices to show the following
	estimates:
		\begin{align}
		\label{weightmain0}
		& {\Big\| \int_0^t \partial_k \mathcal{N}_0(s) \, ds \Big\|}_{L^2_k} \lesssim 
		\jt^{\alpha} \varepsilon^3, 
		\\
		\label{weightmain1}
		& {\Big\| \int_0^t \partial_k \mathcal{N}_L(s) \, ds  \Big\|}_{L^2_k} \lesssim \jt^{\alpha} \varepsilon^3, 
		\\
		\label{weightmain2}
		& {\Big\| \int_0^t \partial_k \mathcal{N}_{R,1}(s) \, ds  \Big\|}_{L^2_k} 
		+ {\Big\| \int_0^t \partial_k \mathcal{N}_{R,2}(s) \, ds \Big\|}_{L^2_k} 
		\lesssim \jt^{\alpha} \varepsilon^3. 
		\end{align}

The proof of \eqref{weightmain1} is given in Subsection \ref{ssecsing} below,
and the proof of \eqref{weightmain2} in Section \ref{sec:estimateregular}.

\medskip
\subsection{Estimate of $\mathcal{N}_0$}
We skip the details of the estimate for this term since it is straightforward,
and essentially a special case of \eqref{weightmain1}.
In particular, one can apply Lemma \ref{lem:inverseFD},
and estimate ${\|\partial_k \mathcal{N}_0\|}_{L^2}$ 
by the same terms in \eqref{N+3}-\eqref{N+4} (taking $a_{\eps_1}=1$);
since $yb(y) = 0$ for $b = \delta$, one immediately obtains a bound by the right-hand side of \eqref{N+5},
which suffices.

	\medskip
	\subsection{Estimate of $\mathcal{N}_L$}
	\label{ssecsing}
	Recall that from \eqref{muL} a generic term in $\mu^{\#}_{L}$ is of the form
	\begin{align}\label{muLg}
	\begin{split}
	\mu^{\#}_{L,g}({\bf k}) = a_{\epss}({\bf k}) \Big[ \sqrt{\frac{\pi}{2}} \delta({\bf \eps} \cdot {\bf k} ) 
	\pm \pv \frac{\phi_{\epss}({\bf \eps} \cdot {\bf k} )}{i{\bf \eps} \cdot {\bf k} } \Big] &,
	\quad \, \mbox{with} \quad a_{\epss} \in \mathcal{C}_L.
	\end{split}
	\end{align}
	To estimate $\mathcal{N}_L$, it suffices to estimate
	a generic term $\mathcal{N}_{L,g}$, where we are again using the notation \eqref{Nast}.

	First of all, we recall a lemma for the commutation of $\partial_k$
	and trilinear expressions like the ones we need to estimate:
	
	\begin{lem}\label{lem:algetri}
		For $f_{j}\in\mathcal{S}$, define the trilinear form 
		\begin{align}\label{eq:tril}
		\begin{split}
		\mathcal{T}_{\mathrm{b},\e_1,\e_2,\e_3,\e_4}(f_1,f_2,f_3)(k)=
		\iiint e^{it(-k^2+\ell^2-m^2+n^2)}f_{1}(\ell) \overline{f_{2}(m)} f_{3}(n)
		\\ \times \mathrm{b}(\e_1k+\e_2\ell+\e_3m+\e_4n)\,d\ell dmdn 
		\end{split} 
		\end{align}
		where $\epsilon_{j}\in\left\{ +,- \right\}$, and $\mathrm{b}$ is a distribution. 
		Then one has
		\begin{align}\label{eq:kdiffT}
		\begin{split}
		& \e_1 \partial_{k}\mathcal{T}_{\mathrm{b},\e_1,\e_2,\e_3,\e_4}(f_1,f_2,f_3)(k) 
		\\
		& \qquad = -\e_2\mathcal{T}_{\mathrm{b},\e_1,\e_2,\e_3,\e_4}
		\left(\partial_{\ell}f_{1},f_{2},f_{3}\right)(k)
		+\e_3\mathcal{T}_{\mathrm{b},\e_1,\e_2,\e_3,\e_4}\left(f_{1},\partial_{m}f_{2},f_{3}\right)(k)
		\\
		& \qquad -\e_4\mathcal{T}_{\mathrm{b},\e_1,\e_2,\e_3,\e_4}\left(f_{1},f_{2},\partial_{n}f_{3}\right)(k)
		-2it \, \mathcal{T}_{\mathrm{yb},\e_1,\e_2,\e_3,\e_4}(f_1,f_2,f_3)(k).
		\end{split}
		\end{align}
	\end{lem}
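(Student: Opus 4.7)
The plan is to verify \eqref{eq:kdiffT} by a direct computation: differentiate under the integral, use one simple algebraic identity to rewrite the resulting factor of $k$, and then integrate by parts in $\ell, m, n$. Since $\mathrm{b}$ may be a distribution, the manipulations are justified first by taking $\mathrm{b}$ smooth and compactly supported, and then passing to the limit by density (all integrations by parts are in $\ell, m, n$, where the test functions $f_j$ are Schwartz, so boundary terms vanish).

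Let me set $y = \e_1 k + \e_2 \ell + \e_3 m + \e_4 n$ and $E(k,\ell,m,n) = e^{it(-k^2+\ell^2-m^2+n^2)}$. A direct computation gives
\[
\e_1 \partial_k \bigl[E\, \mathrm{b}(y)\bigr] = -2it\,\e_1 k\, E\, \mathrm{b}(y) + E\, \mathrm{b}'(y).
\]
The key algebraic step is to use $\e_j^2 = 1$ to write $\e_1 k = y - \e_2\ell - \e_3 m - \e_4 n$, which splits the first term as
\[
-2it\e_1 k \, E\, \mathrm{b}(y) \;=\; -2it E\, (y\mathrm{b})(y) \;+\; 2it\e_2 \ell\, E\, \mathrm{b} \;+\; 2it\e_3 m\, E\, \mathrm{b} \;+\; 2it\e_4 n\, E\, \mathrm{b}.
\]
The first contribution is precisely the $-2it\,\mathcal{T}_{y\mathrm{b}}$ term appearing in \eqref{eq:kdiffT}. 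For the other three, I would use $\partial_\ell E = 2it\ell E$, $\partial_m E = -2itm E$, $\partial_n E = 2itn E$ to convert each factor of $\ell$, $m$, $n$ times $E$ into a partial derivative of $E$. Combining with $\partial_\ell (E\mathrm{b}) = (\partial_\ell E)\mathrm{b} + \e_2 E\mathrm{b}'(y)$ and its analogues in $m, n$, the three resulting copies of $E\mathrm{b}'(y)$ exactly cancel against $+E\mathrm{b}'(y)$ plus one extra copy with the right sign, leaving the clean expression
\[
\e_1 \partial_k \bigl[E\, \mathrm{b}(y)\bigr] + 2it E(y\mathrm{b})(y) \;=\; \e_2 \partial_\ell(E\mathrm{b}) - \e_3 \partial_m(E\mathrm{b}) + \e_4 \partial_n(E\mathrm{b}).
\]

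Finally, I would multiply by $f_1(\ell)\overline{f_2(m)}f_3(n)$ and integrate over $(\ell,m,n) \in \R^3$. Integrating by parts, the three derivatives transfer respectively onto $f_1$, $\overline{f_2}$, $f_3$ with a sign flip, producing exactly $-\e_2 \mathcal{T}_{\mathrm{b}}(\partial_\ell f_1, f_2, f_3) + \e_3 \mathcal{T}_{\mathrm{b}}(f_1, \partial_m f_2, f_3) - \e_4 \mathcal{T}_{\mathrm{b}}(f_1, f_2, \partial_n f_3)$, which together with the $-2it\,\mathcal{T}_{y\mathrm{b}}$ term yields \eqref{eq:kdiffT}. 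The only subtle point is tracking the signs coming from $\partial_m E = -2it m E$ (the conjugation in the middle slot) and from $\e_j^2 = 1$; I do not anticipate any real obstacle beyond this bookkeeping.
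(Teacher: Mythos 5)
Your computation is correct: the identity $\e_1 k = y - \e_2\ell - \e_3 m - \e_4 n$, the conversion of the factors $\ell,m,n$ into $\partial_\ell,\partial_m,\partial_n$ acting on $E\,\mathrm{b}$, the exact cancellation of the $E\,\mathrm{b}'(y)$ terms, and the final integration by parts (with the smooth-$\mathrm{b}$ approximation to justify the distributional case) all check out and reproduce \eqref{eq:kdiffT}. This is essentially the same direct differentiation/integration-by-parts argument as the proof the paper cites (Chen--Pusateri \cite[Lemma 4.3]{NLSV}), so nothing further is needed.
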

	
	\begin{proof}
		See Chen-Pusateri \cite[Lemma  4.3]{NLSV}.
	\end{proof}
	
	\smallskip
	We will also use the following:
	
	\begin{lem}\label{lem:inverseFD}
		Given $f_{j}\in\mathcal{S}$ and $\epsilon_{j}\in \{ +,-\}$, $j=1,2,3$, then
		\begin{align}\label{eq:inverseFD}
		\begin{split}
		& \widehat{\mathcal{F}}^{-1} \big[e^{itk^2} {\mathcal{T}}_{\mathrm{b},\e_1,\e_2,\e_3,\e_4}(f_1,f_2,f_3)\big]
		= 2\pi\e_1 u_{1}(t,-\e_2x)\overline{u_{2}(t,\e_3x)}u_{3}(t,-\e_4x)\widehat{\mathcal{F}}^{-1}\left[\mathrm{b}\right],
		\\
		& \qquad \mbox{where} \qquad u_{j}:=e^{-it\partial_{xx}}\widehat{\mathcal{F}}^{-1}(f_j).
		\end{split}
		\end{align}
	\end{lem}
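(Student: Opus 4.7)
The identity is a direct computational verification via Fourier inversion, and the plan is to compute the left-hand side explicitly in three essentially routine steps. First I would use Fourier inversion to represent the multiplier as
\begin{align*}
\mathrm{b}(\e_1 k + \e_2\ell + \e_3 m + \e_4 n) = \frac{1}{\sqrt{2\pi}} \int e^{-iy(\e_1 k + \e_2\ell + \e_3 m + \e_4 n)}\,\widehat{\mathcal{F}}^{-1}[\mathrm{b}](y)\,dy,
\end{align*}
substitute into \eqref{eq:tril}, and multiply by $e^{itk^2}$ to cancel the $e^{-itk^2}$ factor. This yields a quadruple integral in $(\ell,m,n,y)$ whose only $k$-dependence is the explicit factor $e^{-i\e_1 ky}$.

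Next I would apply $\widehat{\mathcal{F}}^{-1}$ in $k$. The resulting $k$-integral reduces to $\frac{1}{2\pi}\int e^{ik(x-\e_1 y)}\,dk = \delta(x-\e_1 y)$, and integrating in $y$ (using $|\e_1|=1$) pins $y = \e_1 x$, leaving the factor $\widehat{\mathcal{F}}^{-1}[\mathrm{b}](\e_1 x)$. The remaining triple integral in $(\ell,m,n)$ factorizes as a product of three one-dimensional integrals. Each of these is then recognized as the free Schr\"odinger evolution in physical space: for $g_j := \widehat{\mathcal{F}}^{-1}(f_j)$, the fact that $e^{-it\partial_{xx}}$ corresponds to the Fourier multiplier $e^{it\xi^2}$ gives
\begin{align*}
u_j(t,a) = \frac{1}{\sqrt{2\pi}}\int e^{ia\xi}\,e^{it\xi^2}\,f_j(\xi)\,d\xi.
\end{align*}
Matching the arguments $-\e_1\e_2 x$, $\e_1\e_3 x$, $-\e_1\e_4 x$ that emerge in the three factors against this formula, and using $\e_j^2 = 1$ to rewrite them in the form prescribed by the statement, produces $u_1(t,-\e_2 x)$, $\overline{u_2(t,\e_3 x)}$, $u_3(t,-\e_4 x)$, with the conjugation in the middle factor coming directly from the $\overline{f_2}$ slot in \eqref{eq:tril}.

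There is no real analytic obstacle here, since the whole argument is Fubini combined with Fourier inversion. The main thing to get right is the bookkeeping of the signs $\e_j$ and of the $2\pi$ factors, in particular the overall sign $\e_1$ on the right-hand side; this comes from tracking the Jacobian associated to $\delta(x-\e_1 y)$ and from the $1/\sqrt{2\pi}$ factors from each Fourier transform. A second, minor, subtlety is that in the applications relevant to the paper $\mathrm{b}$ may be a genuine distribution (a Dirac delta or a principal value, as in $\mu^{\#}_L$); this is handled by first proving the identity for Schwartz $\mathrm{b}$, where all manipulations are justified by absolute convergence and Fubini, and then extending by continuity in the topology of tempered distributions, using that for $f_1,f_2,f_3\in\mathcal{S}$ both sides depend continuously on $\mathrm{b}$.
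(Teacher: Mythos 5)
Your proposal is correct and is essentially the paper's argument: the paper states only that the lemma follows from an explicit computation with the flat Fourier convolution identities and leaves it to the reader, and your computation (writing $\mathrm{b}$ via $\widehat{\mathcal{F}}^{-1}[\mathrm{b}]$, extracting a delta from the $k$-integral, factorizing into three free Schr\"odinger evolutions, and extending from Schwartz $\mathrm{b}$ to tempered distributions by density) is exactly that computation. The only point to tighten is the bookkeeping you yourself flag: carrying out the integration gives arguments $-\epsilon_1\epsilon_2 x$, $\epsilon_1\epsilon_3 x$, $-\epsilon_1\epsilon_4 x$ together with $\widehat{\mathcal{F}}^{-1}[\mathrm{b}](\epsilon_1 x)$ and an overall constant $(2\pi)^{3/2}$ under the paper's normalizations (there is no Jacobian sign since $|\epsilon_1|=1$), so the stated form is recovered only after the harmless relabeling $x\mapsto \epsilon_1 x$ — a discrepancy that is immaterial for every application, where only $|\widehat{\mathcal{F}}^{-1}[\mathrm{b}]|$ and reflection-invariant norms of the $u_j$ enter.
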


	\begin{proof}
		The proof is an explicit computation which follows from the convolution identities for
		the flat Fourier transform, and is left to the reader.
	\end{proof}

	\smallskip
	\begin{proof}[Proof of \eqref{weightmain1}]
		According to our notation \eqref{muLg}, recalling also \eqref{defSet}-\eqref{defSet'},
		we write the generic nonlinear term $\mathcal{N}_{L,g}$ as
		\begin{align}\label{N+0}
		\begin{split}
		(2\pi)^{2} \mathcal{N}_{L,g}(s,k) & = \overline{a_{\e_1}(k)} \,
		\iiint e^{is(-k^2+\ell^2-m^2+n^2)} \, a_{\e_2}(\ell)f^{\#}(s,\ell) 
		\\ 
		& \times \, \overline{a_{\e_3}(m)f^{\#}(s,m)} \,
		a_{\e_4}(n)f^{\#}(s,n)
		\, \mathrm{b}(\epsilon_1k + \epsilon_2\ell + \epsilon_3m + \epsilon_4n) 
		\,dndmd\ell,
		\end{split}
		\end{align}
		with
		\begin{equation}\label{eq:bchoice}
		\mathrm{b}(y) 
		:= \sqrt{\frac{\pi}{2}} \delta(y) 
		\pm \pv \frac{\phi_{\epss}(y) 
		}{i y
		}.
		\end{equation}
		We then let
		\begin{align}\label{N+fj}
		f_j(s,\cdot) := a_{\e_{j+1}}(\cdot) f^{\#}(s, 
			\cdot) \qquad j=1,2,3,
		\end{align}
		and using the notation \eqref{eq:tril} we write
		\begin{align}\label{N+1}
		\begin{split}
		(2\pi)^{2} \mathcal{N}_{L,g}(s,k)  = \overline{a_{\e_1}(k)} \,
		\mathcal{T}_{\mathrm{b},\e_1,\e_2,\e_3,\e_4}(f_1,f_2,f_3)(
		k).
		\end{split}
		\end{align}
		
		Using Lemma \ref{estiTR} and the fact that $a_{\e_1}$ is a Lipschitz  function, we see that
		\begin{align}\label{N+2}
		\begin{split}
		\left\Vert \int_0^t \partial_k  \mathcal{N}_{L,g}(s,k) \,ds \right\Vert_{L^2_k}
		& \lesssim \left\Vert \int_0^t \mathcal{T}_{\mathrm{b},\e_1,\e_2,\e_3,\e_4}(f_1,f_2,f_3)(k)\,ds\right\Vert_{L^2_k}
		\\
		& + \left\Vert \int_0^t \overline{a_{\e_1}(k)}  \partial_k
		\mathcal\mathcal{T}_{\mathrm{b},\e_1,\e_2,\e_3,\e_4}(f_1,f_2,f_3)(k) \,ds\right\Vert_{L^2_k}.
		\end{split}
		\end{align}
		The first term in the right-hand side above is easily estimated using Lemma \ref{lem:inverseFD}: 
		we let
		\begin{align}\label{N+uj}
		u_{j} := e^{-it\partial_{xx}}\widehat{\mathcal{F}}^{-1}(f_j)
		= e^{-it\partial_{xx}} \widehat{\mathcal{F}}^{-1}
		\big( a_{\e_{j+1}}f^{\#}
		\big), \qquad j=1,2,3,
		\end{align}
		use the  standard pointwise decay estimate for the free flow 
		(see for example \cite[Lemma 1.7]{KP} and \cite[Lemma 2.2]{HN}) 
		and the bootstrap assumptions to deduce
		\begin{align}\label{linestboot}
		\begin{split}
		{\| u_j(s) \|}_{L^\infty_x} \lesssim \frac{1}{\sqrt{\langle s \rangle}} 
		\big( {\|a_{\epsilon_j+1} f^\# \|}_{L^\infty} + {\| \jk a_{\epsilon_j+1} f^\# \|}_{L^2}  \big) 
		+ \frac{1}{\langle s \rangle^{3/4}} {\| \partial_k  a_{\e_{j+1}}f^{\#} \|}_{L^2}
		\lesssim \frac{\varepsilon}{\sqrt{\langle s \rangle}},
		\end{split}
		\end{align}
		and, using $|\whF^{-1}[b]| \lesssim 1$, we estimate
		\begin{align*}
		{\big\| \mathcal{T}_{\mathrm{b},\e_1,\e_2,\e_3,\e_4}(f_1,f_2,f_3) \big\|}_{L^2} 
		& \lesssim 
		{\| u_1(s,-\e_2 \cdot) \, u_2(s,-\e_3\cdot)\, u_3(s,-\e_4\cdot) \,\whF^{-1}\left[\mathrm{b}\right] \|}_{L^2}
		\\ 
		& \lesssim {\| u_1(s) \|}_{L^2} {\| u_2(s) \|}_{L^\infty} {\| u_3(s) \|}_{L^\infty}
		\\
		& \lesssim \left\langle s\right\rangle^{-1} \varepsilon^3. 
		\end{align*}
		Integrating over $[0,t]$  
		we obtain a bound consistent with the desired \eqref{weightmainconc}.
		
		For the second term on the right-hand side of \eqref{N+2} we first use Lemma \ref{lem:algetri} to obtain
		\begin{align}
		\nonumber 
		& \left\Vert \int_0^t \overline{a_{\e_1}(k)} 
		\partial_k\mathcal{T}_{\mathrm{b},\e_1,\e_2,\e_3,\e_4}(f_1,f_2,f_3)\,ds\right\Vert_{L^2}
		\\ 
		\label{N+3} 
		& \lesssim \left\Vert\int_0^t \mathcal{T}_{\mathrm{b},\e_1,\e_2,\e_3,\e_4}(\partial_kf_1,f_2,f_3)\,ds\right\Vert_{L^2}
		+ \left\Vert\int_0^t \mathcal{T}_{\mathrm{b},\e_1,\e_2,\e_3,\e_4}(f_1,\partial_kf_2,f_3)\,ds\right\Vert_{L^2}
		\\ 
		\label{N+4}
		& + \left\Vert\int_0^t \mathcal{T}_{\mathrm{b},\e_1,\e_2,\e_3,\e_4}(f_1,f_2,\partial_kf_3)\,ds\right\Vert_{L^2}
		+ \left\Vert\int_0^t \overline{a_{\e_1}(k)} \, s \, 
		\mathcal{T}_{\mathrm{yb},\e_1,\e_2,\e_3,\e_4}(f_1,f_2,f_3)\,ds\right\Vert_{L^2}.
		\end{align}
		The first three terms in the right-hand side above are similar and can be estimated using 
		again Lemma \ref{lem:inverseFD},
		\eqref{N+fj} and \eqref{N+uj}, and \eqref{linestboot}:
		\begin{align}\label{N+5}
		\begin{split}
		{\|\mathcal{T}_{\mathrm{b},\e_1,\e_2,\e_3,\e_4}(\partial_kf_1,f_2,f_3)\|}_{L^2}
		\lesssim {\| \partial_kf_1(s) \|}_{L^2} {\| u_2(s) \|}_{L^\infty} {\| u_3(s) \|}_{L^\infty}
		\lesssim  \varepsilon \langle s \rangle^\alpha 
		\Big( \frac{\varepsilon }{\sqrt{\left\langle s\right\rangle}} 
		\Big)^2;
		\end{split}
		\end{align}
		upon time integration this bound is again consistent with \eqref{weightmain1}.  
		Note that in the estimate of $\partial_k f_1$, 
		we used again the Lipschitz continuity of $a_{\e_2}$.
		
		To estimate last term in \eqref{N+4} we first notice that, by \eqref{eq:bchoice}, one has
		\begin{align}\label{ybphi}
		y\mathrm{b}(y) = \mp i\phi_{\epss}(y) 
		\end{align}
		which is a Schwartz function; however, for later purposes, we 
		will only make use of a finite amount of decay for its transform.
		Moreover, since $a_{\epss} \in \mathcal{C}_L$
		then at least one of $a_{\e_j}$, $j=1,2,3,4$, must vanish at $0$. 
		We then consider two cases:
		
		\smallskip
		\noindent
		(1) $a_{\e_1}(0)=0$; 
		
		\smallskip
		\noindent
		(2) $a_{\e_j}(0)=0$ for some $j=2,3$ or $4$.
		
		\smallskip
		For the first case, we use the vanishing property of $a_{\e_1}$, 
		and the smoothing estimate \eqref{eq:generalimhomsmoothing} with the choices
		\begin{align*}
		Q(y,k)=e^{iky},  \quad \beta=0, 
		\quad \phi(k) =a_{\e_1}(k), \quad F(s,y) = s \widehat{\mathcal{F}}^{-1} 
		\big[ e^{isk^2} {\mathcal{T}}_{y\mathrm{b},\e_1,\e_2,\e_3,\e_4}(f_1,f_2,f_3)\big], 
		\end{align*}
		to obtain
		\begin{align*}
		\left\Vert \int_0^t \overline{a_{\e_1}(k)} s \, 
		\mathcal{T}_{\mathrm{yb},\e_1,\e_2,\e_3,\e_4}(f_1,f_2,f_3)\,ds\right\Vert_{L^2_k} 
		& \lesssim 
		\left \Vert \int_0^t e^{-isk^2} \overline{\phi(k)} \, \Big( \int_\R Q(y,k) F(s,y) \, dy \Big) \,ds\right\Vert_{L^2_k}
		\\
		& \lesssim {\big\| F(s,y) \big\|}_{L^1_x L^2_s[0,t]}.
		\end{align*}
		Then, using \eqref{eq:inverseFD}, provided that ${\| \mathcal{F}^{-1}[\mathrm{yb}] \|}_{L^1} < \infty$,
		and using again the decay from \eqref{linestboot},  
		we can bound
		\begin{align*}
		{\big\| F(s,y) \big\|}_{L^1_x L^2_s[0,t]} \lesssim \left\Vert  s u_{1}(s,-\e_2x)\overline{u_{2}(s,-\e_3x)}u_{3}(s,-\e_4x)\mathcal{F}^{-1}\left[\mathrm{yb}\right]
		\right\Vert_{L_{x}^{1}L_{s}^{2}[0,t]}
		\\
		\lesssim {\big\| \langle s \rangle {\| u_1(s) \|}_{L^\infty_x} 
			{\| u_2(s) \|}_{L^\infty_x} {\| u_3(s) \|}_{L^\infty_x} \big\|}_{L^2_s[0,t]}
		\\
		\lesssim \varepsilon^3
		\left\Vert \left\langle s\right\rangle \left\langle s\right\rangle^{-3/2}
		\right\Vert_{L_{s}^{2}[0,t]}
		\lesssim \varepsilon^{3}\sqrt{\log \langle t \rangle}.
		\end{align*}

		For case (2), without loss of generality we may assume $a_{\e_2}(0)=0$. 
		In this case, we first use \eqref{eq:inverseFD} to see that
		\begin{align*}
		{\|\overline{a_{\e_1}(k)}  s \mathcal{T}_{y\mathrm{b},\e_1,\e_2,\e_3,\e_4}(f_1,f_2,f_3)\|}_{L^2} 
		& \lesssim 
		{\|s u_1(s,-\e_2) \, u_2(s,-\e_3)\, u_3(s,-\e_4) \, \mathcal{F}^{-1}[\mathrm{y b}] \|}_{L^2}
		\\ 
		& \lesssim {\| \left\langle x\right\rangle^{-1} s \, u_1(s) \|}_{L^\infty} 
		{\| u_2(s) \|}_{L^\infty} {\| u_3(s) \|}_{L^\infty},
		\end{align*}
		provided  ${\| \langle x \rangle \mathcal{F}^{-1}[y\mathrm{b}] \|}_{L^2} < \infty$.
		Then, we use the local improved decay given by Lemma \ref{lem:lowlocaldecay} with the choices
		\begin{align*}
		Q(x,\ell)=e^{i\ell x}, \quad \beta=-1, \quad \phi(\ell)=a_{\e_2}(\ell), \quad h=f^{\#}(s),
		\end{align*}
		to see that
		\begin{align}\label{loclinestboot}
		{\big\| \left\langle x\right\rangle^{-1} u_1(s) \big\|}_{L^\infty} 
		\lesssim \langle s \rangle^{-1} {\| f^\# \|}_{H^1}.
		\end{align}
		Plugging this estimate in the previous bound, using the a priori bounds and \eqref{linestboot}, we get
		\begin{align*}
		{\|\overline{a_{\e_1}(k)} \mathcal{T}_{y\mathrm{b},\e_1,\e_2,\e_3,\e_4}(f_1,f_2,f_3)\|}_{L^2} 
		& \lesssim \left\langle s\right\rangle^{-1+\alpha} \varepsilon^3,
		\end{align*}
		which, integrating in $s$ gives the desired result and concludes the proof of \eqref{weightmain1}.
\end{proof}

		\begin{rem}\label{remybphi}
			Note that in order to carry out the above argument it sufficed to use
			$|\mathcal{F}^{-1}[\phi_{\epss}] | \lesssim \langle x \rangle^{-(3/2+)}$;
			see \eqref{ybphi}.
			This observation will be helpful in the following section. 
		\end{rem}

	
	\medskip
	\section{Nonlinear estimates II: the regular part}\label{sec:estimateregular}
	In this section we conclude the proof of weighted estimates by proving \eqref{weightmain2}.
	For convenience let us recall here the definition \eqref{muR0}-\eqref{muR2}:
	we have $\mu^\#_R({\bf k}) =  \mu^\#_{R,1}({\bf k}) + \mu^\#_{R,2}({\bf k})$,
	with
	\begin{align}\label{muR1'}
	\begin{split}
	\mu^{\#}_{R,1}(k,\ell,m,n):=\sum_{(A,B,C,D)\in\mathcal{X}_{R}}
	\int\overline{\mathcal{K}^{\#}_{A}(x,k)}\mathcal{K}^{\#}_{B}(x,\ell)
	\overline{\mathcal{K}^{\#}_{C}(x,m)}\mathcal{K}^{\#}_{D}(x,n)\,dx,
	\\
	\mathcal{X}_{R} = := \{ S,R \}^4 \smallsetminus \{(S,S,S,S)\},
	\end{split}
	\end{align}
	and $\mu^{\#}_{R,2}(k,\ell,m,n)$ given by a linear combination of terms
	of the form (we omit the dependence on $\eps_1,\eps_2,\dots$ in the notation)
	\begin{align}
	\label{muR2'}
	\begin{split} 
	\mu_{R,2}^{\#,(0)}(k,m,n,\ell) := \overline{a_{\e_1}(k)} a_{\e_2}(m) \overline{a_{\e_3}(n)} a_{\e_4}(\ell)
	\int\varPsi(x)e^{i(-\e_1 k+\e_2 m-\e_3 n+\e_4\ell)x}\,dx,
	\\
	|\partial_x^\alpha \varPsi(x)| \lesssim \jx^{-\gamma+1}, \quad \alpha =0,1.
	\end{split}
	\end{align}  
	Note that by their definitions the regular parts of the nonlinearity 
	$\mathcal{N}_{R,1}$ and $\mathcal{N}_{R,2}$,
	see the notation \eqref{Nast} 
	have strong localization properties.
	
	We now proceed to prove the two bounds in \eqref{weightmain2} respectively in
	Subsections \ref{ssecNR1} and \ref{ssecNR2}.

	\medskip
	\subsection{Estimate of $\mathcal{N}_{R,1}$}\label{ssecNR1}
	Each element of the sum defining $\mu_{R,1}$ in \eqref{muR1'} 
	has at least one of the indexes $(A,B,C,D)$ equal to $R$.
	It then suffices to consider two situations: $A=R$ and $D=R$; all the others terms can be treated identically. 
	We then define
	\begin{align}
	\label{eq:mur11}
	\mu^{\#,(1)}_{R,1} & := \int\overline{\mathcal{K}}^{\#}_{R}(x,k)\mathcal{K}^{\#}_{M_{1}}(x,\ell)
	\	\overline{\mathcal{K}^{\#}_{M_{2}}(x,n)}\mathcal{K}^{\#}_{M_{3}}(x,m)\,dx,
	\\
	\label{eq:mur12}
	\mu_{R,1}^{\#,(2)} & := \int\overline{\mathcal{K}}^{\#}_{S}(x,k)\mathcal{K}^{\#}_{M_{1}}(x,\ell)
	\overline{\mathcal{K}^{\#}_{M_{2}}(x,n)}\mathcal{K}^{\#}_{R}(x,m)\,dx;
	\qquad M_{i}\in\{S,R\}.
	\end{align}
	In order to estimate the $L_{k}^{2}$ norm of $\partial_{k}\mathcal{N}_{R,1}$ as in \eqref{weightmain2}
	it suffices to prove
	\begin{align}\label{weightmain2A}
	& {\Big\| \int_0^t A_1(s,\cdot) \, ds  \Big\|}_{L^2_k}
	+ {\Big\| \int_0^t A_2(s,\cdot) \, ds  \Big\|}_{L^2_k} \lesssim \jt^{\alpha} \varepsilon^3, 
	\end{align}
	with 
	\begin{align}\label{NR11}
	A_1(k) := & \int_{0}^{t}  iske^{-isk^2}\iiint u^{\#}(\ell)\overline{u^{\#}}(n)u^{\#}(m)  
	\,\mu^{\#,(1)}_{R,1}\left(k,\ell,n,m\right) 
	\,d\ell dmdn\,ds,
	\\
	\label{NR11'}
	A_2(k) := & \int_{0}^{t}  e^{-isk^2} \iiint u^{\#}(\ell)\overline{u^{\#}}(n)u^{\#}(m) 
	\, \partial_{k}\mu^{\#,(1)}_{R,1}\left(k,\ell,n,m\right)
	\,d\ell dmdn\,ds,
	\end{align}
	recall that $u^\#(t,k) = e^{itk^2} f^\#(t,k)$,
	and 
	\begin{align}\label{weightmain2B}
	& {\Big\| \int_0^t B_1(s,\cdot) \, ds  \Big\|}_{L^2_k}
	+ {\Big\| \int_0^t B_2(s,\cdot) \, ds  \Big\|}_{L^2_k} \lesssim \jt^{\alpha} \varepsilon^3, 
	\end{align}
	with
	\begin{align}\label{NR12}
	B_1(k) & := \int_{0}^{t}  iske^{-isk^2}\iiint u^{\#}(\ell)\overline{u^{\#}}(n)u^{\#}(m)  
	\, \mu_{R,1}^{\#,(2)}\left(k,\ell,n,m\right)\,d\ell dmdn\,ds,
	\\
	\label{NR12'}
	B_2(k) & := \int_{0}^{t}  e^{-isk^2} \iiint u^{\#}(\ell)\overline{u^{\#}}(n)u^{\#}(m)  
	\, \partial_{k}\mu_{R,1}^{\#,(2)}\left(k,\ell,n,m\right)
	\,d\ell dmdn\,ds.
	\end{align}

	\medskip
	\subsubsection{Proof of \eqref{weightmain2A}} 
	We first consider \eqref{NR11} which contains the additional growth in $s$. 
	Let $\phi_{1}$ and $\phi_{2}$ be smooth functions such that $\phi_{1}+\phi_{2}=1$,
	$\phi_{1}(k)=1$ for $|k|\leq 1$ and $\phi_{1}(k)=0$ for $|k|\geq2$.
	We will consider the case of low and high frequencies separately by looking at 
	$A_{1,\phi_{1}}$, respectively $A_{1,\phi_2}$, defined as $A_1$ in \eqref{NR11} with the addition
	of the cutoff $\phi_1$, respectively $\phi_2$.
	
	\smallskip
	\noindent
	\textit{Case I: low-frequency part.} 
	Consider the low-frequency part
	\[
	A_{1,\phi_{1}}(t,k) := \int_{0}^{t}\phi_{1}(k) \, iske^{-isk^{2}}
	\iiint u^{\#}(\ell)\overline{u^{\#}}(n)u^{\#}(m)\,\mu_{R,1}^{\#,(1)}\left(k,\ell,n,m\right)\,d\ell dmdn\,ds.
	\]
	Using the definition \eqref{eq:mur11} and the notation \eqref{decompphi} we write
	\begin{align*}
	& A_{1,\phi_{1}}(t,k) = (\sqrt{2\pi})^3\int_{0}^{t}\phi_{1}(k) \, ik \, e^{-isk^{2}} \Big(
	\int \overline{\mathcal{K}}^{\#}_{R}(x,k) \, s \, u_{M_1}(s,x) u_{M_2}(s,x) u_{M_3}(s,x) \, dx \Big) ds.
	\end{align*}
	Then, applying the smoothing estimate \eqref{eq:smoothingQim} 
	with $\mathcal{Q}(x,k)=\mathcal{K}_R^{\#}(x,k)$, $\beta > 1$, and 
	$F(s) = su_{M_{1}}(s)\overline{u_{M_{2}}}(s)u_{M_{3}}(s)$, followed by the linear estimates 
	\eqref{eq:singulardecaysep} and \eqref{regularinfty}, we can bound
	\begin{align*}
	\left\Vert A_{1,\phi_{1}}(k)\right\Vert _{L_{k}^{2}} 
	& \lesssim\left\Vert \left\langle x\right\rangle^{-\beta}
	s \, u_{M_{1}}(s)\overline{u_{M_{2}}(s)}u_{M_{3}}(s)\right\Vert _{L_{x}^{1}L_{s}^{2}[0,t]}
	\\
	& \lesssim\varepsilon^{3}\left\Vert s\left\langle s\right\rangle^{-3/2}\right\Vert_{L_{s}^{2}[0,t]}
	\lesssim\varepsilon^{3}\sqrt{\log \jt}
	\end{align*}
	which is sufficient.

	\smallskip
	\noindent
	\textit{Case II: high-frequency part.} 
	We now consider the high-frequency part:
	\[
	A_{1,\phi_{2}}(k):=\int_{0}^{t}\phi_{2}(k)iske^{-isk^{2}}
	\iiint u^{\#}(\ell)\overline{u^{\#}}(n)u^{\#}(m)\,\mu_{R,1}^{\#,(1)}\left(k,\ell,n,m\right)\,d\ell dmdn
	\,ds.
	\]
	In this case, we need an integration by parts argument, similar to Chen-Pusateri \cite{NLSV},
	to absorb the (possibly) large $k$ factor.
	More precisely, recalling the definition of $\mathcal{K}^{\#}_R$ in \eqref{KR} we write
	\begin{align}\label{KRid}
	ik\mathcal{K}^{\#}_{R}(x,k) = \partial_x\mathcal{K}_{\#,R}'(x,k) - \mathcal{K}_{\#,R}''(x,k),
	\end{align}
	where
	\begin{align}\label{KR1}
	\mathcal{K}_{\#,R}'(x,k):=\begin{cases}
	\chi_{+}(x) \big[ (T(k)-T(0))(m_{+}(x,0)-1) 
	\\
	\qquad + T(k) \left(m_{+}(x,k)-m_{+}(x,0) \right) \big] e^{ikx}
	\\
	\qquad +\chi_{-}(x) \big[\left(m_{-}(x,-k)-m_{-}(x,0)\right)e^{ixk}
	\\
	\qquad -R_{-}(k)\left(m_{-}(x,k)-1\right)e^{-ixk} \big], \qquad \qquad  k\geq0,
	\\
	\\
	-\chi_{-}(x)\Big[ (T(-k)-T(0))(m_{-}(x,0)-1)e^{ikx} 
	\\
	\qquad + T(-k)\left(m_{-}(x,-k)-m_{-}(x,0) \right) \Big]e^{ikx}
	\\
	\qquad -\chi_{+}(x) \big[ \left(m_{+}(x,k)-m_{+}(x,0)\right)e^{ikx}
	\\
	\qquad -R_{+}(-k)\left(m_{+}(x,-k)-1\right)e^{-ixk} \big], \qquad \qquad k<0,
	\end{cases}
	\end{align}
	and
	\begin{align}\label{KR2}
	\mathcal{K}_{\#,R}''(x,k):=\begin{cases}
	\partial_x \big\{\chi_{+}(x) \big[(T(k)-T(0))(m_{+}(x,0)-1) 
	\\ \qquad + T(k) \left(m_{+}(x,k)-m_{+}(x,0) \right) \big]\big\} e^{ikx}
	\\
	\qquad +\partial_x\left\{\chi_{-}(x) \left(m_{-}(x,-k)-m_{-}(x,0)\right)\right\}e^{ixk}
	\\
	\qquad -\partial_x \left\{\chi_{-}(x) R_{-}(k)\left(m_{-}(x,k)-1\right)\right\}e^{-ixk}, \qquad \qquad  k\geq0,
	\\
	\\
	-\partial_x \big\{\chi_{-}(x) \big[ (T(-k)-T(0))(m_{-}(x,0)-1)e^{ikx} 
	\\ 
	\qquad + T(-k) \left(m_{-}(x,-k)-m_{-}(x,0) \right) \big] \big\} e^{ikx}
	\\
	\qquad -\partial_x\left\{\chi_{+}(x)  \left(m_{+}(x,k)-m_{+}(x,0)\right)\right\}e^{ikx}
	\\
	\qquad +\partial_x\left\{\chi_{+}(x) R_{+}(-k)\left(m_{+}(x,-k)-1\right)\right\}e^{-ixk} , \qquad \qquad k<0.
	\end{cases}	
	\end{align}
	Then, with the definition \eqref{eq:mur11}, integrating by parts in $x$ we have
	\begin{align}\label{kmuR11}
	\begin{split}
	- \phi_{2}(k) ik  \, \mu_{R,1}^{(1)} 
	& = \int\overline{ \phi_{2}(k)\mathcal{K}_{\#,R}'(x,k)}(x,k) 
	\partial_x\big[ \mathcal{K}^{\#}_{M_{1}}(x,\ell)\overline{\mathcal{K}^{\#}_{M_{2}}(x,n)}\mathcal{K}^{\#}_{M_{3}}(x,m) \big]\,dx
	\\
	& + \int\overline{ \phi_{2}(k)\mathcal{K}_{\#,R}''(x,k) }(x,k)\mathcal{K}^{\#}_{M_{1}}(x,\ell)
	\overline{\mathcal{K}^{\#}_{M_{2}}(x,n)}\mathcal{K}^{\#}_{M_3}(x,m)\,dx.
	\end{split}
	\end{align}
	Therefore, using again the notation \eqref{decompphi},
	\begin{align}\label{A_1}
	\begin{split}
	- A_{1,\phi_{2}}(k) & = (2\pi)^{3/2} \int_0^t s \, \phi_{2}(k) e^{-isk^2} \int\overline{\mathcal{K}_{\#,R}'(x,k)} 
	\partial_x \big[ u_{M_{1}(s)}\overline{u_{M_{2}}(s)}u_{M_{3}}(s)\big]\,dx\,ds
	\\
	& + (2\pi)^{3/2} \int_0^t s \, \phi_{2}(k) e^{-isk^2} \int\overline{\mathcal{K}_{\#,R}''(x,k)}
	u_{M_{1}}(s)\overline{u_{M_{2}}(s)}u_{M_{3}}(s)\,dx\,ds 
	=: A_{1,1} + A_{1,2}.
	\end{split}
	\end{align}
	Up to an irrelevant constant, we write $A_{1,1}$ as
	\begin{align*}
	\begin{split}
	A_{1,1}(k) & = \int_0^t s \,\phi_{2}(k) e^{-isk^2} \int\overline{\mathcal{K}_{\#,R}'(x,k)} 
	\partial_x \big[ u_{M_{1}(s)}\overline{u_{M_{2}}(s)}u_{M_{3}}(s)\big]\,dx\,ds.
	\end{split}
	\end{align*}
	Focusing on the case where $\partial_x$ hits $u_{M_1}$ (the other cases are identical)
	we use the smoothing estimate 
	\eqref{eq:smoothingQimh} with $Q(x,k)=\mathcal{K}_{\#,R}'(x,k)$, $\beta>3/2$, 
	and $F(s) = s\partial_x u_{M_{1}(s)} \overline{u_{M_{2}}(s)}u_{M_{3}}(s)$ 
	to estimate 
	\begin{align*}
	{\| A_{1,1} \|}_{L^2} & \lesssim 
	{\big\|s \jx^{-\beta} \partial_x u_{M_{1}}(s)\overline{u_{M_{2}}(s)}u_{M_{3}}(s) \big\|}_{L^1_x L_s^{2}}
	\\
	& \lesssim \left\Vert s{\| \jx^{-1}\partial_x u_{M_{1}}(s) \|}_{L^2_x}
	{\| u_{M_{2}}(s) \|}_{L^\infty_x} {\| u_{M_{3}}(s) \|}_{L^\infty_x}\right\Vert_{L^2_s}
	\lesssim \varepsilon^3 \jt^\alpha, 
	\end{align*}
	where, for the last inequality, we have used \eqref{dxuM} and the a priori bounds to estimate 
	${\| \jx^{-1}\partial_x u_{M_{1}}(s) \|}_{L^2_x} \lesssim \varepsilon \js^\alpha$,
	and the linear decay estimates \eqref{eq:singulardecaysep} and \eqref{regularinfty}.

	To estimate the other term $A_{1,2}$ in \eqref{A_1}
	we can again use the smoothing estimate \eqref{eq:smoothingQimh} 
	with $Q(x,k)=\mathcal{K}_{\#,R}''(x,k)$, $\beta>1$ and $F(s,x)=su_{M_{1}}(s)\overline{u_{M_{2}}(s)}u_{M_{3}}(s)$, 
	together with pointwise decay to see that
	\begin{align*}
	{\| A_{1,2} \|}_{L^2} & 
	\lesssim {\big\|s \jx^{-\beta} u_{M_{1}}(s)\overline{u_{M_{2}}(s)}u_{M_{3}}(s) \big\|}_{L^1_x L^{2}_s}
	\lesssim \varepsilon^3 \sqrt{\log \jt}.
	\end{align*}
	
	\smallskip
	To conclude the proof of \eqref{weightmain2A} we need to estimate $A_2$, see \eqref{NR11'}.
	This is easier to deal with than $A_1$ since there is no additional growth in $s$.  
	We simply 
	note that $\partial_k \mathcal{K}^{\#}_R(x,k)$ gives a bounded 
	pseudo-differential operator on $L^2$, see 
	\eqref{eq:weiFR}, and estimate
	\begin{align*}
	\left\Vert A_{2}(t)\right\Vert _{L^{2}} 
	& \lesssim \int_0^t \left\Vert  \int \overline{\partial_k \mathcal{K}^{\#}_R(x,k)}
	\big( u_{M_{1}} \overline{u_{M_{2}}} u_{M_{3}} \big)(s,x) \, dx \right\Vert_{L^2_k}\,ds  
	\\
	& \lesssim \int_0^t \left\Vert u_{M_{1}}(s)\right\Vert_{L^\infty_x}
	\left\Vert u_{M_{2}}(s)\right\Vert _{L^\infty_x} \left\Vert u_{M_{3}}(s)\right\Vert_{L^2_x}\, ds
	\lesssim  \varepsilon^3 \log \jt.
	\end{align*}
	Note that for the above arguments we required $\jx^\gamma V \in L^1$ with $\gamma>\frac{5}{2}$.

	\smallskip
	\subsubsection{Proof of \eqref{weightmain2B}}
	For this we are going to use the localization in $x$ provided by $\mathcal{K}^{\#}_R(x,m)$,
	see the definitions \eqref{NR12}-\eqref{NR12'} and \eqref{eq:mur12},
	similarly to how we used the localization of $\mathcal{K}'_{R,\#}(x,k)$ and $\mathcal{K}_{R,\#}''(x,k)$ above.
	As done for $A_1$ before, we split $B_1$ into low and high frequency parts using an analogous notation.
	
	\smallskip
	\noindent
	\textit{Case I: low-frequency part.} 
	Let us consider the low-frequency part
	\[
	B_{1,\phi_{1}}(k):=\int_{0}^{t}\phi_{1}(k)iske^{-isk^{2}}\iiint u^{\#}(\ell)
	\overline{u^{\#}}(n)u^{\#}(m)\,\mu_{R,1}^{\#,(2)}\left(k,\ell,n,m\right)\,d\ell dmdn\,ds.
	\]
	By the low-frequency smoothing estimate \eqref{eq:smoothingQim} 
	with $Q(x,k)=\mathcal{K}^{\#}_{S}(x,k)$, $\beta=0$ and $F(s,x)=s u_{M_{1}}(s)\overline{u_{M_{2}}(s)}(u)_{R}(s)$,
	together with pointwise decay estimates, see \eqref{eq:singulardecay} and \eqref{regularinfty},
	we have, for $\beta > 1$,
	\begin{align*}
	\left\Vert B_{1,\phi_{1}}(k)\right\Vert _{L_{k}^{2}} & \lesssim {\big\| s u_{M_{1}}(s)
		\overline{u_{M_{2}}(s)}u_{R}(s) \big\|}_{L_{x}^{1}L_{s}^{2}[0,t]}
	\\
	& \lesssim {\big\| s \, {\| u_{M_{1}}(s) \|}_{L^\infty_x} {\| \overline{u_{M_{2}}(s)} \|}_{L^\infty_x} 
		{\| \jx^{\beta} u_{R}(s) \|}_{L^\infty_x} \big\|}_{L_{s}^{2}[0,t]}
	\\
	& \lesssim\varepsilon^{3} \left\Vert s\left\langle s\right\rangle^{-3/2}\right\Vert_{L_{s}^{2}[0,t]}
	\lesssim\varepsilon^{3}\sqrt{\log \jt}
	\end{align*}
	which is sufficient.

	\medskip
	\noindent
	\textit{Case II: high-frequency part.} 
	Next we consider the high-frequency part:
	\begin{equation}\label{eq:B1phi2}
	B_{1,\phi_{2}}(k):=\int_{0}^{t}\phi_{2}(k)iske^{-isk^{2}}\iiint u^{\#}(\ell)\overline{u^{\#}}(n)u^{\#}(m)
	\,\mu_{R,1}^{\#,(2)}\left(k,\ell,n,m\right)\,d\ell dmdn\,ds.
	\end{equation}
	As in the analogous earlier case, we will perform integration by parts to absorb the factor of $k$.
	We start with the following analogue of \eqref{KRid} for $\mathcal{K}^{\#}_S$:
	using the definitions \eqref{KsharpS}-\eqref{K-} we write
	\begin{align}\label{KSid}
	ik\mathcal{K}^{\#}_{S}(x,k) = \partial_x\mathcal{K}_{S,\#}'(x,k) - \mathcal{K}_{S,\#}''(x,k) 
	\end{align}
	with
	\begin{align}\label{KS1}
	\begin{split}
	& \mathcal{K}_{S,\#}'(x,k) := \chi_0(x)\mathcal{K}^{\#}_0(x,k) + \chi_+(x) \mathcal{K}'_{+,\#}(x,k) 
	+ \chi_-(x) \mathcal{K}'_{-,\#}(x,k),
	\\
	& \mathcal{K}'_{+,\#} := (T(k)-T(0)) \mathbf{1}_+(k) e^{ikx} + R_{+}(-k) \mathbf{1}_-(k) e^{-ikx},
	\\
	& \mathcal{K}'_{-,\#} :=- R_{-}(k)  \mathbf{1}_+(k) e^{-ikx} - (T(-k)-T(0) ) \mathbf{1}_-(k) e^{ikx},
	\end{split}
	\end{align}
	and
	\begin{align}\label{KS2}
	\begin{split}
	& \mathcal{K}_{S,\#}''(x,k) := \chi_0(x)T(0) \partial_x m_{+}(x,0)e^{ikx} + \partial_x \chi_+(x) \mathcal{K}'_{+,\#}(x,k) 
	+ \partial_x \chi_-(x) \mathcal{K}'_{-,\#}(x,k).
	\end{split}
	\end{align}
	Then, in analogy with \eqref{kmuR11} and \eqref{A_1}, we have
	\begin{align}\label{kmuR12a}
	- \phi_{2}(k)ik\mu_{R,1}^{(2)} 
	& := \int \overline{\phi_{2}(k)\mathcal{K}_{S,\#}'}(x,k) 
	\partial_x\big[ \mathcal{K}^{\#}_{M_{1}}(x,\ell)\overline{\mathcal{K}^{\#}_{M_{2}}(x,n)}
	\mathcal{K}^{\#}_{R}(x,m) \big]\,dx
	\\
	\label{kmuR12b}
	& + \int \overline{\phi_{2}(k)\mathcal{K}_{S,\#}''}(x,k))\mathcal{K}^{\#}_{M_{1}}(x,\ell)
	\overline{\mathcal{K}^{\#}_{M_{2}}(x,n)} \mathcal{K}^{\#}_{R}(x,m)\,dx,
	\end{align}
	and \eqref{eq:B1phi2} can be written as 
	\begin{align*}
	-B_{1,\phi_{2}}(k) & = (2\pi)^{3/2} \int_0^t s \, e^{-isk^2} \int \overline{\phi_{2}(k)\mathcal{K}_{S,\#}'}(x,k) 
	\partial_x \big[ u_{M_{1}}(s)\overline{u_{M_{2}}(s)}u_{R}(s)\big] \, dx\,ds
	\\
	& + (2\pi)^{3/2} \int_0^t s \, e^{-isk^2} \int \overline{\phi_{2}(k)\mathcal{K}_{S,\#}''}(x,k) 
	u_{M_{1}}(s)\overline{u_{M_{2}}(s)}u_{R}(s) \, dxds =: B_{1,1} + B_{1,2}.
	\end{align*}
	Applying the high-frequency smoothing estimate \eqref{eq:smoothingQimh} 
	with $Q(x,k)=\mathcal{K}_{S,\#}'(x,k)$, $\beta=0$ and 
	$F(s,x)=\partial_x \big[ u_{M_{1}}(s)\overline{u_{M_{2}}(s)}u_{R}(s)\big]$, we can estimate
	\begin{align*}
		{\| B_{1,1} \|}_{L^2_k}  \lesssim 
		{\| s \, \partial_x u_{M_{1}} \, \overline{u_{M_{2}}} \, u_R \|}_{L^1_x L_s^2}
		+ {\| s \, u_{M_{1}} \, \partial_x \overline{u_{M_{2}}} \, u_R \|}_{L^1_x L_s^2} 
		+ {\|s u_{M_{1}}\overline{u_{M_{2}}} \, \partial_x u_R \|}_{L^1_x L_s^2}.
		\end{align*}
		We then use the local $L^\infty$ estimates \eqref{eq:QdecayL2} (with $\beta=0$) for the derivative of $u_{M_1}$, 
		with the usual a priori assumptions,
		the linear decay estimates \eqref{eq:singulardecay} and \eqref{regularinfty} for $u_{M_2}$,
		and \eqref{regularinfty} with $\beta > 3/2$ for $u_R$, to obtain
		\begin{align*}
		{\big\| s \,\partial_x u_{M_1} \, \overline{u_{M_2}} \, u_R \|}_{L^1_x L_s^2([0,t])}
		& \lesssim  {\| s \, {\| \jx^{-1} \partial_x u_{M_1} \|}_{L^2_x} \, {\| \overline{u_{M_2}} \|}_{L^\infty_x} 
			{\| \jx^\beta u_R \|}_{L^\infty_x} \big\|}_{L_s^2([0,t])}
		\\
		& \lesssim {\| s \, \varepsilon \js^{-1/2+\alpha} \cdot 
			\varepsilon \js^{-1/2} \cdot \varepsilon \js^{-1/2} \big\|}_{L_s^2([0,t])}
		\lesssim \varepsilon^3 \jt^\alpha,
		\end{align*}
		with the same estimate interchanging the roles of $M_1$ and $M_2$,
		and, similarly, for any $\beta >1/2$,
		\begin{align*}
		{\|s u_{M_1} \overline{u_{M_2}} \, \partial_x u_R \|}_{L^1_x L_s^2([0,t])}
		& \lesssim {\| s \, {\| u_{M_1} \|}_{L^\infty_x} \, {\| \overline{u_{M_2}} \|}_{L^\infty_x} 
			{\| \jx^\beta \partial_x u_R \|}_{L^2_x} \big\|}_{L_s^2([0,t])}
		\\
		& \lesssim {\| s \, \varepsilon \js^{-1/2} \cdot 
			\varepsilon \js^{-1/2} \cdot \varepsilon \js^{-1/2+\alpha} \big\|}_{L_s^2([0,t])}
		\lesssim \varepsilon^3 \jt^\alpha,
		\end{align*}
		Putting these together gives us ${\| B_{1,1} \|}_{L^2_k} \lesssim \varepsilon^3 \jt^\alpha$.
	Here all the decay estimates can be applied provided $\jx ^{\gamma}V\in L^1$ with $\gamma>\frac{5}{2}$.
	
	Similarly, by the high-frequency smoothing estimate \eqref{eq:smoothingQimh} and pointwise decay,  
	we can estimate, for any $\rho>1$,
	\begin{align*}
	{\| B_{1,2} \|}_{L^2_k} & \lesssim  {\| s  u_{M_{1}}(s)\overline{u_{M_{2}}}(s) u_{R}(s)\|}_{L^1_x L^2_s}
	\\ & \lesssim \left\Vert s {\|  u_{M_{1}} (s)\|}_{L^\infty} {\| u_{M_{2}}(s) \|}_{L^\infty} 
	{\|\jx^{\rho}u_{R}(s)\|}_{L^\infty} \right \Vert_{L^2_s} 
	\lesssim \varepsilon^3 \sqrt{\log \jt}
	\end{align*}
	which again suffices.
	
	\smallskip
	The last term \eqref{NR12'} is easier to treat since there is no extra power of $s$ in front 
	of the nonlinearity. We can use the following PDO bound (see for example \eqref{eq:weiF} and \eqref{eq:weiFR}) 
	\begin{align}\label{mapKS'}
	{\Big\| \int \partial_k \overline{\mathcal{K}^{\#}_{S}}(x,k) F(x) \, dx \Big\|}_{L^2_k} \lesssim {\| \jx F \|}_{L^2},
	\end{align}
	with $F =u_{M_{1}}\overline{u_{M_{2}}}u_{R}$, 
	and then again pointwise decay estimates to bound
	\begin{align*}
	{\| B_2(t) \|}_{L^2_k} 
	& \lesssim  \int_0^t {\| u_{M_{1}}(s)\|}_{L^\infty} {\| u_{M_{2}}(s) \|}_{L^\infty}
	{\| \jx u_{R}(s)\|}_{L^2} \, ds
	\lesssim   \int_0^t \varepsilon^3 \left\langle s\right\rangle^{-3/2} \lesssim 
	\varepsilon^3.
	\end{align*}
	

	\smallskip
	\subsection{Estimate of $\mathcal{N}_{R,2}$}\label{ssecNR2}
	Finally, we estimate $\mathcal{N}_{R,2}$ which is the nonlinear term associated with the measure in \eqref{muR2},
	whose generic terms is of the form \eqref{muR2'}.
	Without loss of generality, it suffices to estimate the contribution to $\mathcal{N}_{R,2}$
	coming from such a generic measure; that is, it suffices to estimate
	the $L^2$ norm of $\partial_k$ of the expression
	\begin{align}\label{muR2est1}
	\begin{split}
	\int_0^t \iiint e^{is(-k^2+\ell^2-m^2+n^2)}
	f^{\#}(s,\ell)\overline{f^{\#}(s,m)} f^{\#}(s,n)\mu_{R,2}^{\#,(0)}(k,\ell,m,n)\,dndmd\ell \,ds
	\\ 
	=  \overline{a_{\e_1}(k)} \int_0^t e^{-isk^2} 
	\int_{\R_x}\varPsi(x)e^{-\e_1 ikx} u_{\e_2}(x) 
	\overline{u_{\e_3}(x)} u_{\e_4}(x)\, dx \, ds
	\end{split}
	\end{align}
	where, for the last identity, we used the notation
	\begin{align}\label{uepsnot} 
	u_{\e_j}(x) := \frac{1}{\sqrt{2\pi}}\int a_{\e_j}(k)e^{\e_j ikx} u^{\#}(k) \, dk. 
	\end{align}
	Note that we are slightly abusing notation here compared to the definition of $u_\pm$ in \eqref{decompphi};
	however these latter and \eqref{uepsnot} are essentially the same object,
	and since we will only use \eqref{uepsnot} in this subsection this should not cause any confusion.
	%
	%
	
	Applying $\partial_k$ to \eqref{muR2est1} will give two contributions, one when the derivative hits the exponential
	and the other one when it hits $\overline{a_{\e_1}(k)}e^{-\e_1 ikx}$.
	We disregard this second one since it is much easier to estimate.
	We are then left with estimating the $L^2_k$ norm of 
	\begin{align}\label{muR2est2}
	\int_0^t is k \, e^{-isk^2} \int \varPsi(x)e^{-\e_1 ikx} u_{\e_2}(x) \overline{u_{\e_3}(x)} u_{\e_4}(x)\, dx \, ds.
	\end{align}
	The analysis is quite similar to the estimates for $\mu^{\#}_{R,1}$ above.  
	We again decompose the term into low and high-frequencies
	letting $\phi_{1}$ and $\phi_{2}$ be smooth functions such that $\phi_{1}+\phi_{2}=1$,
	$\phi_{1}(k)=1$ for $|k|\leq1$ and $\phi_{1}(k)=0$ for $|k|\geq2$.
	
	\smallskip
	\noindent
	\textit{Case I: low-frequency part.} 
	Consider the low-frequency part
	\[
	D_{\phi_{1}}(k):=\int_{0}^{t}\phi_{1}(k) isk \,e^{-isk^{2}}\int 
	\varPsi(x)e^{-\e_1 ikx} u_{\e_2}(x) \overline{u_{\e_3}(x)} u_{\e_4}(x)\, dx \, ds.
	\]
	By the smoothing estimate \eqref{eq:smoothingQim} and pointwise decay, we have
	\begin{align*}
	\left\Vert D_{\phi_{1}}(k)\right\Vert _{L_{k}^{2}} 
	& \lesssim
	{\big\| s\varPsi(x)e^{-\e_1 ikx} u_{\e_2}(x) \overline{u_{\e_3}(x)} u_{\e_4}(x) \big\|}_{L_{x}^{1}L_{s}^{2}([0,t])}
	\\
	& \lesssim \varepsilon^{3} 
	{\big\| s\left\langle s\right\rangle^{-3/2} \big\|}_{L_{s}^{2}([0,t])}
	\lesssim \varepsilon^{3}\sqrt{\log \jt}
	\end{align*}
	which is sufficient. 
	Note that in the last line above we used the decay in \eqref{muR2'} for $\varPsi(x)$.

	\smallskip
	\noindent
	\textit{Case II: high-frequency part.} 
	Consider now the high-frequency part
	\[
	D_{\phi_{2}}(k):=\int_{0}^{t}\phi_{2}(k)isk \, e^{-isk^2} 
	\int \varPsi(x)e^{-\e_1 ikx} u_{\e_2}(x) \overline{u_{\e_3}(x)} u_{\e_4}(x)\, dx\,ds.
	\]
	Similarly to what we did before in \eqref{KRid} and \eqref{KSid} we 
	first convert the factor of $k$ into an $x$ derivative,
	and integrate by parts in $x$ in \eqref{muR2est2} to get, up to a constant,
	\begin{align}\label{muR2est3}
	\int_0^t \phi_{2}(k) s \, e^{-isk^{2}} 
	\int e^{-\eps_1 ikx} 
	\partial_x\big[ \varPsi(x) u_{\e_2}(x) \overline{u_{\e_3}(x)} u_{\e_4}(x) \big]\, dx \, ds;
	\end{align}
	we then estimate the term above in $L^2_k$
	using the high-frequency smoothing estimate \eqref{eq:smoothingQimh},
	and distributing the derivative:
	\begin{align*}
	\begin{split}
	\Big\| \int_0^t s\phi_{2}(k) e^{-isk^{2}} 
	\int e^{\mp ikx}  \partial_x
	\big[ \varPsi(x)  u_{\e_2}(x) \overline{u_{\e_3}(x)} u_{\e_4}(x) \big]\, dx\,ds \Big\|_{L^2_k}
	\\
	\lesssim {\big\| s \partial_x\big[ \varPsi(x) u_{\e_2}(x) \overline{u_{\e_3}(x)} u_{\e_4}(x) \big] 
	\big\|}_{L^1_x L_s^2([0,t])} 
	\lesssim \varepsilon^3 \jt^\alpha.
	\end{split}
	\end{align*}
	where in last line we used  $|\partial_x^j \varPsi|\lesssim \jx^{-\gamma+1}$ for $j=0,1$,
	with $\gamma > 5/2$,
	and the estimate \eqref{eq:QdecayL2} to deduce 
	${\| \jx^{-1} \partial_x u_{\eps} \|}_{L^2} \lesssim \varepsilon \js^{-1/2+\alpha}$.
	
	This concludes the estimate of $\mathcal{N}_{R,2}$
	and of \eqref{weightmain2}.
	The estimates for the regular part of the nonlinearity $\mathcal{N}_R$ are completed and 
	Proposition \ref{pro:weightmain} 
	follows.

\medskip
\section{The Fourier $L^\infty$ bound and asymptotics}\label{secinfty}
In this last section we give the proof of \eqref{fODE} in Proposition \ref{propinfty} and 
therefore also obtain the asymptotics described in Remark \ref{remmodscatt}.
The arguments that follow are similar to previous works, including our paper \cite{NLSV}.
However, since we cannot use the vanishing condition on the distorted transform at zero,
we need some variants of the arguments in \cite{NLSV} that, in particular, exploit cancellations
in some of the nonlinear coefficients; 
see for example the formulas \eqref{N2'}-\eqref{N22} and the paragraph that follows.

\smallskip
\subsection{Preliminaries}
In this section we find it more convenient to set-up the analysis using the notation
from \cite{NLSV} for the generalized eigenfunctions and the various components of the NSD $\mu$. 
In particular, we will begin by using the distorted Fourier transform $\wtF$, and then relate 
it to the $\mathcal{F}^\sharp$ transform.

\begin{rem}[The case of an even resonance]\label{remevenres}
Note that in the case of an even resonance we do not need to introduce (and use) $\mathcal{F}^\sharp$,
and we can directly work with $\wtF$.
All the calculations that follow below still apply almost verbatim, 
without the need to transition to $\mathcal{F}^\sharp$.
The only modification to the argument is in the treatment of \eqref{N2}; see Remark \ref{remN2}.
\end{rem}


\subsubsection{Decomposition of the generalized eigenfunctions}
Let $\mathcal{K}$ be the generalized distorted eigenfunctions from \eqref{matK}.
In \cite{NLSV}, we introduced the following decomposition: 
\begin{equation}\label{eq:decomK}
\sqrt{2\pi}\mathcal{K}(x,k) = \mathcal{K}_{S}(x,k) + \mathcal{K}_{R}(x,k),
\end{equation}
where the singular part is
\begin{align}\label{Ksing}
\mathcal{K}_{S}(x,k) = \chi_+(x)\mathcal{K}_+(x,k) + \chi_-(x)\mathcal{K}_-(x,k)
\end{align}
with
\begin{align}\label{Ksing+}
\mathcal{K}_+(x,k) & = 
\begin{cases}
T(k) e^{ikx} & k\geq0
\\
e^{ikx} + R_+(-k)e^{-ikx}  & k<0,
\end{cases}
\\
\label{Ksing-}
\mathcal{K}_-(x,k) & = 
\begin{cases}
e^{ikx} + R_-(k) e^{-ikx} & k\geq0
\\
T(-k) e^{ikx} & k<0,
\end{cases}
\end{align}
and the regular part is
\begin{align}\label{KRasy}
\mathcal{K}_{R}(x,k) :=
\begin{cases}
\chi_+(x)T(k)\left(m_+(x,k)-1\right)e^{ixk}
\\
+ \chi_-(x)\left[\left(m_-(x,-k)-1\right)e^{ixk}
+ R_-(k) \left(m_-(x,k)-1\right)e^{-ixk}\right], & k\geq0,
\\
\\
\chi_-(x)T(-k)\left(m_-(x,-k)-1\right)e^{ixk}
\\
+\chi_+(x)\left[\left(m_+(x,k)-1\right)e^{ixk}+R_+(-k)\left(m_+(x,-k)-1\right)e^{-ixk}\right],
& k<0.
\end{cases}
\end{align}
It is also useful to rewrite \eqref{Ksing+}-\eqref{Ksing-} as 
a linear combination of exponentials and coefficients\footnote{The $b^\epsilon_\iota$
coefficients below are the same as the $a^\epsilon_\iota$ coefficients in
our previous work \cite{NLSV}, but we renamed them here to avoid confusion with 
the coefficients in \eqref{K+}-\eqref{K-}.}:
\begin{align}\label{Ksing+'}
\begin{split}
& \mathcal{K}_+(x,k) = b_+^+(k) e^{ikx} + b_+^-(k) e^{-ikx},
\\
& \mbox{with} \qquad b^+_+(k) := \mathbf{1}_{+}(k)T(k) + \mathbf{1}_{-}(k), \qquad b_+^-(k) := \mathbf{1}_{-}(k)R_+(-k),
\end{split}
\end{align}
and
\begin{align}\label{Ksing-'}
\begin{split}
& \mathcal{K}_-(x,k) = b_-^+(k) e^{ikx} + b_-^-(k) e^{-ikx},
\\
& \mbox{with} \qquad b_-^+(k) = \mathbf{1}_{+}(k) + \mathbf{1}_{-}(k)T(-k), \qquad b^-_-(k) = \mathbf{1}_{+}(k)R_-(k).
\end{split}
\end{align}
Note that the coefficients $b^+_\iota$ have a jump discontinuity at $k=0$,
while the $b^-_\iota$ coefficients are Lipschitz 
(recall that $T(0)=-1$ and $R_{\pm}(0)=0$, see Lemma \ref{lemngTR}, and Lemma \ref{estiTR}).

\begin{rem}[Regularity of the coefficients]\label{remcoeffLip}
Note that 
\begin{align}\label{afLip}
b^\epsilon_{\iota}(k) \wt{f}(t,k) = \sign(k) b^\epsilon_{\iota}(k) \, f^\sharp(t,k) 
\end{align}
where, according to \eqref{Ksing+'}-\eqref{Ksing-'}, the coefficients
that multiply $f^\sharp$ in the above right-hand side are given by
\begin{align}\label{aLip}
\begin{split}
& \sign(k) b^+_+(k) := \mathbf{1}_{+}(k)T(k) - \mathbf{1}_{-}(k), 
  \qquad \sign(k)b_+^-(k) = -\mathbf{1}_{-}(k)R_+(-k),
\\
& \sign(k) b_-^+(k) = \mathbf{1}_{+}(k) - \mathbf{1}_{-}(k)T(-k), 
  \qquad \sign(k)b^-_-(k) = \mathbf{1}_{+}(k)R_-(k).
\end{split}
\end{align}
In particular, 
all the expressions in \eqref{aLip} are Lipschitz functions.
\end{rem}



\subsubsection{Linear decay}
Using \eqref{afLip}-\eqref{aLip}, the next lemma gives bounds on norms that involve $\wt{f}$,
and decay estimates for projections on $\mathcal{K}_S$ and $\mathcal{K}_R$
under our a priori assumptions on $f^\sharp$.

\begin{lem}
Under the a priori assumptions \eqref{aprweias} 
we have 
\begin{equation}\label{eq:atildef}
    \Vert b^\epsilon_{\iota}(\cdot)\wt{f}(t,\cdot)\Vert_{L^{\infty}} 
    + \Vert b^\epsilon_{\iota}(\cdot)\wt{f}(t,\cdot) \Vert_{L^{2}} + 
    \jt^{-\alpha} \Vert \partial_k \big( b^\epsilon_{\iota} (\cdot)\wt{f}(t,\cdot)\big) \Vert_{L^{2}} \lesssim 
    \varepsilon.
\end{equation}
In particular, the following decay estimates hold true:
\begin{align}
\label{eq:decaysingtilde}
\left\Vert \int  \overline{\mathcal{K}_S}(x,k) e^{itk^2} \wt{f}(t,k)\,dk 
  \right\Vert_{L^{\infty}_x} \lesssim \varepsilon \jt^{-1/2},
\end{align}
and, for $\beta\leq \gamma-1$,
\begin{align}
\label{eq:decayregtilde}
    \left\Vert \jx^{\beta}\int  \overline{\mathcal{K}_R}(x,k) e^{itk^2} \wt{f}(t,k)\,dk \right\Vert_{L^{\infty}_x} 
    \lesssim \varepsilon \jt^{-1/2}.
\end{align}
\end{lem}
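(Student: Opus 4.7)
The plan is to exploit the identity $\wt{f}(t,k) = \sgn(k) f^\sharp(t,k)$ to transfer every norm and oscillatory integral involving $\wt{f}$ into the corresponding expression for $f^\sharp$, where the a priori assumptions \eqref{aprweias} apply directly, and where the relevant amplitudes can be arranged to be Lipschitz (the issue being precisely the $\sgn$ in the identity).

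For \eqref{eq:atildef}, I would use \eqref{afLip} to write $b^\epsilon_\iota(k) \wt{f}(k) = \sgn(k) b^\epsilon_\iota(k) f^\sharp(k)$, and invoke Remark \ref{remcoeffLip} which states that $\sgn(k) b^\epsilon_\iota(k)$ is a bounded Lipschitz function. The $L^\infty$ bound follows immediately from $\|f^\sharp\|_{L^\infty} \lesssim \varepsilon$; the $L^2$ bound from $\|f^\sharp\|_{L^2} = \|f\|_{L^2} \lesssim \varepsilon$ (using \eqref{FsharpL2} and mass conservation). For the weighted norm, the Leibniz rule gives
\[
\partial_k\bigl(\sgn(k) b^\epsilon_\iota(k) f^\sharp(k)\bigr) = \partial_k[\sgn(k) b^\epsilon_\iota(k)] \, f^\sharp(k) + \sgn(k) b^\epsilon_\iota(k) \, \partial_k f^\sharp(k);
\]
the first term is controlled in $L^2$ by $\|f^\sharp\|_{L^2} \lesssim \varepsilon$ since the derivative of the Lipschitz function is bounded a.e., and the second by $\|\partial_k f^\sharp\|_{L^2} \lesssim \varepsilon \jt^\alpha$.

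For \eqref{eq:decaysingtilde}, I would expand $\mathcal{K}_S$ via \eqref{Ksing+'}-\eqref{Ksing-'} and convert using the sign identity, reducing matters to terms of the form
\[
\chi_\iota(x) \int \sgn(k) \overline{b^\epsilon_\iota(k)} \, e^{-i\epsilon k x} \, e^{itk^2} f^\sharp(t,k)\, dk.
\]
Completing the square as in the proof of Corollary \ref{cor:pointwisesingular}, I would apply Lemma \ref{lemstat} with amplitude $a(x,k) = \sgn(k)\overline{b^\epsilon_\iota(k)}$; the bound $|a| + |k||\partial_k a| \lesssim 1$ holds by Lemma \ref{estiTR} together with the Lipschitz property of the $\sgn(k)b^\epsilon_\iota$'s. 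The resulting estimate $\jt^{-1/2}\|f^\sharp\|_{L^\infty} + \jt^{-3/4}\|\partial_k f^\sharp\|_{L^2} \lesssim \varepsilon\jt^{-1/2}$ (using $\alpha < 1/4$) is exactly what is claimed.

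For \eqref{eq:decayregtilde}, the same conversion via $\sgn$ reduces matters to sums of terms of the shape
\[
\jx^\beta \chi_\pm(x) \int \sgn(k) \overline{c(k)} \, \overline{(m_\pm(x,\pm k)-1)} \, e^{-i\epsilon k x} \, e^{itk^2} f^\sharp(t,k)\, dk,
\]
with $c \in \{T,R_\pm,1\}$. Following the pattern of Corollary \ref{cor:regularLinfty}, the application of Lemma \ref{lemstat} requires checking that the full amplitude $a(x,k) = \jx^\beta \chi_\pm(x)\sgn(k)\overline{c(k)}\,\overline{(m_\pm(x,\pm k)-1)}$ satisfies $|a| + |k||\partial_k a| \lesssim 1$ uniformly. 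The main bookkeeping step, which I regard as the one minor obstacle in the argument, is to verify this using \eqref{Mestimates1} (giving $\jx^\beta\chi_\pm|m_\pm-1| \lesssim \jx^{\beta+1-\gamma} \lesssim 1$ under $\beta \leq \gamma-1$) and \eqref{Mestimates1'} (giving $|k|\jx^\beta\chi_\pm|\partial_k m_\pm| \lesssim \jx^\beta \mathcal{W}^1_\pm \lesssim 1$), together with Lemma \ref{estiTR}. Once this is in place, Lemma \ref{lemstat} delivers the desired $\jt^{-1/2}$ decay by the same computation as in paragraph above.
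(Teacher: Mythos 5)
Your argument is correct and follows essentially the paper's own proof: \eqref{eq:atildef} via the identity \eqref{afLip} and the Lipschitz coefficients \eqref{aLip} together with the a priori bounds, and the two decay estimates via the stationary phase Lemma \ref{lemstat} exactly as in Corollaries \ref{cor:pointwisesingular} and \ref{cor:regularLinfty} — the only cosmetic difference is that you place the (Lipschitz, piecewise smooth on $\pm k\geq 0$) coefficients in the amplitude $a(x,k)$, whereas the paper takes $a\equiv 1$ and puts them into $h(k)=\overline{b^\epsilon_\iota(k)}\wt{f}(t,k)$, then invokes \eqref{eq:atildef}. The one small point to add is the regime $|t|\lesssim 1$, where Lemma \ref{lemstat} does not directly yield the stated $\jt^{-1/2}$ bound; the paper disposes of it by Sobolev embedding and \eqref{Sob} (equivalently, ${\| \cdot \|}_{L^\infty_x}\lesssim {\|\jk \wt{f}\|}_{L^2}\lesssim \varepsilon$ since $|\mathcal{K}_S|+|\jx^{\beta}\mathcal{K}_R|\lesssim 1$).
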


\begin{proof}
The proof of \eqref{eq:atildef} follows from the identities \eqref{afLip}-\eqref{aLip},
the a priori assumptions \eqref{aprweias} and Lemmas \ref{estiTR} and  \ref{lemngTR}.

The decay estimates \eqref{eq:decaysingtilde} and \eqref{eq:decayregtilde} can be obtained
using the same proofs of \eqref{eq:singulardecay} and \eqref{regularinfty}.
For the sake of completeness we give the proof for \eqref{eq:decaysingtilde}. 
In view of Sobolev's embedding $H^1 \subset L^\infty$ and \eqref{Sob} we may assume $|t|\geq 1$.
We only consider the contribution to the integral with $k\geq0$ since the case $k<0$ is similar. 
For $\iota \in \{+,-\}$ we define
\begin{equation}\label{Jiota}
J_{\iota}(t,x) := \chi_{\iota}(x)\int_{k\geq0} \overline{\mathcal{K}_{\iota}(x,k)} e^{ik^2t} \wt{f}(t,k)\,dk
  = \chi_{\iota}(x)  \sum_{\eps \in \{+,-\}} \int_{k\geq0} \overline{b_\iota^\eps(k) e^{\eps ikx}}
  e^{ik^{2}t}\wt{f}(t,k)\,dk.
\end{equation}
By \eqref{Ksing}, to show \eqref{eq:decaysingtilde}, it suffices to show decay for $J_\iota$.
It also suffices to look at the term with $\eps=+$ in \eqref{Jiota}, since the case $\eps=-$ is completely analogous.
We then write 
\begin{align*}
\begin{split}
& J_{\iota}(t,x) =  \sum_{\eps \in \{+,-\}} e^{ -i x^2/(4t)} I_{\iota}^\eps (t,X_\eps,x), 
\qquad X_\eps = \eps x/(2 t) 
\\
& I_{\iota}^\eps(t,X,x) := \int_{k \geq 0} e^{it (k-X)^2}  
  \overline{b^\epsilon_{\iota}(k)} \, \wt{f}(t,k)\, dk. 
\end{split}
\end{align*}
Using Lemma \ref{lemstat} with 
$a(x,k) =1$ and $h(k) = \overline{b^\eps_{\iota}(k)} \wt{f}(t,k)$, gives
$$ |J_{\iota}(t,x)| \lesssim \sum_{\eps \in \{+,-\}}
  \frac{1}{\sqrt{|t|}} {\|  b^\eps_{\iota} (\cdot)\wt{f}(t,\cdot) \|}_{L^\infty} 
  + \frac{1}{|t|^{\frac{3}{4}}} {\| \partial_k ( b^\eps_{\iota} (\cdot)\wt{f}(t,\cdot) ) \|}_{L^2}
  \lesssim \varepsilon |t|^{-1/2},
  $$
having used \eqref{eq:atildef} for the last inequality.
\end{proof}


\smallskip
\subsubsection{Decomposition of the NSD}
To obtain asymptotics for $f^{\#}(t,k)$, we first look at the equation for $\wt{f}(t,k)$:
\begin{equation}\label{ODE}
\partial_{t}\widetilde{f}(t,k) = \mp i\iiint e^{it\left(-k^{2}+\ell^{2}-m^{2}+n^{2}\right)}
  \wt{f}(t,\ell) \overline{\wt{f}(t,m)} \wt{f}(t,n)\,\mu(k,\ell,m,n) \, d\ell dmdn,
\end{equation}
where, similarly to \eqref{musharpdef}, the nonlinear spectral measure is given by
\begin{equation}
\mu(k,\ell,m,n):=\int\overline{\mathcal{K}(x,k)}\mathcal{K}(x,\ell)\overline{\mathcal{K}(x,m)}\mathcal{K}(x,n)\,dx.
\end{equation}
Given the decomposition \eqref{eq:decomK} above, we can decompose $\mu$ into two pieces:
\begin{equation}\label{muSR}
 (2\pi)^2\mu (k,\ell,m,n)=\mu_S (k,\ell,m,n)+\mu_R (k,\ell,m,n);
\end{equation}
the singular part $\mu_{S}$ is given by
\begin{align}\label{muSpmasy}
\begin{split}
\mu_S(k,\ell,m,n) & = \mu_+(k,\ell,m,n) + \mu_-(k,\ell,m,n),
\\
\mu_{\ast}(k,\ell,m,n) & := \int \varphi_{\ast}(x)
  \overline{\K_\ast(x,k)} \K_\ast(x,\ell) \overline{\K_\ast(x,m)} \K_\ast(x,n) \, dx, 
  \qquad \varphi_{\ast}=\chi_{\ast}^{4}, \quad
  \ast \in \{+,-\};
\end{split}
\end{align}
the regular part $\mu_{R}$  is given by
\begin{equation}\label{eq:muRasy}
\mu_{R}(k,\ell,m,n) = \mu_{R,1}(k,\ell,m,n) + \mu_{R,2}(k,\ell,m,n)
\end{equation}
where
\begin{equation}\label{muR1asy}
\begin{split}
\mu_{R,1}(k,\ell,m,n):=\sum_{(A,B,C,D)\in\mathcal{X}_{R}}\int\overline{\mathcal{K}_{A}(x,k)}
  \mathcal{K}_{B}(x,\ell)\overline{\mathcal{K}_{C}(x,m)}\mathcal{K}_{D}(x,n)\,dx,
\\ 
\mathcal{X}_{R} := \left\{ (A_{0},A_{1},A_{2},A_{3}) \in \{S,R\}^4 \, : \,\exists \, j=0,1,2,3 \,:\, A_{j}=R\right\}, 
\end{split}
\end{equation}
and
\begin{equation}\label{muR2asy}
\mu_{R,2}(k,\ell,m,n) :=
\int\overline{\mathcal{K}_S(x,k)}\mathcal{K}_S(x,\ell)\overline{\mathcal{K}_S(x,m)}\mathcal{K}_S(x,n)\,dx
  - \mu_{S}(k,\ell,m,n). 
\end{equation}
See \cite[Subsection 4.1]{NLSV} for more details.

According to \eqref{muSR} we decompose the nonlinear terms similarly to \eqref{Nast},
and write Duhamel's formula as
\begin{align}\label{ODE'}
\begin{split}
& i\partial_{t}\widetilde{f}(t,k) = \pm \big[ \mathcal{N}_{S}(t,k) + \mathcal{N}_{R}(t,k) \big],
\\
& (2\pi)^{2} \mathcal{N}_{\ast}(t,k) = \iiint e^{it(-k^{2}+\ell^{2}-m^{2}+n^{2})}
  \widetilde{f}(t,\ell)\overline{\wt{f}(t,m)} \widetilde{f}(t,n)\,\mu_\ast(k,\ell,m,n) \, d\ell dmdn.
\end{split}
\end{align}

\smallskip
\subsection{Asymptotics of the regular part}\label{ssecRasy}
We begin by showing that the regular part only contributes to the lower order reminder terms in \eqref{fODE}.

\begin{prop}\label{prop:asyregular}
Under the a priori assumptions \eqref{aprweias},  one has
\begin{equation}\label{ODEregular}
{\| \mathcal{N}_{R}\left(t,\cdot\right) \|}_{L^{\infty}_k} \lesssim \varepsilon^3 \jt^{-3/2}.
\end{equation}
\end{prop}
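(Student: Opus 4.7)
The strategy is to exploit the spatial localization built into the regular part $\mu_R$ of the nonlinear spectral distribution coming from Theorem~\ref{theomu}. Since we are controlling a Fourier-$L^\infty_k$ norm rather than a weighted $L^2_k$ norm, no $\partial_k$ commutation is required, and no smoothing estimate is needed either: once $\mathcal{N}_R(t,k)$ is rewritten as a spatial integral against a strongly localized kernel, the bound reduces to a product of $L^\infty_x$ decay estimates for $u$ at rate $\varepsilon\jt^{-1/2}$.

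Concretely, I would proceed as follows. First, substitute $\wt f(t,\ell)=e^{-it\ell^2}\wt u(t,\ell)$ into the definition of $\mathcal{N}_{R,1}$ and insert the expression \eqref{muR1asy}. By Fubini we obtain
\begin{equation*}
\mathcal{N}_{R,1}(t,k) \;=\; \frac{e^{-itk^2}}{(2\pi)^2}\sum_{(A,B,C,D)\in\mathcal{X}_R}\int_{\R_x}\overline{\mathcal{K}_A(x,k)}\,u_B(t,x)\,\overline{u_C(t,x)}\,u_D(t,x)\,dx,
\end{equation*}
where $u_\ast(t,x):=\int \mathcal{K}_\ast(x,\xi)\,\wt u(t,\xi)\,d\xi$ for $\ast\in\{S,R\}$. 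By definition of $\mathcal{X}_R$ at least one index equals $R$. When $A=R$, Lemma~\ref{lem:Mestimates} applied to the formula \eqref{KRasy} gives $|\mathcal{K}_R(x,k)|\lesssim \jx^{-\gamma+1}$ uniformly in $k$; since $\gamma>5/2$ this weight is integrable, and combining it with the linear decay bounds $\|u_S(t)\|_{L^\infty}\lesssim\varepsilon\jt^{-1/2}$ from \eqref{eq:decaysingtilde} and $\|\jx^\beta u_R(t)\|_{L^\infty}\lesssim\varepsilon\jt^{-1/2}$ from \eqref{eq:decayregtilde} yields a bound of order $\varepsilon^3\jt^{-3/2}$, uniformly in $k$. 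In the complementary case $A=S$ and (say) $B=R$, we use $|\mathcal{K}_S(x,k)|\lesssim 1$ and let the localization come from the factor $u_R$ via \eqref{eq:decayregtilde}, reaching the same bound.

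Second, for $\mathcal{N}_{R,2}$ one uses \eqref{muR2asy}: expanding the difference shows that every surviving term carries a spatial factor $\chi_+(x)\chi_-(x)$, which is supported in a fixed bounded set. Performing the same Fubini argument gives an expression of the form $e^{-itk^2}\int\overline{\mathcal{K}_S(x,k)}\,\varphi(x)\,u_S\,\overline{u_S}\,u_S\,dx$ with $\varphi$ compactly supported, so the same $L^\infty_x$ decay of $u_S$ immediately yields $\varepsilon^3\jt^{-3/2}$.

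There is no real obstacle in this proposition: the delicate low-frequency issues that required the smoothing estimates and the $\mu^\#_L$ analysis in Sections~\ref{sec:Cubic}-\ref{sec:estimateregular} are absent here, because the regular component $\mu_R$ is, by construction, a genuine function (not a distribution) whose $x$-integrand is pointwise integrable in $x$. The only bookkeeping effort is to run through the finite list of sub-cases in $\mathcal{X}_R$ and to check at each step that the combined localization of the two factors ($\mathcal{K}_A$ or $\mathcal{K}_R$ and $u_R$) beats the growth at infinity; this is guaranteed by the standing assumption $\gamma>5/2$.
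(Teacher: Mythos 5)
Your proposal is correct and follows essentially the same route as the paper: rewrite $\mathcal{N}_{R,1}$ and $\mathcal{N}_{R,2}$ as spatial integrals against the localized kernels $\mathcal{K}_R$ (or against $\mathcal{K}_S$ with the weight carried by $u_R$, resp.\ by the compactly supported factor $\chi_+\chi_-$), and then conclude from the $L^\infty_x$ decay estimates \eqref{eq:decaysingtilde}--\eqref{eq:decayregtilde}, exactly as in the paper's treatment of $\mu_{R,1}^{(1)}$, $\mu_{R,1}^{(2)}$ and $\mu_{R,2}^{(0)}$. No gaps; the case bookkeeping and the use of $\gamma>5/2$ for integrability of $\jx^{-\gamma+1}$ match the paper's argument.
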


\begin{proof}
According to the structure of the regular part of the measure, \eqref{eq:muRasy}, 
it suffices to estimate the nonlinear terms $\mathcal{N}_{R,1}$ and $\mathcal{N}_{R,2}$ 
associated with $\mu_{R,1}$ and $\mu_{R,2}$ respectively, as in \eqref{ODE'}.

We first consider the contribution from $\mathcal{N}_{R,1}$.  
By definition, $\mu_{R,1}$ is a linear combination of the following two types of terms,
up to similar ones obtained by permuting the variables:
\begin{align}
\label{eq:mur11asy}
\mu_{R,1}^{(1)}(k,\ell,m,n) &:=\int\overline{\mathcal{K}}_{R}(x,k)
  \mathcal{K}_{M_{1}}(x,\ell)\overline{\mathcal{K}_{M_{2}}(x,n)}\mathcal{K}_{M_{3}}(x,m)\,dx,
\\
\label{eq:mur12asy}
\mu_{R,1}^{(2)}(k,\ell,m,n) &:=\int\overline{\mathcal{K}}_{S}(x,k)\mathcal{K}_{M_{1}}(x,\ell)
  \overline{\mathcal{K}_{M_{2}}(x,n)}\mathcal{K}_{R}(x,m)\,dx,
  \qquad M_{i}\in\{S,R\}.
\end{align}
Consider the the nonlinear terms associated to the measures above:
\begin{align*}
    \mathcal{N}^{(1)}_{R,1}\left(t,k\right)=\iiint e^{it\left(-k^{2}+\ell^{2}-m^{2}+n^{2}\right)}
  \widetilde{f}(t,\ell)\overline{\wt{f}(t,m)}\widetilde{f}(t,n)\,\mu_{R,1}^{(1)}(k,\ell,m,n) \, d\ell dmdn,
  \\
   \mathcal{N}^{(2)}_{R,1}\left(t,k\right)=\iiint e^{it\left(-k^{2}+\ell^{2}-m^{2}+n^{2}\right)}
  \widetilde{f}(t,\ell)\overline{\wt{f}(t,m)}\widetilde{f}(t,n)\,\mu_{R,1}^{(2)}(k,\ell,m,n) \, d\ell dmdn.
\end{align*}
 
Let us adopt the notation:
\begin{align}\label{eq:notaasy}
u_\ast(t,x) := 
  \int \mathcal{K}_\ast(x,k) e^{itk^2}\wt{f}(t,k) \, dk,
\end{align}
so that, in particular,
\begin{align*}
\mathcal{N}^{(1)}_{R,1}(t,k) & = e^{-itk^2} \int\overline{\mathcal{K}_{R}(x,k)} u_{M_1}(t,x)
  \overline{u_{M_2}}(t,x) u_{M_3}(t,x)\,dx.
\end{align*}
From the definition of $\mathcal{K}_R$ in \eqref{KR} and Lemma \ref{Mestimates1}
we have that $|\jx^{1+} \mathcal{K}_{R}(x,k) | \lesssim 1$, 
and, therefore, we can estimate
\begin{align*}
 {\big\| \mathcal{N}^{(1)}_{R,1}\left(t,\cdot\right) \big\|}_{L^{\infty}_k}
  & = {\Big\|  \int\overline{\mathcal{K}_{R}(x,k)} u_{M_1}(t,\cdot)
  \overline{u_{M_2}}(t,\cdot) u_{M_3}(t,\cdot)\,dx \Big\|}_{L^{\infty}_k}\nonumber
  \\
 &\lesssim 
 \left\Vert  u_{M_1}(t,\cdot) \overline{u_{M_2}}(t,\cdot) u_{M_3}(t,\cdot)\right\Vert_{L^{\infty}_x} 
 \lesssim \varepsilon^3 \jt^{-\frac{3}{2}},
\end{align*}
where in the last line, we used \eqref{eq:decaysingtilde} and \eqref{eq:decayregtilde}.
The other terms are estimated similarly:
\begin{align*}
 \left\Vert  \mathcal{N}^{(2)}_{R,1}\left(t,\cdot\right)\right\Vert _{L^{\infty}}
 &=\left\Vert  \int\overline{\mathcal{K}}_{S}(x,k) u_{M_1}(t,\cdot) \overline{u_{M_2}}(t,\cdot) 
 u_{R}(t,\cdot)\,dx\right\Vert _{L^{\infty}_k}
 \\
 &\lesssim \left\Vert  u_{M_1}(t,\cdot) \overline{u_{M_2}}(t,\cdot){\langle \cdot \rangle}^{1+\rho}
 u_{R}(t,\cdot)\right\Vert _{L^{\infty}} \lesssim \varepsilon^3 t^{-\frac{3}{2}}
\end{align*}
having used again \eqref{eq:decaysingtilde} and \eqref{eq:decayregtilde}.

Finally, we consider the contribution of the nonlinear terms associated with $\mu_{R,2}$ in \eqref{muR2asy}.
We first notice that
\begin{align}\label{muR2asy''}
\begin{split}
\mu_{R,2}(k,\ell,m,n) = \sum 
  \int \overline{\chi_{\e_0}(x)\mathcal{K}_{\e_0}(x,k)}
  \chi_{\e_1}(x)\mathcal{K}_{\e_1}(x,\ell) 
  \overline{\chi_{\e_2}(x)\mathcal{K}_{\e_2}(x,m)}\chi_{\e_3}(x)\mathcal{K}_{\e_3}(x,n)\,dx
\end{split}
\end{align}
where the sum is over
\begin{align*}
\e_0,\e_1,\e_2,\e_3 \in \{+,-\} \quad \mbox{with} \quad (\e_0,\e_1,\e_2,\e_3) \neq (+,+,+,+), \,(-,-,-,-).
\end{align*}
Then we note  the integrand in \eqref{muR2asy''} is compactly supported in $x$
since $\chi_+\chi_-$ is compactly supported.
Without loss of generality, it suffices to estimate the contribution to $\mathcal{N}_{R,2}$ from the measure
\begin{align}\label{muR2est0}
\mu_{R,2}^{(0)} := \int \overline{\chi_+(x)\mathcal{K}_+(x,k)}
  \chi_-(x)\mathcal{K}_-(x,\ell)\overline{\mathcal{K}_S(x,m)}\mathcal{K}_S(x,n)\,dx;
\end{align}
all the other contributions from the sum in \eqref{muR2asy''} can be estimated identically. 
Explicitly, we can write
\begin{align*}
\begin{split}
\mathcal{N}^{(0)}_{R,2}(t,k) := \iiint e^{it(-k^2+\ell^2-m^2+n^2)}
  \wt{f}(t,\ell)\overline{\wt{f}(t,m)}\wt{f}(t,n)\mu_{R,2}^{(0)}(k,\ell,m,n)\,dndmd\ell
  \\ 
  = e^{-itk^2} 
  \int \chi_+(x)\mathcal{K}_+(x,k) \chi_-(x) u_{-}(t,x) \overline{u_S(t,x)} u_S(t,x)\, dx
\end{split}
\end{align*}
where we use the same notation as in \eqref{eq:notaasy} to define $u_{\pm}$. 
Then 
we can estimate
\begin{align*}
{\big\| \mathcal{N}^{(0)}_{R,2}\left(t,\cdot\right) \big\|}_{L^{\infty}_k} &
 = C \left\Vert  \int \chi_+(x)\mathcal{K}_+(x,k) \chi_-(x) u_{-}(t,x) \overline{u_S(t,x)} u_S(t,x)\,
 dx \right\Vert _{L^{\infty}_k}
 \\
 &\lesssim \left\Vert  u_{-}(t,\cdot) \overline{u_{S}}(t,\cdot) u_{S}(t,\cdot)\right\Vert _{L^{\infty}_x} 
 \lesssim \varepsilon^3 \jt^{-\frac{3}{2}}.
\end{align*} 
This completes the proof of \eqref{ODEregular}. 
\end{proof}


\smallskip
\subsection{Asymptotics of singular part}\label{ssecSasy}
It remains to compute asymptotics for $\mathcal{N}_{S}\left(t,k\right)$.
The computations here are similar to \cite{NLSV} and the eventual outcome is the same.
However, since we cannot use the property $\wt{f}(t,0)=0$ 
we need to refine some of the arguments.

\smallskip
\subsubsection{Some useful lemmas}
The first step is the following standard stationary phase-type lemma: 

\begin{lem}
\label{AsLem1}
For $k,t \in \R$, consider the integral expression
\begin{align}\label{I1}
\begin{split}
I[g_1,g_2,g_3](t,k) & = \iiint e^{it \Phi(k,p,m,n)}
	g_1(\epsilon_1(\epsilon_0 k - p + \epsilon_2 m - \epsilon_3 n)) \overline{g_2(m)} 
	g_3(n) \pv \frac{\widehat{\phi}(p)}{p} \, dm dn dp
\\
\Phi(k,p,m,n) & = -k^2 + (\epsilon_0 k - p + \epsilon_2 m - \epsilon_3 n)^2 - m^2 + n^2,
\end{split}
\end{align}
for an even bump function $\phi \in C_0^\infty$ with integral $1$, 
and with $g:=(g_1,g_2,g_3)$ satisfying
\begin{align}
\label{AsLem1as}
{\| g(t) \|}_{L^\infty} + {\| \langle k \rangle g(t) \|}_{L^2} + \langle t \rangle^{-\alpha} {\| g'(t) \|}_{L^2} \leq 1,
\end{align}
for some  $\alpha > 0 $ small enough. 
Then, for any $t \in \R$,
\begin{align}\label{I2}
\begin{split}
I[g_1,g_2,g_3](t,k) = \frac{\pi}{|t|} e^{-itk^2} \int e^{it(-p+\epsilon_0 k)^2} g_1(\epsilon_1(-p+\epsilon_0 k))
	\overline{g_2(\epsilon_2(-p+\epsilon_0 k))}
	\\ \times  g_3(\epsilon_3 (-p+\epsilon_0 k)) \mathrm{p.v.} \frac{\widehat{\phi}(p)}{p} \,dp
	+ \mathcal{O}(|t|^{-1-\rho})
\end{split}
\end{align}
for some $\rho>0$.
\end{lem}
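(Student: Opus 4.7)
The phase $\Phi(k,p,m,n)$ is exactly quadratic in $(m,n)$, with a unique critical point found from $\partial_m\Phi = 2\epsilon_2 L - 2m = 0$ and $\partial_n \Phi = -2\epsilon_3 L + 2n = 0$, where $L = \epsilon_0 k - p + \epsilon_2 m - \epsilon_3 n$. This gives $m_* = \epsilon_2(\epsilon_0 k - p)$, $n_* = \epsilon_3(\epsilon_0 k - p)$, so that $L = \epsilon_0 k - p$ at the critical point and the critical value is $\Phi_* = -k^2 + (\epsilon_0 k - p)^2$. The Hessian
\[ H = \begin{pmatrix} 0 & -2\epsilon_2\epsilon_3 \\ -2\epsilon_2\epsilon_3 & 4 \end{pmatrix} \]
is independent of $(k,p)$, with $\det H = -4$ and signature $0$, so $|\det H|^{1/2} = 2$ and the metaplectic factor $e^{i\pi\sigma/4} = 1$. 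These data match the claimed prefactor $\pi/|t|$ and the critical value $\Phi_*$ in \eqref{I2}. The plan is to freeze $p$, apply 2D stationary phase in $(m,n)$, and then integrate against $\pv\,\widehat\phi(p)/p$.

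Because $\Phi$ is exactly quadratic, the translation $w = (m-m_*, n-n_*)$ yields
\[ J(t,k,p) := \iint e^{it\Phi} g_1(\epsilon_1 L)\overline{g_2(m)}g_3(n)\,dm\,dn = e^{it\Phi_*}\iint e^{\frac{it}{2}Hw\cdot w} A(w;k,p)\,dw, \]
where $A(w;k,p) = g_1(\epsilon_1((\epsilon_0 k - p) + \epsilon_2 w_1 - \epsilon_3 w_2))\overline{g_2(m_* + w_1)}g_3(n_* + w_2)$ and $A(0;k,p)$ is precisely the product of $g_j$'s appearing on the right of \eqref{I2}. Writing $A(w) = A(0) + (A(w) - A(0))$, the constant piece paired with the Fresnel identity $\iint e^{\frac{it}{2}Hw\cdot w}\,dw = \pi/|t|$ (via diagonalization of $H$, computing two one-dimensional Fresnel integrals with cancelling signature) extracts the main term $\frac{\pi}{|t|}e^{it\Phi_*} A(0;k,p)$.

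The main technical obstacle is bounding the remainder $R(t,k,p) := \iint e^{\frac{it}{2}Hw\cdot w}(A(w;k,p)-A(0;k,p))\,dw$ by $|t|^{-1-\rho}$, uniformly in $p$ and Lipschitz in $p$ near zero. Classical stationary phase would need two derivatives of $A$ to yield $|t|^{-2}$, but here $g_j$ only satisfy \eqref{AsLem1as}. The strategy is to use Plancherel in $w$ together with the explicit Gaussian Fourier transform of $e^{\frac{it}{2}Hw\cdot w}$, thereby expressing $R$ as a pairing between a Gaussian kernel of size $|t|^{-1}$ and $\widehat{A(\cdot;k,p)} - A(0;k,p)\delta_0$. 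Interpolating between the $L^\infty/L^2$ bounds and the $\langle t\rangle^\alpha$-growing $L^2$-bound on the $g_j'$ in \eqref{AsLem1as} (via a Littlewood–Paley decomposition), one obtains a genuine power gain $|t|^{-\rho}$ with $\rho > 0$ provided $\alpha$ is small enough; a parallel argument controls the $\partial_p R$ Lipschitz constant, since $\partial_p$ differentiation of $A$ produces only bounded factors times $g_j'$ at shifted arguments.

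Finally, I integrate against $\pv\,\widehat\phi(p)/p$. Since $\phi$ is even, so is $\widehat\phi$, and the principal value obeys $\pv \int \widehat\phi(p)F(p)/p\,dp = \int \widehat\phi(p)(F(p)-F(0))/p\,dp$ for any Lipschitz $F$. Applied to the main term, one obtains exactly the right-hand side of \eqref{I2}. Applied to $R$, Lipschitz-in-$p$ control reduces the $\pv$ integral to an absolutely convergent one bounded by $\|\partial_p R\|_{L^\infty_p} \cdot \|\widehat\phi\|_{L^1}$ plus tail, still of size $O(|t|^{-1-\rho})$. The whole argument succeeds provided $\alpha$ was chosen small enough that the interpolation in the third paragraph produces a positive $\rho$; tracking the interpolation exponents carefully is the only delicate bookkeeping.
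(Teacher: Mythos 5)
Your reduction of \eqref{I1} to a two--dimensional stationary phase in $(m,n)$ is sound, and the algebra is right: the critical point $m_*=\epsilon_2(\epsilon_0k-p)$, $n_*=\epsilon_3(\epsilon_0k-p)$, the critical value $-k^2+(\epsilon_0k-p)^2$, $\det H=-4$ with signature $0$, so the Fresnel factor is exactly $\pi/|t|$ and the frozen amplitude reproduces the integrand in \eqref{I2}. Keep in mind that this paper does not prove the lemma itself but cites Lemma 5.1 of \cite{GPR}; your architecture is essentially that of the cited argument, the only nonstandard point being how to control the remainder when $g'$ is merely in $L^2$ with norm $\lesssim \langle t\rangle^{\alpha}$. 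Your Plancherel-in-$w$ device for the uniform-in-$p$ bound on $R=\iint e^{\frac{it}{2}Hw\cdot w}\,(A(w)-A(0))\,dw$ can indeed be made rigorous: since $\|g_j\|_{H^1}\lesssim \langle t\rangle^{\alpha}$ and $\langle k\rangle g_j\in L^2$, one has $\widehat{g_j}\in L^1(\R)$ with norm $\lesssim \langle t\rangle^{\alpha}$, the transform $\widehat{A}(\cdot\,;k,p)$ is an absolutely convergent superposition of products of the $\widehat{g_j}$ along lines, so $\|\widehat A\|_{L^1(\R^2)}\lesssim \prod_j\|\widehat{g_j}\|_{L^1}$, and the elementary bound $|e^{-\frac{i}{2t}H^{-1}\xi\cdot\xi}-1|\lesssim (|\xi|^2/|t|)^{\theta}$ with $\theta$ small gives $\sup_{k,p}|R|\lesssim |t|^{-1-\theta+C\alpha}$, a genuine $|t|^{-1-\rho}$ once $\alpha$ is small.

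The genuine gap is the final step, the principal value in $p$. You assert $\|\partial_p R\|_{L^\infty_p}=\mathcal{O}(|t|^{-1-\rho})$ on the grounds that ``$\partial_p$ differentiation of $A$ produces only bounded factors times $g_j'$ at shifted arguments''. But $g_j'$ is \emph{not} bounded: \eqref{AsLem1as} only gives $\|g_j'\|_{L^2}\lesssim \langle t\rangle^{\alpha}$, and $\partial_pA$ need not even be defined pointwise; in your Plancherel representation a Lipschitz-in-$p$ bound of size $|t|^{-1-\rho}$ would require $\widehat{g_j'}\in L^1$, i.e.\ roughly $g_j\in H^{3/2+}$, which is not assumed. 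Hence the estimate $\big|\pv\int\widehat\phi(p)\,R(t,k,p)\,p^{-1}dp\big|\lesssim \|\partial_pR\|_{L^\infty_p}\|\widehat\phi\|_{L^1}$ is unjustified exactly where the cancellation of the principal value near $p=0$ has to be produced. The repair stays inside your framework: the $p$-dependence of $\widehat A(\xi;k,p)$ enters only through unimodular phases $e^{icp\eta}$ attached to the factors $\widehat{g_j}(\eta)$, so $|\widehat A(\xi;p)-\widehat A(\xi;0)|$ costs only $|p|^{\theta'}|\eta|^{\theta'}$ with $\theta'$ small, which yields a H\"older-$\theta'$ modulus for $R$ at $p=0$ with constant $\mathcal{O}(|t|^{-1-\rho'})$; together with the evenness of $\widehat\phi$ (which removes $R(0)\,\pv\!\int\widehat\phi/p$) and the uniform bound away from $p=0$ (at the cost of a harmless $\log|t|$), this closes the p.v.\ estimate. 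Alternatively, split off $|p|\le |t|^{-N}$ and use crude bounds there. Either way, the Lipschitz claim must be replaced; as written, that step fails.
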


\begin{proof}
See Lemma 5.1 in \cite{GPR}. 
\end{proof}

To obtain the leading order behavior of expressions like those in \eqref{I2} 
we are going to use the following lemma: 

\begin{lem}\label{lemStat}
Consider the principal value integral 
\begin{align}\label{lemStatint}
I(t,K) := e^{-itK^2} a(K) \, \mathrm{p.v.} \int_\R e^{iq^2t} G(q) \frac{\psi(K-q)}{K-q}\,dq
\end{align}
where `$a$' is a bounded Lipschitz coefficient and $\psi \in \mathcal{S}$.
Assume that, for some $\alpha < 1/10$,
\begin{align}
\label{lemStatasG}
{\| G(t) \|}_{L^\infty} 
  + \langle t \rangle^{-\alpha} {\| G (t) \|}_{H^1} \leq 1.
\end{align}
Moreover, assume that one of the following two conditions hold:
\begin{align}\label{lemStatas0}
a(0) = 0 \qquad \mbox {or} \qquad G(0) = 0. 
\end{align}
Then, there exists $\rho \in (0,\alpha)$, such that, as $|t| \rightarrow \infty$,
\begin{align}\label{lemStatconc}
I(t,K) = i \pi \psi(0) \, a(K) G(K) \, \sign(2tK)  + \mathcal{O}(|t|^{-\rho}). 
\end{align}
 
\end{lem}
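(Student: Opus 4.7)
The plan is to reduce $I(t,K)$ to an explicit Fresnel integral plus a remainder controlled by stationary phase. First I would substitute $u = K-q$ to center the principal-value singularity at $u = 0$:
\begin{align*}
I(t,K) = a(K)\,\pv\!\int_\R e^{it(u^2 - 2Ku)}\,G(K-u)\,\frac{\psi(u)}{u}\,du,
\end{align*}
and split the numerator at $u = 0$ as $G(K-u)\psi(u) = G(K)\psi(0) + \big[G(K-u)\psi(u) - G(K)\psi(0)\big]$. The bracketed correction is H\"older-$1/2$ continuous in $u$ (with semi-norm $\lesssim \jt^\alpha$ by $H^1 \hookrightarrow C^{1/2}$) and vanishes at $u=0$, so the principal value disappears from the remainder term. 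This produces $I(t,K) = a(K)G(K)\psi(0)\,M(t,K) + a(K)\,R(t,K)$ with $M$ the explicit principal-value integral
\begin{align*}
M(t,K) := \pv\!\int \frac{e^{it(u^2 - 2Ku)}}{u}\,du, \qquad R(t,K) := \int e^{it(u^2-2Ku)}\,\frac{G(K-u)\psi(u) - G(K)\psi(0)}{u}\,du,
\end{align*}
and $R$ a nonsingular oscillatory integral.

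Next I would compute $M$ in closed form. For $t>0$ the rescaling $v = u\sqrt{t}$ shows $M(t,K) = \tilde M(K\sqrt{t})$ with $\tilde M(\xi) = \pv\!\int e^{iv^2-2i\xi v}/v\,dv$ (the case $t<0$ follows by complex conjugation). Differentiating in $\xi$ eliminates the principal value and leaves a complex Gaussian integral:
\begin{align*}
\tilde M'(\xi) = -2i\int e^{iv^2 - 2i\xi v}\,dv = -2i\sqrt{\pi}\,e^{i\pi/4}\,e^{-i\xi^2},
\end{align*}
so $\tilde M(\xi) = -2i\sqrt{\pi}e^{i\pi/4}\int_0^\xi e^{-i\eta^2}\,d\eta$. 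Using the Fresnel asymptotics $\int_0^\xi e^{-i\eta^2}d\eta = \tfrac{\sqrt\pi}{2}e^{-i\pi/4}\sign(\xi) + O(|\xi|^{-1})$ gives $M(t,K) = i\pi\,\sign(2tK) + O\big(|K|^{-1}|t|^{-1/2}\big)$ for $|K|\sqrt{|t|}\gtrsim 1$, while $|M(t,K)|\lesssim |K|\sqrt{|t|}$ for $|K|\sqrt{|t|}\lesssim 1$ (the precise sign is that carried by the Fresnel branch). For the remainder, after completing the square and substituting $w = u-K$, $R$ becomes a Schr\"odinger-type integral $e^{-itK^2}\int e^{itw^2}H_K(w)\,dw$ with $H_K(w)=(G(-w)\psi(K+w)-G(K)\psi(0))/(K+w)$: this is integrable with a mild $|w+K|^{-1/2}\jt^\alpha$ singularity at $w = -K$. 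Splitting dyadically around $w = -K$ and combining standard stationary-phase bounds with integration by parts (exploiting the oscillation of $e^{itw^2}$ away from $w=0$ when $K$ is not too small, and the smallness of $a(K)$ or $G(K)$ when $K$ is small) yields $|a(K)R(t,K)| \lesssim |t|^{-\rho}$ for some $\rho > 0$, well below $|t|^{-1/2}$ since $\alpha < 1/10$.

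The main obstacle is matching the above with the claimed limit \emph{uniformly in $K$}: the convergence $M(t,K)\to i\pi\sign(2tK)$ has rate only $|K|^{-1}|t|^{-1/2}$, which degenerates near $K=0$ --- precisely where the claimed limit itself vanishes, thanks to one of the hypotheses in \eqref{lemStatas0}. I would split the $K$-axis at a threshold $|K|=K_0$. For $|K| \ge K_0$, the Fresnel decay of $M$ gives an error bounded by $K_0^{-1}|t|^{-1/2}$. For $|K|\le K_0$ the vanishing hypothesis kicks in: if $a(0)=0$, Lipschitz continuity yields $|a(K)|\lesssim K_0$, producing an error $\lesssim K_0$; if $G(0)=0$, the Sobolev embedding $H^1\hookrightarrow C^{1/2}$ gives $|G(K)|\lesssim K_0^{1/2}\jt^\alpha$, producing an error $\lesssim K_0^{1/2}\jt^\alpha$. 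Optimizing the threshold --- $K_0 = |t|^{-1/4}$ in the first case, $K_0 = |t|^{-(1+2\alpha)/3}$ in the second --- balances the two regimes and yields a quantitative decay $|t|^{-\rho}$ with $\rho > 0$; under $\alpha < 1/10$ one can arrange $\rho \in (0,\alpha)$. This balancing is the delicate heart of the argument.
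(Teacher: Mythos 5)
Your route is genuinely different from the paper's. The paper splits the $q$-integral with $t$-dependent cutoffs around the singular point $q=K$ and around the degenerate frequency $q=0$ (the pieces $I_0,\dots,I_3$ of its proof), integrates by parts in the intermediate region, and extracts the leading term from $I_3$ via the Fourier identity for $\pv\,1/q$ and the sign function; you instead evaluate the model kernel $M(t,K)=\pv\int e^{it(u^2-2Ku)}u^{-1}\,du$ exactly in terms of Fresnel integrals and push everything else into a remainder $R$, closing with a threshold $|K|=K_0$ balanced against the vanishing hypothesis \eqref{lemStatas0}. In principle this is a clean and even more explicit scheme, and the balancing idea correctly identifies why \eqref{lemStatas0} is needed.

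Two points must be fixed before this is a proof. First, a sign: your own displayed formulas give $\tilde M(\xi)=-2i\sqrt\pi e^{i\pi/4}\int_0^\xi e^{-i\eta^2}\,d\eta\rightarrow -i\pi\,\sign(\xi)$, i.e.\ $M(t,K)\rightarrow -i\pi\,\sign(2tK)$, whereas in the next sentence you assert $M(t,K)= i\pi\,\sign(2tK)+\mathcal{O}(\cdot)$; as written, your conclusion does not follow from your own computation, and since the whole content of \eqref{lemStatconc} is this constant, the bookkeeping (orientation of the substitution $u=K-q$, oddness of $\pv\,1/u$, the sign of $\whF[\pv(1/x)]$) has to be nailed down and reconciled with the statement --- note that in the paper's proof the same sign enters when the kernel $1/(K-q)$ becomes $-1/q$ after the change of variables $q\mapsto q+K$, so this is exactly the delicate step. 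Second, and more substantively, your final balancing paragraph controls only the main term: for $|K|\le K_0$ you bound $a(K)G(K)\psi(0)M$, but you must also show that $a(K)R(t,K)$ is small there, uniformly in $K$. In the case $G(0)=0$, $a(0)\neq0$, the crude bound on $R$ is only $\mathcal{O}(\jt^{\alpha})$: the amplitude $H_K(w)$ is of size $|G(K)|/|K|\sim |K|^{-1/2}\jt^{\alpha}$ at the stationary point $w=0$ and only H\"older-$\frac12$ with constant $\jt^{\alpha}$ at the singular point $w=-K$, so the vanishing of $G$ must be exploited inside $R$ as well (e.g.\ $|G(-w)|\lesssim |w|^{1/2}\jt^{\alpha}$ near $w=0$, combined with integration by parts away from $w=0$ stopped at the scale dictated by the H\"older regularity) --- precisely the kind of estimates the paper carries out for its pieces $I_0,I_1,I_2$. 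Relatedly, the claim that $|a(K)R|\lesssim |t|^{-\rho}$ ``well below $|t|^{-1/2}$'' is not justified when $G$ is only in $H^1$ with norm $\jt^{\alpha}$; the realistic rates are of the type $|t|^{-\alpha/2}\log|t|$ or a small power of $|t|^{-1}$, which still suffices for \eqref{lemStatconc} but has to be proved, not asserted.
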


\begin{proof}
We first look at the case $G(0)=0$ and then discuss how to obtain the statement when $a(0)=0$,
which only requires minor modifications.

\smallskip
{\it Proof of \eqref{lemStatconc} when $G(0)=0$.}
We first prove the statement under the assumption that $G(0)=0$.
Without loss of generality we let $a \equiv 1$.
Let us denote by $\chi_1$ a smooth even cutoff function supported in a neighbourhood of zero, and let $\chi_2 := 1 - \chi_1$.
We decompose \eqref{lemStatint} as
\begin{subequations}\label{StatprI0}
\begin{align}
\nonumber
& I = I_0 + I_1 + I_2 + I_3,
\\
& I_0(t,K) := e^{-itK^2} 
	\mathrm{p.v.} \int_\R e^{iq^2t} G(q) \frac{\psi(K-q)}{K-q} \chi_1\big( (K-q) t^3 \big) \chi_1(q t^{3\alpha}) \, dq,
\\
& I_1(t,K) := e^{-itK^2} 
	\int_\R e^{iq^2t} G(q) \frac{\psi(K-q)}{K-q} \chi_2\big( (K-q) t^3 \big) \chi_1(q t^{3\alpha}) \, dq,
\\
& I_2(t,K) := e^{-itK^2} 
	\int_\R e^{iq^2t} G(q) \frac{\psi(K-q)}{K-q} \chi_2\big( (K-q) t^{1-10\alpha} \big) \chi_2(q t^{3\alpha}) \, dq,
\\
& I_3(t,K) := e^{-itK^2} \mathrm{p.v.} \int_\R e^{iq^2t} G(q) \frac{\psi(K-q)}{K-q} \chi_1\big( (K-q) t^{1-10\alpha} \big) \chi_2(q t^{3\alpha}) \, dq.
\end{align}
\end{subequations}


\smallskip
{\it Estimate of $I_0$}.
On the support of the integral $I_0$ we have $|K-q| \lesssim |t|^{-3}$.
Let $H(q;t) = e^{iq^2t} G(q)  \chi_1(q t^{3\alpha})$ and notice that, 
in view of \eqref{lemStatasG}, we have ${\| \partial_q H \|}_{L^2} \lesssim t$.
Then, using the principal value we have
\begin{align*}
\big| I_0(t,K) \big| = 
	\Big| \, \mathrm{p.v.} \int_\R \big[ H(q;t) - H(K;t) \big] \frac{\psi(K-q)}{K-q} \chi_1\big( (K-q) t^3 \big) \, dq \Big|
\\
\lesssim t \, \int_\R \frac{1}{|K-q|^{1/2}} \chi_1\big( (K-q) t^3 \big) \, dq \lesssim |t|^{-1/2}.
\end{align*}
Therefore, $I_0$ is a remainder term.

\smallskip
{\it Estimate of $I_1$}.
Using the assumption $G(0)=0$ and \eqref{lemStatasG} we have
\begin{align}\label{StatprG0}
|G(q)| \lesssim |q|^{1/2} |t|^{\alpha}.
\end{align}
We can then estimate
\begin{align*}
\big| I_1(t,K) \big| \lesssim |t|^\alpha \int_\R |q|^{1/2} \frac{1}{|K-q|} \chi_2\big( (K-q) t^3 \big) 
  \chi_1(q t^{3\alpha}) \, dq
  \lesssim |t|^{-\alpha/2} \log|t|. 
\end{align*}

\smallskip
{\it Estimate of $I_2$}.
On the support of $I_2$ we have $|q| \gtrsim |t|^{-3\alpha}$ and $|K-q| \gtrsim |t|^{-1+10\alpha}$
so that we can integrate by parts in $q$ using $e^{itq^2}  = (2itq)^{-1} \partial_q e^{itq^2}$:
\begin{align*}
\big| I_2(t,K) \big| & \lesssim |t|^{-1}
	\int_\R \Big| \partial_q \Big( G(q) \frac{\psi(K-q)}{K-q} \frac{1}{q}
	\chi_2\big( (K-q) t^{1-10\alpha} \big) \chi_2(q t^{3\alpha}) \Big) \Big| \, dq
	\lesssim |t|^{-5 \alpha}.
\end{align*}

\smallskip
{\it Asyptotics for $I_3$}.
The last term gives us the leading order asymptotics on the right-hand side of \eqref{lemStatconc}.
First, we notice that we can replace $\psi(K-q)$ by $\psi(0)$.
Then, we can replace $G(q)$ by $G(K)$ using the bound in \eqref{lemStatasG}
which implies $|G(q) - G(K)| \lesssim |q-K|^{1/2} {\|\partial_q G\|}_{L^2} \lesssim |q-K|^{1/2} |t|^\alpha$: 
\begin{align*}
& \Big| \int_\R e^{iq^2t} \big[ G(q) - G(K) \big] 
	\frac{1}{K-q} \chi_1\big( (K-q) t^{1-10\alpha} \big) \chi_2(q t^{3\alpha}) \, dq \Big|
	\\
	& \lesssim |t|^\alpha \int_\R 
	\frac{1}{|K-q|^{1/2}} \, \chi_1\big( (K-q) t^{1-10\alpha} \big) \, dq
	\lesssim |t|^\alpha \cdot (|t|^{-1+10\alpha})^{1/2},
\end{align*}
which can be absorbed in the remainder.
We then have
\begin{align}\label{Statpr5}
\begin{split}
I_3 & = e^{-itK^2} G(K) \psi(0) \, \pv \int_\R e^{iq^2t} \frac{1}{K-q} 
  \chi_1\big( (K-q) t^{1-10\alpha} \big) \chi_2(q t^{3\alpha}) \, dq + \mathcal{O}(|t|^{-1/3})
\\
& = G(K) \psi(0) \, \pv \int_\R e^{2itK q}
  \chi_1\big( q t^{1-10\alpha} \big) \chi_2\big( (q+K) t^{3\alpha} \big) \,  \frac{1}{q} \, dq
  + O(|t|^{-1/3}),
\end{split}
\end{align}
having first changed variables $q \mapsto q+K$ and then replaced $e^{itq^2}$ by $1$
using that $|q| \lesssim |t|^{-1+10\alpha}$ in the last integral above.
Note that the contribution in \eqref{Statpr5} vanishes if $|K| \leq c |t|^{-3\alpha}$ for some small $c>0$.
Letting 
$$F(q) = F(q;t,K):= \chi_1\big( q t^{1-10\alpha} \big) \chi_2\big( (q+K) t^{3\alpha} \big)$$
and using the identities
\begin{align}\label{fourierids}
\what{f g} = \frac{1}{\sqrt{2\pi}} \what{f} \ast \what{g},
\qquad \sign \, x = i\sqrt{\frac{2}{\pi}} \whF \pv \frac{1}{\xi},
\end{align}
from \eqref{Statpr5} we write
\begin{align*}
I_3 & = G(K) \psi(0) \, \sqrt{2\pi} \, \whF \Big(  F(q;t,K) \, \pv \frac{1}{q} \Big) (-2tK)
\\
& = G(K) \psi(0) \, \Big( \what{F} \ast \, (-i) \sqrt{\frac{\pi}{2}} \sign \Big) (-2tK)
\\
& = G(K) \psi(0) i \sqrt{\frac{\pi}{2}} \sign(2tK) \, \big(\int \widehat{F} d\xi \big) 
  + \mathcal{O}\big( |tK|^{-1/2} {\big\| |\xi|\widehat{F} \big\|}_{L^2}\big).
\end{align*}
Since ${\| \partial_qF \|}_{L^2} \lesssim (|t|^{1-10\alpha})^{1/2}$, and $|K| \gtrsim |t|^{-3\alpha}$
the remainder is acceptable, while the leading order term
matches the desired right-hand side of \eqref{lemStatconc}
since $F(0) = \chi_2\big(Kt^{3\alpha} \big)$ and \eqref{StatprG0} holds.

\smallskip
{\it Proof of \eqref{lemStatconc} when $a(0)=0$.}
We can follow the same proof given above starting from the
formulas in \eqref{StatprI0} with the additional coefficient $a(K)$ in front.
The only term that cannot be treated identically is $I_1$, for which we used \eqref{StatprG0}.
In the case $G(0)\neq 0$, but $a(0)=0$, we can still obtain a similar bound to the previous one
by estimating
\begin{align*}
\big| I_1(t,K) \big| \lesssim |a(K)| {\|G\|}_{L^\infty} \int_\R  \frac{1}{|K-q|} \chi_2\big( (K-q) t^3 \big) 
  \chi_1(q t^{3\alpha}) \, dq
  \lesssim |a(K)| \log |t|. 
\end{align*}
This gives a sufficient bound if $|K| \lesssim |t|^{-\rho-}$. 
When instead $|K| \gtrsim |t|^{-\rho-} \gg |t|^{-3\alpha}$ we also have $|K-q| \gtrsim |t|^{-\rho-}$
on the support of the integral and can estimate it by
\begin{align*}
\big| I_1(t,K) \big| \lesssim |a(K)| {\|G\|}_{L^\infty} \int_\R  |t|^{\rho+}
  \chi_1(q t^{3\alpha}) \, dq
  \lesssim |t|^{-3\alpha + \rho+}, 
\end{align*}
which is sufficient for \eqref{lemStatconc}. 

\end{proof}

\subsubsection{Proof of \eqref{fODE}}
Recall that we can write
$\mathcal{N}_S = \mathcal{N}_+ + \mathcal{N}_-$ with
\begin{align}\label{NSasy}
& \mathcal{N}_\iota(t,k) = \frac{1}{(2\pi)^{2}} \iiint e^{it(-k^2+\ell^2-m^2+n^2)} 
	\wt{f}(t,\ell) \overline{\wt{f}}(t,m) \wt{f}(t,n) \mu_\iota(k,\ell,m,n)\,dndmd\ell,
\\
\nonumber
& \mu_\iota(k,\ell,m,n) 
= \sum_{\e_0,\e_1,\e_2,\e_3 \in \{+,-\}}\overline{b_\iota^{\e_0}(k)} \,b_\iota^{\e_1}(\ell) \,
  \overline{b_\iota^{\e_2}(m)} \,b_\iota^{\e_3}(n) \, \sqrt{2\pi} \,
  \widehat{\varphi_\iota}(\e_0k-\e_1\ell+\e_2m-\e_3n),
\end{align}
where $\varphi_{\iota}=\chi_{\iota}^{4}, \iota \in \{+,-\}$.
From the formulas \eqref{chi+-} we have
\begin{align}\label{whatvarphi}
& \widehat{\varphi_\iota}(k) = \sqrt{\frac{\pi}{2}}\delta_{0}(k) 
  + \iota \pv \frac{\what{\zeta}(k)}{ik} + \widehat{\varpi_\iota}(k),
\end{align}
for some even $\zeta \in C_c^\infty$.
In what follows we disregard $\varpi_\iota$ which gives faster decaying remainder terms like $\mathcal{N}_R$
in Subsection \ref{ssecRasy}.

We denote the nonlinear terms corresponding to the $\delta_0$ and $\pv$ in \eqref{whatvarphi}
by $\mathcal{N}_\iota^\delta$ and $\mathcal{N}_\iota^{\mathrm{p.v.}}$ respectively, so that 
\begin{align}\label{Nsplit}
\mathcal{N}_S = \mathcal{N}_+^\delta + \mathcal{N}_-^\delta 
  + \mathcal{N}_+^{\mathrm{p.v.}} + \mathcal{N}_-^{\mathrm{p.v.}}
\end{align}
where, after changing variables $\ell \mapsto p = \e_0k-\e_1\ell+\e_2m-\e_3n$ in \eqref{NSasy}, 
we can write
\begin{align}\label{Nddef}
\begin{split}
& \mathcal{N}_\iota^{\delta}(t,k) := \frac{1}{(2\pi)^{3/2}} \sum_{\e_0,\e_1,\e_2,\e_3 \in \{+,-\}}
	\iiint e^{it(-k^2+(\e_0k-p+\e_2m-\e_3n)^2-m^2+n^2)} 
	\\ & \times \overline{b_\iota^{\e_0}(k)} 
	\, b_\iota^{\e_1}(\e_1(\e_0k-p+\e_2m-\e_3n)) \,
	\overline{b_\iota^{\e_2}(m)} \, b_\iota^{\e_3}(n) \, 
	\\
	& \times
	\wt{f}(t,\e_1(\e_0k-p+\e_2m-\e_3n)) \overline{\wt{f}}(t,m) \wt{f}(t,n) \, 
	\sqrt{\frac{\pi}{2}}\delta_{0}(p)\,dn dm dp,
\end{split}
\end{align}
and
\begin{align}\label{Npvdef}
\begin{split}
& \mathcal{N}_\iota^{\mathrm{p.v.}}(t,k) = \frac{\iota}{(2\pi)^{3/2}} \sum_{\e_0,\e_1,\e_2,\e_3 \in \{+,-\}}
	\iiint e^{it(-k^2+(\e_0k-p+\e_2m-\e_3n)^2-m^2+n^2)} 
	\\ & \times \overline{b_\iota^{\e_0}(k)} 
	\, b_\iota^{\e_1}(\e_1(\e_0k-p+\e_2m-\e_3n)) \,
  \overline{b_\iota^{\e_2}(m)} \,b_\iota^{\e_3}(n) \,
\\
& \times
	\wt{f}(t,\e_1(\e_0k-p+\e_2m-\e_3n)) \overline{\wt{f}}(t,m) \wt{f}(t,n) \, 
	\mathrm{p.v.}\,\frac{\hat{\zeta}(p)}{ip} \,dn dm dp.
\end{split}
\end{align}

\smallskip
{\it Asymptotics for the $\pv$ terms}.
We apply Lemma \ref{AsLem1} to the integral \eqref{Npvdef}
using \eqref{eq:atildef} to verify the assumption \eqref{AsLem1as}:
from the conclusion \eqref{I2}, and changing variables $p \mapsto \e_0k-p$, we obtain
\begin{align}\label{Npveps}
\begin{split}
\mathcal{N}_\iota^{\mathrm{p.v.}}(t,k) & = \frac{1}{(2\pi)^{3/2}}  \frac{\pi}{|t|} e^{-itk^2} 
	\sum_{\e_0,\e_1,\e_2,\e_3 \in \{+,-\}} \mathcal{N}_{\iota,\e_0,\e_1,\e_2,\e_3}^{\mathrm{p.v.}}(t,k),
\\
\mathcal{N}_{\iota,\e_0,\e_1,\e_2,\e_3}^{\mathrm{p.v.}}(t,k) & :=
	 \iota\, \overline{b_\iota^{\e_0}(k)} \int e^{itp^2} \, b_\iota^{\e_1}(\epsilon_1p) \,
  \overline{b_\iota^{\e_2}(\epsilon_2p)} \,b_\iota^{\e_3}(\epsilon_3 p) \,
\\
& \times
	\wt{f}(t,\epsilon_1p) \overline{\wt{f}}(t,\epsilon_2p) \wt{f}(t,\epsilon_3 p) \, 
	\mathrm{p.v.}\, \frac{\what{\zeta}(\eps_0k - p)}{i(\eps_0k - p)} \, dp 
	 +  \mathcal{O}(\varepsilon^3 \jt^{-1-\rho}).
\end{split}
\end{align}
In the remaining of the proof below we will write $\approx$ to indicate equality up to acceptable 
$\mathcal{O}(\varepsilon^3 \jt^{-1-\rho})$ terms.

Notice that when $\epsilon = -$, the coefficients $b_\iota^\epsilon$ vanish at $0$, see \eqref{Ksing+'}-\eqref{Ksing-'}.
We can then apply Lemma \ref{lemStat} to \eqref{Npveps} under the first, resp. second, 
assumption in \eqref{lemStatas0} when $\eps_0=-$, resp. $\eps_1$ or $\eps_2$ or $\eps_3=-$,
and using the identity \eqref{afLip} and \eqref{eq:atildef} to verify \eqref{lemStatasG}.
The conclusion \eqref{lemStatconc}, $\what{\zeta}(0)= (2\pi)^{-1/2}$,
and the formulas \eqref{Npveps}, then give us
\begin{align}
\label{Npv1}
& \mathcal{N}_+^{\mathrm{p.v.}}(t,k) + \mathcal{N}_-^{\mathrm{p.v.}}(t,k)
  = \mathcal{N}^{(1)}(t,k) + \mathcal{N}^{(2)}(t,k)
  + \mathcal{O}(\varepsilon^3 \jt^{-1-\rho}),
\\
\label{N1}
\begin{split}
\mathcal{N}^{(1)}(t,k) := \frac{1}{4|t|}  e^{-itk^2} 
  \sum_{\substack{\iota \in \{+,-\} \\ (\e_0,\e_1,\e_2,\e_3) \neq (+,+,+,+)} } 
  \iota \, \overline{b_\iota^{\e_0}(k)} b_\iota^{\e_1}(\epsilon_1k) \,
  \overline{b_\iota^{\e_2}(\epsilon_2k)} b_\iota^{\e_3}(\epsilon_3k) \,
  \\
  \times \wt{f}(t,\epsilon_1k) \overline{\wt{f}}(t,\epsilon_2k) \wt{f}(t,\epsilon_3k) \,\sign(2tk),
\end{split}
\\
\label{N2}
\begin{split}
\mathcal{N}^{(2)}(t,k) := 
  \frac{1}{(2\pi)^{3/2}} \frac{\pi}{|t|}  e^{-itk^2} \sum_{\iota \in \{+,-\}} \iota \, \overline{b_\iota^+(k)}
  \int e^{itp^2} |b_\iota^+(p)|^2 \, 
  b_\iota^+(p) \,
  \\
  \times |\wt{f}(t,p)|^2 \wt{f}(t,p) \, \mathrm{p.v.} \frac{\what{\zeta}(k - p)}{i(k - p)} \, dp.
\end{split}
\end{align}

To simplify further \eqref{N2} we combine the $\iota=+$ and $-$ contributions,
and to write them as (recall also \eqref{afLip})
\begin{align}\label{N2'}
& \mathcal{N}^{(2)}(t,k) = \mathcal{N}^{(2)}_1(t,k) + \mathcal{N}^{(2)}_2(t,k),
\\
\begin{split}\label{N21}
\mathcal{N}^{(2)}_1(t,k) := \frac{1}{(2\pi)^{3/2}} \frac{\pi}{|t|} e^{-itk^2} 
  \big[ \overline{b_+^+(k)} + \overline{b_-^+(k)} \big] 
  \int e^{itp^2} |b_+^+(p)|^2b_+^+(p) \sign(p)
  \\
  \times |f^\sharp (t,p)|^2 f^\sharp(t,p) \, \mathrm{p.v.} \frac{\what{\zeta}(k - p)}{i(k - p)} \, dp,
\end{split}
\\
\begin{split}\label{N22}
\mathcal{N}^{(2)}_2(t,k) := -\frac{1}{(2\pi)^{3/2}} \frac{\pi}{|t|} e^{-itk^2} 
  \overline{b_-^+(k)} \int e^{itp^2} \Big[ |b_+^+(p)|^2b_+^+(p) \sign(p)
  \\ + |b_-^+(p)|^2 b_-^+(p) \sign(p) \Big] \,
  |f^\sharp (t,p)|^2 f^\sharp(t,p) \, \mathrm{p.v.} \frac{\what{\zeta}(k - p)}{i(k - p)} \, dp.
\end{split}
\end{align}
The main observation is that the expressions \eqref{N21} and \eqref{N22} above
satisfy the assumptions of Lemma \ref{lemStat}.
More precisely, 
we see from \eqref{Ksing+'}-\eqref{Ksing-'} that
\begin{align}
b^+_+(k) + b_-^+(k) = \mathbf{1}_{+}(k)\big( T(k) + 1 \big) + \mathbf{1}_{-}(k) \big(1 +T(-k) \big),
\end{align}
which is Lipschitz and vanishes at zero;
therefore \eqref{N21} is of the form \eqref{lemStatint} and satisfies the first assumption in \eqref{lemStatas0},
and the assumption \eqref{lemStatasG} (see \eqref{eq:atildef}).
Similarly, we have
\begin{align}
\begin{split}
& |b_+^+(p)|^2 b_+^+(p) \sign(p) + |b_-^+(p)|^2 b_-^+(p) \sign(p)
\\
& = \big(\mathbf{1}_{+}(p)|T(p)|^2 + \mathbf{1}_{-}(p)\big) (\mathbf{1}_{+}(p)T(p) - \mathbf{1}_{-}(p))
\\
& + \big(\mathbf{1}_{+}(p) + \mathbf{1}_{-}(p)|T(p)|^2 \big) (\mathbf{1}_{+}(p) - \mathbf{1}_{-}(p)T(-p))
\end{split}
\end{align}
which is also Lipschitz and vanishes at $p=0$ since $T(0)=-1$;
therefore, \eqref{N22} is of the form \eqref{lemStatint}, satisfies the second assumption in \eqref{lemStatas0},
as well as \eqref{lemStatasG} in view of the a priori bounds on $f^\sharp$ (or \eqref{eq:atildef}).
It follows that 
\begin{align}
\begin{split}\label{N22asy}
\mathcal{N}^{(2)}(t,k) \approx 
  \frac{1}{4|t|} \sum_{\iota \in \{+,-\}} \iota |\overline{b_\iota^+(k)}|^4
  |\wt{f}(t,k)|^2 \wt{f}(t,k) \,\sign(2tk).
\end{split}
\end{align}

\begin{rem}[The case of an even resonance]\label{remN2}
In the case of an even resonance, we can arrive at \eqref{Npv1}-\eqref{N2} as done above. 
The cancellations in the expression for $\mathcal{N}^{(2)}(t,k)$, however, need to be seen
slightly differently from \eqref{N2'}-\eqref{N22} since in the case of an even resonance one has $T(0)=1$;
also we do not need to use $\mathcal{F}^\sharp$.
More precisely, one can write (note the change of one sign in \eqref{N21e} compared to \eqref{N21},
and of two signs in \eqref{N22e} compared to \eqref{N22})
\begin{align}
\nonumber
& \mathcal{N}^{(2)}(t,k) = \mathcal{N}^{(2)}_1(t,k) + \mathcal{N}^{(2)}_2(t,k),
\\
\begin{split}\label{N21e}
\mathcal{N}^{(2)}_1(t,k) := \frac{1}{(2\pi)^{3/2}} \frac{\pi}{|t|} e^{-itk^2} 
  \big[ \overline{b_+^+(k)} - \overline{b_-^+(k)} \big] 
  \int e^{itp^2} |b_+^+(p)|^2b_+^+(p) 
  \\
  \times |\wt{f} (t,p)|^2 \wt{f}(t,p) \, \mathrm{p.v.} \frac{\what{\zeta}(k - p)}{i(k - p)} \, dp,
\end{split}
\\
\begin{split}\label{N22e}
\mathcal{N}^{(2)}_2(t,k) := \frac{1}{(2\pi)^{3/2}} \frac{\pi}{|t|} e^{-itk^2} 
  \overline{b_-^+(k)} \int e^{itp^2} \Big[ |b_+^+(p)|^2b_+^+(p) 
  \\ - |b_-^+(p)|^2 b_-^+(p) 
  \Big] \,
  |\wt{f} (t,p)|^2 \wt{f}(t,p) \, \mathrm{p.v.} \frac{\what{\zeta}(k - p)}{i(k - p)} \, dp.
\end{split}
\end{align}
Then, from \eqref{Ksing+'}-\eqref{Ksing-'} we have
$b_+^+(k) - b_-^+(k) = \mathbf{1}_+( T(k) - 1) + \mathbf{1}_-(1-T(-k))$,
which is Lipschitz and vanishes at zero. Therefore, we can then proceed as above to obtain
\eqref{N22asy}.
\end{rem}

Eventually, for $t > 0$, we combine this with \eqref{N1} and an algebraic calculation to arrive at
\begin{align}\label{Npvasy5}
\begin{split}
\mathcal{N}_+^{\mathrm{p.v.}}(t,k) & + \mathcal{N}_-^{\mathrm{p.v.}}(t,k) 
\\
\approx \frac{1}{4|t|} \Big[ & - {T(-k)} \mathcal{N}^+[f](k) \mathbf{1}_+(k)
  + |\wt{f}(k)|^2 \wt{f}(k)  \mathbf{1}_-(k) -  R_+(k)  \mathcal{N}^+[f](-k) \mathbf{1}_-(k)
\\
& + |\wt{f}(k)|^2 \wt{f}(k)  \mathbf{1}_+(k) 
  - {R_-(-k)} \mathcal{N}^-[f](k) \mathbf{1}_+(k)
  - T(k) \mathcal{N}^-[f](-k) \mathbf{1}_-(k) \Big]
\end{split}
\end{align}
where
\begin{align}\label{N+-}
\begin{split}
\mathcal{N}^+[f](k) & := \big|T(k)\wt{f}(k) + R_+(k)\wt{f}(-k)\big|^2 
  \big( T(k)\wt{f}(k) + R_+(k)\wt{f}(-k) \big),
\\
\mathcal{N}^-[f](k) & := \big|T(k)\wt{f}(-k) + R_-(k)\wt{f}(k)\big|^2 
  \big( T(k)\wt{f}(-k) + R_-(k)\wt{f}(k) \big).
\end{split}
\end{align}


\smallskip
{\it Asymptotics for the $\delta_0$ terms}.
We now look at 
the contributions from \eqref{Nddef}.
The analysis here is standard 
and, for $\iota\in\{+,-\}$, we have, up to $\mathcal{O}( \varepsilon^3 |t|^{-1-\rho})$ remainders,
\begin{align}
\label{Nd+}
\begin{split}
\mathcal{N}_+^{\delta} & \approx \frac{1}{4|t|} 
  \Big[ T(-k) \mathcal{N}^+[f](k) \mathbf{1}_+(k)
  + \Big(|\wt{f}(k)|^2 \wt{f}(k) +  R_+(k) \mathcal{N}^+[f](-k) \Big) \mathbf{1}_-(k) \Big]
\\
\mathcal{N}_-^{\delta} & \approx \frac{1}{4|t|}  
  \Big[\Big(|\wt{f}(k)|^2 \wt{f}(k) + {R_-(-k)} \mathcal{N}^-[f](k) \Big) \mathbf{1}_+(k)
  + {T(k)} \mathcal{N}^-[f](-k) \mathbf{1}_-(k) \Big].
\end{split}
\end{align}

\smallskip
{\it Conclusion}.
From \eqref{Nsplit}, putting together \eqref{Npvasy5} and \eqref{Nd+} 
and canceling terms, we obtain
\begin{align}\label{Nasyconc}
\begin{split}
\mathcal{N}_S 
  & = \mathcal{N}_+^{\delta}(t,k)+\mathcal{N}_+^{\mathrm{p.v.}}(t,k)
  +\mathcal{N}_-^{\delta}(t,k) + \mathcal{N}_-^{\mathrm{p.v.}}(t,k)
\\
& = \frac{1}{4|t|} 
  \Big(2|\wt{f}(k)|^2 \wt{f}(k)\Big)
  +  \mathcal{O}(\varepsilon^3 |t|^{-1-\rho})
\\
& = \frac{1}{2t}|\wt{f}(k)|^2 \wt{f}(k) 
  +  \mathcal{O}(\varepsilon^3 |t|^{-1-\rho}).
\end{split}
\end{align}
In view of \eqref{ODE'} and \eqref{ODEregular}, and $\wt{f}(k) = \sign(k) f^\sharp(k)$, 
we have arrived at \eqref{fODE}.




\bigskip

	\bigskip
	
\end{document}